\documentclass[11pt]{amsart}
\usepackage{times}
\usepackage[T1]{fontenc}
\usepackage{amssymb, amsthm, amsmath}
\usepackage{mathrsfs}
\usepackage{centernot}
\usepackage{mathtools}
\usepackage{stmaryrd}

\usepackage[active]{srcltx}

\usepackage{todonotes}

\usepackage{setspace}
\usepackage{geometry}

\usepackage{hyperref}

\DeclareMathOperator{\acl}{acl}
\DeclareMathOperator{\dcl}{dcl}

\DeclareMathOperator{\eq}{eq}

\DeclareMathOperator{\tp}{tp}

\DeclareMathOperator{\mr}{RM}

\newtheorem{introtheorem}{Theorem}

\newtheorem{theorem}{Theorem}[section]

\newtheorem{claim}{Claim}[theorem]

\newtheorem{corollary}[theorem]{Corollary}

\newtheorem{fact}[theorem]{Fact}
\newtheorem{lemma}[theorem]{Lemma}

\newtheorem{proposition}[theorem]{Proposition}

\newtheorem*{gen-dif}{\fbox{{\large A}} \hypertarget{Agen-dif}{Gen-Dif}}

\newtheorem*{min-balln}{\fbox{{\large A}} \hypertarget{Amin-ball}{Cballs}}

\theoremstyle{definition}
\newtheorem{definition}[theorem]{Definition}
\newtheorem{example}[theorem]{Example}
\newtheorem{remark}[theorem]{Remark}
\newtheorem{question}[theorem]{Question}
\newtheorem{notation}[theorem]{Notation}

\newcommand{\Cc}{{\mathbb{C}}}
\newcommand{\Rr}{{\mathbb{R}}}
\newcommand{\Nn}{{\mathbb{N}}}
\newcommand{\Qq}{{\mathbb{Q}}}

\newcommand{\Zz}{{\mathbb {Z}}}

\newcommand{\m}{\textbf{m}}
\newcommand{\bk}{\textbf{k}}

\newcommand{\CD}{{\mathcal D}}
\newcommand{\CL}{{\mathcal L}}
\newcommand{\CK}{{\mathcal K}}

\newcommand{\CM}{{\mathcal M}}

\newcommand{\CC}{{\mathcal C}}

\newcommand{\CO}{{\mathcal O}}

\newcommand{\rest}{\upharpoonright}

\renewcommand{\phi}{\varphi}

\def\bm{\m}
\def\la{\langle}
\def\ra{\rangle}

\def\dpr{\mathrm{dp\text{-}rk}}
\def\sub{\subseteq}
\def\sups{\supseteq}
\def\drk{\mbox{D-rk}}
\def\adrk{\mbox{aD-rk}}
\def\KOp{(K/\CO)_{\text{\tiny{p-adic}}}}

\newenvironment{claimproof}[1][\proofname]
{%
	\proof[#1]%
	\renewcommand	*\qedsymbol{$\square$ (claim)}%
}
{%
	\endproof%
}

\date{\today}
\oddsidemargin .8cm
\evensidemargin .8cm

\title{The infinitesimal subgroup of interpretable groups  in some dp-minimal  valued fields}

\author{Yatir Halevi}
\address{Department of Mathematics, Ben Gurion University of the Negev, Be'er-Sheva 84105, Israel}
\email{yatirbe@bgu.ac.il}

\author{Assaf Hasson}
\address{Department of Mathematics, Ben Gurion University of the Negev, Be'er-Sheva 84105, Israel}
\email{hassonas@math.bgu.ac.il}

\author{Ya'acov Peterzil}
\address{Department of Mathematics, University of Haifa, Haifa, Israel}
\email{kobi@math.haifa.ac.il}

\setcounter{tocdepth}{1}

\begin{document}
	
	   \thanks{The first author was partially supported by ISF grants No. 555/21 and 290/19. The second author was supported by ISF grant No. 555/21. The third author was supported by ISF grant No. 290/19.}
	\begin{abstract}
		
		We continue our local analysis of groups interpretable in various dp-minimal valued fields, as introduced in \cite{HaHaPeGps}. We associate with every infinite group $G$ interpretable in those fields an infinite type-definable infinitesimal subgroup $\nu(G)$, generated by the four infinitesimal subgroups $\nu_D(G)$ associated with the distinguished sorts $K$, $\bk$, $\Gamma$ and $K/\CO$. 
		To show that $\nu(G)$ is type-definable, we show that the resulting subgroups $\nu_D(G)$ commute with each other as $D$ ranges over the four distinguished sorts.  We then study the  basic properties of $\nu(G)$. Among others, we show that $\nu(G_1\times G_2)=\nu(G_1)\times \nu(G_2)$ and that if $G_1\le G$ is a definable subgroup then $\nu(G_1)$ is relatively definable in $\nu(G)$. We also discuss possible connections between $\dpr(\nu(G))$ and  elimination of imaginaries. 
		
	\end{abstract}
	
	\maketitle	
	
	\tableofcontents{\setcounter{tocdepth}{0}}

	\section{Introduction}
	We revisit the construction of infinitesimal subgroups of groups interpretable  in $V$-minimal expansions of algebraically closed valued fields, $T$-convex expansions of power bounded o-minimal fields and in (analytic expansions of) $p$-adically closed fields.\footnote{See Remark \ref{R: remark on Qp} for possible expansions of $p$-adically closed fields.}. We show that this construction, introduced in \cite{HaHaPeVF} and studied axiomatically in \cite{HaHaPeGps}, can be simplified, expanded  and consolidated when restricted to the more concrete setting described above. 
	
	To recall, in \cite{HaHaPeVF, HaHaPeGps, HaHaPeSemisimple}, we introduced the construction, for a certain class of interpretable groups, of four \textit{infinitesimal subgroups}.
	Each of these infinitesimal subgroups is associated with a different  \textit{distinguished sort}: the valued field $K$, the value group $\Gamma$, the residue field $\bk$ (when infinite) or  $K/\CO$, the cosets of the valuation ring, and at least one of the four has positive $\dpr$. Below, these are denoted $\nu_D$, as $D$ ranges over these distinguished sorts. Applications included, for example, the proof in \cite{HaHaPeSemisimple} that a non-abelian definably simple group $G$ is definably isomorphic to either a $K$-linear group or to a $\bk$-linear group. In \cite{HaHaPeVF} and in \cite{HaHaPeGps} infinitesimal subgroups were crucial in characterizing interpretable fields and studying dp-minimal interpretable groups in those classes of valued fields.

	The key observation underlying these results is the fact that every infinite group $G$ has an infinite definable subset $X\subseteq G$ which is \emph{almost strongly internal} to at least one of the distinguished sorts. Namely,
	there is a distinguished sort $D$ and a definable function $f: X\to D^n$ (some $n$) with finite fibres. For a fixed distinguished sort $D$, the \emph{almost $D$-rank of $G$}, denoted by $\adrk(G)$, is the maximal dp-rank of such a set $X$.

	The main result of the present paper is: 
	
	\begin{introtheorem}
	Let $\CK$ be an expansion of a valued field that is either (i) $V$-minimal, (ii) power bounded $T$-convex or (iii) $p$-adically closed, and let $G$ be an infinite interpretable group in $\CK$. 
	\begin{enumerate}
		\item To each distinguished sort $D$ corresponds a type-definable 
        subgroup $\nu_D:=\nu_D(G)$, 
        with $\dpr(\nu_D(G))=\adrk(G)$, that is almost strongly internal to $D$. 
		\item The group generated by the four $\nu_D(\widehat \CK)$ in a sufficiently saturated model $\widehat \CK\succ \CK$ is infinite, type definable, and definably isomorphic to the direct product \[\nu_K(\widehat \CK)\times \nu_\Gamma (\widehat \CK) \times \nu_{K/\CO}(\widehat \CK)\times \nu_\bk(\widehat \CK).\]

        In particular, the four type-definable subgroups commute with each other.
	\end{enumerate}
\end{introtheorem}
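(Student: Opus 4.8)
The plan is as follows. For part (1), the construction of the subgroups $\nu_D := \nu_D(G)$ and the verification of their basic properties---type-definability, $\dpr(\nu_D) = \adrk(G)$, almost strong internality to $D$, and connectedness---follow the axiomatic development of \cite{HaHaPeGps}, now carried out directly in the $V$-minimal, power bounded $T$-convex and $p$-adically closed settings; the point to stress is that the geometry available here (a definable topology or uniformity on each of $K$, $\Gamma$, $\bk$, $K/\CO$, together with dp-minimality of $\CK$) both streamlines the construction and shows that $\nu_D$ is \emph{canonically attached} to the pair $(G,D)$. Canonicity yields normality of $\nu_D$ in $G$: conjugation by any $g \in G$ is a definable-group automorphism of $G$ carrying almost-$D$-internal subsets to almost-$D$-internal subsets, hence fixing $\nu_D$ setwise.

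For part (2) we pass to a saturated $\widehat\CK$ and abbreviate $\nu_D := \nu_D(\widehat\CK)$. Everything rests on one input, which is where I expect the main work---and the genuine divergence between the three settings---to lie: an \emph{orthogonality statement for the distinguished sorts}, namely that no infinite interpretable set is almost strongly internal to two distinct distinguished sorts. This I would prove by cases, using the known orthogonality of residue field and value group, the fact that an infinite set strongly internal to $K$ carries field-with-valuation complexity unavailable to one internal to $\Gamma$ or to $\bk$, and a dedicated comparison of $K$ with $K/\CO$---for $p$-adically closed $K$, a comparison of the field with the torsion group $K/\CO \cong \Zz(p^\infty)$. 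The difficulty is intrinsic, since $\Gamma$ and $K/\CO$ are themselves quotients of pieces of $K$: what is needed is the finer statement about \emph{strong} internality, not plain definability. A routine fibering argument then upgrades pairwise orthogonality to orthogonality between any one distinguished sort and any finite product of the others.

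Granting this, fix distinct $D_1 \ne D_2$ and put $F := \nu_{D_1} \cap \nu_{D_2}$, a type-definable group that is almost strongly internal to both $D_1$ and $D_2$ and hence, by orthogonality and compactness, finite. Being an intersection of two subgroups normal in $G$, $F$ is normal in $G$, so central in each of the connected groups $\nu_{D_1},\nu_{D_2}$ and in particular abelian. Using normality, for $a \in \nu_{D_1}$ and $b \in \nu_{D_2}$ we get $[a,b] = a\,(b a^{-1} b^{-1}) \in \nu_{D_1}$ and $[a,b] = (a b a^{-1})\,b^{-1} \in \nu_{D_2}$, so $[a,b] \in F$. Fixing $b$ and using that $F$ is central in $\nu_{D_1}$, a direct computation shows $a \mapsto [a,b]$ is a homomorphism $\nu_{D_1} \to F$; its kernel $C_{\nu_{D_1}}(b)$ has finite index in $\nu_{D_1}$, hence is relatively definable, hence equals $\nu_{D_1}$ by connectedness. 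Therefore $[a,b]=1$ for all such $a,b$, i.e.\ $\nu_{D_1}$ and $\nu_{D_2}$ commute.

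Since the four $\nu_D$ pairwise commute, $\nu(\widehat\CK) := \langle \nu_K,\nu_\Gamma,\nu_{K/\CO},\nu_\bk\rangle$ coincides with the internal product $\nu_K\,\nu_\Gamma\,\nu_{K/\CO}\,\nu_\bk$, a subgroup which is type-definable, being a finite product of type-definable sets; it is infinite because it contains a $\nu_D$ of positive dp-rank by part (1). That the multiplication map $\prod_D \nu_D \to \nu(\widehat\CK)$ is an isomorphism I would prove by induction on the number of factors: given $\nu_{D_1}\cdots\nu_{D_k} \cong \nu_{D_1}\times\cdots\times\nu_{D_k}$---hence almost strongly internal to the compound sort $D_1\times\cdots\times D_k$---the intersection of $\nu_{D_{k+1}}$ with it is almost strongly internal to $D_{k+1}$ and to $D_1\times\cdots\times D_k$, hence finite by orthogonality; and since it is a subgroup both of $\nu_{D_{k+1}}$ and of $\nu_{D_1}\cdots\nu_{D_k}$, at least one of which is torsion-free (in residue characteristic zero every $\nu_D$ is torsion-free; $\nu_{K/\CO}$ is torsion when $K$ is $p$-adically closed, but then the complementary subgroup in that step is a product of the torsion-free $\nu_D$'s), it is trivial. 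After the two remaining steps this gives the asserted definable isomorphism; commutativity of the four subgroups has been established along the way.
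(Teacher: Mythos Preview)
Your argument for Part (2) has a genuine gap at the normality step. You claim that canonicity of the construction implies $\nu_D$ is normal in $G$, because conjugation by any $g\in G$ carries almost $D$-internal sets to almost $D$-internal sets. This is correct for $g\in G(\CK)$, and the paper records exactly that (Fact~\ref{F: properties of nu}(3)). But for the commutator step you need conjugation by $b\in \nu_{D_2}(\widehat\CK)\subseteq G(\widehat\CK)$, and such $b$ are \emph{not} in $G(\CK)$. Conjugation by $b$ sends $\CK$-definable almost $D_1$-critical sets to $(\CK\cup\{b\})$-definable ones; there is no reason the resulting collection cuts out the same set of realizations in $\widehat\CK$ as the original $\CK$-type $\nu_{D_1}$. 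Concretely: in the $T$-convex setting take $G=\gl_2(K)$, $a=\left(\begin{smallmatrix}1&\epsilon\\0&1\end{smallmatrix}\right)\in\nu_K(\widehat\CK)$ with $\epsilon$ infinitesimal over $\CK$, and $g=\left(\begin{smallmatrix}t&0\\0&1\end{smallmatrix}\right)$ with $t=1/\epsilon$. Then $gag^{-1}=\left(\begin{smallmatrix}1&1\\0&1\end{smallmatrix}\right)\notin\nu_K(\widehat\CK)$. So $\nu_D(\widehat\CK)$ is \emph{not} normal in $G(\widehat\CK)$ in general, and your deduction that $[a,b]\in\nu_{D_1}\cap\nu_{D_2}$ collapses. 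The same circularity infects your claim that $F=\nu_{D_1}\cap\nu_{D_2}$ is normal (hence central) in $\nu_{D_1}$: that already presupposes $\nu_{D_1}$ normalizes $\nu_{D_2}$.

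The paper confronts exactly this difficulty. Mutual normalization of $\nu_{D_1}(\widehat\CK)$ and $\nu_{D_2}(\widehat\CK)$ is the content of Proposition~\ref{P: nus commute}, and its proof is not soft: it rests on the fine local analysis of Proposition~\ref{P: topological pair 2}, which shows that inside a generic vicinity in $D_2^n$ of a generic $b$ one can always shrink to a smaller vicinity without dropping the dp-rank of a tuple $a\in D_1^m$ over the code of the vicinity. This is what lets one pass from ``every $g\in G(\CK)$ normalizes $\nu_{D_2}$'' to ``every $g\in\nu_{D_1}(\widehat\CK)$ normalizes $\nu_{D_2}$'' by a compactness argument tracking $D_1$-critical sets. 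You correctly anticipated that orthogonality of the sorts (your ``main work'') is an input, but in the paper it is quoted as a known fact (Fact~\ref{F:dist are foreign}); the actual new work is this normalization argument, which your proposal sidesteps. Once mutual normalization is in hand, the paper's route to commutation is shorter than yours: $\nu_{D_1}\cap\nu_{D_2}$ is finite by orthogonality and trivial since one side is torsion-free (Lemma~\ref{L:trivial intersection}), so $[\nu_{D_1},\nu_{D_2}]\subseteq\nu_{D_1}\cap\nu_{D_2}=\{e\}$ directly. Your inductive proof of the direct-product decomposition, using torsion-freeness of the complementary factor when $D=\KOp$, is correct and matches Theorem~\ref{T:main}.
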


	This theorem improves the results of \cite{HaHaPeGps} in two main aspects. First, it replaces the construction of the four infinitesimal subgroups with a single type-definable group that we denote $\nu(G)$. The subgroup $\nu(G)$  encapsulates all the information captured previously by the four infinitesimal subgroups. Moreover, it associates an infinitesimal subgroup with any interpretable group, and not only with a distinguished subclass of such groups.
	
	Let us elaborate on the last point. In our previous work, the construction of the infinitesimal groups was carried out only after passing to a quotient of the interpretable group by a finite normal subgroup. In the present work, we eliminate the need for such quotients whenever $D$ is other than $K/\CO$ in the $p$-adically closed case (Proposition \ref{P:lasi=lsi}).  In all cases, we  provide a uniform description of $\nu_D(G)$ as the partial type generated by the collection of all definable sets $XX^{-1}$ as $X$ ranges over all subsets of $G$ almost strongly internal  to $D$ of maximal dp-rank (Proposition \ref{P:characterizatio of nu_D in general}).

	The rest of the paper is dedicated to studying the group $\nu(G)$. In \cite{HaHaPeGps} we asked (in slightly different terms), whether $\dpr(\nu(G))=\dpr(G)$. In Example \ref{E: exp example} we show that this need not be true in the context of $V$-minimal analytic expansions of ACVF$_{0,0}$. We then apply this example to give a short geometric proof of \cite[Theorem 1.1]{HaHrMac3}, asserting that analytic expansions of ACVF$_{0,0}$ do not eliminate imaginaries down to the so-called geometric sorts of \cite{HaHrMac1}. It may be worth noting that, assuming elimination of imaginaries down to 0-dimensional sorts (e.g., in pure ACVF$_{0,0}$, RCVF or $p$-adically closed fields), partial positive solutions to the above question exist. This possible connection with questions of elimination of imaginaries (see also \cite[Question 1.5]{HaHrMac3}) remains unclear, and seems worthy of further investigation. 
	
	In the last two sections of the paper we initiate a preliminary investigation of the correspondence $G\mapsto \nu(G)$. We show, among others: 
	\begin{introtheorem}
		Let $\CK$ be an expansion of a valued field of characteristic $0$ that is either (i) $V$-minimal, (ii) power bounded $T$-convex or (iii) $p$-adically closed and let $G_1,G_2$ be interpretable infinite groups. Then: 
		\begin{enumerate}
			
			\item  If $G_2\le G_1$ is a subgroup then $\nu(G_2)$ is relatively definable in $\nu(G_1)$. Moreover, if $\dpr(G_1)=\dpr(G_2)$ then $\nu(G_1)=\nu(G_2)$ (Corollary \ref{C: nu rel def}, Proposition \ref{P: nu is local}).
			\item  $\nu(G_1\times G_2)=\nu(G_1)\times \nu(G_2)$ (Proposition \ref{P:nu in products}).
			
			\item   If $f: G_1\to G_2$ is a definable surjective homomorphism with finite kernel then $f$ maps $\nu(G_1)$ onto $\nu(G_2)$ (Proposition \ref{P:nu under finite to one}). 
		\end{enumerate}
	\end{introtheorem}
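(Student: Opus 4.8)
The plan is to work throughout with the uniform description of $\nu_D(G)$ as the partial type generated by the downward-directed family of sets $XX^{-1}$, where $X$ ranges over subsets of $G$ almost strongly internal to $D$ with $\dpr(X)=\adrk(G)$ relative to $D$ (Proposition \ref{P:characterizatio of nu_D in general}), together with the direct-product decomposition $\nu(G)=\nu_K(G)\times\nu_\Gamma(G)\times\nu_{K/\CO}(G)\times\nu_\bk(G)$ from the main theorem and the locality of $\nu$ (Proposition \ref{P: nu is local}). Since in each of the three items both sides decompose as internal direct products over the four distinguished sorts, once the decomposition on the target side is justified each item reduces to the analogous statement for a single $\nu_D$. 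The first thing I would record is how $\adrk(G)$, relative to a fixed distinguished sort $D$, behaves under the three operations: (i) if $G_2\le G_1$ then $\adrk(G_2)\le\adrk(G_1)$, with equality when $[G_1:G_2]<\infty$, by splitting a maximal almost strongly internal subset of $G_1$ over the finitely many cosets of $G_2$ and translating a full-rank piece into $G_2$; (ii) if $f\colon G_1\to G_2$ is a definable surjection with finite kernel then $\adrk(G_1)=\adrk(G_2)$, since almost strongly internal subsets of $G_2$ pull back along $f$ to ones of the same dp-rank (the fibres of $f$ being cosets of the finite kernel), and conversely, passing to a full-rank definable subset $X'$ of a maximal almost strongly internal $X\subseteq G_1$ on which $f$ is injective, the bijective image $f(X')$ is again almost strongly internal of the same dp-rank; (iii) $\adrk(G_1\times G_2)=\adrk(G_1)+\adrk(G_2)$, where ``$\geq$'' comes from products of maximal almost strongly internal sets and additivity of dp-rank, and ``$\leq$'' from a base/fibre argument.

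For item (3): given $g\models\nu_D(G_1)$ and a maximal almost strongly internal $Y\subseteq G_2$, the pullback $f^{-1}(Y)$ is maximal almost strongly internal in $G_1$, so $g\in f^{-1}(Y)\,f^{-1}(Y)^{-1}\subseteq f^{-1}(YY^{-1})$ and $f(g)\in YY^{-1}$; as $Y$ varies, $f(\nu_D(G_1))\subseteq\nu_D(G_2)$. For the reverse inclusion I would fix $h\models\nu_D(G_2)$ and list the finite set $f^{-1}(h)=\{a_1,\dots,a_m\}$: for every maximal almost strongly internal $X\subseteq G_1$, passing to a full-rank $X'\subseteq X$ on which $f$ is injective, $f(X')$ is maximal almost strongly internal in $G_2$, so $h\in f(X')f(X')^{-1}\subseteq f(XX^{-1})$, that is, $XX^{-1}\cap f^{-1}(h)\neq\emptyset$. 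Since the family $\{XX^{-1}\}$ is downward directed, the family of nonempty subsets $\{\,j : a_j\in XX^{-1}\,\}$ of $\{1,\dots,m\}$ is downward directed and so has nonempty intersection; an index $j$ in it yields an $a_j$ realizing $\nu_D(G_1)$ with $f(a_j)=h$. Combining over the four sorts (the $G_2$-side product being direct by the main theorem) gives $f(\nu(G_1))=\nu(G_2)$.

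For item (1) I would first establish $\nu(G_2)\subseteq\nu(G_1)$: a maximal almost strongly internal subset of $G_1$ restricts to a (possibly lower-rank) almost strongly internal subset of $G_2$, and combining this across the four sorts with the product decomposition confines $\nu(G_2)$ inside $\nu(G_1)$; granting this, $\nu(G_2)=\nu(G_1)\cap G_2$, which is relatively definable in $\nu(G_1)$ because $G_2$ is definable, the reverse inclusion coming from the same analysis. For the ``moreover'', if $\dpr(G_1)=\dpr(G_2)$ then, using the additivity statements above and the way $\dpr(G)$ is governed by the almost $D$-ranks in these fields, one extracts $\adrk(G_1)=\adrk(G_2)$ for every $D$, so $G_2$ carries the same local data near the identity as $G_1$ and $\nu(G_1)=\nu(G_2)$ by locality of $\nu$. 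For item (2) it suffices to prove $\nu_D(G_1\times G_2)=\nu_D(G_1)\times\nu_D(G_2)$ for each $D$: ``$\subseteq$'' holds because $X_1\times X_2$ is maximal almost strongly internal in $G_1\times G_2$ whenever each $X_i$ is such in $G_i$, so $\nu_D(G_1\times G_2)\subseteq (X_1\times X_2)(X_1\times X_2)^{-1}=X_1X_1^{-1}\times X_2X_2^{-1}$ and one intersects; ``$\supseteq$'' follows, for $(g_1,g_2)\in\nu_D(G_1)\times\nu_D(G_2)$ and maximal almost strongly internal $X\subseteq G_1\times G_2$, by using additivity of dp-rank to find a base set $A\subseteq G_1$ of full dp-rank over which the fibres $X_a$ have full dp-rank, checking $A$ and the fibres are almost strongly internal to $D$, and concluding $g_1\in AA^{-1}$, $g_2\in X_aX_a^{-1}$ for a suitable $a$, so $(g_1,g_2)\in XX^{-1}$.

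I expect the main obstacle, common to all three items, to be precisely this base/fibre analysis: projections and fibres of a set almost strongly internal to $D$ need not manifestly be almost strongly internal to $D$, so controlling them — which is what underlies the additivity $\adrk(G_1\times G_2)=\adrk(G_1)+\adrk(G_2)$, the inclusion $\nu(G_2)\subseteq\nu(G_1)$, and the passage from equality of dp-ranks to equality of almost $D$-ranks — requires the concrete structure theory of locally strongly internal sets in the $V$-minimal, power-bounded $T$-convex and $p$-adically closed settings, for which Proposition \ref{P:lasi=lsi} is the key tool.
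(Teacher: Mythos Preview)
Your approach to items (2) and (3) is essentially the paper's: the same $XX^{-1}$ description, the same pullback/pushforward arguments, and the same base/fibre additivity for $\adrk$ on products. Your pigeonhole argument for surjectivity in (3) is a pleasant variant of the paper's citation of the image-of-almost-strongly-internal lemmas.

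Item (1), however, has genuine gaps. First, the sentence ``a maximal almost strongly internal subset of $G_1$ restricts to a (possibly lower-rank) almost strongly internal subset of $G_2$'' does not prove $\nu_D(G_2)\subseteq\nu_D(G_1)$: a maximal almost strongly internal $X\subseteq G_1$ may miss $G_2$ entirely, and even when $X\cap G_2$ is nonempty it need not have dp-rank $\adrk(G_2)$, so it plays no role in the $XX^{-1}$ description of $\nu_D(G_2)$. The paper's argument for the unstable, non-$\KOp$ sorts is substantially more delicate: one translates a $D$-critical $X\subseteq G_1$ so that $Xa^{-1}\cap G_2$ is $D$-critical for $G_2$ with a common generic point, and then invokes a topological lemma (Lemma~\ref{L: restriction of nu to subset}) identifying $\nu_Y(c)$ with $\nu_{D^n}(c)\cap Y$ for a lower-dimensional $Y$. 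Second, your assertion that ``$\nu(G_2)=\nu(G_1)\cap G_2$, \dots the reverse inclusion coming from the same analysis'' is false in general: for $D=\bk$ (V-minimal) and $D=\KOp$ the paper only obtains that $\nu_D(G_2)$ has \emph{finite index} in $\nu_D(G_1)\cap G_2$, and relative definability is salvaged by exhibiting a definable subgroup $G_0\le G_2$ with $\nu_D(G_2)\equiv\nu_D(G_1)\cap G_0$ (Theorem~\ref{T: nu of subgroup}). Third, your ``moreover'' argument relies on ``the way $\dpr(G)$ is governed by the almost $D$-ranks'', but Example~\ref{E: exp example} shows $\sum_D\adrk(G)$ can be strictly smaller than $\dpr(G)$, so one cannot read off $\adrk(G_1)=\adrk(G_2)$ from $\dpr(G_1)=\dpr(G_2)$ this way. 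The paper instead uses the subadditivity inequality $\dpr(H_1)+\dpr(H_2)\le\dpr(H_1H_2)+\dpr(H_1\cap H_2)$ applied to $H_1=\nu_D(G_1)$ and $H_2=G_2$ (Lemma~\ref{L: sub-additivity for groups}) to force $\dpr(\nu_D(G_1)\cap G_2)=\adrk(G_1)$ directly.
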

	
	The proof of the results above relies on proving corresponding results for the different distinguished sorts and in fact we have stronger results for the different sorts. For example, when $D$ is not $\bk$ in the $V$-minimal case or $K/\CO$ in the $p$-adically closed case then in (3), $f$ induces  an isomorphism between $\nu_D(G_1)$ and $\nu_D(G_2)$ and in (1), $\nu_D(G_2)=\nu_D(G_1)\cap G_2$.
	\\
	
	\noindent{\bf Acknowledgment} 
	We would like to thank Raf Cluckers, Deirdre Haskell  and Moshe Kamensky for helpful discussions regarding Example \ref{E: exp example}.

	\section{Background and preliminaries}
	In the present section we review the main objects of study introduced in \cite{HaHaPeVF}, \cite{HaHaPeGps} and \cite{HaHaPeSemisimple}, and their basic properties. The original work was carried out in a general axiomatic setting and since we work here with concrete families of valued fields some  definitions can be simplified. In those cases, we provide the simpler definition, and prove its equivalence to the original one. 
	
	In the first two subsections, we review standard notation and terminology. We then proceed to discussing more specialized terminology. 
	
	\subsection{Notation and other conventions}
	Our notation and terminology is standard and similar to that of \cite{HaHaPeVF}, \cite{HaHaPeGps}, so we will be brief. We refer the reader to those papers for a more detailed technical introduction. 
	
	Throughout $\CK$  denotes an expansion of a  valued  field of characteristic $0$ in a language $\CL$ expanding the language of valued rings. We denote its universe by $K$. We assume $\CK$ to be  $(|\CL|+2^{\aleph_0})^+$-saturated. We also fix a $|K|^+$-saturated elementary extension $\widehat \CK\succ \CK$. Definable (and type definable, see below) sets will always be assumed to be defined in $\CK^{eq}$ over sets of parameters from $\CK^{eq}$. 
	
	\textbf{Thus, from now on we do not distinguish interpretable sets from definable ones, and refer to both classes of sets as \textit{definable}.} In $\CK^{eq}$ definable sets have codes and for a definable set $X$ we let $[X]$ denote the $\dcl^{\eq}$-closure of some (equivalently, any) code for $X$. 
	
	By a (partial) type we mean a consistent collection of formulas. Two partial types $\rho_1, \rho_2$ are equal, denoted $\rho_1\equiv\rho_2$, if they are logically equivalent. We write $\rho \vdash X$ to mean that $\rho$ is concentrated on $X$ and write $X\in \rho$ to mean that $X$ is a formula in $\rho$ (in particular $\rho\vdash X$).  If $f:X\to Y$ is a definable function and $\rho\vdash X$ is s partial type then we let $f_*\rho\vdash Y$ denote its image. A {\em type-definable subgroup} of a definable group $G$ is a type  $\nu\vdash G$ over $\CK$  such that $\nu(\widehat \CK)$ is a subgroup of $G(\widehat \CK)$.  When we refer to group theoretic properties of $\nu$ such as ``$\nu$ is torsion-free'' or ``$\nu$ is commutative'' we mean that $\nu(\widehat \CK)$ satisfies these properties.

	We assume familiarity with dp-rank and its properties, such as sub-additivity, invariance under definable finite-to-finite correspondences, invariance under automorphisms etc. See the preliminaries sections of \cite{HaHaPeVF,HaHaPeGps} for a more detailed discussion.

	Closely related to the dp-rank in dp-minimal structures is the {\em $\acl$-dimension}: $\dim_{\acl}(a/A)$, is the minimal  $k\leq n$ for which there exists a sub-tuple
	$a'\sub a$ of size $k$, such that $a\in \acl(a'A)$. All  distinguished sorts in our settings have \textit{algebraic dp-rank}, meaning that dp-rank and $\dim_{\acl}$, as defined above, agree. 
	
	The {\em $\acl$-dimension}  of an $A$-definable set $X$, $\dim_{\acl}(X)$, is defined to be the maximum of $\dim_{\acl}(a/A)$, for all $a\in X$. 
	In {\em geometric structures}, $\dim_{\acl}$ coincides with the standard dimension associated with the $\acl$-pregeometry. However, in the current paper $\acl$ in the distinguished sorts (e.g $K/\CO$ in all settings) need not satisfy the Exchange Property, and some of the distinguished sorts ($\Gamma$ and $K/\CO$ in the $p$-adically closed case) do not satisfy uniform finiteness.

	\subsection{Valued fields}
	For any valued field $(K,v)$, we let $\Gamma$ denote the value group, $\CO$ the valuation ring,  $\m$  the maximal ideal and $\bk:=\CO/\m$ the residue field. The  \emph{distinguished sorts} of $\CK$ are  $K$, $\Gamma$, $\bk$ and $K/\CO$. 
	
	For the rest of the paper,  $\CK$ is assumed to be one of the following (see \cite[Section 2.3]{HaHaPeVF} for details): 
	\begin{enumerate}
		\item A $V$-minimal valued field (such as ACVF$_{0,0}$ and its analytic expansions).
		\item A power-bounded $T$-convex valued field (e.g., RCVF and $\mathbb{R}_{an}$-conv).
		\item A $p$-adically closed field. 
	\end{enumerate}

	\begin{remark}\label{R: remark on Qp}
		As in our previous works, \cite{HaHaPeVF, HaHaPeGps, HaHaPeSemisimple}, in the $p$-adically closed setting, the proofs here go through as written in $P$-minimal expansions (\cite{HasMac}) that are 1-h-minimal (in the sense of \cite{hensel-minII}) and have definable Skolem functions in the valued field sort. 
		Namely, the assumption that the field sort is a pure valued field is never used.
		In particular, the results of the current paper remain true, as stated and as proved, in models of $\Qq_{p, an}$, the expansion of $\Qq_p$ by all analytic functions on $\mathbb Z_p^n$ (for all $n$).

	\end{remark}

	\begin{notation}
		As in our previous work, our goal is to treat the above three settings as uniformly as possible. However, as will be evident below, in the $p$-adically closed setting the sorts $\Gamma$ and especially $K/\CO$  often require  different statements and proofs. %
		In the latter case we introduce the notation $D=\KOp$. 
	\end{notation}
	
	Note that, in all our settings, $\CK$ is dp-minimal. Since we are working solely in $\CK^{\eq}$, all definable sets have finite dp-rank.

	A {\em ball} in $K$ is a set of the form $$B_{\geq \gamma}(a)=\{x\in K: v(x-a)\geq \gamma\}\,\, \mbox{ or } \,\, B_{>\gamma}(a)=\{x\in K: v(x-a)> \gamma\},$$ for $\gamma\in \Gamma$. Since the valuation descends naturally to $K/\CO\setminus \{0\}$ analogous notions exists in $K/\CO$. However, note that in $\KOp$ the set $B_{\geq \gamma}(a)$ is finite for $\gamma\in \mathbb Z$,  \textbf{so here we will reserve the term ``ball'' to infinite balls only}. A ball in $K^n$ or in $(K/\CO)^n$ is a product of balls of equal radius.
	
	The value group is either a $\mathbb{Z}$-group (if $\CK$ is $p$-adically closed) or a densely ordered abelian group (in all other cases). In order to allow uniform treatment of definable sets in $\Gamma$ we define the following:
	
	\begin{definition}\label{D:box}
		A subset $B\subseteq \Gamma^n$ is called {\em a $\Gamma$-box around $a=(a_1,\dots,a_n)$} if it is of the following form:
		\begin{enumerate}
			\item When $\Gamma$ is dense:  $\prod_{i=1}^n (b_i,c_i)$ for some $b_i<a_i<c_i$ in $\Gamma$.
			\item  When $\Gamma$ is a $\mathbb{Z}$-group:    A cartesian product of $n$ sets of the form $(b_i,c_i)\cap \{x_i: x_i\equiv a_i \mod m_i\}$, $m_i\in \mathbb N$, $b_i<a_i<c_i$, \textbf{where both intervals $(b_i,a_i)$ and $(a_i,c_i)$ are infinite}.
		\end{enumerate}
	\end{definition}
	Note that the $\KOp$-balls and the $\Gamma$-boxes around $a$ are not uniformly definable in families because of the lack of uniform finiteness.
	
	When $\CK$ is $p$-adically closed, it is elementarily equivalent (as a pure valued field) to some finite  extension  $\mathbb{F}$ of $\mathbb{Q}_p$.  By saturation, we may assume that $(K,v)$ is an elementary extension of $(\mathbb{F},v)$.  Since the value group $\Gamma_{\mathbb{F}}$ is isomorphic to $\mathbb{Z}$ we identify $\Gamma_{\mathbb{F}}$ with $\mathbb{Z}$ and view it as a prime (and minimal) model for $\Gamma$. In order to be consistent with the conventions of \cite{HaHaPeGps} and apply its results freely, when $\CK$ is $p$-adically closed we name each element of $\mathbb{F}$ in the language.

	\subsection{Strong internality and related concepts}
	The starting point of all of our work is the following observation \cite[Lemma 7.3, Lemma 7.6, Lemma 7.10]{HaHaPeGps}: 
	\begin{fact}\label{F: alsi to D}
		Let $X$ be an infinite set definable in $\CK$. Then there exists an infinite subset $Y\subseteq X$ and a finite-to-one function $f: Y\to D^m$ (both definable, possibly, over additional parameters), for some $m\in \mathbb N$,  where $D$ is one of the sorts: $K$, $\Gamma$, $\bk$ or $K/\CO$.
	\end{fact}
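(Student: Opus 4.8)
The plan is: reduce by elimination of imaginaries to subsets of products of concrete sorts, record the internal structure of the lattice-type imaginary sorts, and then strip off coordinates one at a time by a uniform recursion.

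First I would invoke the elimination of imaginaries theorem available in each of the three settings: $\CK^{\eq}$ is, up to definable bijection, covered by finitely many \emph{geometric sorts} --- the distinguished sorts $K$, $\Gamma$, $\bk$, $K/\CO$ together with finitely many families of lattice-type sorts ($S_n=\gl_n(K)/\gl_n(\CO)$ and $T_n$ in the $V$-minimal case, following \cite{HaHrMac1}, and their counterparts in the power-bounded $T$-convex and $p$-adically closed cases). Applied to a code for $X$ and to its defining formula, this produces a definable bijection of $X$ with a definable subset $X'$ of a finite product $\CS_1\times\cdots\times\CS_k$ of geometric sorts; it then suffices to find the required $Y$ and $f$ for $X'$ and transport them back.

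Next I would make precise how the lattice-type sorts sit over the distinguished ones. Via a triangular (Iwasawa/echelon) decomposition of lattices, the elementary-divisor map $S_n\to\Gamma^n$ is definable and, over each fixed shape, its fibre is definably isomorphic to a power of $K/\CO$ --- the off-diagonal entries of an echelon basis range over fixed fractional-ideal quotients of $K$, each in definable bijection (over the relevant parameters) with $K/\CO$; similarly the forgetful map $T_n\to S_n$ has fibres definably isomorphic to $\bk^n$. With this in hand I would run the recursion on an infinite definable $Z$ inside a product of geometric sorts: if there is more than one factor, look at the projection to the first; if all its fibres are finite, the projection is finite-to-one and we pass to its image inside that single sort; otherwise an infinite fibre is an infinite definable subset of the product of the remaining factors, and we recurse with one fewer factor. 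Once $Z$ lies in a single sort: if it is a distinguished sort we are done; if it is $S_n$ we apply the elementary-divisor map, which is either finite-to-one (done, with $D=\Gamma$) or has an infinite fixed-shape fibre, an infinite definable subset of a power of $K/\CO$ to which the recursion applies again; $T_n$ is handled the same way through $T_n\to S_n$ and $\bk$. In each case the recursion halts with an infinite definable $Y\subseteq Z$ and a finite-to-one definable $f\colon Y\to D^m$, $D\in\{K,\Gamma,\bk,K/\CO\}$; chaining this with the maps generated en route and with the bijection of the first step gives the statement for $X$.

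The real content is the structural step: one needs the exact list of geometric sorts in each setting and the explicit lattice decomposition, and the $p$-adically closed case is genuinely different --- $\bk$ is finite there, so that branch contributes nothing, and there is no definable cross-section $\Gamma\to K^\times$, so everything must be routed through $\Gamma$ and $K/\CO$. This is why \cite{HaHaPeGps} proves Fact~\ref{F: alsi to D} separately for each distinguished sort and each of the three settings (Lemmas 7.3, 7.6 and 7.10 there); granting that input, the elimination-of-imaginaries reduction and the coordinate recursion above are formal.
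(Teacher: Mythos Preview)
Your first step --- invoking elimination of imaginaries down to geometric sorts --- is not available in the generality the paper works in. The three settings are $V$-minimal fields, power-bounded $T$-convex fields, and $p$-adically closed fields (with, per Remark~\ref{R: remark on Qp}, $P$-minimal analytic expansions allowed in the last case). Only the \emph{pure} theories ACVF$_{0,0}$, RCVF and $p$CF are known to eliminate imaginaries to the Haskell--Hrushovski--Macpherson geometric sorts; the expansions do not. In fact the paper itself, in Example~\ref{E: exp example} and the corollary immediately following it, gives a short proof that analytic expansions of ACVF$_{0,0}$ do \emph{not} eliminate imaginaries down to $0$-dimensional sorts (reproving \cite[Theorem 1.1]{HaHrMac3}), and notes the analogous failures for $\Rr_{an}$-conv and $\Qq_{p,an}$. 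So your reduction ``$X$ is in definable bijection with a subset of a finite product of geometric sorts'' simply breaks down for the very structures under consideration.

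Your closing paragraph does not rescue this: you say that ``granting that input'' from \cite{HaHaPeGps} the rest is formal, but Lemmas 7.3, 7.6 and 7.10 of \cite{HaHaPeGps} \emph{are} the fact being cited --- once granted, there is nothing left to prove, and your EI-plus-recursion argument is never invoked. The actual proofs in \cite{HaHaPeGps} proceed differently, without assuming EI: they exploit setting-specific tools (e.g.\ cell decomposition, the structure of $1$-types, and reductions through the valuation/residue maps) to locate an infinite subset mapping finite-to-one into a distinguished sort directly. Your coordinate-stripping recursion is a reasonable idea once one is inside a product of concrete sorts, but getting there is precisely the content of the statement, and EI is not the right vehicle in this generality.
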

	
	As in \cite{HaHaPeGps,HaHaPeVF,HaHaPeSemisimple}, the \emph{distinguished sorts} of $\CK$ are the sorts $K$, $\Gamma$, $\bk$ and $K/\CO$. We tacitly disregard $\bk$ when $\CK$ is $p$-adically closed. We recall here some of their main properties.
	
	All the distinguished sorts, except $K/\CO$, are stably embedded. Except for $\Gamma$ and $\KOp$, they are all SW-uniformities (in the sense of \cite{SimWal}), i.e., dp-minimal uniform structures that are $t$-minimal, see also \cite{HaHaPeVF} for their main properties.\footnote{Recall that a structure $\CM$ with a definable basis for a topology on $M$ is $t$-minimal if for any definable $S\sub M$ the interior of $S$ is non-empty if and only if $S$ is infinite.}
	The sorts $\Gamma$ and $\bk$ eliminate imaginaries in all cases. The residue field $\bk$ in the $V$-minimal setting is a pure algebraically closed field and  the only stable distinguished sort.

	Fact \ref{F: alsi to D}  prompted the following definitions: 
	\begin{definition}\label{def: internalities}
		Let $S$ and $D$ be  definable sets in $\CK$. 
		\begin{enumerate}
			\item 	$S$ is \emph{almost strongly internal to $D$} if there exists a definable finite-to-one function $f: S\to D^n$, for some $n$. 
			\item $S$ is \emph{strongly internal to $D$ } if there exists a definable injective $f: S\to D^n$, for some $n$. 
			\item A (partial) type $p$ is almost strongly internal to $D$ if there is a definable set $S$ such that $p\vdash S$ and $S$ is almost strongly internal to $D$. Similarly, for ``strongly internal''.
			\item $S$ is \emph{locally (almost) strongly internal to $D$} if there exists {\bf an infinite} $X\sub S$ that is (almost) strongly internal to $D$. 
			\item The \emph{$D$-critical rank of $S$} is 
			\[
			\drk(S):=\max\{\dpr(X): X\sub S, \,\, X \text{ is strongly internal to } D\},
			\]
			if $S$ is locally strongly internal to $D$, and $0$ otherwise.
			
			The \emph{almost $D$-rank of $S$}, $\adrk(S)$ is defined analogously. 
			
			\item For $S$   locally strongly internal to $D$, a definable set $X\sub S$ is \emph{$D$-critical with respect to $S$} if $X$ is strongly internal to $S$ and $\dpr(X)=\drk(S)$ .
			
			\item For $S$   locally almost strongly internal to $D$, let $m_{\text{crit}}\in \mathbb N$ be the minimal $m$ such that there exists an $m$-to-one definable $f:X\to D^n$, for some $n$, as $X$ varies over all subsets of $S$ with $\dpr(X)=\adrk(S)$. A definable set $X\sub S$ is \emph{almost $D$-critical with respect to $S$} if 
			\begin{itemize}
				\item $\dpr(X)=\adrk(S)$, and
				\item there exists a definable $m_{\text{crit}}$-to-1 function $f: X\to D^n$, for some $n$.

			\end{itemize}
		\end{enumerate}
	\end{definition}

	\textbf{When referring to an (almost) $D$-critical set $X$ as definable over a parameter set $A$ we are tacitly assuming that not only  the set $X$  is $A$-definable, but so is some function $f: X\to D ^m$ witnessing (almost) strong internality of $X$.}	\\

	In \cite{HaHaPeGps} the construction of the infinitesimal subgroups takes place in the abstract context of \emph{vicinic sets}. In the more specialized setting of our valued fields, some of the definitions can be strengthened and some of the arguments can be simplified. 
	
	As a first step, we slightly strengthen \cite[Remark 2.8]{HaHaPeSemisimple}. In that remark it was observed that 
	given an (almost) $D$-critical subset $X$ of a definable group $G$, we may, at the cost of shrinking $X$ but not its dp-rank, require that the function witnessing it $f: X\to D^m$ is such that $m=\dpr(X)$. 
	
	To state the result, we recall the following: 
	\begin{definition}
		Let $X$ be an $A$-definable set in $\CK$. A point $a\in X$ is \emph{generic over $B\sups A$} (or $B$-generic), if $\dpr(a/B)=\dpr(X)$. By referring to ``a generic $a\in X$'' we mean that $a$ is $[X]$-generic. 
	\end{definition}
	
	Next we note the following: 
	\begin{lemma}\label{L:onto open}
		Let $D$ be an unstable distinguished sort in $\CK$, $S\sub D^n$ an $A$-definable set of dp-rank $m$ and $s\in S$ an $A$-generic.
		Then there exist $B\supseteq A$, a $B$-definable $S'\sub S$ and a coordinate projection $\pi:D^n\to D^m$ such that $s\in S'$, $\dpr(s/B)=m$ and the restriction of $\pi$ to $S'$ is injective. 
	\end{lemma}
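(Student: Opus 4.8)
\emph{Proof plan.} The plan is to use that $D$ has algebraic dp-rank to fix the right coordinate projection, then to invoke the topology of $D$ to turn ``generically finite-to-one'' into ``injective on a small ball near $s$'', and finally to choose the parameters of that ball so as not to destroy genericity of $s$.

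Since $\dpr$ and $\dim_{\acl}$ agree on $D$, from $\dim_{\acl}(s/A)=\dpr(S)=m$ we obtain a sub-tuple $s\rest I$ of $s$ with $|I|=m$ and $s\in\acl(A,s\rest I)$; permuting coordinates we may take $I=\{1,\dots,m\}$, and we put $\pi:=\pi_I$, $c:=\pi(s)=s\rest I$. As $s\in\acl(Ac)$ and $\dpr(s/A)=m$, necessarily $\dim_{\acl}(c/A)=m$, i.e.\ $c$ is generic in $D^m$ over $A$. The key reduction is the following: if we find $B\supseteq A$ and a $B$-definable $S'\subseteq S$ with $s\in S'$ on which $\pi$ is injective, then $s\in\dcl(B,c)$, hence $\dpr(s/B)=\dim_{\acl}(s/B)=\dim_{\acl}(c/B)=\dpr(c/B)$; so it suffices to arrange, in addition, that $c$ stay generic in $D^m$ over $B$.

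If $D$ is $\Gamma$, or $D$ is $\bk$ in the power-bounded $T$-convex setting, then $\acl=\dcl$ in $D$, so $s\in\dcl(Ac)$; writing $s\rest I^c=g(c)$ for an $A$-definable $g$ and taking $B:=A$, $S':=S\cap\{(x,g(x)):x\in\dom g\}$ finishes the job, as $\pi$ is injective on a graph over the first $m$ coordinates. So assume $D=K$. Shrink $S$ to an $A$-definable $S_0\ni s$ on which $\pi$ is finite-to-one (intersect $S$ with an $Ac$-formula witnessing $s\in\acl(Ac)$ and with the first-order requirement that each $\pi$-fibre have at most the relevant size); then $s$ is still generic in $S_0$. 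Since $K$ is a $t$-minimal SW-uniformity, at its generic point $s$ the finite-to-one map $\pi\rest S_0$ is a local homeomorphism (onto an open subset of $D^m$, whence the name of the lemma) and the finitely many other points of $\pi^{-1}(c)\cap S_0$ are separated from $s$; hence there is $\gamma_0\in\Gamma$ with $\pi$ injective on $S_0\cap B_{\geq\gamma}(s)$ for all $\gamma\geq\gamma_0$. Now the delicate step: $B_{\geq\gamma}(s)$ seemingly uses $s$ (and $\gamma$) as parameters. Instead choose $\gamma\geq\gamma_0$ generic over $\acl(As)$ (so $s$, hence $c$, stays generic over $A\gamma$), pick $e$ generic over $As\gamma$ in the $m$-dimensional set $S_0\cap B_{\geq\gamma}(s)$, and set $B:=A\gamma e$, $S':=S_0\cap B_{\geq\gamma}(e)$. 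As $e\in B_{\geq\gamma}(s)$ we have $B_{\geq\gamma}(e)=B_{\geq\gamma}(s)$, so $s\in S'$, $S'$ is $B$-definable, and $\pi\rest S'$ is injective; and a dp-rank computation, using that $\acl$ in $K$ satisfies Exchange (so $\dim_{\acl}$ is additive), yields $\dim_{\acl}(c/Ae\gamma)=\dim_{\acl}(ce/A\gamma)-\dim_{\acl}(e/A\gamma)=(m+m)-m=m$, whence $\dpr(s/B)=m$.

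The main obstacle is the last paragraph. Two things must be made to work: the passage, via the $t$-minimal topology, from ``$\pi$ generically finite-to-one'' to ``$\pi$ injective on $S_0\cap B_{\geq\gamma}(s)$ for $\gamma$ large'' (the generic local homeomorphism property); and -- the real heart -- the recentering trick of replacing the honest centre $s$ by a fresh generic point $e$ of the tiny ball $B_{\geq\gamma}(s)$, which is precisely what keeps $s$ (equivalently $c$) generic over $B$. Finally, $D=\KOp$ is exceptional: it is not an SW-uniformity and $\acl$ there fails Exchange, so one reruns the argument using the ball structure of $K/\CO$ for the topological inputs and the additivity properties of dp-rank available for that sort, if necessary after transporting the situation through the quotient map $K\to K/\CO$.
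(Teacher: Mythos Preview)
Your approach differs substantially from the paper's, which is almost entirely by citation: for SW-uniformities it invokes \cite[Lemma 4.6]{SimWal} and \cite[Corollary 3.12]{HaHaPeVF}, for $\Gamma$ in the $p$-adic case it invokes Presburger cell decomposition, and for $\KOp$ it uses the additive group structure of $(K/\CO)^n$ to find a coset $s+H\subseteq S$ that projects injectively onto a ball. Your hands-on arguments for $\Gamma$ and $\bk$ via $\acl=\dcl$, and the recentering trick for $K$, are correct and more self-contained than the paper's citations; the recentering idea is genuinely nice.

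There are, however, two real gaps. First, your case split is incomplete: you handle $\Gamma$ and $\bk$ via $\acl=\dcl$, then say ``assume $D=K$'', but this omits $D=K/\CO$ in the $V$-minimal setting, which is an SW-uniformity where neither $\acl=\dcl$ nor Exchange is available (the paper explicitly notes that $\acl$ in $K/\CO$ need not satisfy Exchange in any setting). Your additivity computation $\dim_{\acl}(ce/A\gamma)=m+m$ requires Exchange, so the argument does not transfer. Second, and more seriously, your treatment of $\KOp$ is not a proof: you acknowledge that both the SW-uniformity topology and Exchange fail there, yet propose to ``rerun the argument'' using those very tools. The paper's proof in this case is genuinely different---it exploits that $S$ contains a generic vicinity which is a \emph{coset of a definable subgroup} of $(K/\CO)^n$ (via \cite[Lemma 3.30]{HaHaPeGps}), then uses the classification of such subgroups to project injectively onto a ball. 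This algebraic input is not recoverable from your topological/recentering strategy, and something like it is needed.
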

	\begin{proof}
		When $D$ is an SW-uniformity, this is \cite[Lemma 4.6]{SimWal} combined with \cite[Corollary 3.12]{HaHaPeVF}. So we only have to verify the lemma if $\CK$ is $p$-adically closed and $D=\Gamma$ or $D=K/\CO$. In the former case, this follows from cell-decomposition for Presburger Arithmetic, \cite{ClucPresburger}.
		
		Assume that $D=\KOp$. By \cite[Lemma 3.30]{HaHaPeGps}, there is $B\supseteq A$ and a $B$-definable coset $s+H\sub S$, $H\leq (K/\CO)^n$,  with $\dpr(s/B)=\dpr(S)$ (the ``minimal fibers'' assumption in that statement is verified by the aid of \cite[Lemma 3.26]{HaHaPeGps}).
		By  \cite[Lemma 3.10, Remark 3.11]{HaHaPeSemisimple}, there is a definable subgroup $H_1\sub H$, $\dpr(H_1)=\dpr(H)$, projecting injectively onto an open ball in $(K/\CO)^m$. Thus, $s+H_1$ projects injectively onto an open $W \sub (K/\CO)^m$. 
		Finally, by \cite[Proposition 3.8(1)]{HaHaPeGps}, we may replace $W$ by a ball $V\sub W$, so that $\dpr(s/[V]A)=m$. The pre-image of $V$ inside $S$ is the desired $S'$. 
	\end{proof}
	
	\begin{remark}\label{R: onto open for K/O padic}
		We note for later the use that in the case that $D=\KOp$  (the proof of) Lemma \ref{L:onto open} actually gives that $S'$ is a coset of a definable subgroup  whose image  under $\pi$ is a ball in $D^m$.
	\end{remark}
	
	Consequently, we get: 
	\begin{corollary}\label{C: has D-sets}
		Let $G$ be a definable group in $\CK$, $D$ an unstable distinguished sort, and $X\sub G$ an (almost) $D$-critical set of dp-rank $m$. Then for any generic $a\in X$ there exists an (almost) $D$-critical $Y\sub X$, $a\in Y$, witnessed by a definable function $g: Y\to D^m$, such that $a$ is generic in $Y$.  
	\end{corollary}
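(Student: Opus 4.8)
The plan is to combine Lemma \ref{L:onto open} with the definition of (almost) $D$-critical sets, together with a parameter-reduction (conjugacy/genericity) step to make sure the witnessing function has target dimension exactly $m = \dpr(X)$. First I would fix a generic $a \in X$, so $\dpr(a/[X]) = m$, and let $f\colon X \to D^N$ be a definable finite-to-one (resp. injective) function witnessing that $X$ is almost $D$-critical (resp. $D$-critical) with respect to $G$; here $N$ may be much larger than $m$. Set $s := f(a) \in S := f(X) \subseteq D^N$. Since $f$ is finite-to-one and dp-rank is invariant under definable finite-to-finite correspondences, $\dpr(S) = \dpr(X) = m$, and moreover $\dpr(s/[X]) = m$, so $s$ is $[X]$-generic in $S$ (and $S$ is $[X]$-definable).

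Next I would apply Lemma \ref{L:onto open} to $S \subseteq D^N$ with $A := [X]$ and the generic point $s$: this yields $B \supseteq [X]$, a $B$-definable $S' \subseteq S$ with $s \in S'$ and $\dpr(s/B) = m$, and a coordinate projection $\pi\colon D^N \to D^m$ restricting to an injection on $S'$. Now put $Y := f^{-1}(S') \cap X$ and $g := \pi \circ f \restriction Y$. Since $\dpr(s/B) = m$ and $s = f(a)$, and $f$ is finite-to-one, we get $\dpr(a/B) = m$, hence $a$ is generic in $Y$ (and in $X$) over $B$; in particular $\dpr(Y) = m = \adrk(G)$ (resp. $\drk(G)$), so $Y$ has not lost dp-rank. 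The composite $g = \pi \circ f$ is finite-to-one because $\pi\restriction S'$ is injective and $f$ is finite-to-one; it is injective in the $D$-critical case because then $f$ is injective. Thus $g\colon Y \to D^m$ witnesses that $Y$ is (almost) strongly internal to $D$ with target dimension exactly $m$, so $Y$ is (almost) $D$-critical with respect to $X$ (equivalently, with respect to $G$, since $\dpr(Y) = \adrk(G)$), as required. Note that in the almost case one should also check that $g$ realizes the correct $m_{\mathrm{crit}}$: since $Y \subseteq X$ and $X$ already witnesses $m_{\mathrm{crit}}$, shrinking to $Y$ cannot increase the fiber size, and if it decreased it we would contradict minimality of $m_{\mathrm{crit}}$, so the fiber size is preserved (alternatively one re-chooses $f$ from the start to be $m_{\mathrm{crit}}$-to-one).

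The main obstacle I anticipate is bookkeeping the parameters: Lemma \ref{L:onto open} enlarges the parameter set from $[X]$ to some $B$, and one must verify that after this enlargement the chosen point $a$ is still generic in $Y$ over $B$ (this is exactly the content of the clause $\dpr(s/B) = m$ in Lemma \ref{L:onto open}, transported back through the finite-to-one map $f$) and that $Y$ is still $B$-definable with the witnessing function $B$-definable — which matches the bold convention in the paper that "$D$-critical over $A$" carries the witnessing function along. The stable case $D = \bk$ (in the $V$-minimal setting) is excluded by hypothesis, so no separate argument is needed there; in all unstable cases Lemma \ref{L:onto open} applies uniformly, including the $\KOp$ case where Remark \ref{R: onto open for K/O padic} even gives $S'$ as a coset of a definable subgroup, which is more than we need here.
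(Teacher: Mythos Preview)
Your proof is correct and is exactly the argument the paper has in mind: the corollary is stated immediately after Lemma~\ref{L:onto open} with only the word ``Consequently'' as justification, and your unpacking --- push $X$ into $D^N$ via the witnessing map $f$, apply Lemma~\ref{L:onto open} to the image to get an injective coordinate projection onto $D^m$, pull back, and track genericity through the enlarged parameter set $B$ --- is precisely how that lemma is meant to be used. Your attention to the $m_{\mathrm{crit}}$ bookkeeping in the almost case and to the parameter set $B$ is appropriate and matches the paper's conventions.
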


	\section{$D$-balanced groups}		
	In the context of groups definable in $\CK$ some technical notions and concepts introduced in \cite{HaHaPeGps} can be circumvented or simplified and thus give a more direct construction of the infinitesimal subgroups. Let $D$ be a distinguished sort.

	We first introduce our main notion of interest for this section. A key result of \cite{HaHaPeGps} was that given a definable group $G$ in $\CK$ there exists a finite normal subgroup $H\trianglelefteq G$, such that $\drk(G/H)=\adrk(G/H)$, and in addition these equal to $\adrk(G)$. It turns out that this dp-rank equality is useful for much that follows, so we define: 
	
	\begin{definition}
		A definable group $G$ in $\CK$ is \emph{$D$-balanced} if $\drk(G)=\adrk(G)$. 
	\end{definition}
	
	Notice that if $\adrk(G)=0$ then $G$ is $D$-balanced.
	Also, by definition, in a $D$-balanced group which is locally almost strongly internal to $D$, every almost $D$-critical set is $D$-critical. We are now ready to prove an important result in this paper, namely that in all but one case, the finite group $H$ above can be chosen to be trivial.

	\begin{proposition}\label{P:lasi=lsi}
		Let $G$ be an infinite definable group in $\CK$,  $D$ a distinguished sort other than $\KOp$. Then $G$ is $D$-balanced.
	\end{proposition}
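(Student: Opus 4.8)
The plan is the following. The inequality $\drk(G)\le\adrk(G)$ is immediate, since a definable injection is in particular finite-to-one; so the whole content is the reverse inequality $\adrk(G)\le\drk(G)$. As $\adrk(G)$ is attained by some almost $D$-critical subset of $G$, it is enough to establish: given definable $X\subseteq G$ with $\dpr(X)=m=\adrk(G)$ and a definable finite-to-one $f\colon X\to D^n$, there is a definable $Y\subseteq G$ with $\dpr(Y)=m$ that is \emph{strongly} internal to $D$. First I would clean up $f$: its fibres are all finite, hence of bounded size by compactness, so $y\mapsto\lvert f^{-1}(y)\rvert$ is a bounded definable function on $f(X)$, and $f(X)$ (of dp-rank $m$) is a finite disjoint union of its level sets; replacing $X$ by the $f$-preimage of a level set of dp-rank $m$, I may assume that every fibre of $f$ has one and the same size $d$. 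If $d=1$ then $f$ is injective and we are done, so the task reduces to ruling out $d\ge2$.

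Suppose first $D$ is unstable, i.e. $D$ is $K$, $\Gamma$, $\bk$ in the $T$-convex case, or $K/\CO$ in the $V$-minimal or $T$-convex cases. I would invoke Corollary \ref{C: has D-sets} at a generic $a\in X$ to arrange $n=m$, that $a$ is generic in $X$, and (after the shrinking above) that the constant fibre size is still $d\ge2$. Then $s:=f(a)$ is generic in $f(X)\subseteq D^m$, so, since $D$ is an SW-uniformity (or the $p$-adic $\Gamma$) and $\dpr(f(X))=m$, the set $f(X)$ has nonempty interior with $s$ in its interior; by Lemma \ref{L:onto open} I may pass to a ball (or $\Gamma$-box) $V\ni s$ inside $f(X)$ with $\dpr(s/[V])=m$. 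The decisive step --- and what I expect to be the hard part --- is then to split the finite cover $f$ into definable single-valued branches over a sub-ball $V'\ni s$ of $V$, using the group operation of $G$ together with the Henselianity of these valued fields: by an argument of implicit-function/Hensel type, a definable finite cover of a ball in $K^m$ (resp.\ $(K/\CO)^m$) acquires definable sections over a small ball away from its branch locus, which the generic point $s$ avoids; one then transfers this to $f\colon f^{-1}(V)\to V$ by coordinatising $f^{-1}(V)$ near a fibre, e.g.\ after translating $f^{-1}(V)$ by the inverse of a point of $f^{-1}(s)$ so that the fibre over $s$ meets the identity. Composing $f$ with one such branch yields a definable $Y\subseteq G$ of dp-rank $m$ on which $f$ is injective, contradicting $d\ge2$.

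The remaining case, $D=\bk$ with $\CK$ $V$-minimal, I would treat separately, as $\bk$ is then the unique stable distinguished sort and the group plays no role: using that $\bk$ is a purely stably embedded algebraically closed field with elimination of imaginaries, one shows directly that any subset of $G$ almost strongly internal to $\bk$ is, after a harmless shrinking, already strongly internal to $\bk$ (the finite fibres of $f$ are coded in $\bk$ and can be quotiented out definably inside $\bk^{\eq}$), so that $X$ itself provides the required $Y$.

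Finally, I expect the main obstacle to be exactly the splitting of the finite cover in the unstable case, and to be precisely where the hypothesis $D\neq\KOp$ is used: over a $p$-adically closed field the $d$ points of a fibre of such a cover can fail to separate in \emph{any} infinite ball of $K/\CO$ --- this is the phenomenon behind the finite ``balls'' $B_{\ge\gamma}(\xi)$ for $\gamma<0$, which embed a copy of the residue field into $K/\CO$ --- so no definable section exists, and in fact the conclusion of the proposition genuinely fails there. This is also the reason the argument must be case-sensitive in the sort $D$.
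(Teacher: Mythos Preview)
Your approach has a genuine gap, and it is essentially the same gap in both the unstable and the stable cases: you are treating $X$ as if it already lived inside some power of $D$ (or of $K$), whereas $X$ is a subset of the interpretable group $G$, an arbitrary sort of $\CK^{\eq}$.

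In the unstable case, Hensel/implicit-function splitting of a finite cover applies to maps $X'\to V$ where the \emph{source} $X'$ already sits in some $K^k$ (so that one has local coordinates on it). Your step ``coordinatising $f^{-1}(V)$ near a fibre'' is exactly the statement that $f^{-1}(V)\subseteq X$ embeds definably in some $D^k$ --- precisely what you are trying to establish. Translating by a group element moves the fibre inside $G$, but $G$ remains an imaginary sort with no a~priori coordinates; nothing has been gained. Concretely, in ACVF (a $V$-minimal theory) there are no definable Skolem functions, so a finite-to-one map from an imaginary sort onto a ball in $K^m$ need not split over any sub-ball.

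In the stable case ($D=\bk$, $\CK$ $V$-minimal), elimination of imaginaries in $\bk$ codes imaginaries \emph{of $\bk$}. The fibres of $f$ are finite subsets of $X\subseteq G$, not of any power of $\bk$, so $\bk$-EI gives no handle on them. The quotient of $X$ by the fibre equivalence is already in definable bijection with $f(X)\subseteq \bk^n$; what you need is a \emph{section}, and EI says nothing about that.

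The paper's proof is entirely different and uses the group structure in an essential way. It first quotes earlier work to obtain a finite central $H\trianglelefteq G$ of order $n$ such that $G/H$ is $D$-balanced and carries a type-definable subgroup $\rho$ strongly internal to $D$ with $\dpr(\rho)=\adrk(G)$. One then \emph{lifts} $\rho$ back to $G$ via $\beta(x)=(\pi^{-1}(x))^n$, which is well-defined since $H$ is central of order $n$. The image has the correct dp-rank provided the $n$-th power map does not drop rank on $\rho$; this is checked sort-by-sort (torsion-freeness for $\Gamma$ and $K/\CO$, connectedness of algebraic groups for $\bk$). Finally, EI in $\Gamma$ or $\bk$ is applied on the \emph{source side}, where $\rho$ genuinely lives in $D^n$, to convert $\beta$ into a definable injection of its image into some $D^s$; for $D=K/\CO$ one uses instead that $\beta$ is already injective. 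The $n$-th power trick is what stands in for the section you were missing. (The cases $D=K$ and $\CK$ power-bounded $T$-convex are handled separately and more cheaply, the latter because $\dcl^{\eq}=\acl^{\eq}$ there.)
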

	\begin{proof}
		We may assume that $\adrk(G)>0$. By  \cite[Corollary 4.2]{HaHaPeSemisimple}, if $D=K$ (in all settings) then $G$ is $D$-balanced. If $\CK$ is power-bounded $T$-convex, so weakly o-minimal, then $\dcl^{\eq}=\acl^{\eq}$ (\cite[Lemma 2.16]{MelRCVFEOI}). It follows that $G$ is $D$-balanced, see \cite[Remark 7.5]{HaHaPeGps} for all $D$ in this setting.
		
		Thus, we are left with the following cases:  $\CK$ is $V$-minimal and $D\neq K$, or $D=\Gamma$ for $\CK$ a $p$-adically closed field. For what follows, note that if $G_1$ is a finite index subgroup of $G$ then $\drk(G)=\drk(G_1)$ and $\adrk(G)=\adrk(G_1)$; so there is no harm in passing to a finite index subgroup.  
		
		\begin{claim}\label{C:dim of G^n}
			After possibly passing to a finite index subgroup of $G$, there exist a finite central subgroup $H\trianglelefteq G$ and a type definable subgroup $\rho$ of $G/H$, strongly internal to $D$, such that 
			\begin{enumerate}
				\item $G/H$ is $D$-balanced and $\drk(G/H)=\dpr(\rho)$
				\item For any natural number $n$, $\dpr(\rho)=\dpr(\rho^{(n)})$ where $\rho^{(n)}(\widehat \CK)=\{a^n :a\in \rho(\widehat \CK)\}.$
			\end{enumerate}
		\end{claim}
		\begin{claimproof}
			We break the proof into two cases.
			
			\underline{Case 1:} Assume that $D=\bk$ and that $\CK$ is $V$-minimal. By %
			\cite[Proposition 6.2]{HaHaPeGps}) there exists a  finite normal subgroups $H\trianglelefteq G$ such that $G/H$ is $D$-balanced and an $\omega$-stable connected definable subgroup $\widehat \rho$ of $G$ containing $H$, satisfying that for $\rho=\widehat \rho/H$, $\dpr(\rho)=\dpr(G/H)$. Moreover, $\rho$ is a definable subgroup of $G/H$ which is definably isomorphic to a connected algebraic group in $\bk$, an algebraically closed field of characteristic $0$. 
			The group  $\widehat \rho$ contains $H$ and is connected, hence $H$ must be central in $\widehat\rho$.
			
			Since $\rho$ is an algebraic group in $ACF_0$, It is  well known that $\rho^{(n)}$ is Zariski dense in $\rho$ hence $\dpr(\rho^{(n)})=\dpr(\rho)$.\footnote{Since the claim is first order, it is enough to prover it in $\Cc$. It is then standard to verify that the differential of $x\mapsto x^n$ at $e$ is $nI$. By the Inverse Function Theorem $x\mapsto x^n$ is locally invertible in the Euclidean topology. Hence, the image has full dimension in $G$, and is therefore Zariski dense in $G$.}
			
			We are reduced to: \\
			
			\underline{Case 2:} $D=K/\CO$ in the $V$-minimal case or $D=\Gamma$ for $\CK$ $p$-adically closed or $V$-minimal.

			By \cite[Theorems 7.8(3,4,5a), 7.12(2,3,4a)]{HaHaPeGps}, there exists a  finite normal subgroup $H\trianglelefteq G$ such that $G/H$ is $D$-balanced and a type-definable subgroup $\rho$ of $G/H$ satisfying that $\dpr(\rho)=\dpr(G/H)$. 
			By replacing $G$ by $C_G(H)$ we arrive to a finite index subgroup of $G$. Since every almost $D$-critical set of $C_G(H)$ is also an almost $D$-critical set of $G$,  \cite[Proposition 4.35(1,2)]{HaHaPeGps} assures that $H\subseteq C_G(H)$ and that $C_G(H)/H$ is $D$-balanced as well. We may thus work in $C_G(H)$ instead of $G$ and with its suitable type-definable subgroup $\rho$. So we may assume that $H$ is central.

			By \cite[Theorems 7.8(3,4) 7.12(2)]{HaHaPeGps}, the type-definable subgroup $\rho$ of $G/H$ is definably isomorphic to a type-definable subgroup of $(D^m,+)$ for some natural number $m$. In particular, $\rho$ is commutative. Consider the group homomorphism $x\mapsto x^n$ on $\rho$. As $D$ is torsion-free in these cases, the map is an isomorphism onto its image, so $\dpr(\rho^{(n)}=\dpr(\rho)$.
		\end{claimproof}
		
		We remark for a later use in the proof, that if $D=K/\CO$ for  $V$-minimal  $\CK$, then the proof of the claim shows that $x\mapsto x^n$ is injective on $\rho$.

		We fix $H$ as in the claim, let $|H|=n$ and let $f:G\to G/H$ be the definable surjective quotient homomorphism. By \cite[Lemma 2.14(1)]{HaHaPeSemisimple}, $\adrk(G)=\adrk(G/H)=\drk(G/H)$.
		
		Let $\rho\vdash G/H$ be as provided by the claim 	and consider the definable map $\beta:\rho(\hat \CK)\to G(\hat \CK)$ given by $(f^{-1}(x))^n$.  It is well-defined since $H$ is central of order $n$.
		
		Letting $\widehat \rho=\beta_*\rho\vdash G$, the image of the type $\rho$ under $\beta$,  we get that $f_*\widehat \rho=\rho^{(n)}$. By the claim, $\dpr(\rho^{(n)})=\dpr(\rho)=\drk(G/H)$ and as  $f$ is finite-to-one, $\dpr(\widehat \rho)=\text{aD-rk}(G/H)=\adrk(G)$.
		
		Assume first that $D$ is not $K/\CO$, so either $D=\Gamma$ (hence ordered) or $D=\bk$ an algebraically closed field. Either way it eliminates imaginaries. By compactness, $\beta$ extends to a surjective definable map of definable sets, $\beta:X\to Y$, where $X\sub G/H$ is strongly internal to $D$, $\rho\vdash X$ and $\widehat \rho\vdash Y\sub G$. We can choose $Y$ so that $\drk(Y)=\drk(\widehat \rho)=\adrk(G)$. By the strong internality of  $X$, we may identify it with a definable subset of $D^n$ for some $n$. By elimination of imaginaries in $D$,  we can use $\beta$ to obtain a definable bijection between $Y$ and a subset of $D^s$, for some $s$, so $Y$ is strongly internal to $D$ and therefore, $\text{D-rk}(G)\geq \drk(Y)=\adrk(G)$. Since we always have $\drk(G)\leq \adrk(G)$, equality follows. 
		
		Now assume that $D=K/\CO$ (and $\CK$ is $V$-minimal). Since, as we noted,  $x\mapsto x^n$ is injective on $\rho$, $\beta$ is  injective on $\widehat \rho(\widehat \CK)$, indeed if $\beta(x)=\beta(y)$ then by applying $f$, we have $x^n=y^n$ so $x=y$. Thus, as above,  we obtain $X\sub G/H$, $\rho\vdash X$,  strongly internal to $D$ and a  bijective image $Y=\beta(X)\sub G$, and conclude, in this case too, that $\text{aD-rk}(G)=\text{D-rk}(G)$, as claimed. 
		
		This concludes the proof that $G$ is $D$-balanced in all cases except $D=\KOp$.
	\end{proof}

	The above proposition fails for  $D=\KOp$, as demonstrated by the following example: 
	\begin{example}\label{E:not balanced}
		In \cite[Example 3.33]{HaHaPeGps} we showed that if $F$ is a quadratic residual extension of $\mathbb Q_p$ and $C_p$ is a cyclic subgroup of $K/\CO$ of order $p$ then $G:=(K/\CO)/C_p$ satisfies $\adrk(G)=1>\drk(G)=0$. In particular, $G$ is not $K/\CO$-balanced. 
	\end{example}

	When $D$ is unstable, the infinitesimal type-definable subgroups were originally constructed for a class of definable groups called ``$D$-groups''. We will not focus on these groups in the present paper, but properties of $D$-groups are quoted in several references. Thus, for completeness, we remind the definition and verify that every $D$-balanced group with $\drk(G)>0$ is a $D$-group.
	
	\begin{definition}\label{D: D-group}
		Let $\CK$ be a valued field as above and $D$ an unstable distinguished sort.  
		\begin{enumerate}
			\item An $A$-definable group $G$ is an \emph{almost $D$-group} if it is locally almost strongly internal to $D$ and satisfies the following property: 
			
			\begin{align*}\tag{$\dag_a$}
				&  \text{For every $B$-definable $X_1, X_2\subseteq G$ almost strongly internal to $D$, with $X_2$} \\ & \text{ almost $D$-critical and $B\supseteq A$ and  for every $B$-generic $(g,h)  \in X_1\times X_2$, we have } \\ & \dpr(g/B,g\cdot h)=\dpr(g/B).
			\end{align*}
			
			\item An $A$-definable group $G$ is an \emph{$D$-group} if it is locally strongly internal to $D$ and satisfies the following property: 
			
			\begin{align*}\tag{$\dag$}
				&  \text{For every $B$-definable $X_1, X_2\subseteq G$  strongly internal to $D$, with $X_2$} \\ & \text{  $D$-critical and $B\supseteq A$ and  for every $B$-generic $(g,h)  \in X_1\times X_2$, we have } \\ & \dpr(g/B,g\cdot h)=\dpr(g/B).
			\end{align*}
			
		\end{enumerate}
	\end{definition}
	
	\begin{remark}
		The above two definitions are needed. We show in \cite{HaHaPeGps} that every definable group locally almost strongly internal to $D$ is an almost $D$-group. However, we give an example, when $D=\KOp$, of a definable group which is locally strongly internal to $D$ but not a $D$-group. See \cite[Example 4.30]{HaHaPeGps}.
	\end{remark}

	\begin{lemma}\label{L:D-balanced satisfied clause 2}
		Let $G$ be a definable group in $\CK$ and $D$ a distinguished sort. 
		\begin{enumerate}
			\item If $G$ is locally almost strongly internal to  $D$ then it satisfies Property $(\dag_a)$.
			
			\item If $G$ is a $D$-balanced group and $\drk(G)>0$ then it satisfies both properties $(\dag)$ and $(\dag_a)$.
		\end{enumerate}
	\end{lemma}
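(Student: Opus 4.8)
The plan is to deduce both clauses from results already available. For clause (1), recall from \cite{HaHaPeGps} (quoted in the remark preceding this lemma) that every definable group which is locally almost strongly internal to $D$ is an almost $D$-group; by Definition \ref{D: D-group}(1) this means precisely that such a group is locally almost strongly internal to $D$ \emph{and} satisfies $(\dag_a)$. Hence local almost strong internality by itself already yields $(\dag_a)$, which is exactly clause (1).

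For clause (2) the strategy is to reduce Property $(\dag)$ to Property $(\dag_a)$, using the hypothesis that $G$ is $D$-balanced. First, $\drk(G)>0$ forces $G$ to be locally strongly internal to $D$, hence locally almost strongly internal to $D$, so clause (1) already supplies $(\dag_a)$; it remains to derive $(\dag)$. So fix a parameter set $B\supseteq A$, $B$-definable subsets $X_1,X_2\subseteq G$ strongly internal to $D$ with $X_2$ being $D$-critical, and a $B$-generic $(g,h)\in X_1\times X_2$; we must show $\dpr(g/B,g\cdot h)=\dpr(g/B)$. Since an injective definable map is in particular finite-to-one, $X_1$ and $X_2$ are almost strongly internal to $D$. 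Moreover, by $D$-balancedness we have $\dpr(X_2)=\drk(G)=\adrk(G)$, and the existence of an injective (that is, $1$-to-$1$) definable map witnessing strong internality of $X_2$ shows that the constant $m_{\text{crit}}$ of Definition \ref{def: internalities}(7) equals $1$. Therefore $X_2$, being a subset of $G$ of dp-rank $\adrk(G)$ admitting a $1$-to-$1$ (hence $m_{\text{crit}}$-to-one) definable map into some $D^n$, is in fact almost $D$-critical. We may thus apply $(\dag_a)$ to $X_1,X_2$ and $(g,h)$ to conclude $\dpr(g/B,g\cdot h)=\dpr(g/B)$, which is precisely what $(\dag)$ requires for these data; since $B,X_1,X_2,(g,h)$ were arbitrary, $G$ satisfies $(\dag)$.

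I do not expect a genuine obstacle. The only input beyond unwinding definitions is the quoted result of \cite{HaHaPeGps} behind clause (1); the core of clause (2) is the passage from ``almost $D$-critical'' to ``$D$-critical'' in a $D$-balanced group, and this is exactly the content of the equality $\drk(G)=\adrk(G)$, which is nonvacuous here because $\drk(G)>0$ guarantees that strongly internal sets of maximal dp-rank exist. The one point worth stating carefully is why $m_{\text{crit}}=1$, as just indicated.
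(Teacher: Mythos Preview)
Your argument for clause (2) is essentially the paper's: once $(\dag_a)$ is known, the equality $\drk(G)=\adrk(G)$ forces every $D$-critical set to be almost $D$-critical (indeed $m_{\text{crit}}=1$), and $(\dag)$ then follows formally from $(\dag_a)$. That part is fine.

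There is, however, a genuine gap in your treatment of clause (1). The result you invoke from \cite{HaHaPeGps} (via the remark preceding the lemma) --- that a group locally almost strongly internal to $D$ is an almost $D$-group --- is stated and proved there only for \emph{unstable} distinguished sorts; indeed, Definition~\ref{D: D-group} itself is introduced under the hypothesis that $D$ is unstable. But the present lemma is formulated for an arbitrary distinguished sort, so you still owe an argument when $D=\bk$ in the $V$-minimal setting. The paper handles this stable case directly: using elimination of imaginaries in $\bk$, if $X_1,X_2\subseteq G$ are almost strongly internal to $\bk$ then so is $X_1\cdot X_2$; since $\bk$ is strongly minimal, dp-rank agrees with Morley rank on these sets, and the desired equality $\dpr(g/B,gh)=\dpr(g/B)$ follows from additivity of Morley rank. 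Without this (or an equivalent) argument your proof of (1) is incomplete.
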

	
	\begin{proof}
		(1) When $D$ is unstable, this is just \cite[Fact 4.25]{HaHaPeGps}. So assume that $D$ is stable, i.e.  $D=\bk$ in the $V$-minimal setting. If $X_1, X_2\sub G$ are almost strongly internal to $\bk$ then by elimination of imaginaries in $\bk$, so is $X_1\cdot X_2\sub G$ and all three sets have finite Morley rank. Since $\bk$ is a strongly minimal structure, dp-rank coincides with Morley rank, so Property $(\dag_a)$ now follows from additivity of Morley rank (=dp-rank) in this case.
		
		(2) Let $G$ be  a $D$-balanced  group with $\drk(G)>0$. Thus, it is almost strongly internal to $D$  so satisfies $(\dag_a)$ by (1). We show $(\dag)$ follows immediately.
		
		Since $G$ is $D$-balanced, every $D$-critical set is almost $D$-critical (and obviously every set which is strongly internal to $D$ is almost strongly internal to $D$), so $(\dag)$ is a direct consequence of $(\dag_a)$.
	\end{proof}
	
	\begin{remark}
		Actually, if $D\neq \KOp$ then every definable group locally strongly internal to $D$ satisfies $(\dag)$ (see \cite[Fact 4.25]{HaHaPeGps}). We will not require this result here.
	\end{remark}
	
	The class of $D$-groups is strictly larger than that of $D$-balanced groups of positive $\drk$. However, this only manifests itself when $D=\KOp$:
	\begin{example}\label{E:group but not balanced}
		Let $G$ be the group from Example \ref{E:not balanced}. It is easy to verify that $G\times K/\CO$ is a $(K/\CO)$-group, but since  $G$ itself is only an \textit{almost} $(K/\CO)$-group,  the direct product is not $K/\CO$-balanced. 
	\end{example}

	We shall soon show that for definable groups that are not $D$-balanced, there is a \emph{canonical} finite normal subgroup $H_G$ such $G/H_G$  is $D$-balanced. To see this, we first recall the following technical result, that will be useful throughout this paper:
	
	\begin{fact} \label{F:locally critical}
		Let $G$ be a definable group in $\CK$, locally almost strongly internal to a distinguished sort $D$. Let $X,Y\subseteq G$ be such that $X$ is almost strongly internal to $D$ and $Y$ almost $D$-critical. Then for every $(g,h)\in X\times Y$ generic, $\dpr(X\cap gh^{-1}Y)=\dpr(X)$.
		
		If, in addition,  $\dpr(X)=\adrk(G)$ then $hg^{-1}X\cap Y$ is almost $D$-critical (witnessed by the same function witnessing that $Y$ is almost $D$-critical).
	\end{fact}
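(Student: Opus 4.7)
The proof rests on property $(\dag_a)$ for $G$, which holds by Lemma \ref{L:D-balanced satisfied clause 2}(1). A preliminary observation is that, because the inverse map on $G$ is a definable bijection, $Y^{-1}$ is again almost $D$-critical: compose the witnessing function $f : Y \to D^n$ with inversion. Set $B := [X] \cup [Y]$, and let $(g,h) \in X \times Y$ be $B$-generic.

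For the first statement, I apply $(\dag_a)$ to the pair $(X, Y^{-1})$: since $(g, h^{-1})$ is $B$-generic in $X \times Y^{-1}$, the property gives $\dpr(g/B, gh^{-1}) = \dpr(g/B) = \dpr(X)$. The set $X \cap gh^{-1}Y$ is $(B\cup\{gh^{-1}\})$-definable and contains $g$ (because $gh^{-1} \cdot h = g$), so $\dpr(X \cap gh^{-1}Y) \geq \dpr(g/B, gh^{-1}) = \dpr(X)$; the reverse inequality is trivial.

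For the second statement, with the extra assumption $\dpr(X)=\adrk(G)=\dpr(Y)$, the naive symmetric approach (swap the roles of $X,Y$ in the first argument) fails, because $X$ need not itself be almost $D$-critical. I instead extract the required dp-rank equality from the first statement via additivity of algebraic dp-rank. Since $X,Y$ are almost strongly internal to $D$, the product map $X \times Y \to D^{n_X+n_Y}$ is finite-to-one, so $\dpr(g,h/B) = \dpr(X) + \dpr(Y) = 2\adrk(G)$. Because $(g,gh^{-1})$ and $(g,h)$ are $B$-interdefinable, additivity combined with the first statement yields
\[
\dpr(gh^{-1}/B) = \dpr(g,h/B) - \dpr(g/B,gh^{-1}) = \adrk(G).
\]
Applying additivity once more to the $B$-interdefinable pair $(h,hg^{-1})$, I obtain $\dpr(h/B, hg^{-1}) = \dpr(g,h/B) - \dpr(hg^{-1}/B) = \adrk(G)$. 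Since $h \in hg^{-1}X \cap Y$ and the latter set is $(B\cup\{hg^{-1}\})$-definable, $\dpr(hg^{-1}X\cap Y) \geq \adrk(G)$; combined with $\dpr(hg^{-1}X\cap Y) \leq \dpr(Y) = \adrk(G)$, equality holds. For the witnessing function: if $f_Y : Y \to D^n$ is $m_{\text{crit}}$-to-1 for $Y$, its restriction to $hg^{-1}X\cap Y$ has fibers of size at most $m_{\text{crit}}$, and minimality of $m_{\text{crit}}$ forces the locus of strictly smaller fibers to have dp-rank $<\adrk(G)$; removing it, $f_Y$ restricts to an $m_{\text{crit}}$-to-1 map on a subset of full dp-rank $\adrk(G)$, witnessing almost $D$-criticality.

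\textbf{Main obstacle.} The delicate point is the dp-rank accounting in the second part. A direct symmetric application of $(\dag_a)$ to the pair $(Y, X^{-1})$ is unavailable since the hypothesis $\dpr(X)=\adrk(G)$ does not provide the $m_{\text{crit}}$-fibre function needed for $X^{-1}$ to qualify as almost $D$-critical; circumventing this by deriving $\dpr(hg^{-1}/B)=\adrk(G)$ indirectly through the first statement and two applications of additivity is the crux of the argument.
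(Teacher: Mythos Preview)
Your argument for the first statement is correct and essentially matches the paper's: apply $(\dag_a)$ to $(X,Y^{-1})$ and observe that $g\in X\cap gh^{-1}Y$.

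For the second statement, however, your dp-rank accounting has a genuine gap. You write
\[
\dpr(gh^{-1}/B) = \dpr(g,h/B) - \dpr(g/B,gh^{-1}),
\]
but dp-rank is only \emph{sub}-additive in general: from $\dpr(g,gh^{-1}/B)\le \dpr(g/B,gh^{-1})+\dpr(gh^{-1}/B)$ you obtain only $\dpr(gh^{-1}/B)\ge \adrk(G)$, not equality. Your justification (``additivity of algebraic dp-rank'') applies to tuples inter-algebraic with elements of $D$, but $gh^{-1}$ is an element of $G$, not a priori inter-algebraic over $B$ with any tuple in $D^k$ of the right length; nothing prevents $\dpr(gh^{-1}/B)>\adrk(G)$. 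Without the upper bound, your second application of additivity only yields $\dpr(h/B,hg^{-1})\ge 2\adrk(G)-\dpr(hg^{-1}/B)$, which is useless.

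The paper bypasses all of this with a one-line observation you overlooked: left translation by $hg^{-1}$ gives
\[
hg^{-1}\bigl(X\cap gh^{-1}Y\bigr)=hg^{-1}X\cap Y,
\]
so $\dpr(hg^{-1}X\cap Y)=\dpr(X\cap gh^{-1}Y)=\dpr(X)=\adrk(G)$ immediately from the first statement. Since $hg^{-1}X\cap Y\subseteq Y$, the restriction of the witnessing function $f_Y$ is at most $m_{\text{crit}}$-to-one, which is what ``$m_{\text{crit}}$-to-one'' means here; your final manoeuvre of excising small-fibre loci is unnecessary (and in any case would only show that a proper subset is almost $D$-critical, not $hg^{-1}X\cap Y$ itself).
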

	\begin{proof}
		Assume that everything is definable over some parameter set $A$. By Lemma \ref{L:D-balanced satisfied clause 2}, it is easy to see that  $\dpr(g/A,g\cdot h^{-1})=\dpr(X)$. Since $g\in X\cap gh^{-1}Y$ we get that $\dpr(X\cap gh^{-1}Y)=\dpr(X)$.            The last clause follows, since $hg^{-1}X\cap Y$ is a subset of $Y$ and its dp-rank is equal to $\dpr(X)$.
	\end{proof}
	
	The following is a direct consequence.
	
	\begin{corollary}\label{C:corollary of famous usefull lemma}
		Let $G$ be a definable group in $\CK$ locally almost strongly internal to a distinguished sort $D$. For any  $X\subseteq G$  almost strongly internal to $D$ with $\dpr(X)=\adrk(G)$ there exists a definable $Y\sub X$ that is almost $D$-critical.
	\end{corollary}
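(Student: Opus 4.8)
The plan is to deduce this directly from Fact \ref{F:locally critical} by a translation argument, so the corollary really is a formality once that fact is in place. First I would unwind the definition of the critical constant: since $G$ is assumed locally almost strongly internal to $D$ we have $\adrk(G)\ge 1$, and by the very definition of $m_{\mathrm{crit}}$ there is a definable $Z\sub G$ with $\dpr(Z)=\adrk(G)$ carrying a definable $m_{\mathrm{crit}}$-to-one map $f$ into some $D^{n}$; in other words, $Z$ is almost $D$-critical with respect to $G$. Fix a parameter set $A$ over which $X$, $Z$, a function witnessing the almost strong internality of $X$, and $f$ are all defined.

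Next I would apply Fact \ref{F:locally critical} with the given $X$ playing the role of ``$X$'' and with $Z$ playing the role of ``$Y$''. Choosing a generic pair $(g,h)\in X\times Z$ over $A$, the fact yields $\dpr(X\cap gh^{-1}Z)=\dpr(X)=\adrk(G)$ and, because $\dpr(X)=\adrk(G)$, it also yields that $hg^{-1}X\cap Z$ is almost $D$-critical, witnessed by the same function $f$ that witnesses the almost $D$-criticality of $Z$ (so $f\rest(hg^{-1}X\cap Z)$ is $m_{\mathrm{crit}}$-to-one).

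Then I would set $Y:=X\cap gh^{-1}Z$, which is a definable subset of $X$ (over $A$ together with $g,h$). Left translation by $hg^{-1}$ restricts to a definable bijection $Y\to hg^{-1}X\cap Z$, and composing this bijection with $f\rest(hg^{-1}X\cap Z)$ gives a definable $m_{\mathrm{crit}}$-to-one function from $Y$ into $D^{n}$. Since moreover $\dpr(Y)=\adrk(G)$, the set $Y\sub X$ is almost $D$-critical with respect to $G$, as required.

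I do not expect a genuine obstacle: the only points needing care are that the generic pair exists (clear, as $X\times Z$ is definable of finite dp-rank) and that ``almost $D$-critical'' transfers across the definable bijection given by left translation — immediate, since dp-rank and all fibre cardinalities are preserved by definable bijections. The real content has already been isolated in Fact \ref{F:locally critical}.
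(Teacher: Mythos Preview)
Your proposal is correct and follows exactly the approach the paper intends: the paper records the corollary as ``a direct consequence'' of Fact~\ref{F:locally critical} without further elaboration, and your translation argument (passing from the almost $D$-critical set $hg^{-1}X\cap Z$ back to $Y=X\cap gh^{-1}Z$ via the left-translation bijection) is precisely the unwinding of that direct consequence.
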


	For the following, if $f:X\to Y$ is a definable function and $a\in X$, we set $[a]_f:=f^{-1}(f(a))$. Recall that we denote by $m_{\text{crit}}$ the fibre size of any function witnessing almost $D$-criticality, if $\adrk$ is positive. Otherwise $m_{\text{crit}}=0$. 

	\begin{proposition}\label{Canonical H}
		Let $G$ be a definable group in $\CK$,  $D=\KOp$. Then, there exists a normal abelian subgroup $H_G\trianglelefteq G$, of cardinality $m_{\text{crit}}$, such that $G/H_G$ is $D$ balanced, and $H_G\sub H$ for any finite normal $H$ such that $G/H$ is $D$-balanced. 
	\end{proposition}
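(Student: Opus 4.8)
We may assume $\adrk(G)>0$, so that $m_{\mathrm{crit}}\ge 1$; otherwise $G$ is already $D$-balanced and $H_G$ is trivial. Write $\mathcal H$ for the collection of finite normal subgroups $H\trianglelefteq G$ with $G/H$ $D$-balanced, and $\pi_H\colon G\to G/H$ for the quotient map. By the basic structure theory of \cite{HaHaPeGps} (existence of a $D$-balanced quotient), $\mathcal H\neq\emptyset$, and by \cite[Theorem 7.12]{HaHaPeGps} together with Remark \ref{R: onto open for K/O padic} we may, for each $H\in\mathcal H$, fix a type-definable subgroup $\rho_H\le G/H$ with $\dpr(\rho_H)=\adrk(G)$ that is definably isomorphic to a type-definable subgroup of $(D^m,+)$; thus $\widehat\rho_H:=\pi_H^{-1}(\rho_H)\le G$ is type-definable, contains $H$, and has abelian quotient $\widehat\rho_H/H$. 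The plan is to extract a subgroup $N\le G$ of cardinality exactly $m_{\mathrm{crit}}$ from an almost $D$-critical subset of $G$, and then to show directly that $N$ lies inside every member of $\mathcal H$; this forces $N=\bigcap\mathcal H=H_G$.

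\textbf{Lower bound.} For every $H\in\mathcal H$ one has $|H|\ge m_{\mathrm{crit}}$: choosing $\bar X\subseteq G/H$ strongly internal to $D$ with $\dpr(\bar X)=\adrk(G)$ (this exists because $G/H$ is $D$-balanced), the set $\pi_H^{-1}(\bar X)\subseteq G$ is a union of $H$-cosets of dp-rank $\adrk(G)$ which maps $|H|$-to-one into $D^n$; minimality in the definition of $m_{\mathrm{crit}}$ gives the bound. Consequently, any $N\in\mathcal H$ with $|N|=m_{\mathrm{crit}}$ is of minimal cardinality in $\mathcal H$.

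\textbf{Construction of $N$.} Fix an almost $D$-critical $X\subseteq G$ of dp-rank $r:=\adrk(G)$, witnessed by a definable $f\colon X\to D^n$ all of whose fibres have the minimal size $m_{\mathrm{crit}}$, together with (Corollary \ref{C: has D-sets}) a point $a_0\in X$ which is generic in $X$. Using the coset structure of $D$-critical subsets of $(D^m,+)$ in the $\KOp$ setting (Lemma \ref{L:onto open}, Remark \ref{R: onto open for K/O padic} and \cite[Lemma 3.30]{HaHaPeGps}), one replaces $f$, after shrinking $X$ around $a_0$, by a function whose fibration on $X$ is invariant under right translation by a definable subgroup $N\le G$; then $[a_0]_f=a_0N$, so $|N|=m_{\mathrm{crit}}$, and $f$ descends to an injection $X/N\hookrightarrow D^n$, whence $G/N$ is $D$-balanced, i.e. $N\in\mathcal H$. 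It remains to check that $N$ is \emph{normal} in $G$ — which follows from the fact that the fibration is $G$-invariant in the generic sense of Fact \ref{F:locally critical}, so that $N=[a_0]_f\,a_0^{-1}$ is independent of the generic $a_0$ and invariant under conjugation — and that $N$ is \emph{abelian}, which is built into the construction (the corrected fibres are cosets of $N$ sitting inside a set on which the relevant action factors through the abelian group $\widehat\rho_N/N\hookrightarrow(D^m,+)$, taking $H=N$). Together with the lower bound, $|N|=m_{\mathrm{crit}}$.

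\textbf{Minimality and conclusion.} Let $H\in\mathcal H$ be arbitrary. Since $G/(N\cap H)$ embeds in $G/N\times G/H$, one uses the injection $X/N\hookrightarrow D^n$ from the previous step together with Fact \ref{F:locally critical} applied inside $G/H$ to show that the image of $X$ in $G/(N\cap H)$ is strongly internal to $D$ and of dp-rank $r$; hence $G/(N\cap H)$ is $D$-balanced, so $N\cap H\in\mathcal H$, and the lower bound gives $|N\cap H|\ge m_{\mathrm{crit}}=|N|$, i.e. $N\le H$. Thus $N$ is contained in every member of $\mathcal H$, and $H_G:=N$ is the required normal abelian subgroup of cardinality $m_{\mathrm{crit}}$ with $G/H_G$ $D$-balanced. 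The main obstacle is the construction step: passing from the arbitrary fibre-minimal $f$ to one whose fibration is right-$N$-invariant for a \emph{definable} subgroup $N$ of the correct size, and verifying its normality in $G$ — this is precisely where the special features of $D=\KOp$ are used. A second delicate point is the claim in the last step that $G/(N\cap H)$ is $D$-balanced, since quotients of $D$-balanced groups by finite subgroups need not be $D$-balanced (Example \ref{E:not balanced}); one must exploit the explicit $D$-internal structure of $X/N$ rather than balancedness of $G/H$ alone.
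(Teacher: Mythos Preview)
Your proposal has the right architecture --- produce a candidate $N\in\mathcal H$ of size $m_{\mathrm{crit}}$, then show it lies in every $H\in\mathcal H$ --- and your lower bound is correct. But the construction step, which you yourself flag as ``the main obstacle'', is a genuine gap, not a sketch. The results you cite (Lemma~\ref{L:onto open}, Remark~\ref{R: onto open for K/O padic}, \cite[Lemma 3.30]{HaHaPeGps}) concern the coset structure of definable subsets \emph{of $D^n$}; they let you assume the image $f(X)$ is a coset in $(K/\CO)^n$, but say nothing about why the \emph{fibres} of $f$ inside $G$ should be translates of a single finite subgroup $N\le G$, let alone a normal one. That passage --- from ``$f$ has constant fibre size $m_{\mathrm{crit}}$'' to ``the fibres are cosets of a fixed normal subgroup'' --- is exactly the content of \cite[Proposition 4.35]{HaHaPeGps}, and it requires a separate argument comparing fibres at independent generics and invoking minimality of $m_{\mathrm{crit}}$. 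The paper does not reprove any of this here: it quotes \cite[Proposition 4.35]{HaHaPeGps} wholesale for existence, normality, abelianness, and independence of the choice of $X$, and the only new content of the present proof is the minimality clause.

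For minimality, your route via ``$G/(N\cap H)$ is $D$-balanced'' can be made to work, but it is really the paper's argument in different clothing, and the paper's presentation is more direct. The paper pulls back $D$-critical sets $Y_i\subseteq G/H_i$ (for $H_1=H_G$, $H_2=H$) to $G$, translates via Fact~\ref{F:locally critical} so that the preimages agree on a common almost $D$-critical $X$, and considers the paired map $h=(f_1\circ\pi_1,\,f_2\circ\pi_2)\colon X\to D^{m_1}\times D^{m_2}$. Its fibre over $a$ is $aH_G\cap aH$; minimality of $m_{\mathrm{crit}}$ forces $|aH_G\cap aH|\ge m_{\mathrm{crit}}=|aH_G|$, hence $H_G\subseteq H$. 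This is your lower-bound idea applied directly to the map $h$ on $X$, avoiding any need to verify that $N\cap H\in\mathcal H$ and sidestepping the delicate point you raise about quotients of $D$-balanced groups.
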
 
	\begin{proof} If $\adrk(G)=0$ then $H_G=\{e\}$, so assume that $\adrk(G)>0$.
		
		In \cite[Proposition 4.35]{HaHaPeGps}, we associated with every almost $D$-critical definable subset $X\sub G$, a normal, finite, abelian subgroup $H_X\sub G$, such that $|H_X|=m_{\text{crit}}$ is the minimal size of a generic fiber of (any) function $f:X\to D^n$ 
		witnessing almost strong internality in the sense of Definition \ref{def: internalities}(7).  
		Moreover, we showed there that $H_G:=H_X$ does not depend on the choice of $X$,  that $\adrk(G/H_G)=\drk(G/H_G)=\drk(G)$ and that $H_G$ is abelian. 
		
		Thus, it is left to prove the minimality statement. For the sake of the proof we let $H_1=H_G$,  $\pi_1:G\to G/H_1$  the quotient map,     and let $H_2$ be any other finite normal subgroup such that $G/H_2$ is $D$-balanced, with quotient map $\pi_2: G\to G/H_2$. We show that $H_1\sub H_2$.
		
		For $i=1,2$ let $Y_i\sub G/H_i$ be $D$-critical sets, witnessed by $f_i:Y_i\to D^{m_i}$ and let $X_i:=\pi_i^{-1}(Y_i)$.  By the choice of $H_G$ above, $X_1$ is almost $D$-critical,  witnessed by $f_1\circ \pi_1$ (namely, $k=|H_G|$, which is the size of the fibers of $f_1\circ \pi_1$). Applying a translation, if necessary, we may assume that $X_1=X_2:=X$ (Fact \ref{F:locally critical}). 
		
		Let $g_i:=f_i\circ \pi_i$ and let $h=(g_1,g_2)$. Then for all $a\in X$ we have $[a]_h=[a]_{g_1}\cap [a]_{g_2}=aH_1\cap aH_2$. 
		By the minimality of $k$ as above, $[a]_h=[a]_{g_1}=aH_1$. So $aH_1\sub aH_2$ and hence $H_1\sub H_2$. Thus, $H_G:=H_1$ is the desired finite normal subgroup.
	\end{proof}
	
	Let us clarify that \cite[Proposition 4.35]{HaHaPeGps} claims the existence of a finite normal subgroup $H$ such that $G/H$ is $D$-balanced, as well as several invariance properties satisfied by $H$. While the proof of that proposition provides us precisely with the group $H_G$ of Proposition \ref{Canonical H} above, the statement of \cite[Proposition 4.35]{HaHaPeGps} does not quite nail the uniqueness of $H_G$. \\
	
	For the sake of uniformity, we define: 
	\begin{notation}\label{N: H_G}
		Let $G$ be a definable group. We let $H_G\trianglelefteq G$ be the unique minimal finite normal subgroup for which $G/H_G$ is $D$-balanced.
		
		By Proposition \ref{P:lasi=lsi}, if $D\neq \KOp$  then $H_G=\{e\}$. In general, by Proposition \ref{Canonical H}, whenever $G$ is $D$-balanced, $H_G=\{e\}$.
	\end{notation}

	We note the following, implicit in \cite[Proposition 4.35]{HaHaPeGps}.
	
	\begin{fact}\label{F: H contained if same adrk}
		Let $G$ be a definable group in $\CK$ and $G_1\leq G$ a definable subgroup with $\adrk(G_1)=\adrk(G)>0$.  Then $H_G\subseteq G_1$.
	\end{fact}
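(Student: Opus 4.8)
The plan is to deduce the containment from the fact that the canonical finite normal subgroup $H_G$ is already ``visible'' inside any almost $D$-critical subset of $G$; this is built into the construction of $H_G$ in \cite[Proposition 4.35]{HaHaPeGps}. The hypothesis gives $\adrk(G)=\adrk(G_1)>0$, so $G_1$ is locally almost strongly internal to $D$. First I would pick a definable $X\subseteq G_1$, almost strongly internal to $D$, with $\dpr(X)=\adrk(G_1)=\adrk(G)$. Viewing $X$ as a subset of $G$, which has the same almost $D$-rank, Corollary \ref{C:corollary of famous usefull lemma} applies to $G$ and produces a definable $Z\subseteq X$ that is almost $D$-critical with respect to $G$; crucially $Z\subseteq X\subseteq G_1$. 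Fix a finite-to-one $f\colon Z\to D^n$ witnessing this, with generic fibres of the minimal size $m_{\mathrm{crit}}=|H_G|$.

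The second step is to read off from \cite[Proposition 4.35]{HaHaPeGps} that the generic fibres of $f$ are cosets of $H_G$ — that is, we may assume $f$ is chosen so that, for a generic $a\in Z$, $f^{-1}(f(a))=aH_G$. (This is the same mechanism producing the identity $[a]_{g_1}=aH_1$ in the proof of Proposition \ref{Canonical H}: a minimal witness of almost $D$-criticality has fibres that generically refine, by a group-configuration/stabilizer argument, to cosets of the canonical group.) Granting this, $H_G=a^{-1}\cdot f^{-1}(f(a))\subseteq a^{-1}Z\subseteq Z^{-1}Z$, and since $Z\subseteq G_1$ is contained in a subgroup we conclude $H_G\subseteq Z^{-1}Z\subseteq G_1$, which is the assertion. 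Everything besides this is routine bookkeeping with dp-rank and Corollary \ref{C:corollary of famous usefull lemma}.

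The main obstacle is exactly this second step. The published statement of \cite[Proposition 4.35]{HaHaPeGps} records the existence of $H_G$, its order $m_{\mathrm{crit}}$, its normality and commutativity, and the invariance ``$H_G=H_X$ for every almost $D$-critical $X$'', but it does not explicitly phrase the ``localization'' $H_G\subseteq X^{-1}X$ (equivalently, that a minimal witness of almost $D$-criticality of $X$ has generically $H_G$-coset fibres). So I would extract this from the proof of that proposition, where the argument producing $H_X$ manifestly realizes it as a set of quotients of elements of $X$ and hence inside $X^{-1}X$. (One could alternatively try to argue through $H_{G_1}$, showing that $G/H_{G_1}$ is $D$-balanced so that minimality of $H_G$ forces $H_G\subseteq H_{G_1}\subseteq G_1$, but this requires the extra — and not obviously available — fact that $H_{G_1}$ is normal in $G$, so the first route looks cleaner.)
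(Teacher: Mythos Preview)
Your approach is correct and essentially the same as the paper's; the paper is just a bit more direct in its bookkeeping. It starts with an almost $D$-critical $X\subseteq G_1$ (automatically almost $D$-critical in $G$ since $\adrk(G_1)=\adrk(G)$), and then invokes \cite[Proposition 4.35(1)]{HaHaPeGps} in the form: there is a finite subgroup $H_X$ and an $H_X$-invariant $X'\subseteq X$ of full dp-rank with $X'/H_X$ strongly internal to $D$. The $H_X$-invariance of $X'$ immediately gives $H_X\subseteq (X')^{-1}X'\subseteq G_1$, and \cite[Proposition 4.35(2)]{HaHaPeGps} then identifies $H_X=H_G$. So the ``localization'' $H_G\subseteq X^{-1}X$ that you flag as the main obstacle is in fact available from the \emph{statement} of \cite[Proposition 4.35(1)]{HaHaPeGps} (via the set $X'$), not only from its proof; you do not need to open the black box. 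Your detour through Corollary~\ref{C:corollary of famous usefull lemma} and the explicit coset-fiber reading are fine but unnecessary.
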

	\begin{proof}
		As noted above, the only non-trivial case is when  $D=\KOp$ (in particular, $D$ is unstable).
		
		By assumption, any almost $D$-critical subset of $G_1$ is an almost $D$-critical subset of $G$. Fix such a set $X\sub G_1$. By \cite[Proposition 4.35(1)]{HaHaPeGps} there exists a finite subgroup $H_X\le G_1$ and  $X'\sub X$ of full dp-rank in $X$, such that $X'/H_X$ is strongly internal to $D$. Since $X$ is also almost $D$-critical in $G$ then $H_X=H_G$, by  \cite[Proposition 4.35(2)]{HaHaPeGps}. 
	\end{proof}

	\section{Infinitesimal subgroups} \label{Ss: Infi spgps}
	We next proceed to describing infinitesimal subgroups $\nu_D(G)$  for $D$-balanced groups. For this class of groups the  various descriptions we offer below yield the same group we constructed in \cite{HaHaPeGps}. We then extend the construction to arbitrary definable groups. As in the previous section we are working in the same saturated enough valued field $\CK$.  
	
	\subsection{Infinitesimal subgroups of $D$-balanced groups}
	In \cite{HaHaPeGps} we defined, for a distinguished sort $D$ and a  $D$-group $G$, the infinitesimal subgroup $\nu_D(G)$.  In this section we recall the construction for  $D$-balanced groups and provide (Proposition \ref{P: nu withough generics} below) a geometrically cleaner characterization of these subgroups. We then study some basic properties of $\nu_D(G)$. First recall: 	
	
	\begin{definition} 
		Let $X$ be an $A$-definable set in $\CK$, and let $a\in X$ be $A$-generic. For a set of parameters $B\supseteq A$,  a $B$-definable set $U\sub X$ is a \emph{$B$-generic vicinity of $a$ in $X$} if $a\in U$, and $\dpr(a/B)=\dpr(X)$ (in particular, $\dpr(U)=\dpr(X)$).
	\end{definition}

	\begin{definition}\label{D:vicinities}
		Let $D$ be an unstable distinguished sort in $\CK$, $X$ an $A$-definable set admitting an $A$-definable injection into $D^n$ with $n=\dpr(X)$, and let $a\in X$ be $A$-generic.
		
		The \emph{infinitesimal vicinity of $a$ in $X$}, denoted  $\nu_X(a)$, is the partial type consisting of all $B$-definable  $U\sub X$, such that $a\in U$ and $\dpr(a/B)=\dpr(X)$, as $B$ varies over all parameter subsets of $\CK$ (and $B$ not necessarily containing $A$).		
	\end{definition}

	A clarifying remark is in order.
	
	\begin{remark}
		The current definition is taken from \cite{HaHaPeGpsCor}, and is slightly more general than the original definition given in \cite{HaHaPeGps}. In \cite[Definition 5.4]{HaHaPeGps} infinitesimal vicinities were defined only for a subclass of $D$-critical sets called $D$-sets (\cite[Definition 4.16]{HaHaPeGps}). That definition, applicable only for unstable $D$, required the image of $X$ in $D^n$ to be of particular kind. When $n=\dpr(X)$ this requirement trivially holds. It follows immediately from the definition of $D$-sets and   Corollary \ref{C: has D-sets} that if $X$ is $D$-critical and $a$ is generic in $X$, then there is a generic vicinity $Y$ of $a$ in $X$ which is a $D$-set, and for such $Y$ both definitions of $\nu_Y(a)$  coincide. As we shall shortly see  in the Fact \ref{F: vicinities}, we also have $\nu_X(a)\equiv \nu_Y(a)$.
	\end{remark}

	\textbf{In view of the above remark, though $D$-sets are one of the main objects of interest in \cite{HaHaPeGps}, in the present paper we avoid any direct reference to them. Throughout, we systematically replace those with $D$-critical sets.}

	\begin{fact}\label{F: vicinities}
		Let $D$, $X$ and $a\in X$ be as in Definition \ref{D:vicinities}.
		\begin{enumerate}
			\item The definable sets in $\nu_X(a)$ form a filter-base.
			\item For any generic vicinity  $Y\subseteq X$ of $a$ in $X$, $\nu_X(a)\equiv \nu_Y(a)$.
			\item $\nu_X(a)$ is preserved under $A$-definable bijections and more generally: 
			
			If $X_1, X_2$ are $A$-definable,  as in Definition \ref{D:vicinities}, then for any $A$-definable function $f:X_1\to X_2$ and an $A$-generic $a\in X_1$,  and $A$-generic $f(a)\in X_2$, we have  $f_*(\nu_{X_1}(a))\vdash \nu_{X_2}(f(a))$.
			
			\item In Definition \ref{D:vicinities}, restricting to $B$-definable subsets $U\sub X$ for  $B\supseteq A$, does not affect the resulting partial type $\nu_X(a)$.
		\end{enumerate}
	\end{fact}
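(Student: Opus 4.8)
The plan is to derive all four clauses from a single geometric mechanism. Fix $D$ and $X$ as in Definition \ref{D:vicinities}, so $X\subseteq D^n$ with $n=\dpr(X)$. First I would isolate the following preliminary claim: if $a\in X$ is generic over $B$ and $U\subseteq X$ is $B$-definable with $a\in U$, then there is a ``basic neighbourhood'' $W\ni a$ with $W\subseteq U$, defined over some $B'\supseteq A$ with $\dpr(a/B')=\dpr(X)$; in particular $W\in\nu_X(a)$. Here ``basic neighbourhood'' means a product of balls when $D$ is an SW-uniformity, a $\Gamma$-box when $D=\Gamma$, and — via Remark \ref{R: onto open for K/O padic} — the $\pi$-preimage of a ball inside a coset of a definable subgroup when $D=\KOp$. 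The proof of the claim has only two ingredients: first, since $\dpr(a/B)=\dpr(U)=\dpr(X)$ and the frontier of $U$ is $B$-definable of strictly smaller $\acl$-dimension (using $t$-minimality when $D$ is an SW-uniformity, cell decomposition when $D=\Gamma$, and the coset picture of Remark \ref{R: onto open for K/O padic} when $D=\KOp$), the point $a$ lies in $\inte(U)$, so $U$ contains a basic neighbourhood $W_0\ni a$; second, shrinking $W_0$ by choosing its radii (resp. box-endpoints) generic over $Aa$ and invoking additivity of $\dpr$ one obtains $W\subseteq W_0$ with the displayed rank equality over $B':=A\cup[W]$. Granting this, clause (4) is immediate — every $U\in\nu_X(a)$ contains some $W\in\nu_X(a)$ defined over a set $\supseteq A$, so that subfamily is cofinal and generates the same partial type. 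Clause (1) is equally immediate: given $U_1,U_2\in\nu_X(a)$ defined over $B_1,B_2$ with $\dpr(a/B_i)=\dpr(X)$, one has $a\in\inte(U_1)\cap\inte(U_2)=\inte(U_1\cap U_2)$, and the claim applied to the $B_1B_2$-definable set $U_1\cap U_2$ produces $W\in\nu_X(a)$ with $W\subseteq U_1\cap U_2$; hence $\nu_X(a)$ is downward directed. (Note $U_1\cap U_2$ itself need not lie in $\nu_X(a)$, since its canonical code can be interalgebraic with $a$ — this is exactly why the shrinking step is needed, and why one cannot argue by a naive intersection.)

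For clause (2), let $Y\subseteq X$ be a generic vicinity of $a$, say $C$-definable with $\dpr(a/C)=\dpr(X)$. Then $Y$ also embeds into $D^n$ with $n=\dpr(Y)=\dpr(X)$, so $\nu_Y(a)$ is defined and $a$ is $C$-generic in $Y$. Any $U\in\nu_Y(a)$ is a subset of $X$ witnessed by the same parameters and rank condition, hence $U\in\nu_X(a)$; so $\nu_X(a)\vdash\nu_Y(a)$. Conversely $Y\in\nu_X(a)$, so by clause (1) the sets $U\in\nu_X(a)$ with $U\subseteq Y$ are cofinal in $\nu_X(a)$; each such $U$ is a subset of $Y$ satisfying the rank condition and $\dpr(Y)=\dpr(X)$, hence lies in $\nu_Y(a)$. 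Therefore $\nu_Y(a)\vdash\nu_X(a)$, giving $\nu_X(a)\equiv\nu_Y(a)$.

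For clause (3), write $b=f(a)$ and fix $V\in\nu_{X_2}(f(a))$, say $C$-definable with $\dpr(b/C)=\dpr(X_2)$. Since $a$ is $A$-generic in $X_1$ and the discontinuity locus of the $A$-definable map $f$ is $A$-definable of $\acl$-dimension $<\dpr(X_1)$, the function $f$ is continuous at $a$; as $b\in\inte(V)$ this yields $a\in\inte\!\big(f^{-1}(V)\cap X_1\big)$. Applying the preliminary claim to this $CA$-definable set produces $U\in\nu_{X_1}(a)$ with $U\subseteq f^{-1}(V)$, so $f(U)\subseteq V$; since $V$ was arbitrary, $f_*(\nu_{X_1}(a))\vdash\nu_{X_2}(f(a))$. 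When $f$ is an $A$-definable bijection, $b$ is $A$-generic in $X_2$ and $\dpr(X_1)=\dpr(X_2)$, so applying the same to $f^{-1}$ gives the reverse implication and hence $f_*(\nu_{X_1}(a))\equiv\nu_{X_2}(f(a))$. Clauses (2) and (3) then also transfer the coincidence between this definition of $\nu_X(a)$ and the original $D$-set definition of \cite{HaHaPeGps}, as noted in the remark preceding the statement.

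The hard part will be the preliminary claim underlying clause (1), and within it the case $D=\KOp$: there $K/\CO$ carries no usable topology and no ball structure, so one must first pass, via Lemma \ref{L:onto open} and Remark \ref{R: onto open for K/O padic}, to a coset of a definable subgroup projecting bijectively onto a ball in $(K/\CO)^m$, and run the interior/shrinking argument inside that ball; keeping track that this replacement does not disturb genericity of $a$ is the delicate bookkeeping. The remaining cases ($D=K$, $D$ non-$p$-adic $K/\CO$, and $D=\Gamma$) are routine once one has, respectively, $t$-minimality and cell decomposition, together with sub-additivity and additivity of $\dpr$ for the rank computations.
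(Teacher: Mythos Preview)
The paper does not give a proof here; it cites \cite[Lemma 5.6, Lemma 5.8, Remark 5.5]{HaHaPeGps}, where the statements are established in the abstract vicinic framework, and derives (2) as an immediate consequence of (1). Your approach is a concrete topological unpacking of that machinery, and for the SW-uniformity cases it is correct and essentially complete. Two remarks are in order.

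In clause (1), the phrase ``the claim applied to the $B_1B_2$-definable set $U_1\cap U_2$'' is misleading: your claim as stated requires $a$ generic over the parameter set, and $a$ need not be generic over $B_1B_2$. What you are actually doing --- and what is correct --- is establishing $a\in\inte(U_1\cap U_2)$ first (using genericity over $B_1$ and over $B_2$ separately), and then invoking only the \emph{second} step of the claim's proof (the shrinking with radii chosen generic over $Aa$), which needs no genericity hypothesis over $B_1B_2$. Your parenthetical remark shows you see the issue; it would be cleaner to state that shrinking step as a standalone sub-claim and invoke it directly.

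In clause (3) your argument rests on continuity of $f$ at the generic point, so that $a\in\inte(f^{-1}(V)\cap X_1)$. This is standard for SW-uniformities and follows from cell decomposition for $\Gamma$ in the $p$-adic case. For $D=\KOp$, however, it is not routine. You already flag that the preliminary claim for $\KOp$ requires first passing, via Remark \ref{R: onto open for K/O padic}, to a coset of a definable subgroup projecting bijectively onto a ball; but for (3) you would further need that, after making compatible coset choices on both the $X_1$ and $X_2$ sides, the induced map between balls is continuous at the generic point --- and $f$ need not carry one coset into the other. This can be arranged, but it is more than bookkeeping and is not covered by your write-up. The axiomatic vicinic argument of \cite{HaHaPeGps} handles this uniformly by avoiding topology altogether; your route is more transparent where it applies, but for $\KOp$ the abstract approach remains the cleaner one.
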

	\begin{proof}
		(1) \cite[Lemma 5.6]{HaHaPeGps}.
		(2) A straightforward application of (1).
		
		(3)  \cite[Lemma 5.8]{HaHaPeGps}.
		
		(4) \cite[Remark 5.5]{HaHaPeGps}.
	\end{proof}

	As in \cite[Proposition 5.9]{HaHaPeGps}:
	\begin{fact}\label{F: nu in SW for general Y}
		Assume that $D$ is an SW-uniformity, $Y\subseteq D^n$ an infinite definable subset and $a\in Y$ a generic point. Then 
		$\nu_Y(a)\equiv \{V\subseteq Y: V\ni a \text{ is relatively definable open in Y}\}.$
	\end{fact}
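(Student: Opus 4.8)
The plan is to show the two partial types imply one another, after normalising the local geometry at $a$. Since $a$ is generic in $Y$, Lemma~\ref{L:onto open} provides a generic vicinity $Y_0\sub Y$ of $a$ and a coordinate projection $\pi\colon D^n\to D^m$ ($m=\dpr Y$) injective on $Y_0$; by Fact~\ref{F: vicinities}(2),(3) we may replace $(Y,a)$ by $(\pi(Y_0),\pi(a))$ (this is also how $\nu_Y(a)$ is to be read in the first place, when $Y$ does not itself inject into $D^m$), so that $Y\sub D^m$ with $\dpr Y=m$; on a further generic vicinity $\pi$ is moreover a homeomorphism onto its image, so that ``relatively open'' is transported correctly --- routine tame topology for SW-uniformities. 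I will then use the standard fact from the dimension theory of SW-uniformities (\cite{SimWal}; see also \cite{HaHaPeVF}) that a point $c$ of full $\dpr$ over a set $C$ defining $W\sub D^m$ lies in $\inte(W)$ --- equivalently, the topological frontier of a definable subset of $D^m$ drops $\dpr$. Since $\dpr(a/[Y])=m$ this gives $a\in\inte(Y)$, so replacing $Y$ by $\inte(Y)$ (harmless for the open-neighbourhood filter at $a$) we may assume $Y$ is open in $D^m$.

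The first inclusion is then immediate: if $U\in\nu_Y(a)$, say $U$ is $B$-definable with $a\in U\sub Y$ and $\dpr(a/B)=m=\dpr D^m$, then $a\in\inte(U)$ by the above, so $\inte(U)$ is a relatively definable open neighbourhood of $a$ in $Y$ contained in $U$; hence $U$ is a consequence of the relatively-open-neighbourhood filter.

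For the reverse inclusion, let $V\ni a$ be relatively definable open in $Y$, defined over $B_V$. Using a definable basis of balls for the uniformity on $D$, openness of $V$ gives a radius $\gamma_0$ with $B_{\gamma_0}(a)\sub V$, where $B_\gamma(a)$ denotes the product of the $\gamma$-balls around the coordinates of $a$. Choose $\gamma\ge\gamma_0$ in the associated value sort, generic over $[Y]\cup B_V\cup\{a\}$ --- possible by saturation, since the relevant final segment is unbounded --- and set $U:=B_\gamma(a)$, so $U\sub B_{\gamma_0}(a)\sub V$. The key point is that $U$ is a \emph{generic vicinity} of $a$, i.e.\ $\dpr(a/[U]\cup\{\gamma\}\cup[Y])=m$: genericity of $\gamma$ over $[Y]\cup\{a\}$ yields $\dpr(a/[Y]\cup\{\gamma\})=m$, and then, since $[U]\in\dcl(a,\gamma)$ while $U$ is an infinite ball definable from $[U]$ that does not pin down $a$, the point $a$ neither becomes algebraic over $[U]\cup\{\gamma\}\cup[Y]$ nor drops $\dpr$ over it. Hence $U\in\nu_Y(a)$ and $U\sub V$, so $\nu_Y(a)\vdash V$.

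Putting the two inclusions together gives $\nu_Y(a)\equiv\{V\sub Y: a\in V\text{ relatively definable open in }Y\}$. I expect the main obstacle to be the ``key point'' in the last paragraph: showing that a ball of suitably generic radius around $a$ remains a generic vicinity of $a$ --- equivalently, that $a$ does not lose $\dpr$ over the code of a generic infinitesimal ball containing it. This is exactly the step carried out in the proof of \cite[Proposition~5.9]{HaHaPeGps}, and the argument here follows it; the remaining ingredients --- the reductions and the ``generic implies interior'' fact --- are routine.
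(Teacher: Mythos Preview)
The paper does not give its own proof of this statement: it records it as a Fact with the attribution ``As in \cite[Proposition 5.9]{HaHaPeGps}''. Your reconstruction follows precisely that route --- reduce via Lemma~\ref{L:onto open} and Fact~\ref{F: vicinities} to $Y$ open in $D^m$ with $m=\dpr(Y)$, use ``generic implies interior'' for one direction, and for the other produce a small ball that is itself a generic vicinity. You even flag the one non-routine step and point to the same reference. So the approach matches the paper's.

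One remark on the ``key point''. Your justification that $a$ does not drop dp-rank over $[U]\cup\{\gamma\}\cup[Y]$ is phrased as ``$U$ is an infinite ball that does not pin down $a$''; this literally gives only $a\notin\acl([U],\gamma,[Y])$, which is the same as full rank when $m=1$ but not when $m>1$. The argument in the references (see also \cite[Lemma 3.11, Corollary 3.12]{HaHaPeVF}, used in the paper's proof of Proposition~\ref{P: topological pair 2} for exactly this step in the SW case) is a bit more careful: one controls $\dpr$ by working coordinatewise in the product ball, or by showing $\dpr([U]/[Y])\le 1$ via $\gamma\in\dcl([U])$ and then applying sub-additivity to $\dpr(a,[U]/[Y])=\dpr(a,\gamma/[Y])=m+1$. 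Either way you recover $\dpr(a/[U],[Y])\ge m$. Since you already cite the correct source for this, the sketch is fine as a pointer, but as written the sentence does not yet prove the $m>1$ case.
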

	In Lemma \ref{L: restriction of nu to subset} we will prove an analogous version for the other unstable distinguished sorts.

	We can now recall the construction of the infinitesimal subgroups for unstable $D$, phrased for $D$-balanced groups (instead of $D$-groups) and $D$-critical sets (instead of $D$-sets): 
	
	\begin{fact}\cite[Proposition 5.11]{HaHaPeGps}\label{F: properties of nu}
		Let $D$ be one of the unstable distinguished sorts and let $G$ be a definable $D$-balanced group, with $\drk(G)>0$.
		\begin{enumerate}
			\item If $X\subseteq G$ is $D$-critical then for every generic $a,b\in X$, the set $\nu_X(a)a^{-1}$ is a (type-definable) subgroup of $G$ and  
			$\nu_X(a)a^{-1}\equiv\nu_X(b)b^{-1}\equiv a^{-1}\nu_X(a)$. We denote this group $\nu_X$.
			
			\item If $X,Y\sub G$ are $D$-critical then $\nu_X\equiv\nu_Y$, and we denote this type definable subgroup  $\nu_D(G)$ (or just $\nu_D$), \emph{the infinitesimal type-definable subgroup of $G$ with respect to $D$}.
			\item For every $g\in G(\CK)$, we have $g\nu_D(G) g^{-1}\equiv \nu_D(G)$.
			\item $\dpr(\nu_D(G))=\drk(G)=\adrk(G)$. 
		\end{enumerate}
	\end{fact}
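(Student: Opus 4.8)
The plan is to run the classical ``infinitesimal subgroup'' construction in the vicinity formalism, using crucially that $G$ is a $D$-group. \emph{Reductions.} Since $G$ is $D$-balanced with $\drk(G)>0$, Lemma \ref{L:D-balanced satisfied clause 2}(2) says $G$ satisfies $(\dag)$ and $(\dag_a)$, i.e.\ it is a $D$-group in the sense of Definition \ref{D: D-group}; and for every $D$-critical $X\subseteq G$ balancedness gives $m:=\dpr(X)=\drk(G)=\adrk(G)$. Fixing an $[X]$-generic $a$, Corollary \ref{C: has D-sets} lets us shrink $X$ to a generic vicinity of $a$ carrying an $[X]$-definable injection into $D^m$, so that Definition \ref{D:vicinities} applies verbatim; by the remark following that definition together with Fact \ref{F: vicinities}(2) this does not alter $\nu_X(a)$. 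All dp-rank arithmetic below uses algebraicity and additivity of dp-rank on the distinguished sorts, and every $D$-critical set of dp-rank $m$ that appears is produced by Fact \ref{F:locally critical} --- this is the only place $D$-balancedness enters, ensuring no ``almost'' defect.

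\emph{Part (1): group axioms for $\nu:=\nu_X(a)a^{-1}$.} That $e\models\nu$ is clear, since $a$ lies in every generic vicinity of itself. Closure under multiplication is the heart of the matter: given $u,v\models\nu$ in $\widehat\CK$ we must show $uva=u\cdot(va)\models\nu_X(a)$, i.e.\ $uva\in U$ for every $B$-definable generic vicinity $U\ni a$ (any small $B$ with $\dpr(a/B)=m$). It suffices to exhibit, for each such $U$, a generic vicinity $U'$ of $a$ with $U'a^{-1}U'\subseteq U$, because then $ua,va\models\nu_X(a)\vdash U'$ forces $uva=(ua)a^{-1}(va)\in U$. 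Existence of such a $U'$ is a ``continuity of multiplication at $a$ along parameters keeping $a$ generic'' statement, and this is exactly what $(\dag)$ delivers: applied with $X_1=X_2=X$ it gives $\dpr(g/B,gh)=\dpr(g/B)$ for generic $(g,h)\in X\times X$, and, combined with Fact \ref{F:locally critical} to enlarge $B$ while keeping $a$ generic, it shows that the preimage of $U$ under the appropriate translation is again a full-dp-rank relatively-generic neighbourhood of $a$. Closure under inverse, and the identity $a^{-1}\nu_X(a)\equiv\nu_X(a)a^{-1}$, follow by applying Fact \ref{F: vicinities}(3) to the point-reflection $x\mapsto ax^{-1}a$ of a suitable generic vicinity of $a$ (an involution fixing $a$), which therefore preserves $\nu_X(a)$; the minor point that this map is defined over $a$ is absorbed by Fact \ref{F: vicinities}(4).

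\emph{Basepoint independence and Parts (2), (3).} For $[X]$-generic $a$ and $b$ generic over $[X]a$, Fact \ref{F:locally critical} makes $ba^{-1}X\cap X$ a $D$-critical set of dp-rank $m$ and a generic vicinity of $b$; transporting a generic vicinity of $a$ in $X$ along left translation by $ba^{-1}$ and using Fact \ref{F: vicinities}(2),(3) with Part (1) gives $\nu_X(a)a^{-1}\equiv\nu_X(b)b^{-1}$, first for such $b$ and then for arbitrary $[X]$-generic $b$ by linking two generics through a common generic third (compactness with the filter-base property, Fact \ref{F: vicinities}(1)). This defines $\nu_X$. If $X,Y\subseteq G$ are both $D$-critical (dp-rank $m$), Fact \ref{F:locally critical} gives $c$ with $cX\cap Y$ a $D$-critical set of dp-rank $m$, and translation-invariance together with passing to full-rank subsets yields $\nu_X\equiv\nu_{cX}\equiv\nu_{cX\cap Y}\equiv\nu_Y$, proving (2); call the common group $\nu_D(G)$. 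For (3): conjugation by $g\in G(\CK)$ is a definable bijection preserving dp-rank and almost strong internality, so $gXg^{-1}$ is $D$-critical, hence $\nu_D(G)=\nu_{gXg^{-1}}$ by (2), while Fact \ref{F: vicinities}(3) sends $\nu_X(a)$ to $\nu_{gXg^{-1}}(gag^{-1})$; comparing the two gives $g\nu_D(G)g^{-1}\equiv\nu_D(G)$.

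\emph{Part (4) and the main obstacle.} Right translation by $a^{-1}$ is a definable bijection, so $\dpr(\nu_D(G))=\dpr(\nu_X(a))$. The defining family of $\nu_X(a)$ is a downward-directed filter-base (Fact \ref{F: vicinities}(1)) of generic vicinities of $a$, each of dp-rank $m$, and $\nu_X(a)\vdash X$; a routine saturation argument (using algebraicity of dp-rank in $D$) then yields a realization $\sigma$ with $\dpr(\sigma/[X]a)=m$, while no realization can exceed $m$. Hence $\dpr(\nu_D(G))=m=\drk(G)=\adrk(G)$. The step I expect to be the real obstacle is closure under multiplication in Part (1): it is precisely there (and in the closely related basepoint-independence argument) that the $D$-group property $(\dag)$ and Fact \ref{F:locally critical} are indispensable, and where $D$-balancedness is needed so that all the $D$-critical sets in play share the dp-rank $m$ and these tools apply without loss.
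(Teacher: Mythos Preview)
The paper does not prove this statement: it is recorded as a \emph{Fact} and attributed verbatim to \cite[Proposition 5.11]{HaHaPeGps}. There is therefore no proof in the present paper to compare your proposal against; what you have written is a sketch of the argument from the cited reference, filtered through the terminology of the current paper.

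That said, your outline is broadly faithful to how the construction goes in \cite{HaHaPeGps}, and you have correctly located the crux in the closure-under-multiplication step of Part~(1). Two places deserve tightening. First, your passage from $(\dag)$ to the existence of a generic vicinity $U'$ with $U'a^{-1}U'\subseteq U$ is asserted rather than argued: $(\dag)$ tells you that $\dpr(g/B,gh)=\dpr(g/B)$, which says that the fibre of multiplication through a generic point is large, but turning this into the containment $U'a^{-1}U'\subseteq U$ requires a careful compactness argument tracking which parameters the relevant vicinities are defined over; your invocation of Fact~\ref{F:locally critical} ``to enlarge $B$ while keeping $a$ generic'' does not match what that fact actually provides (it produces large intersections of translates, not parameter-extension results). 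Second, in Part~(4) the claim that a directed intersection of sets of dp-rank $m$ has dp-rank $m$ is not automatic from saturation alone; one needs either definability of dp-rank in $D$ or a concrete description of $\nu_X(a)$ (as in Fact~\ref{F: concrete description of nu for D-balanced}) to extract a realisation of full rank. These are not fatal, but they are exactly the points where the original proof in \cite{HaHaPeGps} does genuine work, and your sketch would need to be expanded there to stand on its own.
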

	
	For each of the different unstable distinguished sorts we will offer below several alternative descriptions of the subgroups $\nu_D$. First, recall: 
	
	\begin{fact}\label{F: concrete description of nu for D-balanced}
		Let $D$ be an unstable distinguished sort and $G$ a definable $D$-balanced group with $\drk(G)>0$. 
		\begin{enumerate}
			\item \cite[Section 4.2]{HaHaPeSemisimple} Assume that $D=K$ or $D=\bk$. For every $D$-critical set $X\subseteq G$ and every $f:X\to D^n$ a definable injection, with $n=\dpr(X)$,  and $c\in X$ generic over all the data, \[\nu_D\equiv \{f^{-1}(U)c^{-1}: U\subseteq D^n \text{ definable open containing $f(c)$}\}.\] 
			\item \cite[Section 5.1, Proposition 5.2]{HaHaPeSemisimple} Assume that $D=K/\CO$. Then, there exists  a definable subgroup $G_0\leq G$ and a definable isomorphism $f:G_0\to B$, where $B\leq (K/\CO)^n$ is an open ball around $0$ with $n=\drk(G)$. Moreover, for any such $G_0, f$ and $B$, we have \[\nu_{K/\CO}\equiv\{f^{-1}(U): U\leq B \text{ is an open ball around $0$}\}.\] 
			\item \cite[Lemma 5.6]{HaHaPeSemisimple} Assume that $D=\Gamma$. The type-definable group $\nu_\Gamma$ is definably isomorphic to  $\{U: U\subseteq \Gamma^n \text{ a $\Gamma$-box around $0$}\}$.
		\end{enumerate}
	\end{fact}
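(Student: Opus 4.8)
The plan is to reduce each of the three clauses to the correspondingly cited result of \cite{HaHaPeSemisimple}, after two preliminary observations. First, since $D$ is unstable and $\drk(G)>0$, the group $G$ is locally strongly internal to $D$ and, by Lemma \ref{L:D-balanced satisfied clause 2}(2), satisfies $(\dag)$; hence $G$ is a $D$-group in the sense of Definition \ref{D: D-group}, so the analysis of $\nu_D$ in \cite{HaHaPeSemisimple} is available. Second, I would reconcile the data used there — a $D$-set together with a distinguished injection into $D^n$ — with the data in the statement, namely an arbitrary $D$-critical set $X$ and an arbitrary definable injection $f\colon X\to D^n$ with $n=\dpr(X)$. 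For this, given a generic $c\in X$, Corollary \ref{C: has D-sets} yields a generic vicinity $Y\subseteq X$ of $c$ which is a $D$-set; Fact \ref{F: vicinities}(2) gives $\nu_X(c)\equiv\nu_Y(c)$, whence $\nu_X\equiv\nu_Y\equiv\nu_D(G)$ by Fact \ref{F: properties of nu}; and if $f_1,f_2\colon X\to D^n$ are two such injections, then $f_2\circ f_1^{-1}$ is a definable bijection between subsets of $D^n$, so by Fact \ref{F: vicinities}(3) transporting $\nu_X(c)$ along $f_1$ and along $f_2$ gives the same partial type up to that bijection. It therefore suffices to compute $\nu_D(G)$ on one $D$-set with one convenient injection and then translate by $c^{-1}$, which by Fact \ref{F: properties of nu} turns $\nu_X(c)$ into $\nu_X(c)c^{-1}\equiv\nu_D(G)$.

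Clause (1), $D=K$ or $D=\bk$: both sorts are SW-uniformities (as recorded in the preliminaries — the topology being the valuation topology on $K$, the Zariski topology on $\bk$ in the $V$-minimal case where $\dpr$ coincides with Morley rank, and the order topology on $\bk$ in the $T$-convex case). By Fact \ref{F: nu in SW for general Y} applied to $f(Y)\subseteq D^n$ at its generic point $f(c)$, the type $\nu_{f(Y)}(f(c))$ is generated by the relatively definable-open neighbourhoods of $f(c)$ in $f(Y)$; each such is the trace on $f(Y)$ of a definable open $U\subseteq D^n$ containing $f(c)$, and $f^{-1}(U)=f^{-1}(U\cap f(Y))$ since $f$ maps into $D^n$. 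Pulling back along $f$, invoking the reductions above, and translating by $c^{-1}$ yields the displayed description. In the $\bk$, $V$-minimal case one uses in addition that, after replacing $f(Y)$ by its irreducible component through the generic point, any constructible subset of full dimension contains a nonempty Zariski-open of that component, hence contains the generic point, so ``relatively definable-open'' may be read ``Zariski-open''.

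Clause (2), $D=K/\CO$: by \cite[Proposition 5.2]{HaHaPeSemisimple} there are a definable subgroup $G_0\le G$ and a definable group isomorphism $f\colon G_0\to B$, with $B\le (K/\CO)^n$ a ball around $0$ and $n=\drk(G)$; since $\dpr(G_0)=n=\drk(G)$, the subgroup $G_0$ is itself $D$-critical, so $\nu_D(G)=\nu_{G_0}$ may be computed there. As $f$ is a group isomorphism with $f(e)=0$, it carries $\nu_{G_0}(c)c^{-1}$ to $\nu_B(f(c))-f(c)$, and since every definable open in $(K/\CO)^n$ contains a ball, the relatively definable-open neighbourhoods of the generic point of $B$, translated to $0$, are precisely the subballs of $B$ around $0$; this gives the displayed formula (in the $p$-adically closed case one simply appeals to the same cited proposition, $\KOp$ not being an SW-uniformity). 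Clause (3), $D=\Gamma$: by Lemma \ref{L:onto open} one may take the generic vicinity $Y$ to inject, via a coordinate projection, onto a $\Gamma$-box; and the generic vicinity type of a generic point of $\Gamma^n$ is generated by $\Gamma$-boxes around it — by cell decomposition for Presburger arithmetic when $\Gamma$ is a $\Zz$-group, and by o-minimality when $\Gamma$ is densely ordered. After transport and translation this is \cite[Lemma 5.6]{HaHaPeSemisimple}.

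The substantive content of all three clauses already lives in \cite{HaHaPeSemisimple}; the only real work here is bookkeeping, namely checking that none of the reductions — from $D$-sets to arbitrary $D$-critical sets, from the distinguished injection to an arbitrary $f$, and from ``relatively definable-open'' to the intrinsic descriptions (opens of $D^n$, subballs of $B$, $\Gamma$-boxes) — alters the resulting partial type. This is exactly what the invariance clauses of Fact \ref{F: properties of nu} together with the transport property Fact \ref{F: vicinities}(3) are designed to guarantee, so I expect no genuine obstacle beyond careful verification.
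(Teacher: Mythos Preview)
The paper does not provide its own proof of this statement: it is recorded as a \emph{Fact}, with each clause carrying an explicit citation to \cite{HaHaPeSemisimple}, and no proof environment follows. Your approach --- verifying via Lemma~\ref{L:D-balanced satisfied clause 2}(2) that a $D$-balanced group with $\drk(G)>0$ is a $D$-group, then invoking the cited results, and finally reconciling $D$-sets with $D$-critical sets via Corollary~\ref{C: has D-sets} and Fact~\ref{F: vicinities} --- is exactly the intended reduction, and the bookkeeping you describe is correct.

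One genuine slip: in clause (1) you attempt to treat $\bk$ in the $V$-minimal case, asserting it is an SW-uniformity under the Zariski topology. This is wrong on two counts. First, the hypothesis of the Fact is that $D$ is \emph{unstable}, which already excludes $\bk$ in the $V$-minimal setting (where $\bk$ is a pure algebraically closed field, hence stable; see the discussion preceding Definition~\ref{def: internalities}). Second, the Zariski topology on an algebraically closed field is not Hausdorff and does not make $\bk$ an SW-uniformity, so Fact~\ref{F: nu in SW for general Y} does not apply there. Your parenthetical about irreducible components and Zariski-opens should therefore be deleted: the only $\bk$ case falling under (1) is the power-bounded $T$-convex one, where $\bk$ is o-minimal and the order topology does yield an SW-uniformity. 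With that excision, your argument stands.
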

	
	We shall use the following simple observations in the case that $G=(D^n,+),$ for unstable $D$:
	\begin{remark}\label{R: how nu looks}
		
		\begin{enumerate}
			\item For $D=K$ or $D=\bk$, \[\nu_D(D^n,+)\equiv \{U: U\subseteq D^n \text{ definable open neighborhood of $0$}\}.\]
			\item $\nu_{K/\CO}((K/\CO)^n,+)\equiv \{U\subseteq (K/\CO)^n: U\leq B \text{ is an open ball around $0$}\}$. 
			
			In particular, when $\CK$ is $p$-adically closed, $\nu_{K/\CO}((K/\CO)^n,+)$ contains all the torsion points of $((K/\CO)^n,+)$ (see \cite[Fact 7.3(3)]{HaHaPeSemisimple}). 
			\item $\nu_\Gamma(\Gamma^n,+)\equiv \{U: U\subseteq \Gamma^n \text{ a $\Gamma$-box around $0$}\}$.
		\end{enumerate}
	\end{remark}

	We can now state the alternative characterizations of $\nu_D$ referred to above. Those are given in terms of $D$-critical sets. 
	\begin{proposition}\label{P: nu withough generics}
		Let $D$ be an unstable distinguished sort and $G$ be an $A$-definable $D$-balanced group with $\drk(G)>0$. Then the following types are logically equivalent.
		\begin{enumerate}
			
			\item (The definition of $\nu_D$) Given a $D$-critical set $Z_1\sub G$ over $A$ and $d_1\in Z_1$ $A$-generic, $\rho_1=\{Xd_1^{-1}:X\in \nu_{Z_1}(d_1)\}$.

			\item $\rho_2=\{XX^{-1}: X\subseteq G \text{ is $D$-critical}\}$.
			
			\item Given a $D$-critical $Z_2\subseteq G$, $\rho_3=\{XX^{-1}: X\subseteq Z_2 \text{ is $D$-critical}\}$.
			
			\item Given a  $D$-critical $Z_3\subseteq G$ over $A$ and an  $A$-generic $d_3\in Z_3$,  $\rho_4=\{UU^{-1}:U\in \nu_{Z_3}(d_3)\}$.

		\end{enumerate}
	\end{proposition}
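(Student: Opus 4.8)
The plan is to show the four partial types are pairwise logically equivalent by proving the chain $\rho_1\vdash\rho_2\vdash\rho_3\vdash\rho_1$ together with $\rho_1\vdash\rho_4$ and $\rho_4\vdash\rho_1$. Throughout I would use that, since $G$ is $D$-balanced, Fact~\ref{F: properties of nu} applies: $\rho_1$ is exactly the type-definable subgroup $\nu_D(G)$ (independently of the chosen $D$-critical $Z_1$ and generic $d_1$, by parts (1)--(2) of that fact), and for every $D$-critical $Z\sub G$ and generic $d\in Z$ one has $\nu_Z(d)\equiv \nu_D(G)\cdot d$. I will argue with realization sets in $\widehat\CK$ and write $N:=\nu_D(G)(\widehat\CK)$, a subgroup of $G(\widehat\CK)$.

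The implications out of $\rho_1$ are the routine part. For $\rho_1\vdash\rho_2$: given $D$-critical $X\sub G$ and generic $a\in X$, Corollary~\ref{C: has D-sets} lets me shrink to a $D$-critical $Y\sub X$ through $a$ with a witnessing injection of dp-rank $\dpr(Y)=\dpr(X)$, so $\nu_Y(a)$ is defined; from $\nu_Y(a)\vdash Y\sub X$ and $\nu_Y(a)a^{-1}\equiv\nu_D(G)$ I get $Na\sub X(\widehat\CK)$, hence $N=NN^{-1}\sub (Xa^{-1})(Xa^{-1})^{-1}(\widehat\CK)=XX^{-1}(\widehat\CK)$, i.e.\ $\rho_1\vdash XX^{-1}$. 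The same computation applied to a generic vicinity $U\in\nu_{Z_3}(d_3)$ (using $Nd_3\sub U(\widehat\CK)$ in place of $Na\sub X(\widehat\CK)$) gives $\rho_1\vdash UU^{-1}$, hence $\rho_1\vdash\rho_4$. And $\rho_2\vdash\rho_3$ is immediate, since a $D$-critical subset $X\sub Z_2$ has $\dpr(X)=\drk(Z_2)=\drk(G)$ and is strongly internal to $D$, hence is a $D$-critical subset of $G$, so each formula of $\rho_3$ already occurs in $\rho_2$.

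The core of the proof is the reverse direction, $\rho_3\vdash\rho_1$ and $\rho_4\vdash\rho_1$, which I would deduce from one ``symmetric square-root'' lemma: for every $D$-critical $Z\sub G$, every generic $d\in Z$, and every definable $W$ with $N\sub W(\widehat\CK)$, there exists $U\in\nu_Z(d)$ with $UU^{-1}\sub W$ (any such $U$ is $D$-critical, being a full-dp-rank definable subset of $Z$ through a generic point). The proof is a compactness argument: the partial type in $(x,y)$ asserting $x,y\in Ud^{-1}$ for all $U\in\nu_Z(d)$ has realization set $N\times N$, on which $xy^{-1}\in N\sub W$ because $N$ is a subgroup; compactness together with the filter-base property of $\nu_Z(d)$ (Fact~\ref{F: vicinities}(1)) yields a single $U\in\nu_Z(d)$ with $(Ud^{-1})(Ud^{-1})^{-1}=UU^{-1}\sub W$. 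Applying the lemma with $Z=Z_2$ (and $d\in Z_2$ any generic point) shows that each formula $W$ of $\rho_1$ contains some $XX^{-1}$ with $X\sub Z_2$ $D$-critical, giving $\rho_3\vdash\rho_1$; applying it with $Z=Z_3$, $d=d_3$ gives $\rho_4\vdash\rho_1$. The hard part is precisely this lemma — one must carry the identity $NN^{-1}=N$ back into the filtered family of generic vicinities, and here it matters that, in Definition~\ref{D:vicinities}, $\nu_Z(d)$ ranges over \emph{all} parameter sets over which $d$ is generic, so that the set produced by compactness is genuinely a member of $\nu_Z(d)$ (rather than something defined over parameters spoiling genericity of $d$). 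Once the lemma is available, chaining the implications closes both cycles and establishes $\rho_1\equiv\rho_2\equiv\rho_3\equiv\rho_4$.
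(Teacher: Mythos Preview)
Your proof is correct and follows essentially the same approach as the paper's: both hinge on the compactness argument you call the ``symmetric square-root'' lemma (the paper states it as a Claim with $W$ of the specific form $Ud^{-1}$ for $U\in\nu_Z(d)$, which is equivalent since $\nu_D$ is type-defined by such sets). The paper organizes the implications as the single cycle $\rho_1\vdash\rho_2\vdash\rho_3\vdash\rho_4\vdash\rho_1$ rather than your two cycles $\rho_1\vdash\rho_2\vdash\rho_3\vdash\rho_1$ and $\rho_1\leftrightarrow\rho_4$, but this is purely cosmetic.
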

	
	\begin{proof} 
		We show that $\rho_1\vdash \rho_2\vdash \rho_3\vdash \rho_4 \vdash \rho_1$. We shall use the fact  that the definition of $\nu_D$ does not depend on the choice of the $D$-critical set and its generic point (Fact \ref{F: properties of nu}). 
		
		We first prove:
		\begin{claim} 
			For any $D$-critical $Z\sub G$ and an $A$-generic $d\in Z$ (for $A$ over which everything is defined), if $U\in \nu_Z(d)$ then there exists $X\in \nu_Z(d)$ such that $XX^{-1}\sub Ud^{-1}$. 
		\end{claim}
		\begin{claimproof}
			Since $\nu=\nu_D$ is a subgroup, $\nu\cdot\nu^{-1}$ is logically equivalent to $\nu$, so there exist, by compactness,  $V_1,V_2\in \nu_Z(d)$ such that $V_1d^{-1}\cdot (V_2d^{-1})=V_1V_2^{-1}\sub Ud^{-1}$. As $\nu_Z(d)$ is a filter base (\cite[Lemma 5.6]{HaHaPeGps}), there exists $X\in \nu_Z(d)$ (hence $D$-critical) such that $X\subseteq V_1\cap V_2$ so $XX^{-1}\subseteq Ud^{-1}$.
		\end{claimproof}
		We can now prove the proposition:
		
		$\rho_1\vdash \rho_2$: Let $X\sub G$ be any $D$-critical set and then fix in it a generic $d_1\in X$. Then $X\in \nu_X(d_1)$, so $\nu_D=\nu_X(d_1)d_1^{-1}\vdash XX^{-1}$, hence $\nu_D=\rho_1\vdash \rho_2$.
		
		\vspace{.2cm} $\rho_2\vdash \rho_3$: This is clear.
		
		\vspace{.2cm} $\rho_3\vdash \rho_4$: Fix $D$-critical sets $Z_2,Z_3\subseteq G$ and $d_2\in Z_2$, $d_3\in Z_3$ generic elements.
		
		Given $U\in \nu_{Z_3}(d_3)$, it follows from the claim that there exists $X\sub \nu_{Z_2}(d_2)$ (in particular $X\in Z_2$ is $D$-critical), such that $XX^{-1}\sub UU^{-1}$. Thus, $\rho_3\vdash \rho_4$.
		
		\vspace{.2cm} $\rho_4\vdash \rho_1$: This follows easily from the claim.
	\end{proof}

	\begin{remark}
		The above lemma as well as its proof remain true in the vicinic setting of \cite[\S4]{HaHaPeGps} after systematically replacing all occurrences of ``$D$-critical'' sets with ``$D$-sets''. 
	\end{remark}

	We now extend the above to the case when $D$ is stable, namely $D=\bk$ in  the $V$-minimal case. By \cite[Proposition 6.2]{HaHaPeGps}, if $G$ is locally strongly internal to the residue field $\bk$ then there exists a connected $\omega$-stable definable subgroup of $G$,  also denoted $\nu_\bk$,  which is strongly internal to $\bk$ and whose dp-rank is the $\bk$-rank of $G$. The previous proposition can now be extended to cover the stable case in the following sense:

	\begin{lemma}\label{L:nu in XX-1}
		Let $G$ be a definable group in $\CK$ and $D$ any distinguished sort. If  $G$ is $D$-balanced with $\drk(G)>0$ then $\nu_D$ is equivalent to the partial type $\{XX^{-1}: X\subseteq G \text{ is $D$-critical}\}.$

	\end{lemma}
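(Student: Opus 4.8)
The plan is to split according to whether $D$ is unstable or stable. When $D$ is one of the unstable distinguished sorts there is nothing new to do: Proposition \ref{P: nu withough generics} already gives $\rho_1\equiv\rho_2$, where $\rho_1$ \emph{is} $\nu_D$ (Fact \ref{F: properties of nu}) and $\rho_2=\{XX^{-1}:X\subseteq G\text{ is }D\text{-critical}\}$. So the content is in the stable case, $D=\bk$ in the $V$-minimal setting; here $\nu_\bk$ is a connected $\omega$-stable definable subgroup of $G$, strongly internal to $\bk$, with $\dpr(\nu_\bk)=\drk(G)=:m$ (the discussion following Proposition \ref{P: nu withough generics}), and we must show $\nu_\bk\equiv\rho$ for $\rho:=\{XX^{-1}:X\subseteq G\text{ is }\bk\text{-critical}\}$. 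One implication is immediate: $\nu_\bk$ is itself a $\bk$-critical subset of $G$ (it is definable, strongly internal to $\bk$, of dp-rank $\drk(G)$) and, being a subgroup, satisfies $\nu_\bk=\nu_\bk\nu_\bk^{-1}$, so the formula defining $\nu_\bk$ occurs among the formulas of $\rho$; thus $\rho\vdash\nu_\bk$.

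For the reverse implication $\nu_\bk\vdash\rho$, fix a $\bk$-critical $X\subseteq G$ and work over a set $A$ defining $G$, $\nu_\bk$ and $X$; I must show $\nu_\bk(\widehat\CK)\subseteq XX^{-1}(\widehat\CK)$. Since $\bk$ is strongly minimal, dp-rank and Morley rank agree on $\bk$-internal sets, so $X$ and $\nu_\bk$ both have Morley rank $m$. Applying Fact \ref{F:locally critical} to the almost strongly internal set $X$ (note $\dpr(X)=\adrk(G)$, as $X$ is $\bk$-critical and $G$ is $\bk$-balanced) and to the almost $\bk$-critical set $Y:=\nu_\bk$, we obtain, for generic $(g,h)\in X\times\nu_\bk$, that $W:=hg^{-1}X\cap\nu_\bk$ is almost $\bk$-critical, hence $\bk$-critical since $G$ is $\bk$-balanced. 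So $W\subseteq\nu_\bk$ has full Morley rank $m$ in the connected group $\nu_\bk$, hence is generic in it, and by the standard fact that the product of two generic definable subsets of a connected group of finite Morley rank is the whole group, $WW^{-1}=\nu_\bk$. Since also $W\subseteq hg^{-1}X$,
\[
\nu_\bk=WW^{-1}\subseteq (hg^{-1}X)(hg^{-1}X)^{-1}=hg^{-1}(XX^{-1})gh^{-1},
\]
hence $g(h^{-1}\nu_\bk h)g^{-1}\subseteq XX^{-1}$; as $h\in\nu_\bk$ normalises $\nu_\bk$, this reads $g\nu_\bk g^{-1}\subseteq XX^{-1}$. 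Finally $\nu_\bk\trianglelefteq G$ — being the largest connected definable subgroup of $G$ strongly internal to $\bk$, it is characteristic — so $g\nu_\bk g^{-1}=\nu_\bk$ and $\nu_\bk\subseteq XX^{-1}$, as wanted.

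The stable case carries all the content, and there are two points to watch. First, one needs $\bk$-criticality of $X$ to be strong enough to trap $\nu_\bk$ inside $XX^{-1}$: this is exactly what the application of Fact \ref{F:locally critical} supplies, by relocating $X$ so that it meets a single coset of $\nu_\bk$ in full Morley rank, at which point the connectedness of $\nu_\bk$ takes over. Second, one uses that $\nu_\bk$ is normal in $G$; if this is not quoted directly, it follows from uniqueness of the largest connected definable subgroup strongly internal to $\bk$ (two such generate a connected definable subgroup of finite Morley rank, which in $\CK$ is again internal to $\bk$, as $\omega$-stable definable groups are, hence of the same Morley rank, so the two coincide; conjugates of $\nu_\bk$ are therefore $\nu_\bk$). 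I expect this normality input, rather than any of the surrounding stable-group arithmetic, to be the only delicate ingredient.
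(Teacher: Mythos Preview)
Your proof is correct. The unstable case is identical to the paper's (both simply invoke Proposition \ref{P: nu withough generics}). In the stable case the paper takes a different route: it reduces to $\mathrm{MD}(X)=1$, lets $G_1$ be the stabilizer of the unique generic type of $X$ (so $G_1\subseteq XX^{-1}$ with $\mr(G_1)=m$), and then uses that the product set $\nu_\bk\cdot G_1$ is $\bk$-critical, hence of rank $m$, whence connectedness of $\nu_\bk$ forces $\nu_\bk\subseteq G_1$. Your argument instead uses Fact \ref{F:locally critical} to translate $X$ so that it meets $\nu_\bk$ in a full-rank set $W$, invokes connectedness to get $WW^{-1}=\nu_\bk$, and then appeals to normality of $\nu_\bk$ to undo the conjugation. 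Both arguments ultimately rest on the same two inputs (connectedness of $\nu_\bk$ and the fact that products of $\bk$-internal sets stay $\bk$-internal, bounding their rank), but the paper's stabilizer approach avoids the explicit use of normality, while yours avoids the Morley-degree reduction and the stabilizer machinery in favour of the translation lemma already established in the paper. Either approach is short and natural; yours is arguably more self-contained within the paper's toolkit.
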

	\begin{proof}
		For $D$ unstable this is Proposition \ref{P: nu withough generics} $(1)\Leftrightarrow (2)$, so assume that $D=\bk$ is stable, and $\CK$ is V-minimal. Since $\nu_\bk$ is in that case a definable $\bk$-critical set and $\nu_\bk\nu_\bk^{-1}=\nu_\bk$ it will suffice to prove that for every $\bk$-critical set $X$, $\nu_{\bk}\vdash XX^{-1}$. 
		
		Let $G_0=\nu_{\bk}$, as provided by \cite[Theorem 7.8]{HaHaPeGps}. It is a connected definable normal subgroup of $G$ of finite Morley rank, satisfying $\mr(G_0)=\mbox{\bk-rk(G)}$ (see Proposition 6.2 \emph{loc. cit.} for more details). 
		
		Let $X\sub G$ be a $\bk$-critical set. We have to show that $G_0\sub XX^{-1}$, so there is no harm assuming $\mathrm{MD}(X)=1$. Let $G_1\sub G$ be the stabilizer of the (unique) generic type of $X$ (this is wwell-defined since $X$ is stable and stably embedded and its unique generic type is, therefore, definable. See also the proof of \cite[Proposition 6.2]{HaHaPeGps} for a more detailed argument). So $G_1\sub XX^{-1}$ and by \cite[Claim 6.2.1]{HaHaPeGps} $\mr(G_1)=\mr(X)=\mr(G_0)$.
		
		The group $G_0G_1$ is $\bk$-critical by \cite[Lemma 6.1]{HaHaPeGps}, thus $\mr(G_0G_1)=\mr(G_0)$, so (as $G_0$ is connected) $G_0\sub G_1\sub XX^{-1}$, as claimed. 	
	\end{proof}
	
	\begin{remark}
		The description of $\nu_D$ given above, not present in our previous papers, was motivated by Johnson's description of the infinitesimals in \cite{JohnDpJML}.
	\end{remark}

	The minimality of $\nu_D$ in the appropriate sense  was proved in \cite[Corollary 5.12]{HaHaPeGps} for unstable $D$ and we now prove it for all $D$.
	
	\begin{lemma}\label{L: nu is smallest}
		Let $G$ be a $D$-balanced group definable in $\CK$, with $\drk(G)>0$. For any type-definable subgroup $\mu$ of $G$ strongly internal to $D$ with $\dpr(\mu)=\dpr(\nu_D)$, we have $\nu_D\vdash \mu$. 
	\end{lemma}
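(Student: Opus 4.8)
The plan is to reduce everything, via Lemma~\ref{L:nu in XX-1}, to a short compactness argument. Recall that Lemma~\ref{L:nu in XX-1} identifies $\nu_D$ with the partial type $\{XX^{-1}:X\subseteq G\text{ is }D\text{-critical}\}$. Hence, to prove $\nu_D\vdash\mu$ it suffices to prove: for every formula $V$ of the partial type $\mu$ there is a $D$-critical set $W\subseteq G$ with $WW^{-1}\subseteq V$ --- for then $WW^{-1}$ is literally one of the formulas of $\nu_D$, so $\nu_D\vdash V$, and $V$ was arbitrary. A pleasant feature of this route is that it treats the stable case $D=\bk$ on exactly the same footing as the unstable ones: the only sort-specific ingredient is Lemma~\ref{L:nu in XX-1} itself.

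Assume everything is defined over a parameter set $A$. First I would fix, using that $\mu$ is strongly internal to $D$, an $A$-definable set $S\supseteq\mu$ with an $A$-definable injection $S\hookrightarrow D^n$ for some $n$. Since $\mu\subseteq S$ and $S$ is strongly internal to $D$,
\[
\drk(G)=\dpr(\nu_D)=\dpr(\mu)\le\dpr(S)\le\drk(G),
\]
so $\dpr(S)=\drk(G)$, i.e.\ $S$ is itself $D$-critical (here $\dpr(\nu_D)=\drk(G)$ is Fact~\ref{F: properties of nu}(4) for unstable $D$, and for $D=\bk$ it is the equality of $\dpr(\nu_\bk)$ with the $\bk$-rank of $G$ recalled above). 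Adding the formula $S$ to $\mu$ and closing under finite conjunctions, it is then enough to prove $\nu_D\vdash V$ for every formula $V$ of $\mu$ with $V\subseteq S$, since $V_0\cap S$ is again a formula of $\mu$ for any $V_0\in\mu$, and $\nu_D\vdash(V_0\cap S)$ gives $\nu_D\vdash V_0$.

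Now take an $A$-definable $V$ with $\mu\subseteq V\subseteq S$. Because $\mu(\widehat\CK)$ is a subgroup, $\mu(\widehat\CK)\cdot\mu(\widehat\CK)^{-1}=\mu(\widehat\CK)\subseteq V(\widehat\CK)$; equivalently, the partial type $\mu(x)\cup\mu(y)\cup\{\neg V(x\cdot y^{-1})\}$ is inconsistent. By compactness there is an $A$-definable $W$, a finite conjunction of formulas of $\mu$ (so $\mu\vdash W$ and $\mu(\widehat\CK)\subseteq W(\widehat\CK)$), with $WW^{-1}\subseteq V$. Since $W\subseteq V\subseteq S$, $W$ is strongly internal to $D$, and $\drk(G)=\dpr(\mu)\le\dpr(W)\le\dpr(S)=\drk(G)$ forces $W$ to be $D$-critical. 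By Lemma~\ref{L:nu in XX-1}, $\nu_D\vdash WW^{-1}$, hence $\nu_D\vdash V$, completing the proof.

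I do not expect a serious obstacle here; the points to be careful about are purely bookkeeping. The main one is that the compactness step produces, a priori, only a definable $W$ with $\mu\subseteq W$ and $WW^{-1}\subseteq V$, and one must ensure $W$ is genuinely $D$-critical rather than merely of dp-rank $\drk(G)$ --- which is exactly why one first passes to the fixed strongly-internal set $S$ and shrinks $V$ to $V\cap S$, so that $W\subseteq S$. The other is to remember that the compactness argument is carried out inside the definable group $G$, so that $\neg V(x\cdot y^{-1})$ is a genuine formula and the inconsistency uses that $\mu(\widehat\CK)$ is closed under $(x,y)\mapsto xy^{-1}$.
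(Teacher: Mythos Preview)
Your proof is correct and follows essentially the same route as the paper's: both use Lemma~\ref{L:nu in XX-1} together with a compactness argument to find, inside any formula $V$ of $\mu$, a $D$-critical set $W$ with $WW^{-1}\subseteq V$. The paper's version is simply terser---it writes ``by the assumptions, we can choose $X$ to be $D$-critical'' where you carefully unpack this by first fixing the strongly-internal witness $S$ and noting that any $W$ with $\mu\vdash W\subseteq S$ is automatically $D$-critical.
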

	\begin{proof}
		Let $\mu\vdash Y$, since $\mu$ is a type-definable group there is a definable $X$, $\mu\vdash X$, with $XX^{-1}\subseteq Y$. By the assumptions, we can choose $X$ to be $D$-critical. By Lemma \ref{L:nu in XX-1}, $\nu_D\vdash XX^{-1}$, so $\nu_D\vdash Y$. It follows that $\nu_D\vdash \mu$. 
	\end{proof}

	\begin{lemma}\label{L:nu divisible}
		Let $G$ be a $D$-balanced group definable in $\CK$ where $D$ is unstable, and $\drk(G)>0$. Then
		\begin{enumerate}
			\item For all $D$, $\nu_D$ is divisible.
			\item If $D\neq \KOp$, then $\nu_D$ is torsion-free.
			\item If $D=\Gamma$ then there is a definable finite index subgroup $G_1\sub G$ such that $\nu_\Gamma\vdash Z(G_1)$. In particular, $\nu_\Gamma$ is commutative.
			\item If $D=K/\CO$ then $\nu_D$ is commutative. 
		\end{enumerate}
	\end{lemma}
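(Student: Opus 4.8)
The plan is to reduce (1) and (2) to an analysis of the power map $x\mapsto x^{m}$ on $\nu_{D}$, treating the commutative sorts $D=\Gamma,K/\CO$ together and $D=K$ separately, and to obtain (4) together with the last sentence of (3) immediately from the concrete descriptions of Fact~\ref{F: concrete description of nu for D-balanced}; the substantive work will be the centrality clause of (3).

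For $D\in\{\Gamma,K/\CO\}$ I would use Fact~\ref{F: concrete description of nu for D-balanced}(2,3) to identify $\nu_{D}$, by a definable isomorphism, with a type-definable subgroup $\mu_{D}\le(D^{k},+)$, $k=\drk(G)$: $\mu_{\Gamma}$ is the intersection of all $\CK$-definable $\Gamma$-boxes around $0$, and $\mu_{K/\CO}$ the intersection of all $\CK$-definable open balls around $0$ contained in a fixed ball $B\le(K/\CO)^{k}$. Commutativity of $\nu_{D}$ is then immediate, giving (4) (Fact~\ref{F: concrete description of nu for D-balanced}(2) covers $D=\KOp$ as well) and the final assertion of (3). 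On $\mu_{D}$ the power map is the honest endomorphism $x\mapsto mx$, and I would verify $m\mu_{D}=\mu_{D}$ by a box-by-box, resp.\ ball-by-ball, valuation computation: for $x\in\mu_{D}$ the element $x/m$ still satisfies all the defining conditions, the only delicate point being the $p$-adically closed case, where $v(m)>0$ for $p\mid m$ — this is absorbed by observing that $x$ already lies in the correspondingly smaller box/ball, and in $\KOp$ the torsion points (which then belong to $\mu_{K/\CO}$) are handled automatically, the torsion subgroup of the divisible group $(K/\CO)^{k}$ being divisible. This gives divisibility for these two sorts; and when moreover $D\neq\KOp$ the ambient group $(D^{k},+)$ is torsion-free ($\Gamma$ is ordered, and $K/\CO$ has no torsion in residue characteristic $0$), so $\nu_{D}\cong\mu_{D}$ is torsion-free.

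For $D=K$ I would use that $G$, being $K$-balanced with $\drk(G)=k>0$, carries near $e$ a definable $K$-manifold structure of dimension $k$ for which, by Fact~\ref{F: concrete description of nu for D-balanced}(1), $\nu_{K}$ is exactly the intersection of the $\CK$-definable neighbourhoods of $e$; in particular $\nu_{K}$ is preserved by any $\CK$-definable self-map of $G$ that is continuous at $e$ and fixes $e$. In a chart at $e$ the map $x\mapsto x^{m}$ is definable, fixes $0$, and has differential $m\cdot\id$ there (the computation in the footnote to Claim~\ref{C:dim of G^n}), hence is invertible as $\ch(K)=0$. By the inverse function theorem, valid in all three classes of valued fields, there is a $\CK$-definable neighbourhood $V\ni e$ on which $x\mapsto x^{m}$ is injective, with image a $\CK$-definable neighbourhood of $e$ and with continuous local inverse $\phi$ fixing $e$. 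As $\nu_{K}$ lies inside $V$ and inside its image, injectivity forces torsion-freeness — an $m$-torsion element of $\nu_{K}$ and $e$ have equal $m$-th powers, hence are equal — while $\phi$ carries $\nu_{K}$ into $\nu_{K}$ and exhibits an $m$-th root in $\nu_{K}$ for every element of $\nu_{K}$, giving divisibility.

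The main obstacle is the centrality clause of (3). There I would set $G_{1}:=C_{G}(\nu_{\Gamma})$: since $\nu_{\Gamma}$ is commutative it lies in $G_{1}$, every element of $\nu_{\Gamma}$ centralises $G_{1}$ by definition, so $\nu_{\Gamma}\vdash Z(G_{1})$, and $G_{1}$ is the kernel of the conjugation homomorphism $c\colon G\to\aut(\nu_{\Gamma})$, hence definable once $\im(c)$ is finite. To prove finiteness I would transport through $\nu_{\Gamma}\cong\mu_{\Gamma}\le(\Gamma^{k},+)$: each $c_{g}$ is then a definable automorphism of $\mu_{\Gamma}$, additive and fixing $0$, and since $\mu_{\Gamma}$ is infinitesimal it lies within a single cell of any cell-decomposition near $0$, so by the structure of definable functions in the (weakly) o-minimal, resp.\ Presburger, group $\Gamma$, $c_{g}$ agrees on $\mu_{\Gamma}$ with a matrix in $\gl_{k}(\Qq)$. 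Thus $\im(c)$ is a definable subset of $\gl_{k}(\Qq)\subseteq\acl^{\eq}(\emptyset)$ and therefore finite, and $[G:G_{1}]=|\im(c)|<\infty$. So the hard point is the rigidity statement that a definable group of automorphisms of a type-definable subgroup of $(\Gamma^{k},+)$ is finite; by comparison, the $K$-case rests only on the (standard) definable manifold structure and the inverse function theorem, and the $\Gamma$- and $K/\CO$-cases of (1)–(2) are elementary computations inside $(D^{k},+)$.
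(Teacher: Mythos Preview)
Your treatment of (1), (2), (4) and the commutativity assertion in (3) is essentially the paper's own proof: both reduce the $\Gamma$ and $K/\CO$ cases to direct computations in $(D^{k},+)$ via Fact~\ref{F: concrete description of nu for D-balanced}, and both handle the field sort via the inverse function theorem applied to $x\mapsto x^{m}$ (differential $m\cdot\id$ at $e$). One small omission: the lemma also covers $D=\bk$ in the power-bounded $T$-convex setting, where $\bk$ is o-minimal and hence unstable. The paper treats this together with $D=K$ (same inverse-function-theorem argument, using Fact~\ref{F: concrete description of nu for D-balanced}(1)); your $D=K$ paragraph applies verbatim once you note this.

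For the centrality clause of (3) the paper gives no argument at all --- it simply cites \cite[Proposition~6.1]{HaHaPeSemisimple} --- so your sketch is genuinely new content rather than a comparison point. The strategy is sound, but the closing step ``$\im(c)$ is a definable subset of $\gl_{k}(\Qq)\subseteq\acl^{\eq}(\emptyset)$ and therefore finite'' has two gaps. First, you have not shown that $g\mapsto A_{g}$ is a definable map: the matrix $A_{g}$ is so far recovered only from the action of $c_{g}$ on the \emph{type}-definable set $\mu_{\Gamma}$, and you need a uniformity step (e.g.\ stable embeddedness of $\Gamma$ together with its elimination of imaginaries, or a compactness argument producing a single $\CK$-definable box on which every $c_{g}$ is already linear) before you can speak of $\im(c)$ as a definable set. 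Second, ``contained in $\acl^{\eq}(\emptyset)$'' does not by itself force finiteness; what you actually need is that in o-minimal or Presburger $\Gamma$ an infinite definable subset of $\Gamma^{k^{2}}$ contains an infinite interval in some coordinate, hence cannot sit inside $\Qq^{k^{2}}$ (and in the $T$-convex case the matrices lie a priori in $\gl_{k}(\Lambda)$ for $\Lambda$ the field of exponents, not $\gl_{k}(\Qq)$, though the same finiteness argument applies). Both points are repairable, but as written the argument does not close.
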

	\begin{proof} We prove all clauses for each of the relevant sorts $D$.

		Assume that $D=\Gamma$. By Fact \ref{F: concrete description of nu for D-balanced},  $\nu_D$ is definably isomorphic to $\{U:U\subseteq \Gamma^n \text{ a $\Gamma$-box around $0$}\}$. If $\CK$ is not $p$-adically closed then $\Gamma$ is divisible, and it is easy to see that $\nu_D$ is divisible as well (if $(a,b)$ is an open interval containing $0$ then so is $(\frac{1}{n}a,\frac{1}{n}b)$ for any natural number $n$). Assuming the $\CK$ is $p$-adically closed, every $\Gamma$-box around $0$ is a cartesian product of sets of the form $(b_i,c_i)\cap\{x_i:x_i\equiv 0 \mod (m_i)\}$, where $m_i$ ranges over all integers. In particular, $\nu_\Gamma$ is divisible. It follows, in addition, that in all these cases  $\nu_\Gamma$ is torsion-free and commutative, since $\Gamma$ is. Clause (3) is just \cite[Proposition 6.1]{HaHaPeSemisimple}.
		
		Assume that $D=K$ (in all settings) or $D=\bk$ in the power-bounded $T$-convex setting. In that case, the function $x\mapsto x^n$ is differentiable with respect to the underlying field structure and its differential at the identity is invertible (see, e.g.,  the proof of  \cite[Lemma 7.1]{HaHaPeGps}). By the inverse mapping theorem there is a definable neighborhood $U$ of $e$ such that the restriction of $x\mapsto x^n$ is an open map injective on $U$. By the definition of $\nu_D$, this implies that $x\mapsto x^n$ is surjective on $\nu_D$, so $\nu_D$ is divisible,  and that $\nu_D$ is torsion-free.  
		
		Assume that $D=K/\CO$.  By Fact \ref{F: concrete description of nu for D-balanced} (and Remark \ref{R: how nu looks}), there exists a definable group isomorphism between  $\nu_D$ and   $\nu_{K/\CO}((K/\CO)^m,+)= \{U\subseteq (K/\CO)^m: U\leq B \text{ is an open ball around $0$}\}$ for some $m$. Thus, it is sufficient to show that the latter group is divisible.
		
		Let $a\in \nu_{K/\CO}((K/\CO)^m,+)$, $n\in \mathbb N$ and let $B_{>r}(0)$ be a ball in $(K/\CO)^m$ around $0$ of radius $r<0$ (if $\CK$ is $p$-adically closed, then $r<\mathbb{Z}$). Thus, $a\in B_{>r+v(n)}$ and since $K$ is divisible, there is some $b\in (K/\CO)^m$ with $nb=a$ and $v(b)=v(a)-v(n)>r$ so $b\in B_{>r}(0)$. We conclude that $b\in \nu_{K/\CO}((K/\CO)^m,+)$ as well. Ranging over all $n$  we get that $\nu_D$ is divisible. As $K/\CO$ is commutative, so is $\nu_D$.  When $\CK$ is not $p$-adically closed, $K/\CO$ is torsion-free, hence so is $\nu_D$.
	\end{proof}
	
	\begin{remark}
		If $D=\bk$ in the $V$-minimal case  then for every $n$, $\dpr((\nu_D)^{\la n\ra })=\dpr(\nu_D)$ (the same proof as in Claim \ref{C:dim of G^n}). However, $\bk$-algebraic groups need not be divisible, and could have torsion.
	\end{remark}
	
	\subsection{Infinitesimal subgroups for all}
	We now extend the definition of  infinitesimal subgroups to all groups definable in $\CK$, not necessarily $D$-balanced. In particular, we define the infinitesimal subgroups of definable groups, which are locally {\em almost} strongly internal to $D$ but may not be   locally strongly internal to $D$. As we have already seen, groups that are not $D$-balanced can only arise when $D=\KOp$. Recall that $H_G$ is the unique minimal normal finite subgroup of $G$ such that $G/H_G$ is $D$-balanced, and if $D\neq \KOp$ then $H_G=\{e\}$.
	
	\begin{definition}
		Let $G$ be a group definable in $\CK$ and assume that $G$ is locally almost strongly internal to a distinguished sort $D$. 
		Then define the \emph{$D$-infinitesimal subgroup of $G$} as \[\nu_D(G):=\pi^{-1}(\nu_D(G/H_G))\] where $\pi: G\to G/H_G$ is the quotient map.  
	\end{definition}
	
	\begin{notation}
		For the sake of uniformity of notation, we let $\nu_D(G):=\{e\}$ if $G$ is not locally almost strongly internal to $D$.  
	\end{notation}
	
	\begin{remark}
		\begin{enumerate}
			\item If $G$ is $D$-balanced and $\drk(G)>0$ then $\nu_D$ as defined above is the same $D$-infinitesimal subgroup of $G$ provided by \cite[Proposition 5.11]{HaHaPeGps}. 
			\item In \cite{HaHaPeGps} we have not defined the $D$-infinitesimal subgroups of groups that are not $D$-groups, (e.g., when $D$ is stable or $G$ is not locally strongly internal to $D$). In that regard, the current definition extends the original one. 
			\item For groups locally strongly internal to $D$ but not $D$-balanced, the current definition of $\nu_D$ may differ from the one in \cite{HaHaPeGps}. Indeed, for $G$  as in Examples \ref{E:not balanced}, \ref{E:group but not balanced} the group $G\times K/\CO$ is locally  strongly internal to $D=K/\CO$ but is not $D$-balanced. Its almost $D$-rank is $2$, implying that $\dpr(\nu_D(G\times K/\CO))=2$.  On the other hand, the $D$-rank of $G\times K/\CO$ is $1$. Since it is locally strongly internal to $K/\CO$, the construction of \cite[Proposition 5.11]{HaHaPeGps} can be carried out, giving rise to an infinitesimal subgroup of rank $1$ which will be a subgroup of our $\nu_D$. 
		\end{enumerate}
	\end{remark}

	We aim to prove a characterization of $\nu_D(G)$ generalizing Lemma \ref{L:nu in XX-1}. Towards that end, we first prove:
	
	\begin{lemma}\label{L:some results on almost D-crit vs D-crit}
		Let $G$ be a definable group locally almost strongly internal to a distinguished sort $D$. 
		Let $f:G\to G/H_G$ be the quotient map. For any definable subset $X\subseteq G$ almost strongly internal to $D$ with $\dpr(X)=\adrk(G)$,  there exists an almost $D$-critical definable subset $X'\subseteq X$ such that $f(X')$ is $D$-critical in $G/H_G$ and $f^{-1}(f(X'))=X'$.
	\end{lemma}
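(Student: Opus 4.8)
The plan is to reduce everything to the canonical finite subgroup $H_G$ and its basic properties from Proposition \ref{Canonical H} and Corollary \ref{C:corollary of famous usefull lemma}. Recall that $H_G \trianglelefteq G$ is abelian of cardinality $m_{\text{crit}}$, and $m_{\text{crit}}$ is precisely the minimal generic fibre size of any function witnessing almost strong internality of a set of dp-rank $\adrk(G)$. First I would apply Corollary \ref{C:corollary of famous usefull lemma} to the given $X$ to obtain a definable $X_0 \subseteq X$ that is almost $D$-critical; passing to $X_0$ is harmless since it still has dp-rank $\adrk(G)$ and is almost strongly internal to $D$, and any further shrinking we do will remain inside $X$. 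By the construction in \cite[Proposition 4.35]{HaHaPeGps} (as recalled in the proof of Proposition \ref{Canonical H} and of Fact \ref{F: H contained if same adrk}), there is $X_1 \subseteq X_0$ of full dp-rank such that $X_1/H_G$ is strongly internal to $D$, i.e.\ there is a definable injection $\bar g : X_1/H_G \to D^k$ for some $k$; equivalently, $g := \bar g \circ (f\!\restriction\! X_1)$ is a function on $X_1$ whose fibres are exactly the cosets $aH_G$ intersected with $X_1$.

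The key technical point is to arrange that $X_1$ is a union of full $H_G$-cosets, so that $f^{-1}(f(X_1)) = X_1$. The obstruction is that, a priori, $X_1$ only meets each relevant coset $aH_G$ in a set of size $m_{\text{crit}}$ that need not be all of $aH_G$ — but in fact the fibre $[a]_g = aH_G \cap X_1$ has size exactly $m_{\text{crit}} = |H_G|$ for generic $a$, by minimality of $m_{\text{crit}}$ together with the fact that $g$ witnesses almost strong internality; hence $aH_G \cap X_1 = aH_G$ for generic $a \in X_1$. So I would let $X' := \{ a \in X_1 : aH_G \subseteq X_1 \}$ — this is definable since $H_G$ is finite (a finite conjunction), it contains every generic point of $X_1$ and hence $\dpr(X') = \adrk(G)$, and by construction $f^{-1}(f(X')) = X'$. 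Moreover $X' \subseteq X_1 \subseteq X$, $X'$ is still almost strongly internal to $D$ (via the restriction of $g$), and with fibres of size exactly $|H_G| = m_{\text{crit}}$ it is almost $D$-critical.

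It remains to check that $f(X')$ is $D$-critical in $G/H_G$. Since $f\!\restriction\! X'$ has all fibres equal to full $H_G$-cosets of size $|H_G|$, and $\dpr$ is invariant under finite-to-finite correspondences, $\dpr(f(X')) = \dpr(X') = \adrk(G) = \drk(G/H_G)$, the last equality because $G/H_G$ is $D$-balanced (Proposition \ref{Canonical H}) and $\adrk(G) = \adrk(G/H_G)$ by \cite[Lemma 2.14(1)]{HaHaPeSemisimple}. Finally, $\bar g$ restricts to a definable injection $f(X') \hookrightarrow D^k$, so $f(X')$ is strongly internal to $D$ of maximal possible dp-rank, i.e.\ $D$-critical in $G/H_G$. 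The main obstacle is the bookkeeping in the second paragraph — verifying that the generic fibres of $g$ on $X_1$ really are the full cosets $aH_G$ rather than proper subsets — but this is exactly the content of the minimality of $m_{\text{crit}} = |H_G|$ extracted from \cite[Proposition 4.35]{HaHaPeGps}, applied to the function $g$ which by construction already has fibres contained in $H_G$-cosets.
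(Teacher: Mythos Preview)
Your proof is correct and follows essentially the same approach as the paper's: both invoke \cite[Proposition 4.35]{HaHaPeGps} to obtain $X_1\subseteq X$ with $f(X_1)$ strongly internal to $D$, then use the minimality of $m_{\text{crit}}=|H_G|$ to conclude that generic fibres of $f\restriction X_1$ are full $H_G$-cosets, and finally shrink to the definable locus where this holds. The paper's version is slightly more streamlined --- it applies Proposition 4.35(1) directly to $X$ without first passing through Corollary~\ref{C:corollary of famous usefull lemma}, and it disposes of the $D$-balanced case explicitly at the outset --- but your extra step is harmless and your more explicit description of $X'$ as $\{a\in X_1 : aH_G\subseteq X_1\}$ is a nice touch.
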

	\begin{proof}
		The result is trivially true if $G$ is $D$-balanced so we assume $D=\KOp$ and in particular $D$ is unstable.
		
		Let $f:G\to G/H_G$ be the quotient map. By Proposition \ref{Canonical H}, $|H_G|=m_{crit}$.  Let $X\subseteq G$ be almost strongly internal to $D$ with $\dpr(X)=\adrk(G)$.
		By \cite[Proposition 4.35(1)]{HaHaPeGps}, there is a definable $X_1\sub X$,
		such that $f(X_1)$ is $D$-critical in $G/H_G$. 
		
		Then $\dpr(X_1)=\adrk(G)$, and by the minimality of $m_{crit}$, a generic fiber of $f\rest X_1$ has size $m_{crit}$. Thus, there exists a definable $X'\sub X_1$ such that the fibres of $f\rest X'$ have size $|H_G|$ i.e. $f^{-1}(f(X'))=X'$. Clearly, $f(X')$ is also $D$-critical in $G/H_G$.
	\end{proof}
	
	\begin{proposition}\label{P:characterizatio of nu_D in general}
		Let $G$ be a definable group locally almost strongly internal to a distinguished sort $D$. 
		
		\begin{enumerate}
			\item The following partial types are logically equivalent:
			\begin{enumerate}
				\item[(a)] $\nu_D$
				\item[(b)] $\{XX^{-1}: X\subseteq G  \text{ is almost $D$-critical}\}$.
				
				\item[(c)] $\{XX^{-1}: X\subseteq G  \text{ is almost strongly internal to $D$ and $\dpr(X)=\adrk(G)$}\}$.
				
				\item[(d)] For any $Y\subseteq G$ almost $D$-critical, $\{XX^{-1}: X\subseteq Y  \text{ is almost $D$-critical}\}.$

			\end{enumerate}

			\item Let $\mu$ be a type-definable group concentrated on an almost $D$-critical set. If $\dpr(\mu)=\adrk(G)$ then $\nu_D\vdash \mu$. 
		\end{enumerate}
	\end{proposition}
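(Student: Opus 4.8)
The plan is to reduce the whole statement to the $D$-balanced case through the quotient map $\pi\colon G\to \bar G:=G/H_G$, where Lemma~\ref{L:nu in XX-1} and Proposition~\ref{P: nu withough generics} (and, for stable $D$, the stabilizer argument inside the proof of Lemma~\ref{L:nu in XX-1}) already supply the desired descriptions of $\nu_D(\bar G)$. Note first that $G$ being locally almost strongly internal to $D$ forces $\adrk(G)\ge 1$, so $\bar G$ is $D$-balanced with $\drk(\bar G)=\adrk(\bar G)=\adrk(G)>0$ and $\nu_D(\bar G)$ is defined. If $D\ne\KOp$ then $H_G=\{e\}$ by Proposition~\ref{P:lasi=lsi}, so $G=\bar G$ is itself $D$-balanced and part~(1) is immediate: (a)$\equiv$(b) is Lemma~\ref{L:nu in XX-1}; (b)$\equiv$(c) holds because the family in (b) sits inside the family in (c), and conversely any $X$ as in (c) contains an almost $D$-critical $X'$ with $X'X'^{-1}\subseteq XX^{-1}$ by Corollary~\ref{C:corollary of famous usefull lemma}; and (b)$\equiv$(d) is Proposition~\ref{P: nu withough generics}(3) for unstable $D$, or for $D=\bk$ the same stabilizer argument as in Lemma~\ref{L:nu in XX-1}. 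So the genuinely new content is the case $D=\KOp$, treated below via $\pi$ and a ``coset dictionary''. Throughout, for a partial type $\rho$ on $\bar G$ I write $\pi^{-1}(\rho)$ for $\{\pi^{-1}(V):V\in\rho\}$, and I use repeatedly that taking $\pi$-preimages preserves logical equivalence of partial types.

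Assume $D=\KOp$. The dictionary I want is that $X\mapsto\pi(X)$ is a bijection between the almost $D$-critical subsets of $G$ that are unions of $H_G$-cosets and the $D$-critical subsets of $\bar G$, with inverse $Y\mapsto\pi^{-1}(Y)$. One direction: if $Y\subseteq\bar G$ is $D$-critical then $\pi^{-1}(Y)$ has dp-rank $\dpr(Y)=\drk(\bar G)=\adrk(G)$ (as $\pi$ is finite-to-one), composing an injective witness $Y\hookrightarrow D^m$ with $\pi$ gives a witness $\pi^{-1}(Y)\to D^m$ all of whose fibres have size $|H_G|=m_{\text{crit}}$ (Proposition~\ref{Canonical H}), so $\pi^{-1}(Y)$ is almost $D$-critical, and it is manifestly a union of $H_G$-cosets. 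The other direction is exactly the argument of Lemma~\ref{L:some results on almost D-crit vs D-crit}, using \cite[Proposition~4.35]{HaHaPeGps}: an almost $D$-critical union of $H_G$-cosets maps onto a $D$-critical set. Next, if $X'=\bigcup_i a_iH_G$ then, since $H_G$ is normal, $X'X'^{-1}=\bigcup_{i,j}a_ia_j^{-1}H_G$ is again a union of $H_G$-cosets, whence $X'X'^{-1}=\pi^{-1}\!\big(\pi(X')\pi(X')^{-1}\big)$. Finally, by Lemma~\ref{L:some results on almost D-crit vs D-crit} together with Corollary~\ref{C:corollary of famous usefull lemma}, every almost $D$-critical $X\subseteq G$ contains an almost $D$-critical union of $H_G$-cosets $X'$ with $X'X'^{-1}\subseteq XX^{-1}$. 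Combining these facts, the family of (b) is logically equivalent to $\{X'X'^{-1}:X'\text{ almost }D\text{-critical, a union of }H_G\text{-cosets}\}$, which under the dictionary equals $\pi^{-1}\!\big(\{WW^{-1}:W\subseteq\bar G\text{ is }D\text{-critical}\}\big)\equiv\pi^{-1}(\nu_D(\bar G))=\nu_D(G)$, the middle step being Lemma~\ref{L:nu in XX-1} for $\bar G$. This proves (a)$\equiv$(b); (b)$\equiv$(c) is as above via Corollary~\ref{C:corollary of famous usefull lemma}; and (b)$\equiv$(d) follows by the same preimage transfer applied to Proposition~\ref{P: nu withough generics}(3) over $\bar G$: given an almost $D$-critical $Y\subseteq G$, shrink it to an almost $D$-critical union of cosets $Y_0\subseteq Y$ and run the dictionary between the $D$-critical subsets of $\pi(Y_0)$ and the almost $D$-critical unions of cosets contained in $Y_0$.

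For part~(2), let $\mu\vdash Y_0$ with $Y_0$ almost $D$-critical, and fix any definable $Y'$ with $\mu\vdash Y'$. As $\mu$ is a type-definable group there is a definable $X$ with $\mu\vdash X$ and $XX^{-1}\subseteq Y'$; then $\mu\vdash X\cap Y_0$, so $\adrk(G)=\dpr(\mu)\le\dpr(X\cap Y_0)\le\dpr(Y_0)=\adrk(G)$, and $X\cap Y_0$ is almost strongly internal to $D$, hence by Corollary~\ref{C:corollary of famous usefull lemma} contains an almost $D$-critical $X''\subseteq X\cap Y_0$. Then $X''X''^{-1}\subseteq XX^{-1}\subseteq Y'$, and part~(1)(b) gives $\nu_D\vdash X''X''^{-1}\vdash Y'$. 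Letting $Y'$ range over the defining sets of $\mu$ yields $\nu_D\vdash\mu$; this extends Lemma~\ref{L: nu is smallest}.

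I expect the main obstacle to be the coset dictionary in the $\KOp$ case: establishing that $\pi$-preimages of $D$-critical sets in $G/H_G$ are genuinely \emph{almost} $D$-critical — which is exactly where $|H_G|=m_{\text{crit}}$ from Proposition~\ref{Canonical H} is needed — together with the bookkeeping identity $X'X'^{-1}=\pi^{-1}(\pi(X')\pi(X')^{-1})$ for unions of $H_G$-cosets. Once these are in place, all the equivalences in (1) become formal manipulations of partial types through $\pi^{-1}$, and (2) is a short consequence of (1)(b).
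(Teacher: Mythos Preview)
Your proof is correct and follows the same route as the paper's: reduce to the $D$-balanced quotient $G/H_G$, invoke Lemma~\ref{L:nu in XX-1} and Proposition~\ref{P: nu withough generics} there, and pull back via the correspondence between almost $D$-critical unions of $H_G$-cosets in $G$ and $D$-critical sets in $G/H_G$ supplied by Lemma~\ref{L:some results on almost D-crit vs D-crit}. Your version is in fact more explicit than the paper's---particularly in separating out the stable case and in writing down the coset identity $X'X'^{-1}=\pi^{-1}(\pi(X')\pi(X')^{-1})$---and the only cosmetic point is that your ``dictionary'' need not be a literal bijection (Lemma~\ref{L:some results on almost D-crit vs D-crit} only guarantees a \emph{subset} whose image is $D$-critical), but your argument never actually uses more than this and goes through as written.
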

	\begin{proof}
		Recall that $X\sub G$ is almost $D$-critical if $\dpr(X)=\adrk(G)$ and there is $f:X\to D^n$, $n=\dpr(X)$, with the fibers of size $m_{crit}$. 
		
		(1)     
		If $G$ is $D$-balanced then $X\sub G$ is $D$-critical if and only if it is almost $D$-critical, so Proposition \ref{P: nu withough generics} implies the result. 
		
		If $G$ is not $D$-balanced let $f:G\to G/H_G$ be the quotient map and then $\nu_D(G/H_G)$ is logically equivalent to $\{YY^{-1}: Y\subseteq G/H_G \text{ is $D$-critical} \}$  (Proposition \ref{P: nu withough generics} again).

		Thus, $\nu_D(G)$ is logically equivalent to $$\{f^{-1}(Y)f^{-1}(Y)^{-1}: Y \subseteq G/H_G \text{ is $D$-critical in } G/H_G\}.$$ By definition of $H_G$, every such $f^{-1}(Y)$ is almost $D$-critical. Applying Lemma \ref{L:some results on almost D-crit vs D-crit}, one can prove the equivalence to (b). The equivalence to (c) and (d) is similarly proved using Proposition  \ref{P: nu withough generics}.
		
		(2) By the rank assumption, $\mu$ is equivalent to the collection of all almost $D$-critical sets which are consistent with $\mu$. Since $\mu$ is a subgroup we know that $\mu\equiv \mu\mu^{-1}$, so the conclusion follows from (1b) above. 
	\end{proof}
	
	For the following see also \cite[Lemma 2.12]{HaHaPeSemisimple}.
	
	\begin{proposition}\label{P:general nu of finite index subgroup}
		Let $D$ be a distinguished sort and let $G$ be a  definable group in $\CK$ locally almost strongly internal to $D$. 
		
		For any definable subgroup $G_1\leq G$, if $\adrk(G_1)=\adrk(G)$ then  $\nu_D(G_1)\equiv \nu_D(G)$. In particular, this is the case if $G_1$ has finite index in $G$.
		
		Consequently,  if $H\leq G$ is a finite normal subgroup then $\nu_D(G)\vdash C_G(H)$.
	\end{proposition}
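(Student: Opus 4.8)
The plan is to reduce everything to the characterizations of $\nu_D$ collected in Proposition~\ref{P:characterizatio of nu_D in general}. First note that since $G$ is locally almost strongly internal to $D$ we have $\adrk(G)\ge 1$, so the hypothesis $\adrk(G_1)=\adrk(G)$ forces $G_1$ to be locally almost strongly internal to $D$ as well, and hence $\nu_D(G_1)$ is given by the construction rather than by the trivial convention. The crux of the argument will be to produce a single definable set $Y\subseteq G_1$ that is almost $D$-critical \emph{both} when viewed inside $G$ and when viewed inside $G_1$, with the same associated value $m_{\text{crit}}$. Granting this, I would invoke Proposition~\ref{P:characterizatio of nu_D in general}(1)(d) inside $G$, obtaining $\nu_D(G)\equiv\{XX^{-1}:X\subseteq Y\text{ almost $D$-critical in }G\}$, and the same clause inside $G_1$, obtaining $\nu_D(G_1)\equiv\{XX^{-1}:X\subseteq Y\text{ almost $D$-critical in }G_1\}$. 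These two families of formulas are literally the same: for a subset of $Y\subseteq G_1$, unwinding Definition~\ref{def: internalities}(7) shows that ``almost $D$-critical in $G$'' and ``almost $D$-critical in $G_1$'' both say precisely that the set has dp-rank $\adrk(G)=\adrk(G_1)$ and admits a finite-to-one map into a power of $D$ all of whose fibres have size $m_{\text{crit}}$ --- and the two values of $m_{\text{crit}}$ agree by the step above. Thus $\nu_D(G_1)\equiv\nu_D(G)$.

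To obtain such a $Y$ together with the equality of the two $m_{\text{crit}}$'s, I would begin with any definable $X\subseteq G_1$ that is almost strongly internal to $D$ with $\dpr(X)=\adrk(G_1)=\adrk(G)$ (available since $G_1$ is locally almost strongly internal to $D$), regard $X$ as a subset of $G$, and apply Corollary~\ref{C:corollary of famous usefull lemma} to get a definable $Y\subseteq X\subseteq G_1$ that is almost $D$-critical with respect to $G$, witnessed by some $f:Y\to D^n$ with every fibre of size $m_{\text{crit}}(G)$. Reading $f$ inside $G_1$ instead: it is a finite-to-one map witnessing almost strong internality of the set $Y\subseteq G_1$, which has dp-rank $\adrk(G_1)$; so, by the minimality built into the definition of $m_{\text{crit}}(G_1)$, its generic fibre size $m_{\text{crit}}(G)$ is $\ge m_{\text{crit}}(G_1)$. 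The reverse inequality $m_{\text{crit}}(G)\le m_{\text{crit}}(G_1)$ is immediate, since any subset of $G_1$ of dp-rank $\adrk(G_1)$ is also a subset of $G$ of dp-rank $\adrk(G)$. Hence $m_{\text{crit}}(G)=m_{\text{crit}}(G_1)$, so $Y$ is almost $D$-critical with respect to $G_1$ as well, and the previous paragraph applies.

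It then remains to record the two consequences. If $[G:G_1]<\infty$ then $\adrk(G_1)=\adrk(G)$ (as already noted in the excerpt, e.g.\ in the proof of Proposition~\ref{P:lasi=lsi}), so the first part gives $\nu_D(G_1)\equiv\nu_D(G)$. For the last assertion, given a finite normal $H\trianglelefteq G$, the set $C_G(H)=\bigcap_{h\in H}C_G(h)$ is definable and of finite index in $G$ (the conjugation action of $G$ on the finite set $H$ factors through $\aut(H)$, with kernel $C_G(H)$); applying the first part to $C_G(H)\le G$ gives $\nu_D(C_G(H))\equiv\nu_D(G)$, and since $\nu_D(C_G(H))$ is by construction concentrated on $C_G(H)$, we conclude $\nu_D(G)\vdash C_G(H)$.

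The one point I expect to require genuine care is the equality $m_{\text{crit}}(G)=m_{\text{crit}}(G_1)$ --- i.e.\ that restricting to a definable subgroup of the same almost $D$-rank cannot lower the minimal generic fibre size. This is exactly what makes ``almost $D$-critical'' an absolute notion for subsets of $G_1$ and lets the two applications of Proposition~\ref{P:characterizatio of nu_D in general} line up; everything else is routine bookkeeping with that proposition.
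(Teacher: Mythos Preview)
Your proof is correct and follows the same route as the paper: both arguments invoke Proposition~\ref{P:characterizatio of nu_D in general}(1d), working with a common almost $D$-critical subset $Y\subseteq G_1$, and then deduce the finite-index and centralizer consequences exactly as you do. Your explicit verification that $m_{\text{crit}}(G)=m_{\text{crit}}(G_1)$ (so that ``almost $D$-critical in $G$'' and ``almost $D$-critical in $G_1$'' coincide for subsets of $Y$) makes precise a point the paper's one-line proof leaves implicit.
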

	\begin{proof}
		By assumption, every almost strongly internal subset of $G_1$ is also such for $G$. Applying Proposition \ref{P:characterizatio of nu_D in general}(1d) to both groups, yields $\nu_D(G_1)\equiv \nu_D(G)$. If $G_1$ has finite index in $G$ then clearly $\adrk(G_1)=\adrk(G)$.
		
		The last statement follows by applying the above to the finite index subgroup $C_G(H)$.
	\end{proof}

	The following complements Proposition \ref{P:general nu of finite index subgroup}. It also shows that $\nu_D(G)$ is invariant under definable isomorphisms of $G$. 
	
	\begin{proposition}\label{P:nu under finite to one}
		Let $D$ be a distinguished sort and $G_1, G_2$ be definable groups in $\CK$ locally almost strongly internal to $D$.
		
		Let $f:G_1\to G_2$ be a definable surjective homomorphism with finite kernel. Then $f_*\nu_D(G_1)\equiv \nu_{D}(G_2)$. In particular, if $f$ is an isomorphism then it induces an isomorphism of $\nu_D(G_1)$ with $\nu_D(G_2)$.
		
		If $D$ is unstable other $\KOp$ then $f$ induces an isomorphism between $\nu_D(G_1)$ and $\nu_D(G_2)$.
	\end{proposition}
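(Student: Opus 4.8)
The plan is to deduce everything from the concrete description in Proposition \ref{P:characterizatio of nu_D in general}(1), namely that for a group $G$ locally almost strongly internal to $D$ one has $\nu_D(G)\equiv\{XX^{-1}:X\subseteq G\text{ almost strongly internal to }D,\ \dpr(X)=\adrk(G)\}$, together with the single technical input $(\ast)$: for every $X\subseteq G_1$ almost strongly internal to $D$ with $\dpr(X)=\adrk(G_1)$, the image $f(X)\subseteq G_2$ is again almost strongly internal to $D$ (necessarily with $\dpr(f(X))=\dpr(X)$, since $\ker f$ is finite). Write $N=\ker f$, a finite normal subgroup, and recall that a finite-to-one definable map preserves dp-rank and that a composition of finite-to-one maps is finite-to-one. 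Granting $(\ast)$, I would first observe that it forces $\adrk(G_1)=\adrk(G_2)$: the inequality $\adrk(G_1)\ge\adrk(G_2)$ is free, since for $Y\subseteq G_2$ almost strongly internal to $D$ via a finite-to-one $h\colon Y\to D^m$ the composite $h\circ f$ witnesses that $f^{-1}(Y)\subseteq G_1$ is almost strongly internal to $D$ with $\dpr(f^{-1}(Y))=\dpr(Y)$; and the reverse inequality is exactly $(\ast)$ applied to an $X$ realising $\adrk(G_1)$.

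With this equality in hand both entailments of $f_*\nu_D(G_1)\equiv\nu_D(G_2)$ are short. For $f_*\nu_D(G_1)\vdash\nu_D(G_2)$: given $Y\subseteq G_2$ almost strongly internal to $D$ with $\dpr(Y)=\adrk(G_2)$, the set $f^{-1}(Y)$ is almost strongly internal to $D$ of maximal rank in $G_1$, so $\nu_D(G_1)\vdash f^{-1}(Y)f^{-1}(Y)^{-1}$, whence $f_*\nu_D(G_1)\vdash f(f^{-1}(Y))f(f^{-1}(Y))^{-1}=YY^{-1}$, using that $f$ is a surjective homomorphism; ranging over all such $Y$ gives the entailment. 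For the converse: since $f$ is finite-to-one and $\nu_D(G_1)$ is equivalent to a filter base of sets $XX^{-1}$, a pigeonhole argument on the finite fibres of $f$ shows $f_*\nu_D(G_1)\equiv\{f(X)f(X)^{-1}:X\subseteq G_1\text{ almost strongly internal to }D,\ \dpr(X)=\adrk(G_1)\}$; by $(\ast)$ and the rank equality each $f(X)$ is almost strongly internal to $D$ of maximal rank in $G_2$, so each $f(X)f(X)^{-1}$ is one of the defining sets of $\nu_D(G_2)$, and hence $\nu_D(G_2)\vdash f_*\nu_D(G_1)$.

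So the whole argument reduces to establishing $(\ast)$, and this is where a case analysis on $D$ enters. When $D=\bk$ is stable ($\CK$ $V$-minimal) it is immediate: $X$ is interpretable in $\bk$ (stable embeddedness and elimination of imaginaries in $\bk$), hence so is its quotient $f(X)$ by a finite definable equivalence relation. When $D\in\{K,\bk,\Gamma\}$ in the remaining settings, one produces a finite-to-one map $f(X)\to D^{n}$ out of a witnessing finite-to-one $g\colon X\to D^{m}$ by sending $y\in f(X)$ to a definable code for the finite multiset $g\bigl(f^{-1}(y)\cap X\bigr)\subseteq D^m$ --- elementary symmetric functions when $D$ is a field, the lexicographically ordered tuple when $D=\Gamma$ --- which is finite-to-one because distinct fibres of $f$ are disjoint. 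The one genuinely hard case is $D=\KOp$, where $K/\CO$ is neither a field nor linearly ordered and there is no such coding of finite subsets. For that case I would pass to the $D$-balanced quotients $G_i/H_{G_i}$ of Proposition \ref{Canonical H}, checking first, from minimality of $H_{G_1}$ together with the fact that $G_1/f^{-1}(H_{G_2})\cong G_2/H_{G_2}$ is $D$-balanced, that $f(H_{G_1})\subseteq H_{G_2}$, so that $f$ induces a surjective homomorphism $\bar f\colon G_1/H_{G_1}\to G_2/H_{G_2}$ with finite kernel; one then analyses $\bar f$ on the concrete ``ball'' normal forms of Fact \ref{F: concrete description of nu for D-balanced}(2) and Remark \ref{R: onto open for K/O padic}, in which the relevant $D$-critical sets are cosets of definable subgroups mapping isomorphically onto open balls in $(K/\CO)^{n}$, using the finiteness of $\ker\bar f$ to control the image. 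I expect this $\KOp$ bookkeeping --- reconciling the canonical finite subgroups $H_{G_i}$ with $f$ and pushing the ball normal form through $\bar f$ --- to be the main obstacle.

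Finally, for the refinement when $D$ is unstable other than $\KOp$: there $G_1$ is automatically $D$-balanced (Proposition \ref{P:lasi=lsi}), and since $\adrk(G_1)>0$ (both groups being locally almost strongly internal to $D$) the group $\nu_D(G_1)$ is torsion-free by Lemma \ref{L:nu divisible}(2), which covers $D=K$ in all three settings as well. Hence $\ker f\cap\nu_D(G_1)$, a finite subgroup of a torsion-free group, is trivial, so $f$ is injective on $\nu_D(G_1)$; combined with the equivalence $f_*\nu_D(G_1)\equiv\nu_D(G_2)$ already established, $f$ restricts to a definable bijection, that is, an isomorphism of type-definable groups, from $\nu_D(G_1)$ onto $\nu_D(G_2)$. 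The intermediate assertion (``if $f$ is an isomorphism'') is the special case where in addition $\ker f$ is trivial, and follows from the same two ingredients.
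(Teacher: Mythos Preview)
Your overall architecture matches the paper's proof exactly: use the description of $\nu_D$ from Proposition \ref{P:characterizatio of nu_D in general}(1c), establish $\adrk(G_1)=\adrk(G_2)$, prove the two entailments by pulling back and pushing forward witnessing sets, and finish the last clause via torsion-freeness (Lemma \ref{L:nu divisible}(2)).

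The one place where you work harder than necessary is your technical input $(\ast)$. You require that $f(X)$ itself be almost strongly internal to $D$, and then do a case analysis to verify this, with $\KOp$ becoming the anticipated ``main obstacle''. The paper instead uses the weaker statement that $f(X)$ \emph{contains a definable subset of full dp-rank} which is almost strongly internal to $D$; this is available uniformly for all distinguished sorts directly from \cite[Lemma 2.9(2), Lemma 3.9, Lemma 4.3]{HaHaPeGps}, with no case split and no $\KOp$ bookkeeping. The weaker statement is enough: if $Y'\subseteq f(X)$ has full dp-rank and is almost strongly internal, then $\nu_D(G_2)\vdash Y'Y'^{-1}\subseteq f(X)f(X)^{-1}$, and this already yields $\nu_D(G_2)\vdash f_*\nu_D(G_1)$. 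Similarly, the paper obtains the rank equality $\adrk(G_1)=\adrk(G_2)$ by a direct citation (\cite[Lemma 2.14(1)]{HaHaPeSemisimple}) rather than deriving it from $(\ast)$. So your proof is correct, but your detour through the stronger $(\ast)$ --- and in particular the passage to balanced quotients and ball normal forms in the $\KOp$ case --- can be replaced by a one-line citation.
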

	\begin{proof} 
		By \cite[Lemma 2.14(1)]{HaHaPeSemisimple}, $\adrk(G_1)=\adrk(G_2)$. If $Y\sub G_2$ is almost strongly internal to $D$ and $\dpr(Y)=\adrk(G_2)$ then  $f^{-1}(Y)\sub G_1$ is almost strongly internal to $D$, with $\drk(f^{-1}(Y))=\drk(G_1)$. This shows, using Proposition \ref{P:characterizatio of nu_D in general}(1c), that $f_*(\nu_D(G_1))\vdash \nu_D(G_2)$.

		If $X\subseteq G_1$ is a definable subset almost strongly internal to $D$ with $\dpr(X)=\adrk(G_1)$ then $f(X)$ contains a subset of full dp-rank which is almost strongly internal to $D$ (\cite[Lemma 2.9(2), Lemma 3.9, Lemma 4.3]{HaHaPeGps}). This shows, using Proposition \ref{P:characterizatio of nu_D in general}(1c), that $\nu_D(G_2)\vdash f_*\nu_D(G_1)$.   

		Now, assume that $D$ is unstable and different from $\KOp$. By Lemma \ref{L:nu divisible}(2), $\nu_D(G_1)$ is torsion-free so $f\restriction \nu_D(G_1)$ is injective; the result follows.
	\end{proof}
	
	Let us note that the above does not extend to  definable homomorphisms with infinite kernel. In the example below we have two groups $G_1, G_2$ that are $D$-balanced and of  the same $D$-ranks for all $D$, $\dpr(G_1)=\dpr(G_2)$ and a definable function $f:G_1\to G_2$, however, $f$ does not map $\nu_D(G_1)$ onto $\nu_D(G_2)$.
	
	\begin{example}\label{E: homomorphisms}
		Let us assume that $\CK$ is $V$-minimal (a similar example will work in the other cases). Let $G_1=(K/\m)\times K/\CO$. The $K/\CO$-rank and $\bk$-rank of $G_1$ are both $1$. Fix $B\supseteq \CO$ an open ball, and consider  $f: G_1\to G_2= K/\CO\times K/B$ given by $(x,y)\mapsto (x+\CO, y+B)$. Notice that $K/B\cong K/\m$.  For $D=\bk$ we have $\nu_D(G_1)= (\CO/\m)\times \{0\}\cong (\bk,+)$ and $\nu_D(G_2)=\{0\}\times \bar B/B\cong (\bk,+)$ (with $\bar B$ the closed ball corresponding to $B$). 
		However, unlike the case of finite kernel, we have $f_*(\nu_D(G_1))=\{(0,0)\}$. 
		
		An analogous argument shows that $f_*(\nu_{K/\CO})(G_1)=\{(0,0)\}$.   
	\end{example}

	\subsection{The case of $\KOp$}
	
	In this section we prove the main properties of $\nu_D(G)$ for $D=\KOp$.

	\begin{proposition}\label{P:lasi in K/O p-adic.1}
		Let $\CK$ be $p$-adically closed and $G$ a definable group. For $D=K/\CO$, the following are equivalent:
		
		\begin{enumerate}
			\item $G$ is locally almost strongly internal to $D$ and $\adrk(G)\geq m$.
			\item There exists a definable subgroup of $G$ which is definably isomorphic to a group $B/C$, where $B\leq (K/\CO)^m$ is a ball around $0$ and $C\leq  B$ is a finite subgroup.
		\end{enumerate}
	\end{proposition}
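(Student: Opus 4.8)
The plan is to prove the two implications $(2)\Rightarrow(1)$ and $(1)\Rightarrow(2)$ separately; the first is routine and the second carries the content, so I will sketch it in detail. We may assume $m\geq 1$. For $(2)\Rightarrow(1)$: suppose $G_0\leq G$ is definable with a definable isomorphism $G_0\cong B/C$, where $B\leq(K/\CO)^m$ is a ball around $0$ and $C\leq B$ is finite; let $N$ be the exponent of $C$, so $NC=\{0\}$. Then $x+C\mapsto Nx$ is a well-defined definable map $B/C\to (K/\CO)^m$, and each of its fibres lies in a coset of the $N$-torsion subgroup $(K/\CO)^m[N]$, which is finite in the $p$-adically closed setting. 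Hence the map is finite-to-one, so $G_0$ is almost strongly internal to $K/\CO$ with $\dpr(G_0)=\dpr(B/C)=\dpr(B)=m$; in particular $G$ is locally almost strongly internal to $K/\CO$ and $\adrk(G)\geq m$.

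For $(1)\Rightarrow(2)$ I would first reduce to the $D$-balanced quotient. Since $G\to G/H_G$ has finite kernel, $\drk(G/H_G)=\adrk(G/H_G)=\adrk(G)\geq m$ (by \cite[Lemma 2.14(1)]{HaHaPeSemisimple} together with the fact that $G/H_G$ is $D$-balanced, Notation \ref{N: H_G}). Applying Fact \ref{F: concrete description of nu for D-balanced}(2) to $G/H_G$ yields a definable subgroup $\bar G_0\leq G/H_G$ and a definable isomorphism onto a ball $B_0\leq(K/\CO)^n$ around $0$, with $n=\drk(G/H_G)\geq m$. Writing $B_0$ as a product of $n$ balls of equal radius around $0$ and keeping only the first $m$ coordinates produces a definable subgroup $\bar B\leq G/H_G$ definably isomorphic to a ball $B\leq(K/\CO)^m$ around $0$. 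Let $\pi\colon G\to G/H_G$ be the quotient map and put $G_1:=\pi^{-1}(\bar B)$: a definable subgroup of $G$ with $G_1/H_G\cong B$ and $H_G$ finite and normal in $G_1$.

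The remaining — and, I expect, the genuinely hard — step is that $G_1$ is only an extension of the ball $B$ by the finite group $H_G$, and need not be abelian (or even nilpotent), whereas the target $B/C$ is abelian; a power map must be used to untwist it. First pass to $G_1^\circ:=C_{G_1}(H_G)$, which has finite index in $G_1$ and contains $H_G$ in its centre; since $[G_1^\circ,G_1^\circ]\subseteq[G_1,G_1]\subseteq H_G$ (because $G_1/H_G$ is abelian), $G_1^\circ$ is nilpotent of class $\leq 2$ with commutator subgroup of exponent dividing $M:=$ the exponent of $H_G$. The image $\pi(G_1^\circ)=G_1^\circ/H_G$ has some finite index $d$ in $\bar B$, hence (under $\bar B\cong B$) corresponds to a subgroup containing $dB$, and $dB=:B_1$ is again a ball around $0$ in $(K/\CO)^m$; let $\bar B_1\leq\pi(G_1^\circ)$ correspond to it and set $G_1^{\circ\circ}:=G_1^\circ\cap\pi^{-1}(\bar B_1)$, a definable class-$\leq 2$ nilpotent subgroup with $G_1^{\circ\circ}/H_G\cong B_1$ and $[G_1^{\circ\circ},G_1^{\circ\circ}]$ of exponent dividing $M$. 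By the standard identity $(xy)^k=x^ky^k[y,x]^{\binom k2}$ in class-$2$ groups, the $2M$-th power map $g\mapsto g^{2M}$ is an endomorphism of $G_1^{\circ\circ}$; it kills $H_G$, so it factors through $G_1^{\circ\circ}/H_G\cong B_1$, and its image $G_2:=\{g^{2M}:g\in G_1^{\circ\circ}\}$ is a definable subgroup of $G$ definably isomorphic to $B_1/C$, with $C$ contained in the finite group $B_1[2M]$. Thus $G_2$ witnesses $(2)$. The crux, and the only place where the gap between ``almost strongly internal'' and ``strongly internal'' really enters, is exactly this untwisting — converting the quotient data coming from $G/H_G$ into an honest definable subgroup of $G$ of the prescribed shape, given that $\pi^{-1}(\bar B)$ may be genuinely non-abelian. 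Its resolution combines a purely group-theoretic manoeuvre (centralizer of $H_G$ for class-$2$ nilpotency, then the power map, whose image is automatically abelian) with the $p$-adic-specific finiteness of bounded torsion in $K/\CO$; the latter is precisely why the statement is confined to $D=\KOp$, since in residue characteristic $0$ the group $K/\CO$ is torsion-free, $H_G$ is trivial, and the issue evaporates.
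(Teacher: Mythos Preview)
Your proof is correct and the overall architecture matches the paper's: both directions are handled the same way conceptually, and for $(1)\Rightarrow(2)$ both arguments pass to the $D$-balanced quotient $G/H_G$, locate there a definable subgroup isomorphic to a ball, pull back, and then use a power map to produce the desired subgroup of $G$.

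The one substantive divergence is in the ``untwisting'' step. The paper first replaces $G$ by $C_G(H_G)$, then invokes \cite[Lemma 2.17(2)]{HaHaPeSemisimple} (a finite-by-abelian group has centre of finite index) to find an honestly \emph{abelian} subgroup $G_0\leq G$ containing $H_G$ with $G_0/H_G$ isomorphic to a ball; on an abelian group the map $x\mapsto x^{|H_G|}$ is trivially a homomorphism, and its kernel is contained in the finite $|H_G|$-torsion of the ball. You instead stop at the centraliser $G_1^\circ=C_{G_1}(H_G)$, observe it is nilpotent of class $\le 2$ with commutator subgroup of exponent dividing $M$, and use the class-$2$ identity $(xy)^k=x^ky^k[y,x]^{\binom{k}{2}}$ with $k=2M$ to force $g\mapsto g^{2M}$ to be an endomorphism. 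Your route is slightly more self-contained (no external lemma on finite-by-abelian groups) at the cost of a mild technical manoeuvre and the factor $2$ in the exponent; the paper's route is cleaner once that lemma is granted. Either way the kernel lands in a bounded-torsion subgroup of the ball, which is finite in the $p$-adic case, exactly as you note.

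For $(2)\Rightarrow(1)$ your multiplication-by-$N$ map and the paper's quotient by the small ball $B_{\geq r}$ are interchangeable; both witness almost strong internality with finite fibres.
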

	\begin{proof}
		Assume (2) and let $r=\min\{v(x):x\in C\}$. Since $C$ is finite, $r\in \mathbb Z$ (\cite[Fact 7.3]{HaHaPeSemisimple}) and  
		$B_{\geq r}=\{x\in (K/\CO)^m:v(x)\geq r\}$ is a finite group containing $C$.  The map $x+C\mapsto x+B_{\geq r}$ is finite-to-one mapping $B/C$ into $(K/\CO)^m/B_{\geq r}$. Since $(K/\CO)^m/B_{\geq r}$ is definably isomorphic to $(K/\CO)^m$ it follows that a definable subgroup of $G$ of dp-rank $m$ is almost strongly internal to $K/\CO$, so  $\adrk(G)\geq \dpr(B)\geq m$.
		
		We now prove the direction (1) $\Rightarrow$ (2).  As a first step, we replace $G$ by its finite index subgroup $C_G(H_G)$ and assume that $H_G$ is central. This does not affect the almost $D$-rank.
		
		Let $f:G\to G/H_G$ be the quotient map. 
		
		\begin{claim}
			There is a definable abelian subgroup $G_0\leq G$ containing $H_G$, such that $f(G_0)$ is definably isomorphic to a ball $B\leq (K/\CO)^m$.
		\end{claim}
		\begin{claimproof}
			Since $\adrk(G)\geq m$ then, by Fact \ref{F: concrete description of nu for D-balanced}, $G/H_G$ has a definable subgroup $N_1$ which is definably isomorphic to a ball in $(K/\CO)^m$, so in particular $N_1$ is abelian of dp-rank $m$. For $G_1=f^{-1}(N_1)$, the map $f\rest G_1:G_1 \to N_1$ has finite kernel and abelian image. Thus, by  \cite[Lemma 2.17(2)]{HaHaPeSemisimple},    $Z(G_1)$ has finite index in $G_1$ so $f(Z(G_1))\sub N_1$ has dp-rank $m$. By Fact \ref{F: concrete description of nu for D-balanced}, it contains a group $N_2$ definably isomorphic to a ball in  $(K/\CO)^m$.
			
			Let $G_0=f^{-1}(N_2)\cap Z(G_1)$. It is abelian, and its image $N_2$ is definably isomorphic to a ball $B\sub (K/\CO)^m$. Note that $f^{-1}(N_2)$ and $G_1$ contain $H_G$ by definition, so as $H_G$ is central in $G$, it is contained in $G_0$.  This proves the claim.
		\end{claimproof}
		
		Let $\widehat f:G_0\to B$ be the composition of (the restriction of) $f$ with the isomorphism to a ball in $(K/\CO)^m$. 
		Now, if $n=|H_G|$, then there is a definable function $\beta: B \to G_0$, given by $\beta(x)=f^{-1}(x)^n$. It is well-defined since $|\ker f|=n$ and because $G_0$ is abelian it is a group homomorphism. Note that $C:=\ker(\beta)$ is contained in  $\{x\in B:nx=0\}$ and by \cite[Fact 7.3]{HaHaPeSemisimple} it is finite. 
		Thus, $\dpr(\beta(B))=m$ and $\beta(B)\sub G$ is definably isomorphic to $B/C$. 
	\end{proof}

	\begin{proposition}\label{P:lasi in K/O p-adic}
		Let  $\CK$ be $p$-adically closed, $G$ be a definable group, $D=\KOp$ and $\adrk(G)=m>0$. Then, 
		\begin{enumerate}
			\item[(i)] $\nu_{D}(G)$ is definably isomorphic to the quotient of $\nu_D((K/\CO)^m,+)$ by a finite subgroup $C$. 
			\item[(ii)] $\nu_{D}(G)$ is divisible. 
		\end{enumerate}  
	\end{proposition}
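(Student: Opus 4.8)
The plan is to reduce the computation to the concrete group $(K/\CO)^m$ and then transport the description of $\nu_D$ along finite-kernel homomorphisms. First I would extract from (the proof of) Proposition~\ref{P:lasi in K/O p-adic.1} a definable subgroup $G_1\le G$, a ball $B\le (K/\CO)^m$ around $0$ with $\dpr(B)=m$, a finite subgroup $C\le B$, and a definable surjective homomorphism $\beta\colon B\to G_1$ with $\ker\beta=C$ (so $G_1\cong B/C$). One then checks that $\adrk(G_1)=m$: since $C$ lies in a finite ball, $B/C$ maps finite-to-one into $(K/\CO)^m$ (as in the direction $(2)\Rightarrow(1)$ of Proposition~\ref{P:lasi in K/O p-adic.1}) and $\dpr(G_1)=\dpr(B)=m$ because $C$ is finite; on the other hand $\adrk(G_1)\le\adrk(G)=m$ since $G_1\le G$. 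Hence $\adrk(G_1)=\adrk(G)$, so Proposition~\ref{P:general nu of finite index subgroup} gives $\nu_D(G)\equiv\nu_D(G_1)$, and it suffices to analyse $\nu_D(G_1)$.

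Next I would identify $\nu_D(B)$ with $\nu_D((K/\CO)^m,+)$. Indeed $B$ is a definable subgroup of $(K/\CO)^m$ with $\adrk(B)=m=\adrk((K/\CO)^m)$, so Proposition~\ref{P:general nu of finite index subgroup} applies directly; equivalently, by Fact~\ref{F: concrete description of nu for D-balanced}(2) (with $G_0=B$, $f=\mathrm{id}$) and Remark~\ref{R: how nu looks}(2) both types are the partial type generated by the balls around $0$ contained in $B$, and these are cofinal among all balls around $0$. Now apply Proposition~\ref{P:nu under finite to one} to the finite-kernel surjection $\beta\colon B\to G_1$: it yields $\beta_*\nu_D(B)\equiv\nu_D(G_1)$, so $\beta$ restricts to a definable surjective homomorphism from $\nu_D(B)(\widehat\CK)=\nu_D((K/\CO)^m,+)(\widehat\CK)$ onto $\nu_D(G_1)(\widehat\CK)$, with kernel the finite group $C\cap\nu_D((K/\CO)^m,+)(\widehat\CK)$. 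Writing $C'$ for this finite subgroup, we get a definable isomorphism
\[
\nu_D(G)\ \cong\ \nu_D(G_1)\ \cong\ \nu_D((K/\CO)^m,+)/C',
\]
which is part (i).

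For part (ii), recall that $(K/\CO)^m$ is $D$-balanced with $\drk=m>0$, so $\nu_D((K/\CO)^m,+)$ is divisible (and abelian) by Lemma~\ref{L:nu divisible}(1),(4); alternatively this is exactly the explicit computation carried out in the proof of that lemma for $D=K/\CO$ using Remark~\ref{R: how nu looks}(2). Since a quotient of a divisible group is divisible, $\nu_D((K/\CO)^m,+)/C'$, and therefore $\nu_D(G)$, is divisible.

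I do not expect a genuine obstacle here, since the substantive work is already contained in Proposition~\ref{P:lasi in K/O p-adic.1}; the only points needing care are the verification that $\adrk(G_1)=\adrk(G)$ (so that Proposition~\ref{P:general nu of finite index subgroup} applies even though $G_1$ need not have finite index in $G$), and the bookkeeping of the finite kernels — in particular that the relevant subgroup $C'$ is literally $C\cap\nu_D((K/\CO)^m,+)$, which is finite because $C$ is.
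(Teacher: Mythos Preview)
Your proof is correct and follows essentially the same route as the paper's: extract $G_1\cong B/C$ from Proposition~\ref{P:lasi in K/O p-adic.1}, use Proposition~\ref{P:general nu of finite index subgroup} to get $\nu_D(G)\equiv\nu_D(G_1)$, push $\nu_D(B)\equiv\nu_D((K/\CO)^m,+)$ forward along the finite-kernel quotient via Proposition~\ref{P:nu under finite to one}, and then quote divisibility of $\nu_D((K/\CO)^m,+)$. You are in fact slightly more careful than the paper in two places: you explicitly verify $\adrk(G_1)=m$ (the paper only records $\dpr(G_1)=m$ before invoking Proposition~\ref{P:general nu of finite index subgroup}), and you track the kernel as $C'=C\cap\nu_D((K/\CO)^m,+)$ rather than $C$ --- in fact $C'=C$ by Remark~\ref{R: how nu looks}(2), since all torsion of $(K/\CO)^m$ lies in $\nu_D$, but either description suffices for the statement.
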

	\begin{proof} 
		(i) Let $m=\adrk(G)$. By Proposition \ref{P:lasi in K/O p-adic.1}, there exists a definable subgroup $G_1\leq G$, $\dpr(G_1)=m$, which is definably isomorphic to $B/C$, for $B\sub (K/\CO)^m$ a ball and $C$ finite a finite subgroup. By Proposition \ref{P:general nu of finite index subgroup}, $\nu_D(G)\equiv \nu_D(G_1)$; the latter is definably isomorphic to $\nu_D(B/C)$. By Proposition \ref{P:nu under finite to one}, $\nu_D(B/C)$ is the image of $\nu_D(B)$ under the quotient map by $C$. Since $\nu_D(B)\equiv \nu_D((K/\CO)^m,+)$, the result follows. 
		
		(ii) Because  $\nu_D((K/\CO)^m,+)$ is divisible, so is its quotient by $C$.
	\end{proof}

	As a consequence of Proposition \ref{P:lasi in K/O p-adic} and Proposition \ref{P:lasi=lsi}, Lemma \ref{L:nu divisible} holds for all definable groups:
	\begin{corollary}\label{C: nu divisible}
		Let $G$ be an infinite group definable in $\CK$ and locally almost strongly internal to an unstable distinguished sort $D$. Then
		\begin{enumerate}
			\item $\nu_D$ is divisible.
			\item If $D\neq \KOp$, then $\nu_D$ is torsion-free.
			\item If $D$ is either $\Gamma$ or $K/\CO$ then $\nu_D$ is commutative. 
		\end{enumerate}
	\end{corollary}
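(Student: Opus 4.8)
The plan is to split into two cases according to whether $D=\KOp$.

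In the case $D\neq \KOp$, I would first note that since $G$ is locally almost strongly internal to $D$ we have $\adrk(G)>0$, and then invoke Proposition \ref{P:lasi=lsi} to conclude that $G$ is $D$-balanced, so that $\drk(G)=\adrk(G)>0$ and (by Notation \ref{N: H_G}) $H_G=\{e\}$. Consequently $\nu_D(G)$, as defined in this section, coincides with the infinitesimal subgroup of the $D$-balanced group $G$ from Section \ref{Ss: Infi spgps} (cf.\ the Remark following the definition of $\nu_D(G)$). At this point Lemma \ref{L:nu divisible} applies directly and delivers everything: divisibility (clause (1) there) gives (1); torsion-freeness for $D\neq\KOp$ (clause (2)) gives (2); and commutativity for $D=\Gamma$ (clause (3)) or $D=K/\CO$ (clause (4)) gives (3). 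Observe that this already settles (3) when $D=K/\CO$ and $\CK$ is not $p$-adically closed, since then $D\neq\KOp$.

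For $D=\KOp$ --- so $\CK$ is $p$-adically closed, $D=K/\CO$, and (2) is vacuous --- I would appeal to Proposition \ref{P:lasi in K/O p-adic}. Part (ii) of that proposition is exactly (1). For (3), part (i) exhibits $\nu_D(G)$ as definably isomorphic to a quotient of $\nu_D((K/\CO)^m,+)$ by a finite subgroup, where $m=\adrk(G)>0$; since $\nu_D((K/\CO)^m,+)$ is a subgroup of the abelian group $(K/\CO)^m$, it and every quotient of it are commutative, hence so is $\nu_D(G)$.

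This is a corollary and the argument is essentially bookkeeping; the only point requiring any care is verifying that the hypotheses of Lemma \ref{L:nu divisible} are in force --- namely $\drk(G)>0$ and $D$-balancedness, so that the present $\nu_D(G)$ agrees with the one used there --- which is precisely what Proposition \ref{P:lasi=lsi} provides once one observes that local almost strong internality forces $\adrk(G)>0$.
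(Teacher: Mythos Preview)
Your proof is correct and follows exactly the approach the paper indicates: the corollary is stated as a direct consequence of Proposition~\ref{P:lasi=lsi} (reducing the $D\neq\KOp$ case to Lemma~\ref{L:nu divisible}) and Proposition~\ref{P:lasi in K/O p-adic} (handling $D=\KOp$). Your elaboration of the case split and the verification of hypotheses is precisely what the paper leaves implicit.
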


	\section{The infinitesimal groups commute}
	
	Our goal in this section is to prove that the infinitesimal subgroups associated with any two distinct distinguished sorts commute. In the following section this is used to prove that the subgroup generated by the four subgroups is isomorphic to their direct product. We first need some additional technical preparations. 
	
	\subsection{Fine local analysis}
	
	Local analysis in, e.g., o-minimal structures, builds heavily on the fact that given a definable set $X$ and a generic point $x\in X$ there are arbitrarily small generic neighborhoods of $x$ in $X$ (i.e. neighborhoods that are generic vicinities), over any set of parameters. Our definition of vicinic sets in \cite[\S 4]{HaHaPeGps} aimed to capture this property in a more general setting, covering all the distinguished sorts. In the present section, we extend this analysis by allowing parameters coming from different distinguished sorts.  
	
	The following proof requires  specific properties of the different distinguished sorts.  We refer the reader to \cite{HaHaPeVF,HaHaPeGps} for a more detailed exposition covering the main properties of these structures. Throughout the proof, we give specific references for those  properties we need.

	\begin{proposition}\label{P: topological pair 2}
		Let $D_1,D_2$ be distinguished sorts with $D_2$ unstable.  	
		Let $a\in D_1^m$ and let $b$ be generic in $D_2^n$ over some parameter set $A$.  Then for every  $U\in \nu_{D_2^n}(b)$   
		there exists $V\in \nu_{D_2^n}(b)$, $V\subseteq U$,  such that $\dpr(a/Ab[V])=\dpr(a/Ab)$.
	\end{proposition}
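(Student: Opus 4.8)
The plan is to pass to ``basic'' neighborhoods of $b$, to choose inside $U$ such a basic neighborhood whose size parameter is generic over $a$, and then to verify the two requirements.

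\emph{Reduction to basic neighborhoods.} By Fact~\ref{F: nu in SW for general Y} when $D_2$ is an SW-uniformity (i.e.\ $D_2=K$ in all three settings, or $D_2=\bk$ in the power bounded $T$-convex case), and by Remark~\ref{R: how nu looks} together with the translation invariance of infinitesimal vicinities (Fact~\ref{F: properties of nu}(1), applied with $G=(D_2^n,+)$ and $X=D_2^n$) when $D_2\in\{\Gamma,K/\CO\}$, the partial type $\nu_{D_2^n}(b)$ is equivalent to the filter-base of \emph{basic neighborhoods} of $b$ inside $D_2^n$: products of balls $\prod_i B_{>\gamma_i}(b_i)$ when $D_2\in\{K,K/\CO\}$, open boxes $\prod_i(b_i-\varepsilon_i,b_i+\varepsilon_i)$ when $D_2=\bk$ is real closed, and $\Gamma$-boxes around $b$ when $D_2=\Gamma$. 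Since $U$ is a formula of $\nu_{D_2^n}(b)$, compactness gives a basic neighborhood of $b$ contained in $U$; conversely, for a suitably generic choice of its size parameter, a basic neighborhood of $b$ lies in $\nu_{D_2^n}(b)$. So it is enough to produce a basic neighborhood $V\subseteq U$ of $b$ with $\dpr(a/Ab[V])=\dpr(a/Ab)$.

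\emph{Choosing the size generically.} Write the basic neighborhoods of $b$ as a definable family $\{W(b,s)\}_{s\in S_b}$, with $W(b,s)$ definable over $\{b,s\}$ and, given $b$, the code $[W(b,s)]$ interdefinable with $s$; here the size parameter $s$ is a tuple from the value group $\Gamma$ (the radii, resp.\ the endpoints and moduli of a $\Gamma$-box, in the cases $D_2\in\{K,K/\CO,\Gamma\}$) or from $\bk$ (the $\varepsilon_i$'s, in the case $D_2=\bk$ real closed). Let $B$ be a parameter set over which $U$ is definable. The set $S^\ast:=\{s\in S_b: W(b,s)\subseteq U\}$ is $\{b\}\cup B$-definable, contains a subfamily cofinal with respect to reverse inclusion, and hence has full dp-rank in the ambient space of size parameters. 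Pick $s\in S^\ast$ generic over $A\cup B\cup\{a,b\}$ and set $V:=W(b,s)$. Then $V\subseteq U$; by the generic choice of $s$, $V$ is one of the basic neighborhoods generating $\nu_{D_2^n}(b)$, so $V\in\nu_{D_2^n}(b)$; and, $[V]$ being interdefinable over $b$ with $s$, we have $\dpr(a/Ab[V])=\dpr(a/Abs)$.

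\emph{The rank equality — the main obstacle.} It remains to prove $\dpr(a/Abs)=\dpr(a/Ab)$; the inequality ``$\le$'' is automatic. The generic choice of $s$, together with the full-dimensionality of $S^\ast$, gives $\dim_{\acl}(s/Aba)=\dim_{\acl}(s/Ab)$, i.e.\ $s$ is generic over $Aba$ in its sort, $\Gamma$ or $\bk$. What must be shown is that such a parameter can be adjoined to $Ab$ without decreasing $\dim_{\acl}(a)$, and this is precisely the step that cannot be carried out by formal properties of dp-rank alone: $\acl$ need not satisfy Exchange across distinguished sorts (for instance between $K$ and $\Gamma$), so the ``symmetry of independence'' one would like to invoke is not available abstractly. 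Instead one uses that $\Gamma$ and $\bk$ are stably embedded, are mutually orthogonal in the $V$-minimal and power bounded cases, and interact with parameters of $D_1$-type only through the valuation and residue maps; concretely, adjoining to $Ab$ an element of $\Gamma$ (resp.\ of $\bk$) that is generic over $Aba$ introduces no new algebraic dependence among the coordinates of $a$ over $Ab$. This is verified by a short case analysis on the pair $(D_1,D_2)$, invoking the standard independence/orthogonality facts for $V$-minimal, power bounded $T$-convex, and $p$-adically closed valued fields (cf.\ \cite{HaHaPeVF}); the conclusion $\dpr(a/Abs)=\dpr(a/Ab)$ then follows because all the distinguished sorts have algebraic dp-rank. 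This case analysis, rather than any single slick argument, is where the real work lies.
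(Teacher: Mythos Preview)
Your overall strategy---reduce to basic neighborhoods and choose the size parameter carefully---is the right shape, and your reduction step is fine. But the proof has a genuine gap precisely where you flag ``the main obstacle'': you never actually prove $\dpr(a/Abs)=\dpr(a/Ab)$. You appeal to ``standard independence/orthogonality facts'' and say a short case analysis suffices, but you do not carry it out, and the orthogonality language is misleading. The sorts in play are \emph{not} orthogonal to one another: for $D_1=K$ the valuation $v:K\to\Gamma$ is a definable surjection, so adjoining a $\Gamma$-element can certainly change $K$-types. What you would need is a cross-sort symmetry statement of the form ``$s$ generic in $\Gamma$ (or $\bk$) over $Aba$ implies $a$ stays generic over $Abs$'', and nothing in the cited background gives this; $\acl$ does not satisfy Exchange across the distinguished sorts, as you yourself note. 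Writing ``this is where the real work lies'' is honest, but it leaves the proof incomplete at its only nontrivial point.

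The paper's argument avoids this trap entirely, and the mechanism is worth internalizing. Rather than choosing $s$ generic over $a$ and then arguing symmetry, the paper exploits that $D_1$ has \emph{algebraic} dp-rank, so the condition ``$\dpr(a/Abs)\ge\dpr(a/Ab)$'' is upward type-definable in $s$. One then shows the resulting partial type in $s$ is consistent with ``$s$ is a small enough size parameter'' by exhibiting explicit witnesses: in the $\KOp$ and $p$-adic $\Gamma$ cases these are elements of $\mathbb Z\subseteq\Gamma$ (hence in $\dcl(\emptyset)$, so the rank condition holds trivially), and in the SW-uniformity case the consistency is the content of \cite[Lemma~3.11(1)]{HaHaPeVF}. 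No exchange or orthogonality is invoked. The paper also separates the two requirements---$\dpr(a/Ab[V])=\dpr(a/Ab)$ and $V\in\nu_{D_2^n}(b)$---into finding two sets $U'\supseteq U''$ and then splicing them with an automorphism; this sidesteps the issue, which you gloss over, of verifying that a basic neighborhood with your chosen $s$ is actually a \emph{generic} vicinity of $b$ (note $W(b,s)$ is defined over $\{b,s\}$, and $\dpr(b/bs)=0$, so you must pass to the code, which needs an argument). Finally, in the $p$-adic $\Gamma$ and $\KOp$ cases the ``basic neighborhoods'' do not form a uniformly definable family (the moduli $N_i$ range over $\mathbb N$, the radii must lie below $\mathbb Z$), so your compactness reduction also needs more care than stated.
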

	\begin{proof}  
		We first observe that  it is enough to find a set $U'$, $b\in U'\subseteq U$, with $U'$ definable over $B\supset A$,  such that $\dpr(a/Bb)=\dpr(a/Ab)$, and a subset $U''\subseteq U'$ definable over some $C\supseteq A$, such that  $b$  is generic in $U''$ over $C$.
		
		Indeed, if we can find such sets, then let $p$ be a complete type over $BCb$ extending $\tp(a/Bb)$ such that $\dpr(p)=\dpr(a/Bb)$. Let $a'\models p$ and let $\sigma$ be an automorphism over $Bb$ mapping $a'$ to $a$. So $\sigma(U'')$ is a $\sigma(C)$-definable subset of $U'$ containing $b$.  By our choice of $p$, $\dpr(a/Bb)=\dpr(a'/BbC)=\dpr(a/Bb\sigma(C))$ (so $\dpr(a/Ab)=\dpr(a/Ab\sigma(C))$). 
		Also, $\dpr(b/A)=\dpr(b/C)=\dpr(b/\sigma(C))$, so setting $V=\sigma(U'')$ (defined over $\sigma(C)$), we get that  $V\in \nu_{D_2^m}(b)$, and $\dpr(a/Ab[V])=\dpr(a/Ab)$, as required. 
		
		\vspace{.2cm}
		
		Thus, let us see that we can find $U'$ and $U''$ as above. In the classes of valued field we consider, $D_2$ can be  either an SW-uniformity, $\Gamma$ in the $p$-adically closed case (so a $\mathbb{Z}$-group) or $\KOp$. We investigate each case separately.  \\

			\noindent    \textbf{Case 1: $D_2$ is an SW-uniformity.}

			By Lemma \cite[Lemma 3.14(2)]{HaHaPeVF}, there exists a model $N\supseteq B$ with $\dpr(b/N)=\dpr(b/B)=n$.
			By \cite[Proposition 4.5]{SimWal}, as $b$ is generic in $D_2^n$, there is an  $N$-definable open neighborhood $U_0\subseteq U$ of $b$.
			
			Let $\{U_t[b]\}_{t\in T}$ be a definable neighborhood base at $b$ (so $U_t[b]$ is definable over $bt$), and shrinking $U_0$ if needed,  assume that $U_0=U_{t_0}[b]$ for some $t_0\in T(N)$. For $t_1, t_2\in T$ write $t_1\leq t_2$ if $U_{t_1}[b]\subseteq U_{t_2}[b]$. Note that, since the topology is Hausdorff without isolated points, $(T,<)$ has no minimal elements. 
			
			The proof is, in essence, the same as that of \cite[Lemma 3.11(1)]{HaHaPeVF}, but minor modifications are needed for the first part of the proof since $a$ need not belong to an SW-uniformity. In \cite{HaHaPeVF} the fact that $D_1$ is an SW-uniformity was applied to obtain definability of dp-rank. Instead, since $a$ belongs to a distinguished sort $D_1$ and dp-rank in $D_1$ is algebraic (i.e., given by $\dim_{\acl}$), we get upward type-definability of the dp-rank. Specifically, this means that for any formula $\varphi(x,y)$ and integer $m$, if $x$ is a tuple from $D_1$ then the set
			$\{c:\dpr(\varphi(D_1,c))\geq m\}$ is type-definable over the same parameters as $\phi$ (see also \cite[Corollary 3.5]{Simdp}, although Simon does not explicitly allow parameters from outside the dp-minimal sort).
			
			In view of the above, the set $P(s)$ of $s\in T$ with $s\le t_0$ and $\dpr(a/Abs)\geq \dpr(a/Ab)$ is type-definable. If $P(s)$ is consistent, then any realization $s$ of $P(s)$ will be sufficient (set $U'=U_{s}[b]$). The proof  that $P(s)$ is indeed consistent goes through verbatim as in \cite[Lemma 3.11(1)]{HaHaPeVF}.

			This gives the desired definable set $U'$. In this situation, $U'$ is an open set containing $b$ so \cite[Proposition 3.12]{HaHaPeVF} applies, and we find the desired $U''$. \\
			
			\noindent \textbf{Case 2: $D_2=\KOp$.}
			
			By \cite[Lemma 3.6]{HaHaPeGps}, there exists $U_0=B_{\geq t}(b)\subseteq U$ for some $t<\Zz$. Consider the collection of formulas in the $\Gamma$-variable $r$ 
			
			$p(r):=\text{``}\dpr(a/Abr)=\dpr(a/Ab)"\cup \text{``}t<r<\Zz".$ 
			Note that the relation (in the variable $r$) $\dpr(a/Abr)\geq \dpr(a/Ab)$ is type-definable since $D_1$ has algebraic dp-rank. To show consistency, it suffices to show that for all $k\in \Nn$ the type: 
			\[
			p_k(r):=\text{``}\dpr(a/Abr)=\dpr(a/Ab)"\cup \text{``}t<r<-k"
			\]
			is consistent. But since $t<\Zz$ e.g., $r=-k-1$ satisfies this type. So $p_k(r)$ is consistent, implying that $p(r)$ is consistent, as needed. 
			Let $r$ be any realization of $p(r)$; so $b\in B_{\geq r}(b)\subseteq U$, $B_{\geq t}(b)$ is definable over $bt$,  and $\dpr(a/Abr)=\dpr(a/Ab)$. Set $U'=B_{\geq r}(b)$. To find the desired set $U''$ we apply \cite[Proposition 3.8(2)]{HaHaPeGps}. \\
			
			\noindent     \textbf{Case 3: $D_2$ is $\Gamma$ and $\CK$ is $p$-adically closed.}
			
			The proof is similar to the one in Case 2. By \cite[Lemma 3.4]{OnVi}, there exists a a $\Gamma$-box  $U_0\subseteq U$ around $b=(b_1,\dots, b_n)$, i.e. a definable set of the form $I_1\times\dots \times I_n$, where for each $i\leq n$, $b_i\in I_i=\{\alpha_i \leq x\leq \beta_i: x\equiv_{N_i} c_i\} $, for some $N_i\in \mathbb N$,  where both intervals $[\alpha_i,b_i]$ and $[b_i,\beta_i]$ are infinite and $0\leq c_i<N_i$.
			
			Let $p(r)$ be the type in $r\in \Gamma$ stipulating that $\dpr(a/Abr)\geq \dpr(a/Ab)$,  that $r>\mathbb{N}$ and that both $b_i<b_i+r<\beta_i$ and $\alpha_i<b_i-r<b_i$ are satisfied. As above the relation $\dpr(a/Abr)=\dpr(a/Ab)$ is type-definable since $D_1$ has algebraic dp-rank. It is easily seen to be consistent. For $r$ satisfying this type,  $b_i\in J_i:=\{ b_i-r < x< b_i+r: x\equiv_{N_i} c_i\}\subseteq I_i$. Now take $U'=\prod_i J_i$.
			
			The existence of the desired set $U''$ goes through an induction on $n$, so we only give the case $n=1$, as the induction step is similar. By compactness we find $\alpha',\beta'\in \Gamma$, $b_1-r <\alpha'<b<\beta'<b_1+r$ with $\alpha', \beta'$ independent generics over all the data. By exchange, $\dpr(b_1/A\alpha', \beta')=1$, and $\alpha'<x<\beta' \land x\equiv_b c_1$ is the set $U''$.      
			
			This completes the proof of Proposition \ref{P: topological pair 2}.
		\end{proof}

		\subsection{Proof of commutation}
		We return to our task of proving that the infinitesimal subgroups commute. Recall that $\widehat \CK\succ \CK$ is a $|\CK|^+$-saturated extension. We need the following result:

		\begin{fact}\cite[Proposition 9.3]{HaHaPeGps}\label{F:dist are foreign}
			Any two distinct distinguished sorts $D_1, D_2$ in $\CK$ are foreign. I.e., there is no definable infinite $Z\sub D_1^n\times D_2^m$ projecting finite-to-one on both factors. 
		\end{fact}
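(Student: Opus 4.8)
The plan is to deduce foreignness from an abstract statement about interalgebraicity, and then establish that statement by a case analysis over the unordered pairs of distinguished sorts, using the structure theory of each of the three settings.

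First I would set up the reduction. Suppose, towards a contradiction, that $Z\subseteq D_1^n\times D_2^m$ is infinite, definable over a small model $M\prec\CK$ (which we may arrange by enlarging the parameters), and that both coordinate projections are finite-to-one. Since finite-to-one definable maps preserve dp-rank and every distinguished sort has algebraic dp-rank, a generic $(\bar a,\bar b)\in Z$ over $M$ satisfies $\dim_{\acl}(\bar a/M)=\dim_{\acl}(Z)=\dim_{\acl}(\bar b/M)=:d$, with $d\geq1$ because $Z$ is infinite (an infinite $M$-definable set is not contained in $\acl(M)$), and $\bar a\in\acl(M\bar b)$, $\bar b\in\acl(M\bar a)$. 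Using algebraic dp-rank once more I would replace $\bar a$ by a subtuple of length $d$ which is $\acl$-independent over $M$ and still interalgebraic with (a subtuple of) $\bar b$ over $M$. It then suffices to rule out the following configuration: a model $M$, an $\acl$-independent tuple $\bar a\in D_1^d$ with $d\geq1$, and a tuple $\bar b\in D_2^{<\omega}$ interalgebraic with $\bar a$ over $M$.

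Next I would run through the pairs. For $\{\bk,\Gamma\}$ this is the classical orthogonality of residue field and value group, available in all three settings ($\bk$ stable versus $\Gamma$ unstable when $\CK$ is $V$-minimal; part of the structure theory of $T$-convex valued fields in the power-bounded case; $\bk$ disregarded in the $p$-adic case), so the configuration is impossible there. For $\{K,\Gamma\}$ and $\{K,\bk\}$ the key point is that adjoining to $M$ a tuple from $\Gamma$, respectively from $\bk$, does not enlarge $\acl(M)\cap K$: this follows from quantifier elimination together with the description of definable functions from the valued field into $\Gamma$ and $\bk$ (and, in the analytic expansions, from the tameness properties --- resolution, $1$-h-minimality --- recalled in the preliminaries, cf.~Remark~\ref{R: remark on Qp}), so an element of $K^{<\omega}$ algebraic over $M\bar b$ with $\bar b$ from $\Gamma$ or $\bk$ must already lie in $\acl(M)$, contradicting $d\geq1$. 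For the pairs involving $D=\KOp$ I would invoke the analysis of definable subsets and subgroups of $(K/\CO)^n$ recalled in the preliminaries (Lemma~\ref{L:onto open}, Remark~\ref{R: onto open for K/O padic}, and the facts on the valuation and on torsion underlying Fact~\ref{F: concrete description of nu for D-balanced} and Remark~\ref{R: how nu looks}): $K/\CO$ is fibred over $\Gamma$ (via the valuation on its nonzero elements) with $\bk$-controlled fibres, which lets one reduce the configuration for $\{\KOp,D\}$ to the ones for $\{\Gamma,D\}$ and $\{\bk,D\}$.

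The main obstacle is the sort $\KOp$. It is not stably embedded and does not satisfy exchange, and --- most delicately --- in the $p$-adically closed setting the valuation $(K/\CO)\setminus\{0\}\to\Gamma_{<0}$ can have finite fibres (the residue field being finite), so there is genuinely a finite-to-one definable map from a one-dimensional subset of $K/\CO$ into $\Gamma$; foreignness of $\KOp$ to $\Gamma$ therefore cannot be read off from a single direction, and one must show in addition that there is no finite-to-one definable map from an infinite subset of $\Gamma^m$ back into $(K/\CO)^n$. For that one has to exploit the fibration of $K/\CO$ over $\Gamma$ with $\bk$-controlled fibres, showing that a generic element of such a fibre is foreign to $\Gamma$ (which rests in turn on $\bk\perp\Gamma$ and on the description of definable subgroups of $(K/\CO)^n$ cited above), while keeping careful track of the sizes of these fibres --- finite, pseudofinite, or infinite, according as the relevant valuation lies in $\acl(M)$ and whether it is ``bounded'' there --- so that a coordinate of $\bar a$ with new valuation cannot be algebraic over a tuple from $\Gamma$ and a coordinate in a genuinely finite fibre lies in $\acl(M)$, forcing $\dim_{\acl}(\bar a/M)=0$. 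The analytic $V$-minimal case is mildly delicate for the separate reason that quantifier elimination is unavailable there and the arguments have to be routed through the tameness properties of the expansion recorded in the preliminaries.
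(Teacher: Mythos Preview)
The paper does not prove this statement: it is recorded as a Fact and simply cited from \cite[Proposition~9.3]{HaHaPeGps}, so there is no argument here to compare your proposal against. Your reduction to an interalgebraicity configuration over a small model is correct, and the case-by-case plan is the natural shape such a proof takes.

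There is, however, a genuine confusion in your handling of the pair $\{\KOp,\Gamma\}$. You assert that, since the residue field is finite, the valuation $(K/\CO)\setminus\{0\}\to\Gamma_{<0}$ gives ``a finite-to-one definable map from a one-dimensional subset of $K/\CO$ into $\Gamma$'', and then claim that foreignness survives because one must also rule out a map in the reverse direction. This is backwards: if such a finite-to-one map on an infinite definable $S\subseteq K/\CO$ existed, its graph would \emph{already} be an infinite definable $Z\subseteq (K/\CO)\times\Gamma$ projecting injectively onto $S$ and finite-to-one onto $\Gamma$, refuting foreignness outright. There is no second direction to check.

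In fact no such $S$ exists, and this is exactly what has to be proved. The fibres of $v$ over $\gamma\in\Zz_{<0}$ are indeed finite, but over any $\gamma<\Zz$ the fibre $(B_{\geq\gamma}/\CO)\setminus(B_{>\gamma}/\CO)$ is a union of $|\bk|-1$ cosets of the infinite group $B_{>\gamma}/\CO$, hence infinite; and every infinite definable subset of $K/\CO$ contains a ball of radius $r<\Zz$, on which $v$ is either constant (if the ball misses $0$) or has infinite fibres over every $\gamma$ with $r<\gamma<\Zz$ (if the ball contains $0$). The structure theory you cite is deployed to establish precisely this, not to patch up a one-directional failure. Once you reorient the $\KOp$ versus $\Gamma$ case in this way, the rest of your outline is reasonable, though still quite sketchy in the analytic $V$-minimal case.
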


		\begin{lemma}\label{L:trivial intersection}
			Let $G$ be a definable group.	If $D_1$, $D_2$ are distinct distinguished sorts then $\nu_{D_1}(\widehat \CK)\cap \nu_{D_2}(\widehat \CK)=\{e\}$.
		\end{lemma}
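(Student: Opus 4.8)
The plan is to show that the type-definable subgroup $N:=\nu_{D_1}(\widehat\CK)\cap\nu_{D_2}(\widehat\CK)$, which is type-defined over $\CK$ by the partial type $\nu_{D_1}(G)\cup\nu_{D_2}(G)$, is finite, and then to upgrade finiteness to triviality using Corollary~\ref{C: nu divisible}. We may assume that $G$ is locally almost strongly internal to both $D_1$ and $D_2$, since otherwise one of the $\nu_{D_i}(G)$ equals $\{e\}$ by our convention and there is nothing to prove; in particular $\adrk_{D_i}(G)>0$ for $i=1,2$.

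First I would record the following in all cases: $\nu_{D_i}(G)$ is concentrated on a single definable set that is almost strongly internal to $D_i$. When $G$ is $D_i$-balanced this is read off the construction: by Fact~\ref{F: properties of nu} (and the remark preceding Lemma~\ref{L:nu in XX-1} when $D_i=\bk$ is stable) we have $\nu_{D_i}\equiv\nu_X(a)a^{-1}$ for some $D_i$-critical $X\subseteq G$ and some generic $a\in X$, which we may take in $\CK$; then $\nu_{D_i}\vdash Xa^{-1}$, and $Xa^{-1}$ is strongly internal to $D_i$ via $y\mapsto f(ya)$, where $f\colon X\hookrightarrow D_i^n$ witnesses strong internality of $X$. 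When $D_i=\KOp$ and $G$ is not $D_i$-balanced, use instead $\nu_{D_i}(G)=\pi^{-1}(\nu_{D_i}(G/H_G))$, so that $\nu_{D_i}(G)\vdash\pi^{-1}(Y)$ for some $Y\subseteq G/H_G$ strongly internal to $D_i$; since $\pi$ has finite fibres, $\pi^{-1}(Y)$ is almost strongly internal to $D_i$.

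Now pick $Y_1\in\nu_{D_1}(G)$ almost strongly internal to $D_1$, witnessed by a finite-to-one $f_1\colon Y_1\to D_1^{n_1}$, and $Y_2\in\nu_{D_2}(G)$ almost strongly internal to $D_2$, witnessed by a finite-to-one $f_2\colon Y_2\to D_2^{n_2}$. Since $N$ is type-defined by $\nu_{D_1}(G)\cup\nu_{D_2}(G)$, we have $N\vdash Y_1\cap Y_2$; write $Z=Y_1\cap Y_2$. The map $g=(f_1\restriction Z,\,f_2\restriction Z)\colon Z\to D_1^{n_1}\times D_2^{n_2}$ is finite-to-one, and its image $W=g(Z)$ projects finite-to-one onto each factor: the fibre of the first projection over $d_1$ has second coordinates lying in the finite set $f_2\bigl(f_1^{-1}(d_1)\cap Z\bigr)$, and symmetrically for the second projection. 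If $Z$ were infinite, then so would $W$ be, contradicting Fact~\ref{F:dist are foreign} (distinct distinguished sorts are foreign). Hence $Z$ is finite, and therefore $N(\widehat\CK)\subseteq Z(\widehat\CK)$ is finite. Finally, since $D_1\ne D_2$, at least one of them — say $D_2$ — is not $\KOp$, so by Corollary~\ref{C: nu divisible}(2) the group $\nu_{D_2}(\widehat\CK)$ is torsion-free; a finite subgroup of a torsion-free group is trivial, so $N=\{e\}$.

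The argument is short once the earlier machinery is in place, so I do not anticipate a serious obstacle; the only point requiring genuine care is the first step — verifying uniformly, across the $D_i$-balanced case, the stable residue-field case, and the $D_i=\KOp$ non-balanced case, that $\nu_{D_i}(G)$ genuinely lies inside one definable set almost strongly internal to $D_i$. After that, foreignness yields finiteness of $N$ and Corollary~\ref{C: nu divisible} yields its triviality with no further work.
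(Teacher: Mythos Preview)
Your proof is correct and follows essentially the same two-step strategy as the paper's: foreignness of distinct distinguished sorts (Fact~\ref{F:dist are foreign}) forces the intersection to be finite, and torsion-freeness of at least one $\nu_{D_i}$ (Corollary~\ref{C: nu divisible}) then forces it to be trivial. You spell out the finiteness step in more detail than the paper does---the paper compresses it to a single sentence---by explicitly verifying that each $\nu_{D_i}$ is concentrated on a definable set almost strongly internal to $D_i$ and then exhibiting the foreignness witness.

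One small wrinkle: Corollary~\ref{C: nu divisible} is stated only for \emph{unstable} $D$, so when you choose ``$D_2\neq\KOp$'' and invoke clause~(2), you should also ensure $D_2$ is unstable; otherwise the corollary does not apply (e.g.\ if $D_2=\bk$ in the $V$-minimal case). This is harmless, since among any two distinct distinguished sorts at least one is unstable and not $\KOp$ (in the $V$-minimal case there is no $\KOp$, and at most one sort is the stable $\bk$; in the $p$-adically closed case all sorts are unstable and at most one is $\KOp$; in the $T$-convex case all sorts are unstable and none is $\KOp$), so you simply pick that one.
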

		\begin{proof}
			If either $G$ is not locally almost internal to $D_1$ or $D_2$ then the conclusion is straightforward, so we assume that $G$ is locally almost internal to both $D_1$ and $D_2$.
			
			Since any two distinct distinguished sorts are foreign,  the intersection $\nu_{D_1}(\widehat \CK)\cap \nu_{D_2}(\widehat \CK)$ is finite. It follows from Corollary \ref{C: nu divisible} that at least one of the two groups is torsion-free, so the intersection must be trivial.
		\end{proof}

		\begin{definition}
			Two type-definable groups $\rho_1$ and $\rho_2$ \emph{commute} if for any $g\in \rho_1(\widehat \CK)$ and $h\in \rho_2(\widehat \CK)$, $gh=hg$.    
		\end{definition}

		\begin{proposition}\label{P: nus commute}
			Let $G$ be a definable group in $\CK$ and let $D_1$ and $D_2$ be two 
			distinct distinguished sorts. Then $\nu_{D_1}(\widehat \CK)$ and $\nu_{D_2}(\widehat \CK)$ commute.
		\end{proposition}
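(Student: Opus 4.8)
The strategy is the standard commutator argument. Suppose we know that each $\nu_{D_i}(G)$ is a normal subgroup of $G(\widehat\CK)$; this is trivial when $G$ is not locally almost strongly internal to $D_i$, as then $\nu_{D_i}(G)=\{e\}$. Fix $g\in\nu_{D_1}(\widehat\CK)$ and $h\in\nu_{D_2}(\widehat\CK)$. By normality $h^{-1}gh\in\nu_{D_1}(\widehat\CK)$, so $[g,h]=g^{-1}\cdot(h^{-1}gh)$ lies in $\nu_{D_1}(\widehat\CK)$; symmetrically, $[g,h]=(g^{-1}h^{-1}g)\cdot h$ lies in $\nu_{D_2}(\widehat\CK)$. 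Hence $[g,h]\in\nu_{D_1}(\widehat\CK)\cap\nu_{D_2}(\widehat\CK)=\{e\}$ by Lemma~\ref{L:trivial intersection}, i.e.\ $gh=hg$.

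Everything thus reduces to normality, and it suffices to check that $\nu_{D_2}(\widehat\CK)$ normalizes $\nu_{D_1}(\widehat\CK)$ (and symmetrically). Conjugation-invariance of $\nu_{D_1}(G)$ under $G(\CK)$ is Fact~\ref{F: properties of nu}(3); the difficulty is that the relevant conjugators $h$ lie in $\widehat\CK$ rather than $\CK$, and --- since the partial type defining $\nu_{D_1}(G)$ uses parameters ranging over all of $\CK$ --- one cannot reduce to the $G(\CK)$-case simply by an automorphism of $\widehat\CK$. My plan is to compute $\nu_{D_1}$ from a $D_1$-critical set $X_1\subseteq G$ with generic point $d_1$ (after passing to $G/H_G$ and using the $\pi^{-1}$-description of $\nu_{D_1}(G)$ when $D=\KOp$, where balancedness may fail), and to follow the effect of conjugation by $h$: it produces the $D_1$-critical set $hX_1h^{-1}$ with generic point $hd_1h^{-1}$ and carries $\nu_{X_1}(d_1)$ to $\nu_{hX_1h^{-1}}(hd_1h^{-1})$ (Fact~\ref{F: vicinities}(3)). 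Since $\nu_{D_1}$ does not depend on the choice of $D_1$-critical set or generic point (Fact~\ref{F: properties of nu}(1,2), Proposition~\ref{P:characterizatio of nu_D in general}), normality follows once one verifies that $d_1$ remains generic in the appropriate sense over the new parameter $h$ and over arbitrarily small vicinities, so that the conjugated data genuinely recomputes $\nu_{D_1}$. This is where the fine local analysis of the previous subsection is used: Proposition~\ref{P: topological pair 2}, applied with the ``$D_2$-side'' parameter being a tuple coding $h$ and the ``$D_1$-side'' generic being $d_1$ (viewed in $D_1^{\dpr(X_1)}$ via an injection on $X_1$), lets one shrink generic vicinities of $d_1$ without decreasing the dp-rank of $h$ over them; feeding this through the filter-base description of $\nu_{X_1}(d_1)$ gives $h\nu_{D_1}(\widehat\CK)h^{-1}\subseteq\nu_{D_1}(\widehat\CK)$, and repeating with $h^{-1}$ gives equality.

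I expect the main obstacle to be exactly this cross-sort bookkeeping --- controlling a $D_1$-parameterised family of vicinities against a parameter coming from the foreign sort $D_2$ --- which is the reason Proposition~\ref{P: topological pair 2} was isolated beforehand. Once normality is in hand, the foreignness of $D_1$ and $D_2$ (Fact~\ref{F:dist are foreign}), through the triviality of the intersection in Lemma~\ref{L:trivial intersection}, completes the proof. A subsidiary technical nuisance is the sort $D=\KOp$, where one works modulo the canonical finite normal subgroup $H_G$ and uses that preimages of normal subgroups under the quotient map are again normal.
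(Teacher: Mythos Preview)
Your high-level strategy is exactly the paper's: reduce commutation to mutual normalization via the commutator identity, then invoke Lemma~\ref{L:trivial intersection}. The reduction to the balanced case via $H_G$ is also on target (the paper lifts commutation from $G/H_G$ using divisibility of $\nu_{D_1}$, but your ``preimage of normal is normal'' would need the same ingredient to handle the $\nu_{D_1}$-side, since $\nu_{D_1}(G)$ is \emph{not} the full preimage of $\nu_{D_1}(G/H_G)$ when $D_1\neq\KOp$).

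The genuine gap is in your normalization argument. You fix $h\in\nu_{D_2}(\widehat\CK)$ and conjugate a $D_1$-critical set $X_1$ by it, then appeal to ``independence of the choice of $D$-critical set'' (Fact~\ref{F: properties of nu}(2)) to conclude $\nu_{hX_1h^{-1}}(hd_1h^{-1})\cdot(hd_1h^{-1})^{-1}\equiv\nu_{D_1}$. But $hX_1h^{-1}$ is defined over the $\widehat\CK$-parameter $h$, whereas the infinitesimal vicinity types and the independence statement in Fact~\ref{F: properties of nu} are set up for $D$-critical sets over $\CK$: the type $\nu_X(a)$ is obtained by letting the parameter set $B$ range over $\CK$. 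If you recompute over $\widehat\CK$, you get the (strictly smaller) $\widehat\CK$-infinitesimal group, not the one in the statement. Your proposed use of Proposition~\ref{P: topological pair 2} does not close this; you describe shrinking vicinities of $d_1$ while preserving $\dpr(h/\cdots)$, but nothing in that conclusion yields $h V h^{-1}\subseteq U$ for prescribed $U\in\nu_{D_1}$, and for an arbitrary $h\in\nu_{D_2}(\widehat\CK)$ the rank $\dpr(h/\cdots)$ may be $0$, making the hypothesis vacuous.

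The paper avoids this by reversing the logic: it never fixes an element of $\widehat\CK$. Instead it fixes a $\CK$-generic $a$ in a $D_1$-critical set $X_0$ and uses that $a\in G(\CK)$ already normalizes $\nu_{D_2}$ (Fact~\ref{F: properties of nu}(3)). Given $U\ni\nu_{D_2}$, one finds $Y_1,Y_2\in\nu_{Y_0}(b)$ with $(Y_1Y_1^{-1})^{a^{-1}}\subseteq U$ and $(Y_2Y_2^{-1})^{a}\subseteq Y_1Y_1^{-1}$, applying Proposition~\ref{P: topological pair 2} (with $b$ the $D_2$-generic and $a$ the $D_1$-tuple whose rank is preserved) to shrink $Y_1,Y_2$ so that $a$ stays generic over $[Y_1],[Y_2]$. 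The $\CK$-definable set $W=\{x\in X_0:(Y_1Y_1^{-1})^{x^{-1}}\subseteq U\wedge(Y_2Y_2^{-1})^{x}\subseteq Y_1Y_1^{-1}\}$ then contains $a$ generically, so is $D_1$-critical, and a direct computation gives $(Y_2Y_2^{-1})^{\beta}\subseteq U$ for every $\beta\in WW^{-1}$. This is a first-order statement, hence holds in $\widehat\CK$, and $\nu_{D_1}\vdash WW^{-1}$ finishes. The key point you are missing is that the argument must produce, for each $U$, a \emph{$\CK$-definable} $D_1$-critical $W$; Proposition~\ref{P: topological pair 2} is used in the direction that keeps the $D_1$-generic $a$ generic over the codes of the shrinking $D_2$-vicinities, which is the opposite of what you wrote.
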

		\begin{proof}
            We may assume that $G$ is almost strongly internal to both $D_1$ and $D_2$, as otherwise $\nu_D(G)=\{e\}$, and there is nothing more to prove. We may further assume that $G$ is both $D_1$ and $D_2$-balanced. Indeed, if $G$ is not $D_2$-balanced, i.e. $D_2=\KOp$ (Proposition \ref{P:lasi=lsi}) then there is a finite normal  abelian subgroup $H\trianglelefteq G$ with  $G/H$  $D_2$-balanced (Proposition \ref{Canonical H}). Since $K$ is $p$-adically closed,  $D_1$ is necessarily unstable. Let $f:G\to G/H$ be the quotient homomorphism. By definition, $f_*\nu_{D_2}(G)\equiv \nu_{D_2}(G/H)$ and by Proposition \ref{P:nu under finite to one}, $f_*\nu_{D_1}(G)\equiv \nu_{D_1}(G/H)$. 
			
			Assume that $\nu_{D_2}(G/H)$ and $\nu_{D_1}(G/H)$ commute; thus as $f$ is a homomorphism, for any $a\in \nu_{D_2}(G)(\widehat \CK)$ and $b\in \nu_{D_1}(G)(\widehat \CK)$, $aba^{-1}=bh$ for some $h\in H$. Since $H$ commutes with $\nu_{D_1}(G)$ (Proposition \ref{P:general nu of finite index subgroup}), $ab^ka^{-1}=b^k$ for $k=|H|$. By Corollary \ref{C: nu divisible}, $\nu_{D_1}(G)$ is divisible so every element of $\nu_{D_1}(G)$ is of the form $b^k$ for some $k$, and so we are done.
			
			Assume then that $G$ is $D_1$-balanced  and $D_2$-balanced. By Lemma \ref{L:trivial intersection}, it suffices to show  that $\nu_{D_1}(\widehat \CK)$ and $\nu_{D_2}(\widehat \CK)$ normalize each other. If $D_2$ is stable then $\nu_{D_2}(\widehat \CK)$ is a definable normal subgroup of $G(\widehat \CK)$, so obviously $\nu_{D_1}(\widehat \CK)$ normalizes $\nu_{D_2}(\widehat \CK)$. We assume that $D_2$ is unstable and we prove that $\nu_{D_1}(\widehat \CK)$ normalizes $\nu_{D_2}(\widehat \CK)$ and then we will conclude by symmetry. 
			
			We will actually show something stronger: For every definable $U$ with  $\nu_{D_2}\vdash U$, there exists a $D_1$-critical subset $W\subseteq G$ and some $V$, $\nu_{D_2}\vdash V$,  such that for every $\beta \in WW^{-1}$, $V^\beta\subseteq U$. Since $\nu_{D_1}\vdash WW^{-1}$  (Proposition \ref{P: nu withough generics}),  the result will follow by compactness.

			Fix some  $D_2$-critical subset $Y_0\subseteq G$ and a $D_1$-critical subset $X_0\subseteq G$ and let $(a,b)\in X_0\times Y_0$ be a generic pair over a parameter set $A$ over which everything is defined.  By Corollary \ref{C: has D-sets}, we may identify $Y_0$ with a subset of $D_2^n$, $n=\dpr(Y_0)$, and identify $X_0$ with a subset of $D_1^m$ for $m=\dpr(X_0)$.
		
			By Proposition \ref{P: nu withough generics}:     $\nu_{D_2}\equiv \{YY^{-1}: Y\in \nu_{Y_0}(b)\}$. 
			Fix some $U=YY^{-1}$ for some $Y\in \nu_{Y_0}(b)$.
			Notice that $\nu_{Y_0}(b)\equiv \nu_Y(b)\equiv \nu_{D_2^n}(b)$ (Fact \ref{F: vicinities}(2)).
			
			By Fact \ref{F: properties of nu}, every element of $G=G(\CK)$ normalizes $\nu_{D_2}(\widehat \CK)$, hence there exists  $Y_1\in \nu_{Y_0}(b)$ such that $(Y_1Y_1^{-1})^{a^{-1}}\sub U$. By Proposition \ref{P: topological pair 2}, we may shrink $Y_1$ if needed, and  assume that $\dpr(a/Ab[Y_1])=m$.

			Next, using again the fact that $a$ normalizes $\nu_{D_2}(\widehat \CK)$, there is $Y_2\in \nu_{Y_0}(b)$, such that $(Y_2Y_2^{-1})^a\sub Y_1Y_1^{-1}$. Again, using Proposition \ref{P: topological pair 2}, we may assume, after shrinking $Y_2$ that $\dpr(a/Ab[Y_1][Y_2])=m$.
			
			Let \[W=\{x\in X_0: (Y_1Y_1^{-1})^{x^{-1}}\sub U \wedge (Y_2Y_2^{-1})^x\sub Y_1Y_1^{-1}\}.\] The set $W$ contains $a$, is contained in $X_0$, and is defined over $A, [Y_1],[Y_2]$, therefore $\dpr(W)=m$, so $W$ is $D_1$-critical. We claim that for every $x\in WW^{-1}$, we have  $(Y_2Y_2^{-1})^x\sub U$. Indeed, given any $x=x_1x_2^{-1}\in WW^{-1}$, we have
			\[(Y_2Y_2^{-1})^{x_1x_2^{-1}}=((Y_2Y_2^{-1})^{x_1})^{x_2^{-1}}\sub (Y_1Y_1^{-1})^{x_2^{-1}}\sub U.\] Thus, taking $V=Y_2Y_2^{-1}$, we found a $D_1$-critical set $W$ such that for every $\beta\in WW^{-1}$, $V^\beta\sub U$. This completes the proof that $\nu_{D_1}$ and $\nu_{D_2}$ normalize each other and hence commute.
		\end{proof}

		\begin{example}
			Let $\CK$ be any of our valued field. Consider the semi-direct product $G:=K/\CO\rtimes \CO^\times$; the multiplication law is given by 
			\[(a,b)*(c,d)=(a+bc,bd).\]
			It is locally strongly internal to $K/\CO$ and to $K$. Since the distinct distinguished sorts are foreign, it is not hard to verify that $G$ is $K/\CO$-balanced (by Proposition \ref{P:lasi=lsi} it is automatically $K$-balanced). We get that $\nu_{K/\CO}(G)=\nu_{K/\CO}(K/\CO)\times \{1\}$ and $\nu_K(G)=\{0\}\times \nu_K(\CO^\times)$, where $\nu_{K/\CO}(K/\CO)$ is the infinitesimal subgroup of $K/\CO$ and   $\nu_K(\CO^\times)$ is the multiplicative infinitesimal subgroup of $\CO^\times$ (around $1$).
			
			It is not hard to see that any $(a,b)\in G(\CK)$ normalizes the infinitesimal groups, we leave the calculations to the reader. The aim here is to verify that the infinitesimal groups  commute.
			
			The following computations are straightforward: For any $(a,b)\in G$, $(a,b)^{-1}=(b^{-1}\cdot (-a),b^{-1})$ and 	$(0,b)(c,1)(0,b)^{-1}=(bc,1).$
			
			Let $c\in \nu_{K/\CO}(K/\CO)(\widehat \CK)$ and $b\in \nu_K(\CO^\times)(\widehat \CK)$. Note that $v(bc-c)=v(c(1-b))\in \CO(\widehat \CK)$ so $bc=c$ (in $K/\CO$).
		\end{example}

		\section{The main theorem and rank (in)equality}
		
		We are now ready to prove our main theorem. 
		
		\begin{theorem}\label{T:main}
			Let $\CK$ be an expansion of a valued field of characteristic $0$ that is either (i) $V$-minimal, (ii) power bounded $T$-convex or (iii) $p$-adically closed and let $G$ be an infinite $\CK$-definable  group. Let  $\widehat \CK\succ \CK$ be a $|K|^+$-saturated elementary extension.    
			
			The group generated by the $\nu_D(\widehat \CK)$ in $G(\widehat \CK)$ is type definable, and definably isomorphic to 
            \[\nu_K(\widehat \CK)\times \nu_\Gamma (\widehat \CK) \times \nu_{K/\CO}(\widehat \CK)\times \nu_\bk(\widehat \CK).\]
			
		\end{theorem}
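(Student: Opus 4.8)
The plan is to show that the multiplication map is a definable isomorphism onto its image. Since the four subgroups $\nu_K(\widehat\CK),\nu_\Gamma(\widehat\CK),\nu_{K/\CO}(\widehat\CK),\nu_\bk(\widehat\CK)$ pairwise commute (Proposition~\ref{P: nus commute}), the set $N:=\nu_K(\widehat\CK)\cdot\nu_\Gamma(\widehat\CK)\cdot\nu_{K/\CO}(\widehat\CK)\cdot\nu_\bk(\widehat\CK)$ is a subgroup of $G(\widehat\CK)$ — plainly the one generated by the four — and it is type-definable over $\CK$, being a filtered intersection of images of $\CK$-definable sets under the (definable) multiplication map, using the saturation of $\widehat\CK$. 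Commutativity also turns
\[
\mu\colon \nu_K(\widehat\CK)\times\nu_\Gamma(\widehat\CK)\times\nu_{K/\CO}(\widehat\CK)\times\nu_\bk(\widehat\CK)\longrightarrow N,\qquad (a_D)_D\mapsto\textstyle\prod_D a_D,
\]
into a surjective group homomorphism. So the whole statement reduces to $\ker\mu=\{e\}$: once this is known, $\mu$ is a definable bijective homomorphism, its inverse is type-definable, and the direct product decomposition follows.

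I would first prove that $\ker\mu$ is \emph{finite}. Fix a $2{+}2$ partition of the four sorts, say $M_1:=\nu_K\cdot\nu_\Gamma$ and $M_2:=\nu_{K/\CO}\cdot\nu_\bk$, so $N=M_1M_2$. By Lemma~\ref{L:trivial intersection} together with commutativity, each $M_i$ is an \emph{internal} direct product of its two factors, so decompositions in $M_i$ are unique; a direct computation then shows that $(a_D)_D\mapsto a_Ka_\Gamma$ is a group isomorphism from $\ker\mu$ onto $M_1\cap M_2$, and it suffices to see the latter is finite. Let $g\in M_1\cap M_2$. Writing $g=a_Ka_\Gamma$ uniquely with $a_K\in\nu_K$, $a_\Gamma\in\nu_\Gamma$ gives $a_K,a_\Gamma\in\dcl(g\CK)$; composing with the $\CK$-definable finite-to-one maps witnessing that $\nu_K,\nu_\Gamma$ are almost strongly internal to $K,\Gamma$, we obtain that $g$ is interalgebraic over $\CK$ with a tuple from $K\sqcup\Gamma$, and symmetrically with a tuple from $(K/\CO)\sqcup\bk$. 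Now $K\sqcup\Gamma$ is foreign to $(K/\CO)\sqcup\bk$: this is deduced from the pairwise foreignness of distinct distinguished sorts (Fact~\ref{F:dist are foreign}) by the standard fact that foreignness is inherited by disjoint unions — since all the tuples here lie in sorts with algebraic dp-rank, if $\bar c$ from one sort and $(\bar d_1,\bar d_2)$ from a disjoint union of two sorts are interalgebraic over a base $C$, then working over $C\bar d_2$ and using foreignness to the first sort puts $\bar c,\bar d_1$ in $\acl(C\bar d_2)$, and then foreignness to the second sort puts everything in $\acl(C)$; iterating gives the claim. Hence every tuple interalgebraic with $g$ over $\CK$ lies in $\acl^{\eq}(\CK)$, so $g\in\acl^{\eq}(\CK)$. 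Thus $M_1\cap M_2$ is a type-definable-over-$\CK$ subset of $\acl^{\eq}(\CK)$, so $|M_1\cap M_2|\le|\CK|$; but an infinite type-definable set over $\CK$ has size at least $|\CK|^+$ by the $|\CK|^+$-saturation of $\widehat\CK$ (transfinitely realize ``$x$ is in the set and distinct from all previously chosen elements''). This contradiction shows $M_1\cap M_2$, hence $\ker\mu$, is finite.

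To conclude $\ker\mu=\{e\}$ I would use that in each of the three settings at least three of the four infinitesimal subgroups are torsion-free: $\nu_K$ and $\nu_\Gamma$ always are (Corollary~\ref{C: nu divisible}(2)), $\nu_{K/\CO}$ is unless $\CK$ is $p$-adically closed (Corollary~\ref{C: nu divisible}(2), Lemma~\ref{L:nu divisible}), and $\nu_\bk$ is in the $T$-convex case (Lemma~\ref{L:nu divisible}) and is trivial in the $p$-adically closed case — so the \emph{only} infinitesimal subgroup that can carry torsion is $\nu_\bk$ in the $V$-minimal case or $\nu_{K/\CO}$ in the $p$-adically closed case, and never two at once. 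Being a finite subgroup of $\prod_D\nu_D$, $\ker\mu$ has trivial projection onto each of the (at least three) torsion-free factors, hence lies in the single remaining coordinate subgroup $\{e\}\times\cdots\times\nu_{D_0}\times\cdots\times\{e\}$; but $\mu$ restricted to that subgroup is just the inclusion of $\nu_{D_0}$ into $G$, which is injective. Therefore $\ker\mu=\{e\}$ and $\mu$ is the asserted definable isomorphism, whence in particular the four subgroups pairwise commute (already known) and the subgroup they generate is $\nu_K(\widehat\CK)\times\nu_\Gamma(\widehat\CK)\times\nu_{K/\CO}(\widehat\CK)\times\nu_\bk(\widehat\CK)$.

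The hard part is the finiteness of $\ker\mu$ in the second step — specifically the passage from pairwise foreignness of the distinguished sorts to foreignness of their disjoint unions, and the bookkeeping (via uniqueness of the internal direct-product decompositions and the standing convention that the finite-to-one maps witnessing almost strong internality are taken over $\CK$) needed to exhibit an element of $M_1\cap M_2$ as interalgebraic over $\CK$ with tuples from two complementary unions of sorts. Everything after that — pinning $\ker\mu$ into a torsion-free-free coordinate and killing it — is essentially formal and relies only on the divisibility and torsion-freeness statements already established in the preceding sections.
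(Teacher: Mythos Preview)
Your proof is correct and uses exactly the same ingredients as the paper's argument: commutativity of the $\nu_D$ (Proposition~\ref{P: nus commute}), the extension of pairwise foreignness of distinguished sorts to foreignness of their products, and torsion-freeness of all but at most one $\nu_D$ (Corollary~\ref{C: nu divisible}). The difference is purely organizational. You do a $2+2$ split, identify $\ker\mu$ with $M_1\cap M_2$ via the uniqueness of the internal decompositions (which requires the $\dcl$ argument), show this intersection is finite via foreignness, and then kill the finite kernel in one stroke using that at most one factor carries torsion. The paper instead adds the sorts one at a time: first $\nu_\Gamma(\widehat\CK)\cap\nu_\bk(\widehat\CK)=\{e\}$ (Lemma~\ref{L:trivial intersection}), then $\nu_K(\widehat\CK)\cap(\nu_\Gamma(\widehat\CK)\cdot\nu_\bk(\widehat\CK))=\{e\}$ (finite by foreignness of $K$ with $\Gamma\times\bk$, then trivial since $\nu_K$ is torsion-free), and finally $\nu_{K/\CO}(\widehat\CK)\cap(\nu_K(\widehat\CK)\cdot\nu_\Gamma(\widehat\CK)\cdot\nu_\bk(\widehat\CK))=\{e\}$ (finite by foreignness, trivial since one side is torsion-free in each setting). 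This stepwise route avoids your $\dcl$-of-decomposition bookkeeping and the separate ``finite kernel'' cleanup, making it a little shorter; your route has the mild conceptual advantage of separating the foreignness step (finiteness) from the torsion step (triviality) cleanly. Your sketch of how pairwise foreignness propagates to products is what the paper leaves implicit (``it is not hard to see'').
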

		\begin{proof}
			By  Proposition \ref{P: nus commute}, any two of the $\nu_D$ commute. Let $N$ be the subgroup generated by the $\nu_D(\widehat \CK)$ in $\widehat \CK$. 
			It will suffice to show that the map $\tau: \nu(\widehat \CK)\to N$ given by $(a,b,c,d)\mapsto abcd$ is a group isomorphism. 
			
			The map $\tau$ is a group homomorphism (since the $\nu_D$ commute) and it is obviously onto. To show that it is injective, we will need to prove a generalization of Lemma \ref{L:trivial intersection}. For ease of presentation we assume that all the $\nu_D$ are non-trivial, the general case follows similarly. It will be convenient to deal with $K/\CO$ last. 
			
			By Lemma \ref{L:trivial intersection}, $\nu_\Gamma(\widehat \CK)$ has trivial intersection with $\nu_{\bk}(\widehat \CK)$. Consider $S:=\nu_K(\widehat \CK)\cap \left(\nu_\Gamma(\widehat \CK)\cdot \nu_\bk(\widehat \CK)\right)$. By what we have just shown we can identify $S$ with a subgroup of $\nu_\Gamma(\widehat \CK)\times \nu_\bk(\widehat \CK)$. Since both $K$ and $\Gamma$ and $K$ and $\bk$ are foreign it is not hard to see that $K$ and $\Gamma\times \bk$ are foreign, it follows that $S$ is finite. So $S$ is a finite subgroup of $\nu_K$. %
			But $\nu_K$ is torsion-free by Corollary \ref{C: nu divisible}(2), so must be trivial.

			Finally, consider $\nu_{K/\CO}(\widehat \CK)\cap (\nu_K(\widehat \CK)\cdot \nu_\Gamma(\widehat \CK)\cdot \nu_\bk(\widehat \CK))$. A similar argument to the one given in the previous paragraph shows that the intersection must finite. If $\CK$ is not $p$-adically closed then $\nu_{K/\CO}$ is torsion-free and if $\CK$ is $p$-adically closed then the group on the right is torsion-free (again by Corollary \ref{C: nu divisible}(2)), either way the intersection is trivial.
			
			Consequently, $\ker(\tau)$ is trivial and that $\tau$ is an isomorphism.
		\end{proof}

    In view of the above result we introduce: 
        \begin{definition}
            Let $G$ be an infinite $\CK$-definable group. The \textit{infinitesimal subgroup of $G$}, denoted $\nu(G)$, is the subgroup generated by the $\nu_D$, as $D$ ranges over all four distinguished sorts.  
        \end{definition}
		
		\subsection{The rank equality and EI}
	Our main theorem can be restated as showing that $\nu(G)$ is type-definable and definably isomorphic to the direct product of the $\nu_D$. The following is an immediate consequence: 
		
		\begin{proposition}\label{C:Cor of main}
			Let $\CK$ be an expansion of a valued field of characteristic $0$ that is either (i) $V$-minimal, (ii) power bounded $T$-convex or (iii) $p$-adically closed and let $G$ be a definable group. For $\CD=\{K,\Gamma,K/\CO,\bk\}$, we have  $1\le \dpr(\nu(G))=\sum_{D\in \CD} \text{aD-rk}(G)\leq \dpr(G).$
		\end{proposition}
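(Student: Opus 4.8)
The plan is to read the statement off Theorem~\ref{T:main}. By that theorem $\nu(G)$ is type-definable and definably isomorphic to $\prod_{D\in\CD}\nu_D(\widehat\CK)$, so $\dpr(\nu(G))=\dpr\big(\prod_{D\in\CD}\nu_D\big)$. Granting the central equality $\dpr(\nu(G))=\sum_{D\in\CD}\adrk(G)$, the two outer inequalities are automatic: $\nu(G)$ is a type-definable subset of $G(\widehat\CK)$ and $\dpr(G(\widehat\CK))=\dpr(G)$, so monotonicity of dp-rank gives $\dpr(\nu(G))\le\dpr(G)$; and since $G$ is infinite, Fact~\ref{F: alsi to D} yields an infinite subset of $G$ almost strongly internal to some distinguished sort, hence $\adrk(G)\ge1$ for at least one $D\in\CD$ and $\sum_{D\in\CD}\adrk(G)\ge1$. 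Thus everything reduces to the equality $\dpr(\nu(G))=\sum_{D\in\CD}\adrk(G)$, which I would break into the per-sort identity $\dpr(\nu_D(G))=\adrk(G)$ and the additivity $\dpr\big(\prod_{D}\nu_D\big)=\sum_{D}\dpr(\nu_D)$.

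For the per-sort identity: if $\adrk(G)=0$ then $\nu_D(G)=\{e\}$ by our convention and there is nothing to prove. If $\adrk(G)>0$ and $D\neq\KOp$, then $G$ is $D$-balanced by Proposition~\ref{P:lasi=lsi}, so $H_G=\{e\}$, and $\dpr(\nu_D(G))=\drk(G)=\adrk(G)$ is Fact~\ref{F: properties of nu}(4) when $D$ is unstable, and follows from the defining property of $\nu_\bk$ (recalled before Lemma~\ref{L:nu in XX-1}) together with $D$-balancedness when $D=\bk$ in the $V$-minimal case. If $D=\KOp$, Proposition~\ref{P:lasi in K/O p-adic}(i) writes $\nu_D(G)$ as a finite-to-one image of $\nu_D((K/\CO)^m,+)$ with $m=\adrk(G)$, and $\dpr(\nu_D((K/\CO)^m,+))=m$ by Remark~\ref{R: how nu looks}(2); invariance of dp-rank under finite-to-one maps gives $\dpr(\nu_D(G))=\adrk(G)$.

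For the additivity, $\dpr\big(\prod_{D}\nu_D\big)\le\sum_{D}\dpr(\nu_D)$ is sub-additivity of dp-rank. For the reverse inequality I would transfer the problem into the distinguished sorts: each $\nu_D$ maps finite-to-one onto a definable set $Y_D$ inside a finite power of $D$, with $\dpr(Y_D)=\dim_{\acl}(Y_D)=\dpr(\nu_D)$ (for unstable $D$ via Corollary~\ref{C: has D-sets} and the vicinity/$D$-critical description of $\nu_D$ in Proposition~\ref{P:characterizatio of nu_D in general} and Fact~\ref{F: properties of nu}; for $D=\bk$ because $\nu_\bk$ is already a definable subgroup strongly internal to $\bk$). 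Hence $\prod_D\nu_D$ maps finite-to-one onto $\prod_D Y_D\subseteq K^{n_1}\times\Gamma^{n_2}\times(K/\CO)^{n_3}\times\bk^{n_4}$, and it suffices to show $\dpr\big(\prod_D Y_D\big)=\sum_D\dpr(Y_D)$. Fixing an order on $\CD$ and choosing, for a generic point of $\prod_D Y_D$, each block generic over the earlier blocks, foreignness of distinct distinguished sorts (Fact~\ref{F:dist are foreign}) should guarantee that adjoining parameters from the other sorts creates no new $\acl$-dependence, so the $\acl$-dimension of the whole tuple is $\sum_D\dim_{\acl}(Y_D)$; since the distinguished sorts carry algebraic dp-rank, this $\acl$-dimension equals the dp-rank, yielding the lower bound. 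I expect this last step to be the only real obstacle: turning the pairwise foreignness and algebraic dp-rank of the individual sorts into a genuine additivity statement for a tuple spread across all four sorts requires care, especially because $\acl$ need not satisfy Exchange in $\Gamma$ and in $K/\CO$. Once this additivity is in hand the proposition follows as indicated.
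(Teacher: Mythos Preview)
Your argument is correct and follows the paper's intended route (the paper declares the proposition an immediate consequence of Theorem~\ref{T:main} and gives no further proof). However, you make the additivity step harder than necessary. The equality $\dpr\big(\prod_{D}\nu_D\big)=\sum_{D}\dpr(\nu_D)$ is a general fact about dp-rank and requires nothing specific to the distinguished sorts: for any (type-)definable $X,Y$ one has $\dpr(X\times Y)=\dpr(X)+\dpr(Y)$. Sub-additivity gives $\le$; for $\ge$, an ict-pattern of depth $\dpr(X)$ in the $x$-variable and one of depth $\dpr(Y)$ in the $y$-variable concatenate to an ict-pattern of depth $\dpr(X)+\dpr(Y)$ in $X\times Y$, since the two blocks of formulas involve disjoint free variables and the consistency conditions for each path split accordingly. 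Your detour through foreignness and algebraic dp-rank is therefore unnecessary, and the concern you flag about Exchange failing in $\Gamma$ and $K/\CO$ does not arise. With this in hand, together with your (correct) per-sort computation $\dpr(\nu_D(G))=\adrk(G)$ and the two outer inequalities, the proposition is indeed immediate.
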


        In light of the above proposition, it  is natural to ask to what extent the group $\nu(G)$ ``exhausts'' the group $G$, namely whether $\dpr(\nu(G))=\dpr(G)$ (below we call it ``the rank equality''). There are few examples where we know the rank equality to hold: When $G$ is dp-minimal this follows from the mere fact that $\nu(G)$ is infinite. When $G$ is definably semisimple (i.e., has no infinite definable commutative normal  subgroups) the rank equality  follows from   \cite[Theorem 10.3(1,2)]{HaHaPeSemisimple}.  
		
		Below we show that the  rank equality fails in general, even for $K$-pure groups (i.e., where $\nu(G)=\nu_K(G)$). Classifying those cases where the rank equality holds is, in view of the above examples, an intriguing problem in its own right.  It  seems to have a relation, though yet to be fully understood,  with elimination of imaginaries in $\CK$. 
		
		For a more detailed discussion we need Gagelman's extension (\cite{Gagelman}) of the $\acl_K$-dimension to $\CK^{eq}$. See also \cite[\S 2] {JohnTopQp} for a quick overview of the definition, and main properties. In those terms  \cite[\S 4.1]{CasHasTconv} introduces the notion of a valued field $\CK$  eliminating  imaginaries down to 0-dimensional sorts. This means that for any 0-definable equivalence relation $E$ on $K^n$ (any $n$) there is a $0$-definable injection $f: K^n/E\to K^m\times S$ for some $m\in \Nn$ and some $0$-dimensional sort $S$.  The main result of \cite{HaHrMac1} asserts that the theory of pure algebraically closed fields eliminates imaginaries down to the so called, geometric sorts, that are readily checked to be $0$-dimensional. Similar results exist for the theories of real closed valued fields (\cite{MelRCVFEOI}) and $p$-adically closed fields (\cite{HrMarRid}) in the language of valued rings. 
		
		In the setting of the present paper, if $\CK$ has elimination of imaginaries down to $0$-dimensional sorts, and $G$ is a pure $K$-group  then the rank equality holds. Indeed, by elimination of imaginaries down to 0-dimensional sorts, for such a group $G$  there exists a definable finite-to-one map from $G$ into some $K^n$   (\cite[Lemma 4.1]{CasHasTconv}), So, for $D=K$, we have $\dpr(G)=\adrk(G)$  and since $\adrk(G)=\drk(G)$ it follows that $\dpr(\nu(G))=\dpr(\nu_K(G))=\dpr(G)$. 
		In general, however, the rank equality need not hold even for a pure $K$-group, as the following example shows: 
		
		\begin{example}\label{E: exp example}
			
			Let $\CK$ be a model of an analytic expansion of ACVF$_{0,0}$ (in the sense of \cite{Lip}, see also \cite[\S 1]{HaHrMac3} for a quick overview). Let $\exp: \m \to 1+\m$ be the exponential map (definable in this structure) and let $E$ be its graph. If $\widehat G=(\CO,+)\times (\CO^\times,\cdot)$ then $E$ is a subgroup. We define $G=\widehat G/E$, and let  $\pi:\widehat G\to G$ be the quotient map.
			Restricting the residue map $\mathrm{res}$ to $\CO^\times$, it is a group homomorphism from $\CO^\times$ onto $\bk^\times$ with kernel $1+\m$.  Since $E\sub \m\times (1+\m)$ , the map $\mathrm{res}:\widehat G \to \bk \times \bk^\times$ factors through $\pi$, and in particular, $\dpr(G)=\dpr(\bk\times \bk^\times)=2$. Let $f:G\to \bk\times \bk^\times$ be the corresponding homomorphism.

			Our goal is to prove that the $K$-rank of $G$ is $1$, but $G$ contains no infinite definable set of dimension $0$, so in particular, for all distinguished sorts $D\neq K$, $\drk(G)=0$.

			Because $\dim(E)=1$, it follows immediately from the definition of $\dim$ that for a generic $(a,b)\in \widehat G$, we have $\dim(\pi(a,b)/\emptyset)=1$ and thus for every definable  $Y\sub \widehat G$, if $\dim(Y)=2$ then $\dim(\pi(Y))=1$. In particular, $\dim(G)=1$, and we can see directly that the $K$-rank of $G$ is $1$, because the restriction of $\pi$ to either $\{0\}\times \CO^\times$ or $\CO\times \{1\}$ is injective.

			Let us now show: 
			\begin{claim}\label{C: no 0-dim sets}
				There is no definable infinite $X\sub G$ with $\dim(X)=0$.     
			\end{claim}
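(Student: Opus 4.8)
The plan is to argue by contradiction, working with the pullback to $\widehat G$. Suppose $X\subseteq G$ is definable, infinite, with $\dim(X)=0$, and set $\widetilde X:=\pi^{-1}(X)\subseteq \widehat G=\CO\times\CO^\times\subseteq K^2$. Since the fibres of $\pi$ are precisely the cosets of $E$ and $\dim(E)=1$, additivity of $\dim$ gives $\dim(\widetilde X)=\dim(X)+1=1$; moreover $\widetilde X$ is a union of cosets of $E$, and since $\pi$ induces a bijection between the cosets of $E$ contained in $\widetilde X$ and the points of $X$, it meets infinitely many of them. We will use repeatedly that every coset of $E$ lies inside a single slab $S_{\bar a}:=\mathrm{res}^{-1}(\bar a)\times\CO^\times$, $\bar a\in\bk$, because an element $(s,\exp s)\in E$ (with $s\in\m$) changes no residues.

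First I would show that every vertical fibre $\widetilde X_a:=\{b\in\CO^\times:(a,b)\in\widetilde X\}$ is finite. If some $\widetilde X_{a^*}$ were infinite it would, being an infinite definable subset of $K$, have dimension $1$ (the analytic structure is $C$-minimal); then $E$-invariance makes $\{(a^*+s,\,b\exp s):s\in\m,\ b\in\widetilde X_{a^*}\}$ — an injective image of $\m\times\widetilde X_{a^*}$ — a subset of $\widetilde X$ of dimension $2$, contradicting $\dim(\widetilde X)=1$. For $a,a'$ in a common slab we have $\widetilde X_{a'}=\exp(a'-a)\cdot\widetilde X_a$, so $|\widetilde X_a|$ takes a constant value $N(\bar a)\in\Nn$ on each slab, equal to the number of cosets of $E$ inside $\widetilde X\cap S_{\bar a}$.

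Next, $T:=\{\bar a\in\bk:\widetilde X\cap S_{\bar a}\neq\emptyset\}$ must be infinite, for otherwise $\widetilde X$ meets only finitely many slabs, each contributing the finitely many cosets counted by $N(\bar a)$, and $X$ would be finite; being definable in the algebraically closed field $\bk$, $T$ is then cofinite. By uniform finiteness $\bar a\mapsto N(\bar a)$ is definable, hence constant $=N_0$ off a finite set, and we may shrink $T$ so that $N\equiv N_0\ge 1$ on it. Finally, the finiteness of the vertical fibres together with analytic cell decomposition shows that, over $T$, $\widetilde X$ is a finite union of graphs of continuous definable functions $h$; $E$-invariance forces $h(a+s)=h(a)\exp s$ for $s\in\m$, i.e.\ $h'=h$, so on each $\m$-coset $h$ is an exponential branch $a\mapsto c\exp(a-a_0)$ whose graph is a single coset of $E$. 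Thus over $T$ the set $\widetilde X$ is a definable, ``$N_0$-per-slab'' family of exponential branches spread over the infinite set $T$.

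The remaining — and main — step is that no such family can exist; here the transcendence of $\exp$ over the valued field is genuinely used (an algebraic choice of $E$ would not have produced a new group at all). One of the finitely many graph-functions $h$ from the cell decomposition has domain meeting infinitely many $\m$-cosets. After reducing to the case in which the second-coordinate residue $\mathrm{res}\circ h$ is constant — the complementary case is handled symmetrically, swapping the two coordinates and using $\log\colon 1+\m\to\m$ in place of $\exp$ — one obtains a definable $\Phi\colon\CO'\to\CO^\times$ with $\CO':=\dom(\Phi)$ meeting infinitely many $\m$-cosets, $\mathrm{res}\circ\Phi$ constant, and $\Phi(a+s)=\Phi(a)\exp s$ for $s\in\m$. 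Composing with the definable logarithm straightens $\Phi$: fixing $a_0\in\CO'$ and putting $\lambda(a):=\log\!\big(\Phi(a)/\Phi(a_0)\big)$, we get $\lambda(a+s)=\lambda(a)+s$, so $a-\lambda(a)$ depends only on $\mathrm{res}(a)$ and descends to a definable injective map $\nu$ from the infinite definable set $\mathrm{res}(\CO')\subseteq\bk$ into $\CO$ with $\mathrm{res}(\nu(\bar a))=\bar a$ — a definable partial section of the residue map over an infinite set. This is impossible: $\nu(\mathrm{res}(\CO'))$ would be an infinite definable subset of $\CO$ on which $\mathrm{res}$ is injective, whereas, by $C$-minimality, any infinite definable subset of $K$ contains a ball, and $\mathrm{res}$ restricted to a ball is either constant or has an infinite fibre. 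This contradiction proves Claim \ref{C: no 0-dim sets}; everything before the last step is routine bookkeeping with $C$-minimal/analytic cell decomposition and additivity of dimension.
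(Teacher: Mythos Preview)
Your approach is genuinely different from the paper's and the first half is sound: pulling back to $\widetilde X$, establishing finite vertical fibres, showing the residue-image $T$ is cofinite, and decomposing $\widetilde X$ into finitely many graphs $h_i$ each of which, restricted to any $\m$-coset, is a single exponential branch --- all of this is correct and well organised.

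The gap is in your reduction ``to the case in which $\mathrm{res}\circ h$ is constant --- the complementary case is handled symmetrically''. This dichotomy is not exhaustive. Nothing you have proved rules out the following scenario: $\widetilde X$ is the graph of a single definable bijection $h\colon \CO'\to V\subseteq\CO^\times$ with $\CO'$ meeting infinitely many $\m$-cosets, satisfying $h(a+s)=h(a)\exp s$ for $s\in\m$, and such that the induced map $\bar h\colon \mathrm{res}(\CO')\to\bk^\times$ on residues is \emph{injective}. Swapping coordinates then gives $g=h^{-1}$, whose residue map $\bar g=\bar h^{-1}$ is again injective, not constant. So neither the original nor the swapped version lets you form $\lambda(a)=\log(\Phi(a)/\Phi(a_0))$, since $\Phi(a)/\Phi(a_0)\notin 1+\m$ when $\mathrm{res}(a)\neq\mathrm{res}(a_0)$. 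Of course such an $h$ does not exist --- that is precisely the content of the claim --- but your argument as written does not exclude it. (A plausible repair: your relation $h(a+s)=h(a)\exp s$ yields $h'=h$ pointwise; combined with analytic cell decomposition, one of the cells is a Swiss cheese meeting infinitely many $\m$-cosets, hence contains a ball of negative radius on which $h$ is given by a convergent series, and the ODE $h'=h$ forces the coefficients to be $c_0/n!$, which diverges on such a ball. But this is a different argument from the one you wrote, and making it precise requires some care with the analytic machinery.)

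For comparison, the paper avoids all of this by exploiting the group structure: it replaces $X$ by the symmetric set $XX^{-1}$ and forms the $\bigvee$-definable subgroup $H=\bigcup_n X^{(n)}$, still of dimension $0$. Pushing forward along $f\colon G\to\bk\times\bk^\times$ gives an infinite $\bigvee$-definable subgroup, which (by Zilber indecomposability in the algebraically closed residue field) must contain one of the two one-dimensional connected algebraic subgroups $\{0\}\times\bk^\times$ or $\bk\times\{1\}$. Pulling back, one finds that for some $n$ the preimage $\pi^{-1}(X^{(n)})$ is contained in $\m\times\CO^\times$ or $\CO\times(1+\m)$; but the earlier argument shows both coordinate projections of $\pi^{-1}(X^{(n)})$ meet infinitely many cosets of $\m$ (resp.\ $1+\m$), a contradiction. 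This group-theoretic route is shorter and sidesteps any fine analysis of definable functions.
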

			\begin{claimproof}
				Assume otherwise, and let $X$ witness it.
				By the above discussion, $\dpr(X)=1$ (indeed, otherwise its preimage in $\widehat G$ has dimension $2$, implying that $\dim(X)=1$). 
				
				We claim that for $\widehat X=\pi^{-1}(X)$, the projections $\pi_i(\widehat X)$, $i=1,2$, into $\CO$ and $\CO^\times$, respectively, contain infinitely many cosets of $\m$ and $1+\m$, respectively. Indeed, $\widehat X$ consists of infinitely many cosets of $E$, and since $\exp:\m \to 1+\m$ is surjective, if $\pi_2(\hat X)$ intersects only  finitely many cosets of $1+\m$ then for some generic $b\in \pi_2(\widehat X)$ the fiber $\widehat X^b\sub \CO$ must be infinite. It follows from additivity of $\dim$ that $\dim(\widehat X)=2$. Similarly, if $\pi_1(\widehat X)$ consists of finitely many cosets of $\m$ then $\dim(\widehat X)=2$.  In either case we obtain a contradiction to $\dim(\widehat X)=1$.
				
				We now proceed with the proof of the claim and note that $\dim (X\times X)=0$ 
				and by additivity of $\dim$ it follows that $\dim(XX^{-1})=0$, so we may assume that  $X$ is symmetric.  Similarly, for every $n$,  $\dim(X^{(n)})=0$ (where $X^{(n)}$ is the set of group products of $n$ elements from $X$). Consider $H=\bigcup\limits_{n\in \Nn} X^{(n)}$; it is a  $\bigvee$-definable subgroup of $G$ of dimension $0$.
				
				We claim that $f(X)$ (and hence $f(H)$) is infinite. Indeed, since both $\pi_1(\widehat X)$ and $\pi_2(\widehat X)$ intersect infinitely many cosets of $\m$ and $1+\m$ respectively, it follows that $\widehat X$ intersects infinitely many cosets of $\m\times (1+\m)$.  Therefore, $f(X)$ is infinite and therefore $f(H)$ is an infinite $\bigvee$-definable subgroup of $\bk\times \bk^\times$. However, $\bk$ is a pure algebraically closed field, hence (e.g. by Zilber Indecomposability Theorem), $f(H)$ contains an infinite $\bk$-algebraic connected subgroup.
				
				A non-trivial connected algebraic subgroup of $\bk\times \bk^\times$ is either  $\bk\times \bk^\times$, $\{0\}\times \bk^\times$ or $\bk\times \{1\}$. Since $\dpr(H)=1$, only the last two are possible.

				Thus, there exists $n\in \mathbb N$, such that either $f(X^{(n)})=\{0\}\times \bk^*$ or $f(X^{(n)})=\bk\times \{1\}$. In the first case $\pi^{-1}(X^{(n)})=\bm\times \CO^\times$ and in the second case $\pi^{-1}(X^{(n)})=\CO\times (1+\bm)$. In both cases, we get a contradiction because, just like $\widehat X$ above, the projections of $\pi^{-1}(X^{(n)})$ onto both coordinates must contain infinitely many cosets of $\bm$ and $1+\bm$, respectively.
				
				Therefore, there are no infinite definable subsets of $G$ of dimension $0$.
			\end{claimproof}
			
			It follows from the claim that for all $D\neq K$,  $\drk(G)=0$. Thus,  $G$ is $K$-pure and $\nu(G)=\nu_K(G)$, so $\dpr(\nu(G))=1<\dpr(G)=2$. 
		\end{example}
		
		In \cite[Theorem 1.1]{HaHrMac3} it is shown, using an example similar to the above, that the field $\CK$ of the previous example does not have elimination of imaginaries down to the geometric sorts of \cite{HaHrMac1}. In fact, as noted in \cite[\S 4.1]{CasHasTconv}, the example of Haskell, Hrushovski and Macpherson shows that $\CK$ does not have elimination of imaginaries down to 0-dimensional sorts. Below we give a quick alternative proof of this result, based on the machinery studied in the present paper. As we will see, the proof is an almost immediate corollary of the above example:  
		\begin{corollary}
			Let $\CK$ be a model of an analytic expansion of ACVF$_{0,0}$. Then $\CK$ does not have elimination of imaginaries down to 0-dimensional sorts. 
		\end{corollary}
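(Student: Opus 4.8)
The plan is to derive a contradiction from the group $G$ constructed in Example~\ref{E: exp example}. Recall the two features of that example that matter here. First, $G$ is a \emph{pure $K$-group}: by Claim~\ref{C: no 0-dim sets} there is no infinite definable $X\subseteq G$ with $\dim(X)=0$, so $\drk(G)=0$ for every distinguished sort $D\neq K$, and hence $\nu(G)=\nu_K(G)$. Second, the computation of the $K$-rank of $G$ in the example, together with Proposition~\ref{C:Cor of main}, gives $\dpr(\nu(G))=1$, whereas $\dpr(G)=2$; that is, the rank equality \emph{fails} for $G$.

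Now I would suppose, towards a contradiction, that $\CK$ eliminates imaginaries down to $0$-dimensional sorts, and invoke the implication recorded in the paragraph preceding Example~\ref{E: exp example}: for a pure $K$-group $G$, elimination of imaginaries down to $0$-dimensional sorts produces, via \cite[Lemma~4.1]{CasHasTconv}, a definable finite-to-one map $G\to K^n$ for some $n$. Since such maps preserve dp-rank, the almost $K$-rank of $G$ equals $\dpr(G)$; and since $G$ is $K$-balanced by Proposition~\ref{P:lasi=lsi}, the $K$-rank of $G$ equals its almost $K$-rank, hence also $\dpr(G)$. Combining with $\nu(G)=\nu_K(G)$ and Proposition~\ref{C:Cor of main} yields
\[
\dpr(\nu(G))=\dpr(\nu_K(G))=\dpr(G),
\]
which contradicts $\dpr(\nu(G))=1<2=\dpr(G)$ from Example~\ref{E: exp example}. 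Hence no such elimination of imaginaries exists. (Equivalently, the finite-to-one map $G\to K^n$ would force $\dim(G)=\dpr(G)=2$, while $\dim(G)=1$ was computed in the example.)

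There is essentially no difficult step here: all the substantive work has already been done in Example~\ref{E: exp example}. The only point needing any care is verifying that the group $G$ of that example genuinely qualifies as a pure $K$-group to which \cite[Lemma~4.1]{CasHasTconv} applies, and this is exactly what Claim~\ref{C: no 0-dim sets} (the absence of infinite $0$-dimensional definable subsets of $G$) secures. Thus the corollary is an almost immediate consequence of the example together with Proposition~\ref{C:Cor of main}.
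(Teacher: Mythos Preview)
Your proposal is correct and follows essentially the same approach as the paper. The only cosmetic difference is that the paper's proof is fully self-contained: rather than invoking the informal discussion preceding Example~\ref{E: exp example} (and its reference to \cite[Lemma~4.1]{CasHasTconv}) as a black box, the paper unpacks that argument directly---embedding $G$ into $K^n\times S$ for a $0$-dimensional sort $S$, observing via Claim~\ref{C: no 0-dim sets} that the projection to $K^n$ must be finite-to-one, and then deriving the contradiction $2=\dpr(G)=\dim(\pi_1(G))\le\dim(G)=1$ (which is precisely the parenthetical you give at the end).
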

		\begin{proof}
			Assume towards a contradiction that $\CK$  eliminates  imaginaries down to $0$-dimensional sorts. This means that for $G$ the group of Example \ref{E: exp example}, there exists a definable injection $f: G\to K^n\times S$ for some $0$-dimensional sort $S$. Identifying $G$ with $f(G)$, we may assume that $G\subseteq K^n\times S $.
			
			Let $\pi_1: G\to K^n$ be the projection. If for some $a\in K^n$ the fibre $\pi_1^{-1}(a)$ is infinite then the set $\{g\in G: \pi_1(g)=a\}$ is infinite and $0$-dimensional (as it is in definable bijection with a subset of the $0$-dimensional set $S$), contradicting Claim \ref{C: no 0-dim sets}.  
			
			Therefore, the function $\pi_1: G\to K^n$ is finite-to-one. But this too is impossible, as we have seen that $\dpr(G)=2$, and thus also $\dpr(\pi_1(G))=2$. As $\pi_1(G)\sub K^n$ it follows that \[2=\dpr(\pi_1(G))=\dim(\pi_1(G))\le \dim(G)=1, \]
			a contradiction. 
		\end{proof}

		Let us note that in \cite[Theorem 1.2, Theorem 1.3 ]{HaHrMac3}  results analogous to the above corollary are proved for the $T$-convex expansion of $\Rr_{an}$ and for $\mathbb Q_{p,an}$ respectively. However, since $\exp\restriction [-1,1]$ is definable in $\Rr_{an}$, the analogous example in the $T_{an}$-convex setting yields a group $G$ which contains a definable infinite subset of dimension $0$ (the projection of the graph of  $\exp\restriction [-1,1]$ into $G$) which is strongly internal to $\bk$. Therefore,  in that case $\dpr(\nu_K(G))=\dpr(\nu_k(G))=1$ and as $\nu(G)\cong\nu_K(G)\times \nu_{\bk}(G)$  the rank equality holds.\\

		The following problems remain open: 
		
		\begin{question} 
			\begin{enumerate}
				\item Does the rank equality hold in ACVF$_{0,0}$,
				in T-convex expansions of power bounded o-minimal fields and in p-adically closed fields?
				\item Does an example similar to Example \ref{E: exp example} exist in $\Qq_{p,an}$? 
			\end{enumerate}
			
		\end{question}

		\section{Infinitesimals of subgroups}
		%
		
		The aim of the present section is to show that if $G_1\le G$ is a definable subgroup, then $\nu(G_1)$ is a relatively definable subgroup of $\nu(G)$, namely the intersection of a definable set with $\nu(G)$. 
		We first prove this result for each distinguished sort separately. 
		
		\subsection{Infinitesimals of lower dp-rank}
		We consider an unstable distinguished sort $D$ and a generic $a\in D^n$. Given an $A$-definable set $Y\sub D^n$,  possibly of lower dp-rank, but such that $a$ is $A$-generic in $Y$ we first investigate the relation between $\nu_{D^n}(a)$ and $\nu_Y(a)$. Recall that if $\dpr(Y)=n$ then by Fact \ref{F: vicinities}(2) we get $\nu_{D^n}(a)=\nu_Y(a)$. We repeatedly use Fact \ref{F: vicinities}(4). 

			Below, for a partial type $\mu$, and a definable set $X$, we write, in abuse of notation, $\mu \cap Y$ for the collection $\{X\cap Y:X\in \mu \}$.
			
			\begin{lemma}\label{L: restriction of nu to subset}
				Let $D$ be an unstable distinguished sort, $Y\subseteq D^n$ an infinite definable subset,  $c\in Y$, with $\dpr(c/[Y])=\dpr(Y)$ and $\dpr(c/\emptyset)=n$.
				
				\begin{enumerate}
					\item If $D\neq \KOp$ then , $\nu_Y(c)\equiv \nu_{D^n}(c)\cap Y$.
					\item If $D=\KOp$  then there exists a generic vicinity $Y'$ of $c$ in $Y$, which is a coset of a definable subgroup of $D^n$, such that $\nu_Y(c)\equiv \nu_{D^n}(c)\cap Y'$.
					
				\end{enumerate}
			\end{lemma}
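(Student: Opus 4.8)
The plan is to reduce to the case where $Y$ has full dp-rank $n$ in $D^n$ --- where the lemma is immediate --- via a coordinate projection, and then to transport the two partial types across the induced definable bijection; the engine for the parameter bookkeeping is Proposition \ref{P: topological pair 2}.

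If $\dpr(Y)=n$ then $Y\in\nu_{D^n}(c)$ (as $\dpr(c/[Y])=\dpr(Y)=n$), so $\nu_{D^n}(c)\cap Y\equiv\nu_{D^n}(c)\equiv\nu_Y(c)$ by the filter-base property and Fact \ref{F: vicinities}(2). So suppose $m:=\dpr(Y)<n$, and apply Lemma \ref{L:onto open} to $Y,c$ to obtain $B\supseteq[Y]$, a $B$-definable generic vicinity $Y_1\subseteq Y$ of $c$, and a coordinate projection $\pi\colon D^n\to D^m$ with $\pi\restriction Y_1$ a definable bijection onto $W_1:=\pi(Y_1)$ and $\dpr(c/B)=\dpr(W_1)=m$; inspecting the proof of Lemma \ref{L:onto open}, we may further arrange that $Y_1$ is relatively open in $Y$ at $c$ and that $\pi\restriction Y_1$ is a homeomorphism onto $W_1$ in the appropriate topology --- and, when $D=\KOp$, by Remark \ref{R: onto open for K/O padic}, that $Y_1$ is a coset of a definable subgroup of $D^n$ with $W_1$ a ball, in which case we put $Y':=Y_1$. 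By Fact \ref{F: vicinities}(2),(3), $\nu_Y(c)\equiv\nu_{Y_1}(c)$ is carried by $\pi\restriction Y_1$ onto $\nu_{W_1}(\pi(c))$, and since $W_1$ has full dp-rank in $D^m$ and $\pi(c)$ is generic in it, $\nu_{W_1}(\pi(c))\equiv\nu_{D^m}(\pi(c))$ by Fact \ref{F: vicinities}(2). Hence the lemma reduces to showing that $\nu_{D^n}(c)\cap Y$ (resp.\ $\nu_{D^n}(c)\cap Y'$) is logically equivalent to $\{\,Y_1\cap\pi^{-1}(U):U\in\nu_{D^m}(\pi(c))\,\}$, the $(\pi\restriction Y_1)$-pullback of $\nu_{D^m}(\pi(c))$.

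One inclusion is soft: for $W\in\nu_{D^n}(c)$, $W\cap Y$ is a relatively definable open (resp.\ $\Gamma$-box, resp.\ ball) neighbourhood of $c$ in $Y$; carrying it over via $\pi\restriction Y_1$ and using that $\nu_{W_1}(\pi(c))$ is exactly the family of such relative neighbourhoods of $\pi(c)$ in $W_1$ --- Fact \ref{F: nu in SW for general Y} for $D$ an SW-uniformity, and the concrete $\Gamma$-box / ball descriptions of Fact \ref{F: concrete description of nu for D-balanced} and Remark \ref{R: how nu looks} in the other two cases --- shows that $W\cap Y$ contains a set $Y_1\cap\pi^{-1}(U)$ with $U\in\nu_{D^m}(\pi(c))$. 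The reverse inclusion is the heart of the matter, and the step I expect to be the main obstacle. Given $U\in\nu_{D^m}(\pi(c))$ one must build $W\in\nu_{D^n}(c)$ with $W\cap Y\subseteq Y_1\cap\pi^{-1}(U)$; the difficulty is entirely about parameters, since $W$ must be definable over a set over which $c$ keeps its full dp-rank $n$, while $U$ and $Y_1$ are only given over sets over which $c$ has dp-rank $m$. This is precisely the situation Proposition \ref{P: topological pair 2} addresses: applied (with care about the order of the choices) with $b:=\pi(c)$ generic in $D^m$ and $a:=c$ a tuple from $D$, it produces subsets of $U$ --- and of the sets carving $Y_1$ out of $Y$ --- defined over an enlarged parameter set over which $c$ still has dp-rank $n$; pulling these back through $\pi$ yields the required $W$. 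One uses Case 1 (open sets) when $D$ is an SW-uniformity, Case 3 ($\Gamma$-boxes) when $D=\Gamma$ in the $p$-adically closed case, and Case 2 (balls) when $D=\KOp$; in the last case the passage to $Y'=Y_1$ --- a subgroup coset projecting onto a ball --- is genuinely needed, because a single ball around $c$ in $D^n$ over admissible parameters cannot in general be used to discard $Y\setminus Y_1$ for an arbitrary $Y$, only once $Y$ is replaced by such a coset. Assembling the two inclusions completes the proof.
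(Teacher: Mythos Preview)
Your reduction via a coordinate projection and Lemma \ref{L:onto open} is sound for the $\KOp$ case and coincides with what the paper does there, but the appeal to Proposition \ref{P: topological pair 2} for the hard direction does not close the argument in the remaining cases, and the paper proceeds quite differently.

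The gap is the step ``$W\cap Y\subseteq Y_1$''. For the reverse inclusion you need some $W\in\nu_{D^n}(c)$, i.e.\ a set defined over parameters $C$ with $\dpr(c/C)=n$, satisfying $W\cap Y\subseteq Y_1\cap\pi^{-1}(U)$. Proposition \ref{P: topological pair 2}, applied with $b=\pi(c)$ and $a=c$, lets you shrink $U$ to some $V\in\nu_{D^m}(\pi(c))$ with $\dpr(c/[V])=n$, and then indeed $\pi^{-1}(V)\in\nu_{D^n}(c)$ and $\pi^{-1}(V)\cap Y\subseteq\pi^{-1}(U)\cap Y$. But this says nothing about containment in $Y_1$: the parameters over which $Y_1$ is defined already force $\dpr(c)\le m<n$, so no set definable from them can lie in $\nu_{D^n}(c)$. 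Your phrase ``and of the sets carving $Y_1$ out of $Y$'' gestures at the problem but does not solve it; Proposition \ref{P: topological pair 2} only allows the auxiliary tuple $a$ to live in a distinguished sort, whereas the code $[Y_1]$ is an imaginary. There is also no topology on a $\mathbb{Z}$-group making the Presburger cell $Y_1$ relatively open in $Y$, so the claim that you ``may further arrange that $Y_1$ is relatively open in $Y$'' is not justified in that case.

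The paper avoids this entirely. For $D$ an SW-uniformity it does not project at all: by Fact \ref{F: nu in SW for general Y}, $\nu_{D^n}(c)$ is generated by open sets and $\nu_Y(c)$ by relatively open sets, so the subspace topology gives the result in one line. For $D=\Gamma$ with $\CK$ $p$-adically closed, the paper proves directly by induction on $n$ that every $Z\in\nu_Y(c)$ contains some $U\cap Y$ with $U$ a $\Gamma$-box around $c$; the inductive step isolates $c_n$ from the finitely many other points of $\pi^{-1}(c')\cap Y$ using a congruence condition and a bounded interval, then applies the inductive hypothesis to $\pi(Y)\subseteq\Gamma^{m}$. This combinatorial use of congruences is exactly what produces a $\Gamma$-box over parameters that do \emph{not} see the cell $Y_1$, and it is not captured by Proposition \ref{P: topological pair 2}.
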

			\begin{proof}
				We break the proof into cases.
				
				\vspace{.2cm}
				\noindent   \textbf{Case 1:  $D$ is an SW-uniformity.} 
				\vspace{.2cm}

				By Fact \ref{F: nu in SW for general Y} (see also \cite[Proposition 5.9]{HaHaPeGps}), $\nu_{D^n}(c)$ is logically equivalent to \[\{U\subseteq D^n: \text{$U\ni c$ is definable open}\}\] and $\nu_Y(c)$ is logically equivalent to $\{V\subseteq Y: \text{$V\ni c$ is relatively definable open in $Y$}\}$.
				Since the topology on $Y$ is the subspace topology, the result follows.

				\vspace{.2cm}
				
				\noindent    \textbf{Case 2: $D=\Gamma$ and $\CK$ is $p$-adically closed.} 
				
				\vspace{.2cm}

				By \cite[Lemma 5.5]{HaHaPeSemisimple}, $\nu_{D^n}(c)\equiv \{U : U\subseteq D^n \text{ is a $\Gamma$-box around $c$}\}$. 
				It is not hard to see that $\nu_Y(c)\vdash \nu_{D^n}(c)\cap Y$. Indeed, if $U\sub D^n$ is a $\Gamma$-box around $c$ then, by compactness we can find a  smaller $\Gamma$-box $U'$ centered at $c$,  and whose end points are independent of all the data, so by exchange, $c$ is generic in $U'\cap Y$ and $U'\cap Y\in \nu_Y(c)$ (see also \cite[proof of Lemma 4.2]{HaHaPeGps}). For the converse, it is enough to prove the following:
				
				\begin{claim} 
					For every $Z\in \nu_Y(c)$ there  is a $\Gamma$-box $U\sub \Gamma^n$ around $c$ such that $U\cap Y\sub Z$. 
				\end{claim} 
				\begin{claimproof} 
					We use induction on $n$. Let $m=\dpr(H)$.
					Since $\nu_Y(c)$ is a filter-base (Fact \ref{F: vicinities},  by shrinking $Z$ if needed, we may assume that $\dpr(c/A[Z][Y])=\dpr(Y)$. 
					
					If $m=n$ (this covers the $n=1$ case) then by \cite[Lemma 3.2]{OnVi} we further shrink $Z$ and assume that it is a $\Gamma$-box so $U=Z$ works. We thus assume that $1\leq m<n$.
					
					Since $\Gamma$ is a geometric structure, $c$ has a sub-tuple $c'$ of length $m$ and of dp-rank $m$ over $A[Z][Y]$ 
					such that $c\in \dcl(c'A[Z][Y])$. Without loss of generality, $c'=(c_1,\ldots, c_m)$. Let $\pi:\Gamma^n\to \Gamma^m$ be the projection on the first $m$ coordinates. Since $c'\in \pi(Y)$ it follows from additivity of rank in $\Gamma$ that $\pi^{-1}(c')\cap Y$ is finite,

					For simplicity of presentation assume that $n=m+1$, the general case is similar.  Let the finite fiber   $\pi^{-1}(c')\cap Y$ be $(c',d_1),\dots (c',d_k)$, with  $d_1=c_n$.

					Let $N\in \mathbb N$ be such that for all $i=2,\ldots, k$, we have $d_i-d_1\notin N \Gamma$ and let $\alpha<d_1<\beta\in \Gamma$ be elements with $\beta-\alpha\notin \mathbb{Z}$ and such that whenever $d_i-d_1\notin \mathbb Z$, then $d_i$ is not between $\alpha$ and $\beta$. We may choose such $\alpha,\beta$ so that $\dpr(c/A[Z][Y]\alpha,\beta)=m$.  
					Let $B=A[Z][Y]\alpha,\beta$.
					
					Fix $r\in \mathbb N$  such that $d_1-r\in N\Gamma$. Define for $x\in \Gamma^m$, 
					\[  R_{x}:= \{y\in \Gamma: (x,y)\in Y \wedge  \alpha<y<\beta\wedge  y-r\in N\Gamma\},\]
					and note that   $R_{c'}=\{d_1\}.$      Let   
					\[R=\{(x,y)\in \Gamma^m\times \Gamma: y\in R_{x}\wedge \exists\, ! \,w \in R_{x}\}.\]
					
					The set $R$ contains $c$, and defined over $B$, thus $\dpr(R)=m$ and as $R\cap Z$ contains $c$, $\dpr(R\cap Z)=m$ as well. Since $R$ projects injectively into $\Gamma^m$, $c'\in \pi(R\cap Z)\sub \pi(Y)$, and $c'$ is $B$-generic in both sets. By induction, there is a $\Gamma$-box $B_1\sub \Gamma^m$ around $c'$, such that $B_1\cap \pi(Y)\sub \pi(R\cap Z)$.
					
					Let $U$ be the $\Gamma$-box $B_1\times \{y\in \Gamma: \alpha<y<\beta\wedge  y-r\in N\Gamma\}$. We claim that $U\cap Y\sub Z$. Indeed, if $(x,y)\in U\cap Y$ then $x\in B_1\cap \pi(Y)$, so $x\in \pi(R\cap Z)$. It follows, by definition of $R$, that there is a unique $w\in R_{x}$ such that $(x,w)\in R\cap Z$. Also, by the definition of $U$, we have $y\in R_{x}$, so necessarily $y=w$, namely $(x,y)\in Z$, as claimed.
				\end{claimproof}
				
				This ends the proof in Case 2.

				\vspace{.2cm}

				\noindent\textbf{Case 3:} $D=\KOp$.
				\vspace{.2cm}

				By Fact \ref{F: concrete description of nu for D-balanced}(2) and Remark \ref{R: how nu looks}(2), $\nu_{D^n}(c)\equiv \{U\subseteq D^n: U \text{ is a  ball around $c$}\}$. 
				By Lemma \ref{L:onto open} (and Remark \ref{R: onto open for K/O padic}), there exists a generic vicinity $Y'\subseteq Y$ of $c$ in $Y$, which is a coset of a definable subgroup of $D^n$, and a coordinate projection $\pi:D^n\to D^m$ such that $\pi\restriction Y'$ is injective onto a ball $B\subseteq D^m$ around $\pi(c)$. By Fact \ref{F: vicinities}(2), $\nu_Y(c)\equiv \nu_{Y'}(c)$. By Fact \ref{F: concrete description of nu for D-balanced}(2), and using the fact that $\pi\rest Y'$ is an injective, $\nu_{Y'}(c)\equiv\{\pi^{-1}(V)\cap Y': \text{$V\subseteq B\subseteq D^m$ is a ball around $\pi(c)$}\}$. The result follows.
			\end{proof}
			
			The above lemma is optimal, in the sense that, when $D= \KOp$,  passing to  a subset $Y'$ of $Y$ in (2) of the above lemma may be unavoidable:
			
			\begin{example}
				Assume that $\CK$ is $p$-adically closed. Let $(a,b)\in K/\CO\times K/\CO$ be a generic pair and $c\in K/\CO$ be such $v(a-c)=-1$; note that $a$ and $c$ are inter-algebraic. Letting $X=K/\CO\times K/\CO$ we get $\nu_X(a,b)\equiv (\nu_{K/\CO}(K/\CO,+))^2$. For $Y=\{a,c\}\times K/\CO$ we thus have that $\nu_X(a,b)\cap Y$ is logically equivalent to $\{a,c\}\times \nu_{K/\CO}(K/\CO,+)$. But since $\{a\}\times K/\CO$ is a generic vicinity of $(a,b)$ in $Y$, we get that $\nu_Y(a,b)\equiv \{a\}\times \nu_{K/\CO}(K/\CO,+)$.
			\end{example}
			
			\subsection{Infinitesimals of subgroups}
			We now turn our attention to the infinitesimals of subgroups. We start with a useful technical observation. 
			\begin{lemma}\label{L:subgroup is also balanced}
				Let $G$ be a definable group in $\CK$ and  $G_1\leq G$ a subgroup with $\adrk(G_1)>0$. If $G$ is $D$-balanced then so is $G_1$.
				
			\end{lemma}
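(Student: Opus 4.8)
The plan is to split on the distinguished sort $D$. If $D\neq\KOp$, then by Proposition \ref{P:lasi=lsi} \emph{every} infinite definable group is $D$-balanced, so there is nothing to prove; thus assume $D=\KOp$, so $\CK$ is $p$-adically closed. Put $m:=\adrk(G_1)>0$ and $M:=\adrk(G)$; since $G$ is $D$-balanced, $\drk(G)=M\geq m>0$. Because $\drk(G_1)\leq\adrk(G_1)=m$ always, it suffices to exhibit a definable subset of $G_1$ of dp-rank $m$ strongly internal to $D$. Since $G$ is $D$-balanced with $\drk(G)>0$, Fact \ref{F: concrete description of nu for D-balanced}(2) yields a definable subgroup $G^*\leq G$ and a definable isomorphism $\phi\colon G^*\to B^*$, where $B^*\leq(K/\CO)^M$ is a ball around $0$. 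Then $G^*\cap G_1\leq G_1$ is strongly internal to $D$ via $\phi$, and since $\phi(G^*\cap G_1)$ is a definable subgroup of $(K/\CO)^M$ it is itself $D$-balanced by \cite[Lemma 3.10, Remark 3.11]{HaHaPeSemisimple}; hence $\drk(G_1)\geq\drk(G^*\cap G_1)=\dpr(G^*\cap G_1)$. Combined with $\dpr(G^*\cap G_1)\leq\adrk(G_1)=m$, the whole statement reduces to proving $\dpr(G^*\cap G_1)\geq m$.

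To do this, fix an almost $D$-critical $X\subseteq G_1$, so $\dpr(X)=m$, let $\pi\colon G\to G/G^*$ be the definable coset projection and set $\bar X=\pi(X)$. As $X$ is almost strongly internal to $D$ and $\bar X$ is a definable image of $X$, \cite[Lemma 2.9(2), Lemma 3.9, Lemma 4.3]{HaHaPeGps} provide a subset of $\bar X$ of dp-rank $\dpr(\bar X)$ which is almost strongly internal to $D$, so $\adrk(G/G^*)\geq\dpr(\bar X)$. Granting for the moment that $\adrk(G/G^*)=0$ (the crux, discussed below), we get $\dpr(\bar X)=0$, so by subadditivity of dp-rank some fibre $X\cap gG^*$ has dp-rank $m$; picking $x_0\in X\cap gG^*$ (note $x_0\in G_1$) we obtain $x_0^{-1}(X\cap gG^*)\subseteq G^*\cap G_1$, whence $\dpr(G^*\cap G_1)\geq m$, as needed. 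This would finish the proof: $\drk(G_1)=m=\adrk(G_1)$, i.e.\ $G_1$ is $D$-balanced.

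The heart of the argument — and the step I expect to be the main obstacle — is therefore the assertion that, for a $D$-balanced $G$, the subgroup $G^*$ carries the full $D$-content of $G$, i.e.\ $\adrk(G/G^*)=0$. The natural approach is: an infinite almost $D$-internal $\bar Z\subseteq G/G^*$ would lift to a definable $Z=\pi^{-1}(\bar Z)\subseteq G$ fibered over $\bar Z$ with fibres the cosets of $G^*$, each strongly internal to $D$ of dp-rank $M$; if one could conclude that $Z$ itself is almost strongly internal to $D$, then $\adrk(G)\geq\dpr(Z)=\dpr(\bar Z)+M>M$, contradicting that $G$ is $D$-balanced. The delicate point is that $\pi$ need not admit a definable section, so "combining" the $D$-internality of the base with that of the fibres is not automatic. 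I would handle this by replacing $G^*$ throughout with the normal type-definable subgroup $\nu_D(G)$ (Fact \ref{F: properties of nu}(3)), so that the fibres become translates of a single type-definable group, and then exploit the uniform ball/coset description of definable subgroups of $(K/\CO)^M$ available in the $p$-adically closed setting (Remark \ref{R: onto open for K/O padic}, \cite[Lemma 3.30]{HaHaPeGps}, \cite[Lemma 3.10]{HaHaPeSemisimple}) — alternatively, by a direct dp-rank computation modelled on the proof of Proposition \ref{P:lasi in K/O p-adic.1}.
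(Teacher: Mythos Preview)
Your case split is fine, and for $D\neq\KOp$ the reduction to Proposition \ref{P:lasi=lsi} is correct. The problem is entirely in the $\KOp$ case, where your argument has a genuine gap that is not repaired by the sketch you offer.

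The claim $\adrk(G/G^*)=0$ is simply false for an arbitrary $G^*$ of the kind Fact \ref{F: concrete description of nu for D-balanced}(2) provides. Take $G=(K/\CO,+)$ and $G^*$ any proper infinite ball around $0$; then $G/G^*\cong K/B$ for a ball $B\supsetneq\CO$, which after scaling is $K/\m$, and this carries a finite-to-one map to $K/\CO$, so $\adrk(G/G^*)=1$. Your proposed route to the claim assumes $\dpr(\pi^{-1}(\bar Z))=\dpr(\bar Z)+M$, but sub-additivity only gives $\le$, and the example shows the inequality can be strict. Replacing $G^*$ by the type-definable $\nu_D(G)$ does not help directly: $G/\nu_D(G)$ is only hyperdefinable, and you would still need a rank-additivity statement you do not have. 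A secondary issue: the lemmas you cite for ``$\bar X$ contains a full-rank subset almost strongly internal to $D$'' are stated for finite-to-one maps, whereas $\pi\restriction X$ need not be finite-to-one.

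The paper's proof avoids all of this and is a two-line application of Fact \ref{F:locally critical}. Choose any almost $D$-critical $X_0\subseteq G_1$ (so $\dpr(X_0)=m$) and any $D$-critical $Y\subseteq G$; since $G$ is $D$-balanced, $Y$ is almost $D$-critical in $G$, so by Fact \ref{F:locally critical} there is $g\in G$ with $\dpr(X_0\cap gY)=m$. The set $X_0\cap gY$ lies in $G_1$ and is contained in $gY$, which is strongly internal to $D$; hence it is strongly internal to $D$ of dp-rank $m$, giving $\drk(G_1)\ge m=\adrk(G_1)$. This is exactly the statement ``some translate of a $D$-critical set meets $X_0$ in full rank'' that your quotient argument was trying to reach indirectly; Fact \ref{F:locally critical} delivers it immediately, with no need to analyse $G/G^*$ at all.
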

			\begin{proof}
				
				Assume that $G$ is $D$-balanced, so every $D$-critical subset of $G$ is almost $D$-critical, By Fact \ref{F:locally critical}, if $Y$ is a $D$-critical set for $G$ then there is $g\in G$ such that $gY$ contains an almost $D$-critical set of $G_1$, call it $X$. But then $\dpr(X)=\adrk(G_1)$ and $X$ is contained in the domain of an injective function into some $D^n$ (namely,  the function witnessing that $gY$ is strongly internal to $D$), so $X$ is $D$-critical.
			\end{proof}
			
			We also need the following observation. 
			\begin{lemma}\label{L: same H if same adrk}
				Let $G$ be a $\CK$-definable group, $G_1\le G$ a definable subgroup, and let $D$ be a distinguished sort. Then $H_{G_1}\le H_G$, and if $\adrk(G_1)=\adrk(G)$ then $H_G=H_{G_1}$. 
			\end{lemma}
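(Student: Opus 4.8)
The plan is to reduce immediately to the only nontrivial sort and then treat the two assertions separately. If $D\neq\KOp$ then $H_G=H_{G_1}=\{e\}$ by Proposition \ref{P:lasi=lsi}, and if $\adrk(G_1)=0$ then $H_{G_1}=\{e\}$; in either case both claims are trivial. So assume from now on that $D=\KOp$ and $\adrk(G_1)>0$ (hence also $\adrk(G)\ge\adrk(G_1)>0$).

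For the inclusion $H_{G_1}\le H_G$, I would pass to the $D$-balanced group $G/H_G$ and look at its subgroup $G_1H_G/H_G$, which by the second isomorphism theorem is definably isomorphic to $G_1/(G_1\cap H_G)$. Since $G_1\cap H_G$ is a finite normal subgroup of $G_1$, the quotient map $G_1\to G_1/(G_1\cap H_G)$ has finite kernel, so $\adrk(G_1/(G_1\cap H_G))=\adrk(G_1)>0$ by \cite[Lemma 2.14(1)]{HaHaPeSemisimple}, and therefore $\adrk(G_1H_G/H_G)>0$. Now Lemma \ref{L:subgroup is also balanced} applies: $G_1H_G/H_G$ is a positive-$\adrk$ subgroup of the $D$-balanced group $G/H_G$, hence $D$-balanced, and so is the definably isomorphic group $G_1/(G_1\cap H_G)$ ($D$-balancedness being preserved under definable isomorphism, as $\drk$ and $\adrk$ are). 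By the minimality built into the definition of $H_{G_1}$ (Notation \ref{N: H_G}, Proposition \ref{Canonical H}), it follows that $H_{G_1}\le G_1\cap H_G\le H_G$.

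For the equality under the extra hypothesis $\adrk(G_1)=\adrk(G)$, I would compare cardinalities via the invariant $m_{\text{crit}}$. Because $\adrk(G_1)=\adrk(G)$ and $G_1\subseteq G$, every definable $X\subseteq G_1$ with $\dpr(X)=\adrk(G_1)$ is a definable subset of $G$ of maximal dp-rank, and a function witnessing its almost strong internality to $D$ does so irrespective of the ambient group; hence the infimum defining $m_{\text{crit}}$ for $G_1$ is taken over a subcollection of the one for $G$, giving $m_{\text{crit}}(G)\le m_{\text{crit}}(G_1)$. By Proposition \ref{Canonical H} this reads $|H_G|\le|H_{G_1}|$, which together with the inclusion $H_{G_1}\le H_G$ just established forces $H_{G_1}=H_G$.

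The argument is short; the one step that genuinely needs attention is verifying that $G_1H_G/H_G$ has positive almost $D$-rank, so that Lemma \ref{L:subgroup is also balanced} is applicable, after which the $m_{\text{crit}}$ comparison is just an unwinding of Definition \ref{def: internalities}(7) together with Proposition \ref{Canonical H}.
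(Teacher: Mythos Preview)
Your proof is correct and follows essentially the same strategy as the paper: reduce to $D=\KOp$, use Lemma \ref{L:subgroup is also balanced} to show a suitable quotient of $G_1$ by a finite subgroup contained in $H_G$ is $D$-balanced, and invoke the minimality in Proposition \ref{Canonical H}.

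The organization differs slightly. The paper proves the equality case first (directly from \cite[Proposition 4.35(2)]{HaHaPeGps}, which says $H_X$ is independent of the almost $D$-critical $X$), and then deduces the inclusion by passing to $G_2:=G_1H_G$: since $G_2/H_G$ is $D$-balanced one gets $H_{G_2}\le H_G$, and the equality case applied to $G_1\le G_2$ gives $H_{G_1}=H_{G_2}$. You invert the order: your inclusion argument via $G_1/(G_1\cap H_G)\cong G_1H_G/H_G$ is self-contained and does not need the equality case, and then your equality argument compares $m_{\text{crit}}$ and combines with the already-proved inclusion. Your route avoids the auxiliary group $G_2$ and the forward reference to the equality case; the paper's route has the advantage that the equality case is a one-line citation. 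Both are short and use the same ingredients.
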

			\begin{proof}
				By Proposition  \ref{P:lasi=lsi}, we may assume that $D=\KOp$.
				
				If $\adrk(G)=\adrk(G_1)$  then, since every almost $D$-critical set  of $G_1$ is also an almost $D$-critical of $G$, we conclude by  \cite[Proposition 4.35(2)]{HaHaPeGps} that $H_G=H_{G_1}$.

				For the general case,  since $H_G$ is normal in $G$, it follows that $G_2:=G_1H_G$ is a subgroup of $G$ and as  $G/H_G$ is $D$-balanced, it follows from Lemma \ref{L:subgroup is also balanced} that  so is $G_2/H_G$. By minimality of $H_{G_2}$, $H_{G_2}\leq H_G$. As $\adrk(G_2)=\adrk(G_1)$, by the previous paragraph, $H_{G_2}=H_{G_1}$, so $H_{G_1}\sub H_G$.
			\end{proof}
			
			The following example shows $H_{G_1}$ may be a proper subgroup of $H_G$.
			\begin{example}
				Let $G_0:=(K/\CO)/C_p$ be as in Example \ref{E:not balanced}, and consider  $G=G_0\times G_0$ with the subgroup  $G_1=G_0\times H_{G_0}$. Note that, since $G_0\times \{0\}$ has finite index in $G_1$ we get that $H_{G_1}=H_{G_0}\times \{0\}$. Also, as $G/(H_{G_0}\times H_{G_0})$ is $K/\CO$-balanced, $H_G\leq H_{G_0}\times H_{G_0}$ so $H_G\leq G_1$. 
				
				But $G/H_{G_1}\cong K/\CO \times G_0$, which is not $K/\CO$-balanced. So $H_{G_1}\lneq H_G=H_G\cap G_1$. 
				
			\end{example}

			We now prove the relative definability results for $\nu_D(G_1)$ in a series of lemmas. Recall that when $D$ is stable then $\nu_D(G)$ is a definable connected normal subgroup of $G$.
			
			\begin{lemma}\label{L: nu of subgroup when stable}
				Let $G$ be a definable group in $\CK$ $V$-minimal and  $G_1\leq G$ a definable subgroup. If $D=\bk$  then $\nu_D(G_1)$ has finite index in $\nu_D(G)\cap G_1$. 
			\end{lemma}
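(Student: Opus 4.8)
The plan is to identify $\nu_\bk(G)\cap G_1$ as a $\bk$-critical definable subgroup of $G_1$ and then show that $\nu_\bk(G_1)$ is exactly its connected component. Recall that in the $V$-minimal case $\nu_\bk(-)$ is, by \cite[Proposition 6.2, Theorem 7.8]{HaHaPeGps}, a connected $\omega$-stable definable \emph{normal} subgroup, strongly internal to $\bk$, of Morley rank equal to the $\bk$-rank; since $\bk$ is strongly minimal, on sets strongly internal to $\bk$ Morley rank and dp-rank coincide, and I will pass between them freely. By Proposition~\ref{P:lasi=lsi} every definable group here is $\bk$-balanced, so $\drk=\adrk$ throughout.

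First I would dispose of the degenerate case $\drk(G_1)=0$: then $\nu_\bk(G_1)=\{e\}$, while $\nu_\bk(G)\cap G_1$ is a definable subgroup of $G_1$ strongly internal to $\bk$; were it infinite, $G_1$ would be locally strongly internal to $\bk$ with $\drk(G_1)\ge 1$, a contradiction, so $\nu_\bk(G)\cap G_1$ is finite and the statement is trivial. So assume $\drk(G_1)>0$; note also $\drk(G_1)\le\drk(G)$, so $\nu_\bk(G)$ is the genuine construction.

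The key observation is that \emph{every connected definable subgroup $S\le G$ strongly internal to $\bk$ is contained in $\nu_\bk(G)$.} Indeed, since $\nu_\bk(G)\trianglelefteq G$, the product $\nu_\bk(G)\cdot S$ is a definable subgroup of $G$, and by elimination of imaginaries in $\bk$ (as in the proof of Lemma~\ref{L:D-balanced satisfied clause 2}(1)) it is strongly internal to $\bk$; hence $\mr(\nu_\bk(G)\cdot S)\le\drk(G)=\mr(\nu_\bk(G))$, so $\nu_\bk(G)$ has finite index in $\nu_\bk(G)\cdot S$ and, being connected, equals $(\nu_\bk(G)\cdot S)^0$. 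As $S$ is connected, $S\subseteq(\nu_\bk(G)\cdot S)^0=\nu_\bk(G)$. Applying this with $S=\nu_\bk(G_1)$ yields $\nu_\bk(G_1)\subseteq\nu_\bk(G)\cap G_1$. Now $\nu_\bk(G)\cap G_1\subseteq G_1$ is strongly internal to $\bk$, so $\mr(\nu_\bk(G)\cap G_1)\le\drk(G_1)$; combined with $\nu_\bk(G_1)\subseteq\nu_\bk(G)\cap G_1$ and $\mr(\nu_\bk(G_1))=\drk(G_1)$, we get $\mr(\nu_\bk(G)\cap G_1)=\drk(G_1)=\mr(\nu_\bk(G_1))$. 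Since $\nu_\bk(G)\cap G_1$ has finite Morley rank and $\nu_\bk(G_1)$ is a connected subgroup of it of full Morley rank, $\nu_\bk(G_1)=(\nu_\bk(G)\cap G_1)^0$, which has finite index, as required.

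I expect the only real subtlety to be the bookkeeping inside the key observation: one must know that products of subgroups strongly internal to $\bk$ remain strongly internal to $\bk$ — this is exactly where elimination of imaginaries in $\bk$, the unique stable distinguished sort, enters — and then the Morley-rank comparison together with connectedness pins down the connected components. Everything else (the rank inequalities and the degenerate case) is routine.
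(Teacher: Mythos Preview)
Your proof is correct and follows essentially the same approach as the paper's: form the product $\nu_\bk(G)\cdot\nu_\bk(G_1)$ using normality of $\nu_\bk(G)$, use elimination of imaginaries in $\bk$ to see it is strongly internal to $\bk$, compare Morley ranks and invoke connectedness to conclude $\nu_\bk(G_1)\subseteq\nu_\bk(G)$, and then a second rank comparison gives finite index in $\nu_\bk(G)\cap G_1$. Your treatment of the degenerate case $\drk(G_1)=0$ is in fact more careful than the paper's (which asserts $G_1$ is finite, whereas only $\nu_\bk(G)\cap G_1$ need be finite).
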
		
			\begin{proof}
				If $\drk(G_1)=0$ then $G_1$ is finite so we are done. So we assume that $\drk(G_1)>0$, and by Proposition \ref{P:lasi=lsi}, $G$ and  $G_1$ are $D$-balanced. 
				
				Since $D=\bk$ in the $V$-minimal setting, the groups $\nu(G)$, $\nu(G_1)$ are definably isomorphic to connected $\bk$-algebraic groups, normal in $G$ and $G_1$, respectively. By Elimination of Imaginaries in $\bk$, the group $N=\nu(G)\cdot \nu(G_1)$  is also definably isomorphic to a connected $\bk$-algebraic group, hence $\dpr(N)\leq \drk(G)=\dpr(\nu(G))$. Since dp-rank agrees with $\mr$ in $\bk$, it follows that $N=\nu_D(G)$. Hence $\nu_D(G_1)\leq  \nu_D(G)$ and so $\nu_D(G_1)\sub \nu_D(G)\cap G_1$. 
				
				However, the group $\nu_D(G)\cap G_1$ is also definably isomorphic to a $\bk$-algebraic group (as a definable subgroup of $\nu_D(G)$), so its $\dpr$ is at most $\drk(G_1)$. Since $\mr(\nu_D(G_1))=\dpr(\nu_D(G_1))=\mr(\nu_D(G)\cap G_1)$ it follows that $\nu_D(G_1)$ has finite index in $\nu_D(G)\cap G_1$.
				
			\end{proof}
			
			Obviously, the previous lemma cannot be improved. Working inside the algebrically closed field $\bk$, $G_1$ can be a non-connected algebraic group so the finite index appears naturally.

			As is often the case, we separate  $\KOp$ from the remaining cases so we first prove the following.
			
			\begin{lemma}
				Let $G$ be a definable group in $\CK$,  $G_1\leq G$ a definable subgroup, and $D$ is unstable different than  $\KOp$. Then  $\nu_D(G_1)\equiv \nu_D(G)\cap G_1$.
			\end{lemma}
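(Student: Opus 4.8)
The plan is to prove the two implications $\nu_D(G_1)\vdash \nu_D(G)\cap G_1$ and $\nu_D(G)\cap G_1\vdash \nu_D(G_1)$ separately, after disposing of degenerate cases. Since $D\neq\KOp$, Proposition \ref{P:lasi=lsi} gives that $G$ and $G_1$ are both $D$-balanced, so $H_G=H_{G_1}=\{e\}$ and each $\nu_D$ is the subgroup built in Fact \ref{F: properties of nu}. If $G_1$ is not locally almost strongly internal to $D$, then $\nu_D(G_1)=\{e\}$ by convention; since $\nu_D(G)$ is concentrated on a set strongly internal to $D$, so is $\nu_D(G)\cap G_1\subseteq G_1$, so if it were infinite then $G_1$ would be locally strongly internal to $D$, a contradiction. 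Being finite and torsion-free (Corollary \ref{C: nu divisible}(2)), $\nu_D(G)\cap G_1=\{e\}$, as needed. (If $G$ itself is not locally a.s.i.\ to $D$, neither is $G_1\le G$, and both sides are $\{e\}$.) So from now on both groups are $D$-balanced with $M:=\drk(G)\ge m:=\drk(G_1)>0$. When $m=M$ the group $G_1$ has finite $D$-index in $G$, and the statement follows from Lemma \ref{L: nu is smallest} together with $\nu_D(G_1)\vdash \nu_D(G)$ (applying Lemma \ref{L: nu is smallest} inside $G$ to the subgroup $\nu_D(G_1)$), so we may assume $m<M$.

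Next I would set up good coordinates. Using Fact \ref{F:locally critical} with a $D$-critical set $Z_0\subseteq G$ and a $D$-critical set $Y_0\subseteq G_1$ and a generic pair $(g,h)\in Y_0\times Z_0$ (chosen in $\CK$ by saturation), the set $Z:=gh^{-1}Z_0$ is $D$-critical for $G$ and $Y:=Z\cap G_1\supseteq Y_0\cap Z$ has $\dpr(Y)=m$, hence is $D$-critical for $G_1$; write $\iota\colon Z\hookrightarrow D^M$ for the witnessing injection. Fixing an $A$-generic $a\in Y$ and applying Lemma \ref{L:onto open} to $\iota(Y)\subseteq D^M$ at the $A$-generic point $b:=\iota(a)$ (here Remark \ref{R: onto open for K/O padic} is irrelevant as $D\neq\KOp$), I obtain, after passing to a generic vicinity $Y'$ of $a$ in $Y$, a coordinate projection $\pi\colon D^M\to D^m$ such that $\psi:=\pi\circ\iota$ is injective on $Y'$ and $b':=\psi(a)$ is generic in $D^m$. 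Morally this straightens out $G_1$: near $a$, the set $Z\cap G_1$ sits inside $Z$ as the graph of a function of the first $m$ coordinates, exactly as a definable subgroup appears as a coordinate subvariety near the identity in the o-minimal setting. I would also note that $\pi\iota(Z)$ has dp-rank $m$ (so $b'$ is generic in it) and that the generic fibre of $\pi\iota|_Z$ has dp-rank $M-m$, so there is a $c\in Z$ generic over $A$ with $\pi\iota(c)$ generic in $\pi\iota(Z)$.

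With this in place, both containments should come from Lemma \ref{L: restriction of nu to subset} and the vicinity calculus. For $\nu_D(G_1)$: Fact \ref{F: properties of nu}(1) and Fact \ref{F: vicinities}(2,3) give $\nu_D(G_1)\equiv\nu_Y(a)a^{-1}\equiv\nu_{Y'}(a)a^{-1}$ and $\psi_*\nu_{Y'}(a)\equiv\nu_{\psi(Y')}(b')$, and Lemma \ref{L: restriction of nu to subset}(1) identifies the latter with $\nu_{D^m}(b')\cap\psi(Y')$; hence $\nu_D(G_1)$ is described by the $\psi$-preimages of the basic infinitesimal neighbourhoods of $b'$ in $D^m$ (open sets, resp.\ $\Gamma$-boxes, resp.\ balls, according to $D$, by Remark \ref{R: how nu looks}), translated by $a^{-1}$. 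For $\nu_D(G)\cap G_1$: using $\nu_D(G)\equiv\nu_Z(c)c^{-1}$ together with $\iota_*\nu_Z(c)\equiv\nu_{D^M}(\iota(c))$ (Fact \ref{F: vicinities}(2), since $\dpr(\iota(Z))=M$), the formulas of $\nu_D(G)\cap G_1$ are, up to equivalence, the sets $\iota^{-1}(W')c^{-1}\cap G_1$ for $W'$ a basic infinitesimal neighbourhood of $\iota(c)$ in $D^M$. One then checks that, because $\pi$ is injective on $\iota\bigl((Z\cap G_1c)c^{-1}\cdot c\bigr)$ by the choice of $\pi$ from Lemma \ref{L:onto open}, this trace depends only on $\pi(W')$, a basic infinitesimal neighbourhood of $\pi\iota(c)$ in $D^m$; a translation argument in $D^m$, using the homogeneity of $\nu_{D^m}$ (Remark \ref{R: how nu looks}) to move from the reference point $\pi\iota(c)$ to $b'$ and from the shift $c^{-1}$ to $a^{-1}$, then matches this family with the one describing $\nu_D(G_1)$, giving $\nu_D(G)\cap G_1\equiv\nu_D(G_1)$.

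I expect the genuine difficulty to be this last matching step: presenting $\nu_D(G)$ forces a generic point $c$ of the large critical set $Z$ and a coset shift by $c^{-1}$, whereas $\nu_D(G_1)$ is naturally expressed at the point $a$, which is only generic in the smaller $Y=Z\cap G_1$ — and when $m<M$ these two reference points have different dp-ranks and cannot be identified. The mechanism that resolves this is precisely the ``straightening'' of $G_1$ from Lemma \ref{L:onto open}, which makes intersecting with $G_1$ commute with the projection $\pi$; but upgrading this geometric statement to an honest equivalence of partial types, \emph{uniformly} across the four sorts (the three unstable topological sorts carrying visibly different bases of infinitesimal neighbourhoods), is delicate and will require care. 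A possible shortcut for one direction is to prove $\nu_D(G)\cap G_1\vdash\nu_D(G_1)$ directly from $\nu_D(G)\equiv\{VV^{-1}:V\ D\text{-critical for }G\}$ (Lemma \ref{L:nu in XX-1}) by exhibiting, for each $D$-critical $U\subseteq G_1$, a $D$-critical $V\subseteq G$ with $VV^{-1}\cap G_1\subseteq UU^{-1}$ built from the graph presentation above; the reverse inclusion then follows from the minimality of $\nu_D(G_1)$ (Lemma \ref{L: nu is smallest}) once one verifies $\dpr(\nu_D(G)\cap G_1)=m$.
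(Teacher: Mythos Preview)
Your handling of the degenerate case and your instinct to use Lemma \ref{L: restriction of nu to subset} are both correct, and the overall strategy is close to the paper's. However, the step you yourself flag as the ``genuine difficulty'' is a real gap, and the paper resolves it by a different setup that you are missing.

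The problem with your approach is exactly what you identify: you introduce two distinct reference points, $a$ generic in $Y=Z\cap G_1$ (of rank $m$) and $c$ generic in $Z$ (of rank $M$), and then need to reconcile the vicinity types $\nu_Y(a)$ and $\nu_Z(c)$ computed at these incomparable points. The projection-and-translation argument you sketch is vague, and your proposed shortcut via Lemma \ref{L:nu in XX-1} still requires constructing, for each $D$-critical $U\subseteq G_1$, an explicit $D$-critical $V\subseteq G$ with $VV^{-1}\cap G_1\subseteq UU^{-1}$ --- which you do not do.

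The paper avoids this entirely. Starting from a $D$-critical $X\subseteq G$ and a $D$-critical $Y\subseteq G_1$, take a generic pair $(h,g)\in Y\times X$ over $A$ and set $a:=h^{-1}g$. The key observation is that the \emph{same} point $g$ is now generic in $X$ over $A$ (so $\nu_D(G)\equiv\nu_X(g)g^{-1}$) \emph{and} generic in $G_1a\cap X$ over $Aa$ (since $g$ and $h$ are interdefinable over $a$, and $h$ is generic in the $D$-critical set $G_1\cap Xa^{-1}$ of $G_1$). This is precisely the hypothesis of Lemma \ref{L: restriction of nu to subset}(1), applied to $G_1a\cap X\subseteq X$ at the point $g$, yielding $\nu_{G_1a\cap X}(g)\equiv\nu_X(g)\cap G_1a$. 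A single right-translation by $a^{-1}$ (sending $g\mapsto h$) then gives $\nu_D(G_1)\equiv\nu_{G_1\cap Xa^{-1}}(h)h^{-1}\equiv(\nu_X(g)\cap G_1a)g^{-1}\equiv\nu_X(g)g^{-1}\cap G_1\equiv\nu_D(G)\cap G_1$, using that $g\in G_1a$ so $G_1ag^{-1}=G_1$. No projection, no second generic point, no matching step.
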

			
			\begin{proof}
				Assume first that $\drk(G_1)=0$. Then $\nu_D(G)\cap G_1 $ must be finite, but by Lemma \ref{L:nu divisible}, $\nu_D(G)$ is torsion-free hence $\nu_D(G_1)=\{e\}\equiv \nu_D(G)\cap G_1$. So assume $\drk(G_1)>0.$

				Let $X\sub G$ be $D$-critical.
				
				\begin{claim} 
					There exists a parameter set $A$, containing $[X]$, $g\in X$ $A$-generic and  $h\in G_1$ with $\dpr(g,h/A)=\drk(G)+\drk(G_1)$ such that, for $a=h^{-1}g$ the following hold:
					
					(i) $G_1\cap Xa^{-1}$ is $D$-critical in $G_1$ and $h$ is generic in it over $Aa$.
					
					(ii) $g$ is generic in $G_1a\cap X$ over $Aa$.
					
				\end{claim}
				
				\begin{claimproof}
					
					Let $Y\subseteq G_1$  be $D$-critical for $G_1$. Let $A$ be a parameter set over which everything is defined (say containing $[X]$) and  let $(h,g)\in Y\times X$ be generic over $A$.

					For  $a=h^{-1}g$, we have $h\in Y\cap Xa^{-1}$. By 
					Fact \ref{F:locally critical}, $h$ is generic in that set over $Aa\,$ and in addition $\dpr(Y\cap Xa^{-1})=\dpr(Y)$. 
					Consider now the set $G_1\cap Xa^{-1}$. It is strongly internal to $D$ (since $Xa^{-1}$ is) and its rank is at least $\dpr(Y\cap Xa^{-1})$, which equals $\drk(G_1)$. Hence $\drk(G_1\cap Xa^{-1})=\drk(G_1)$, so $G_1\cap Xa^{-1}$ is $D$-critical for $G_1$.

					Since $h$ and $g$ are inter-definable over $a$ it follows that $$\dpr(g/Aa)=\dpr(h/Aa)=\dpr(G_1\cap Xa^{-1}).$$ Clearly, $\dpr(G_1a\cap X)=\dpr(G_1\cap Xa^{-1})$ and as  $g\in G_1a\cap X$ it follows that  $g$ is $Aa$-generic there. 
				\end{claimproof}
				
				By definition,  \begin{equation}\label{eq:subgroup}\nu_D(G_1)\equiv \left(\nu_{G_1\cap Xa^{-1}}(h)\right) h^{-1}\equiv \left(\nu_{G_1\cap Xa^{-1}}(h)\right) ag^{-1}.\end{equation} 
				The map $x\mapsto xa$ is defined over $aA$, and sends $h$, which is generic in $G_1\cap Xa^{-1}$ over $aA$, to $g$, which is generic in $G_1a\cap X$ over $aA$. Thus, by Fact \ref{F: vicinities} (3),  it sends $\nu_{G_1\cap Xa^{-1}}(h)$ to $\nu_{G_1a\cap X}(g)$, namely $\left(\nu_{G_1\cap Xa^{-1}}(h)\right)ag^{-1}\equiv  \left(\nu_{G_1a\cap X}(g)\right)g^{-1}$. In addition, since $g$ is generic in $G_1a\cap X$ over $Aa$ and also in $X$ over $A$ it follows from  Lemma \ref{L: restriction of nu to subset} (applied to the injective image of the two sets in $D^n$) that  $\nu_{G_1a\cap X}(g)\equiv G_1a\cap \nu_X(g)$. 
				
				Plugging the above observations into  (\ref{eq:subgroup}) we obtain
				\[\nu_D(G_1)=\left(\nu_{G_1a\cap X}(g)\right) g^{-1}\equiv \left(\nu_X(g)\cap G_1a\right)g^{-1}.\] However, $g$ belongs to the coset $G_1a$, hence $G_1ag^{-1}=G_1$, and we conclude that \[\nu_D(G_1)\equiv \nu_X(g)g^{-1}\cap G_1.\] As $X$ is $D$-critical for $G$ and $g$ is generic in it, it follows that $\nu_X(g)g^{-1}\equiv \nu_D(G) $, which implies the desired result. \qedhere
				
			\end{proof}

			The following general lemma is most likely already known.

			\begin{lemma}\label{L: sub-additivity for groups}
				Let $G$ be a group of finite dp-rank definable in some sufficiently saturated structure $\CM$ and $H_1,H_2\leq G$ be type-definable subgroups. Then \[\tag{*}
				\dpr(H_1)+\dpr(H_2)\le \dpr(H_1\cdot H_2)+\dpr(H_1\cap H_2). 
				\]
			\end{lemma}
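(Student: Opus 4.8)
The plan is to realise $H_1H_2$ as the image of $H_1\times H_2$ under a definable surjection all of whose fibres look like $H_1\cap H_2$, and then feed this into sub-additivity of dp-rank together with its additivity on products. Fix a small model $N$ over which $G$, $H_1$ and $H_2$ (hence also $L:=H_1\cap H_2$ and $H_1H_2$) are defined, and work inside a sufficiently saturated elementary extension. Since $H_2$ is a group, $H_1H_2=\{xy^{-1}:x\in H_1,\ y\in H_2\}$, so there is an $N$-definable surjection
\[
\mu\colon H_1\times H_2\longrightarrow H_1H_2,\qquad \mu(x,y)=xy^{-1}.
\]
If $\mu(x,y)=\mu(x',y')$ then $x'^{-1}x=y'^{-1}y$, and this common element lies in $H_1\cap H_2=L$; hence the fibre of $\mu$ through $(x,y)$ is exactly the coset $(x,y)\,\Delta_L$ of the type-definable subgroup $\Delta_L:=\{(c,c):c\in L\}\le H_1\times H_2$. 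In particular every fibre of $\mu$ is, over the relevant parameters, definably isomorphic to $L$, and so has dp-rank $\dpr(H_1\cap H_2)$.

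Next I would invoke the fibration form of sub-additivity of dp-rank (which applies equally to type-definable sets): if a definable map has base of dp-rank $m$ and all fibres of dp-rank at most $k$, then the source has dp-rank at most $m+k$ --- this is immediate from $\dpr(z,w/A)\le\dpr(z/Aw)+\dpr(w/A)$ evaluated at a generic $z$ of the source with $w=\mu(z)$. Applied to $\mu$ this gives $\dpr(H_1\times H_2)\le\dpr(H_1H_2)+\dpr(H_1\cap H_2)$. Combining this with additivity of dp-rank on products, $\dpr(H_1\times H_2)=\dpr(H_1)+\dpr(H_2)$, yields exactly the inequality $(*)$.

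The one step that is not a formal manipulation of sub-additivity --- and the one I expect to be the main obstacle --- is the lower bound $\dpr(H_1\times H_2)\ge\dpr(H_1)+\dpr(H_2)$, i.e. super-additivity for a direct product; note that the opposite inequality and the fibration bound above are both instances of sub-additivity. Dp-rank fails to be super-additive along arbitrary fibrations, but it is super-additive along a direct product, and the self-contained route to this uses that the dp-rank of a definable set is witnessed and independent of the base: with $m=\dpr(H_1)$ and $n=\dpr(H_2)$, one takes mutually indiscernible sequences $(I_i)_{i<m}$ in $H_1$ over $N$ simultaneously split by some $a\in H_1$, then mutually indiscernible sequences $(J_j)_{j<n}$ in $H_2$ over a suitable base extending $N a$ and simultaneously split by some $b\in H_2$; the crux is to arrange the resulting $m+n$ sequences to be simultaneously mutually indiscernible over $N$ (this is where the usual care with mutual indiscernibility is needed), after which they are all split by $(a,b)\in H_1\times H_2$, giving $\dpr(H_1\times H_2)\ge m+n$. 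Alternatively one may simply cite that dp-rank is additive on products of definable sets in the NIP setting that is relevant here, in which case the entire argument is three lines.
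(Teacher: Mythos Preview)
Your proof is correct and follows essentially the same route as the paper: define the multiplication map $H_1\times H_2\to H_1H_2$, observe that each fibre is in definable bijection with $H_1\cap H_2$, apply sub-additivity of dp-rank along this fibration, and combine with additivity of dp-rank on products. The paper's proof is three lines and simply asserts $\dpr(H_1\times H_2)=\dpr(H_1)+\dpr(H_2)$ as a known fact, whereas you spend a paragraph sketching the super-additivity direction via mutually indiscernible sequences; this extra care is not wrong, but it is not needed here since additivity of dp-rank on products is standard background in the paper (listed among the basic properties of dp-rank in the preliminaries).
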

			\begin{proof}
				Consider the function $f: H_1\times H_2\to G$ given by $(x_1,x_2)\mapsto x_1x_2$. For $a_i\in H_i$ the function $x\mapsto (a_1x, x^{-1}a_2)$ gives a definable bijection between  $H_1\cap H_2$ and the fiber of $f$ over $a_1a_2$. Thus, setting $d:=\dpr(H_1\cap H_2)$ we see that the fibers of $f$ have constant rank $d$. Since $\dpr(H_1\times H_2)=\dpr(H_1)+\dpr(H_2)$ the inequality (*) follows from sub-additivity. 
			\end{proof}

			We now treat the remaining case.
			
			\begin{lemma}
				Let   $G$ a definable group in $\CK$,  and $G_1\leq G$ a definable subgroup, and $D=\KOp$. Then $\nu_D(G_1)$ has finite index in $\nu_D(G)\cap G_1$. 
				
				Moreover, there exists a definable subgroup $G_0\le G_1$, $\dpr(G_0)=\adrk(G_1)$,  such that $$\nu_D(G_1)\equiv \nu_D(G)\cap G_0,$$ and in addition the image of $G_0$ inside $G/H_G$ is definably isomorphic to a subgroup of $((K/\CO)^m,+)$, $m=\adrk(G_1)$. 
			\end{lemma}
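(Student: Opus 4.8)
The plan is to follow the proof of the preceding lemma — the unstable $D\neq\KOp$ case — but, since in the $\KOp$ case Lemma~\ref{L: restriction of nu to subset} only produces a generic vicinity that is a \emph{coset of a subgroup} rather than the full translate of $G_1$, to run the argument with the ambient $D$-critical set taken to be a ball-subgroup, so that this coset becomes honestly algebraic. First the reductions. If $\adrk(G_1)=0$ then $G_1$ contains no infinite definable subset almost strongly internal to $D$, so $\nu_D(G)\cap G_1$ — concentrated on the trace on $G_1$ of an almost $D$-critical subset of $G$ — is finite, and taking $G_0=\{e\}$ settles everything (with $m=0$). So assume $m:=\adrk(G_1)>0$. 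By Lemma~\ref{L: same H if same adrk}, $H_{G_1}\le H_G$; put $\bar G=G/H_G$, $\pi\colon G\to\bar G$, and $\bar G_1=\pi(G_1)$, which is $D$-balanced by Lemma~\ref{L:subgroup is also balanced} with $\adrk(\bar G_1)=m$. We first prove the statement for the $D$-balanced group $\bar G$ and its subgroup $\bar G_1$, then transfer.

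\emph{The $D$-balanced case.} Assume $G$ is $D$-balanced; then so is $G_1$, and $H_G=H_{G_1}=\{e\}$. By Fact~\ref{F: concrete description of nu for D-balanced}(2) fix a definable subgroup $G'\le G$ and a definable \emph{group} isomorphism $f\colon G'\to B'$ onto an open ball $B'\le(K/\CO)^{\drk(G)}$; then $G'$ is $D$-critical in $G$. Fix a $D$-critical $Y\subseteq G_1$, a parameter set $A$ over which everything is defined, and — via Fact~\ref{F:locally critical}, exactly as in the preceding lemma — a generic pair $(h,g)\in Y\times G'$ with $a:=h^{-1}g$, so that $\nu_D(G_1)\equiv\nu_{G_1\cap G'a^{-1}}(h)\,h^{-1}\equiv\nu_{G_1 a\cap G'}(g)\,g^{-1}$, with $g$ generic in $G'$ over $A$ and in $G_1 a\cap G'$ over $Aa$. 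Since $g\in G'$ and $G'$ is a subgroup, $G_1 a\cap G'=(G_1\cap G')g$, and as $f$ is a group isomorphism $f((G_1\cap G')g)=f(G_1\cap G')+f(g)$ is a coset of the subgroup $f(G_1\cap G')\le B'$. Applying Lemma~\ref{L: restriction of nu to subset}(2) (together with Lemma~\ref{L:onto open} and Remark~\ref{R: onto open for K/O padic}) in the ambient $(K/\CO)^{\drk(G)}$, the generic vicinity may be taken to be $G_0 g$, where $G_0\le G_1\cap G'$ is a definable subgroup with $\dpr(G_0)=m$ whose image $f(G_0)$ projects injectively onto a ball in $(K/\CO)^m$; and $\nu_{G_1 a\cap G'}(g)\equiv\nu_{G'}(g)\cap(G_0 g)$, using Fact~\ref{F: vicinities}(2,3) to identify $\nu_{G'}(g)$ with the pullback of $\nu_{(K/\CO)^{\drk(G)}}(f(g))$. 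Therefore
\[
\nu_D(G_1)\equiv\big(\nu_{G'}(g)\cap G_0 g\big)g^{-1}=\nu_{G'}(g)g^{-1}\cap G_0=\nu_D(G)\cap G_0 ,
\]
using $\nu_{G'}(g)g^{-1}\equiv\nu_D(G)$ from Fact~\ref{F: properties of nu}. Here $G_0\le G_1$, $\dpr(G_0)=m$, and $f\restriction G_0$ identifies $G_0$ with a subgroup of $(K/\CO)^m$; since $\adrk(G_0)=m=\adrk(G_1)$, Proposition~\ref{P:general nu of finite index subgroup} also gives $\nu_D(G_0)\equiv\nu_D(G_1)$. Finally $\nu_D(G_1)=\nu_D(G)\cap G_0\le\nu_D(G)\cap G_1$, and both sides have dp-rank $m=\adrk(G_1)$, so the index is finite.

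\emph{Transfer to general $G$.} Apply the $D$-balanced case to $\bar G_1\le\bar G$, obtaining $\bar G_0\le\bar G_1$ with $\nu_D(\bar G_1)\equiv\nu_D(\bar G)\cap\bar G_0$, $\dpr(\bar G_0)=m$, and $\bar G_0$ definably isomorphic to a subgroup of $(K/\CO)^m$. Let $G_0$ be the preimage in $G_1$ of $\bar G_0$ under $\pi$, further intersected with a definable subgroup so that $G_0\cap H_G=H_{G_0}(=H_{G_1})$; this normalization is possible because the finite ``obstruction'' $(H_G\cap G_1)/H_{G_1}$ is transverse to the divisible ball-subgroups of $G_1/H_{G_1}$ and, in the $\KOp$ case, $\nu_D$ absorbs the remaining torsion (Remark~\ref{R: how nu looks}(2), Proposition~\ref{P:nu under finite to one}). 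Then $\pi(G_0)=G_0/H_{G_0}\cong\bar G_0$ embeds in $(K/\CO)^m$, $\dpr(G_0)=m$, and writing $\pi_{G_0}\colon G_0\to G_0/H_{G_0}$ for $\pi\restriction G_0$,
\[
\nu_D(G)\cap G_0=\pi_{G_0}^{-1}\big(\nu_D(\bar G)\cap\bar G_0\big)=\pi_{G_0}^{-1}\big(\nu_D(\bar G_1)\big)=\nu_D(G_0)\equiv\nu_D(G_1),
\]
using Propositions~\ref{P:nu under finite to one} and~\ref{P:general nu of finite index subgroup}; the finite-index claim follows from the same dp-rank count as before (one may also note that the existence of the relevant ball-subgroup inside $G_1$ is already guaranteed by Proposition~\ref{P:lasi in K/O p-adic.1}).

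The hard part is the step in the $D$-balanced case where the generic vicinity is turned into a genuine subgroup. For $D\neq\KOp$, Lemma~\ref{L: restriction of nu to subset}(1) gives $\nu_{G_1 a\cap G'}(g)\equiv\nu_{G'}(g)\cap(G_1 a)$ directly, whence $\nu_D(G_1)\equiv\nu_D(G)\cap G_1$ with no loss; for $\KOp$ the restriction lemma only yields a coset of a \emph{proper} subgroup inside $G_1 a\cap G'$, which is exactly why $\nu_D(G)\cap G_1$ can be strictly larger than $\nu_D(G_1)$. Making that coset algebraic — so that $(G_0 g)g^{-1}=G_0$ is a subgroup of $G$ rather than merely a definable set — is what forces the ambient $D$-critical set to be a ball-subgroup $G'$ on which the internality witness $f$ is a group homomorphism; this choice, together with the transversality bookkeeping in the non-balanced reduction, is the technical heart of the argument, and it is where one should expect to have to be most careful.
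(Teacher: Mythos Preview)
Your approach is genuinely different from the paper's, and the construction of $G_0$ in the $D$-balanced case is clever: by taking the ambient $D$-critical set to be a ball-subgroup $G'\le G$ on which the internality witness is a group isomorphism, the coset produced by Lemma~\ref{L: restriction of nu to subset}(2) becomes $G_0g$ for an honest subgroup $G_0\le G_1\cap G'$, and $\nu_D(G_1)\equiv\nu_D(G)\cap G_0$ follows. This part is correct and gives the ``moreover'' clause. The paper, by contrast, reduces (via Proposition~\ref{P:lasi in K/O p-adic.1}) to the case where $G$ itself is a ball $B\le (K/\CO)^n$, lifts through a finite quotient $B/C$, and then takes $G_0=p^kG_1$, where finite index in $G_1$ is a direct structural fact about subgroups of $(K/\CO)^n$.

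There is, however, a genuine gap in your finite-index argument. The assertion ``both sides have dp-rank $m=\adrk(G_1)$, so the index is finite'' is not valid: equal dp-rank does \emph{not} imply finite index for definable subgroups of $(K/\CO)^n$ in the $p$-adic setting. For instance, any ball $B_{>r}\le K/\CO$ has infinite index in $K/\CO$, though both have dp-rank $1$. So from $\nu_D(G_1)=\nu_D(G)\cap G_0\le\nu_D(G)\cap G_1$ and equality of dp-ranks you cannot conclude finiteness of the index. What you actually need is that only finitely many $G_0$-cosets inside $G_1\cap G'$ meet $\nu_D(G)$; this is plausible (and true), but it requires an argument --- e.g.\ showing $p^k(G_1\cap G')\le G_0$ for some $k$, which is exactly how the paper's choice $G_0=p^kG_1$ sidesteps the issue.

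A secondary problem is the transfer to non-balanced $G$. The ``normalization'' of $G_0$ so that $G_0\cap H_G=H_{G_1}$ is asserted via a hand-wave about transversality and torsion absorption, and the displayed chain of equalities conflates the quotients by $H_G$, by $H_G\cap G_1$, and by $H_{G_1}$ (which may all differ). The paper's more concrete reduction --- first to $G=B/C$ inside $G$, then lifting to $B$ --- keeps this bookkeeping explicit and avoids the ambiguity.
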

			\begin{proof} 
				The case where $\adrk(G_1)=0$ is immediate, since $\nu_D(G)\cap G_1$ must be finite. So,  assume that $\adrk(G_1)=m>0$.
				
				By Proposition \ref{P:lasi in K/O p-adic.1}, there exists a definable subgroup $H\leq G$ which is definably isomorphic to $B/C$, where $B\leq (K/\CO)^m$ is a ball and $C\leq B$ finite.
				
				We claim that $\adrk(G_1\cap H)=\adrk(G_1)$: By Fact \ref{F:locally critical}, for $Y\subseteq G_1$ almost $D$-critical in $G_1$, there is some $g\in G$ with $\adrk(G_1)=\dpr(H\cap gY)=\dpr(g^{-1}H\cap Y)$. The latter set is almost strongly internal to $D$ and contained in a coset of $H\cap G_1$, so $\adrk(H\cap G_1)=\adrk(G_1)$. 
				
				By Proposition \ref{P:general nu of finite index subgroup}, $\nu_D(G_1)\equiv \nu_D(G_1\cap H)$ and $\nu_D(G)\equiv \nu_D(H)$.
				Thus, if we proved that $\nu_D(G_1\cap H)$ had finite index in $\nu_D(H)\cap (G_1\cap H)\equiv\nu_D(H)\cap G_1$ then clearly so does $\nu_D(G_1)$. Also, if we proved that $\nu_D(G_1\cap H)\equiv \nu_D(H)\cap G_0$ for some definable group $G_0\leq H$ then the same would be true if we replaced $\nu_D(G_1\cap H)$ with $\nu_D(G_1)$ and $\nu_D(H)$ with $\nu_D(G)$.

				Therefore,  we may assume that $G=B/C$ and $G_1\leq B/C$ is a definable subgroup. 
				Let $\widehat G_1$ be the preimage of $G_1$ inside $B$.
				
				\begin{claim} It suffices to prove the lemma for $\widehat G_1$ and $B$ (instead of $G_1$ and $G$). 
				\end{claim}
				
				\begin{claimproof}
					Indeed, assume that we know that $\nu_D(\widehat G_1)$ has finite index in $\nu_D(B)\cap \widehat G_1$. Let $\pi:B\to B/C$ be the quotient map. By Proposition \ref{P:nu under finite to one}, $\pi(\nu_D(\widehat G_1))\equiv \nu_D(G_1)$ and $\pi(\nu_D(B))\equiv \nu_D(G)$, so $\nu_D(G_1)$ has finite index in $\pi(\nu_D(B)\cap \widehat G_1)$. However, since $C$ is finite it is a subgroup of $\nu_D(B)$ (Remark \ref{R: how nu looks}(2)), so we must have  $\pi(\nu_D(B)\cap \widehat G_1)\equiv \pi(\nu_D(B))\cap G_1$, and hence $\nu_D(G_1)$ has finite index in $\nu_D(G)\cap G_1$.
					
					As for the moreover clause in the statement of the lemma, assume that there is a definable subgroup $\widehat G_0\leq B$ such that $\nu_D(\widehat G)\equiv \nu_D(B)\cap \widehat G_0$. Taking the image under $\pi$ (using again that $\ker(\pi)\leq \nu_D(B)$), we see that  $\nu_D(\widehat G_1)\equiv \pi(\nu_D(B)\cap \widehat G_0)\equiv \nu_D(G)\cap \pi(\widehat G_0)$, so $G_0:=\pi(\widehat G_0)$ is a definable group with the desired properties.
				\end{claimproof}
				
				Thus, we may assume that $G$ is a ball in $(K/\CO)^n$. So $\nu_D(G)$ is the intersection of all balls around $0$ (Fact \ref{F: concrete description of nu for D-balanced}). If $\dpr(G_1)=\dpr(G)$ then, as $G$ is a subset of $(K/\CO)^n$, by Proposition \ref{P:general nu of finite index subgroup} $\nu_D(G)=\nu_D(G_1)$, and there is nothing to prove. 
				
				So we assume that $\dpr(G_1)<\dpr(G)$. By \cite[Lemma 3.14]{HaHaPeSemisimple}, there exists a natural number $k$ and a coordinate projection $\pi:(K/\CO)^n\to (K/\CO)^m$, for $m=\dpr(G_1)$, such that $\pi\rest p^kG_1$ is injective. By Fact \ref{F: concrete description of nu for D-balanced}, $\nu_D(p^kG_1)\equiv p^kG_1 \cap \nu_D(G)$ and since $p^kG_1$ has finite index in $G_1$ (\cite[Remark 3.11]{HaHaPeSemisimple}) it follows  by Proposition \ref{P:general nu of finite index subgroup} that $\nu_D(p^kG_1)\equiv \nu_D(G_1)$. So we get that $\nu_D(G_1)\equiv p^kG_1\cap \nu_D(G)$ thus the moreover clause of the lemma is proved for $G_0=p^kG_1$. As $p^kG_1$ has finite index in $G_1$, the lemma is proved.%
				
			\end{proof}
			
			Note that the finite index clause in the previous lemma cannot be improved in general due to the existence of finite subgroups in $\KOp$.\\

			Summing up the above results we get: 
			\begin{theorem}\label{T: nu of subgroup}
				Let $G$ be a definable group in $\CK$ and $G_1$ a definable subgroup and $D$ a distinguished sort. Then  $\nu_D(G_1)$ has finite index in $\nu_D(G)\cap G_1$ and: 
				\begin{itemize}
					\item If $D$ is stable or $D=\KOp$ then there exists a definable subgroup $G_0\leq G_1$ almost strongly internal to $D$ with $\dpr(G_0)=\adrk(G_1)$ such that $\nu_D(G_1)\equiv \nu_D(G)\cap G_0$. 
					\item In all other cases, $\nu_D(G_1)\equiv \nu_D(G)\cap G_1$. 
				\end{itemize}
			\end{theorem}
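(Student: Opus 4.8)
The theorem is a compilation of the three lemmas just established, so the plan is to split into cases according to the nature of $D$ and invoke the appropriate lemma in each, adding only the bookkeeping needed to produce the subgroup $G_0$ in the first bullet. If $D$ is unstable and different from $\KOp$, I would simply quote the lemma above that gives $\nu_D(G_1)\equiv \nu_D(G)\cap G_1$; this is precisely the ``all other cases'' clause, and it trivially yields the finite-index assertion (with index $1$).

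For $D=\KOp$, the last of the three lemmas already delivers both conclusions at once: $\nu_D(G_1)$ has finite index in $\nu_D(G)\cap G_1$, and there is a definable $G_0\le G_1$ with $\dpr(G_0)=\adrk(G_1)=:m$, with $\nu_D(G_1)\equiv \nu_D(G)\cap G_0$, and whose image in $G/H_G$ embeds definably into $((K/\CO)^m,+)$. The one point I would add is that such a $G_0$ is itself almost strongly internal to $D$: modulo the finite subgroup $G_0\cap H_G$ it is strongly internal to $D$ of dp-rank $m$, so pulling back along this finite-to-one quotient shows $G_0$ is almost strongly internal to $D$, as required.

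For the remaining (stable) case $D=\bk$, so $\CK$ is $V$-minimal, I would first invoke Proposition \ref{P:lasi=lsi} to see that $G_1$ is $\bk$-balanced; consequently $\nu_\bk(G_1)$ is a definable connected subgroup of $G_1$, strongly internal to $\bk$, with $\dpr(\nu_\bk(G_1))=\drk(G_1)=\adrk(G_1)$. Lemma \ref{L: nu of subgroup when stable} supplies $\nu_\bk(G_1)\le \nu_\bk(G)$ together with the fact that $\nu_\bk(G_1)$ has finite index in $\nu_\bk(G)\cap G_1$. Taking $G_0:=\nu_\bk(G_1)$ then exhibits a definable subgroup of $G_1$, (almost) strongly internal to $\bk$, with $\dpr(G_0)=\adrk(G_1)$, and $\nu_\bk(G)\cap G_0=G_0=\nu_\bk(G_1)$ since $G_0\le \nu_\bk(G)$; this establishes the first bullet in the stable case.

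There is no genuine obstacle here, since essentially all the content resides in the three lemmas; the only mild points of care are the packaging of $G_0$ in the stable case (where $\bk$-balancedness is what pins down its dp-rank) and the verification that the $G_0$ produced by the $\KOp$ lemma is almost strongly internal to $D$, both of which are routine.
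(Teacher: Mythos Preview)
Your proposal is correct and matches the paper's approach exactly: the paper presents this theorem with the words ``Summing up the above results we get'' and no further proof, treating it as a direct compilation of the three preceding lemmas. The bookkeeping you add---taking $G_0=\nu_\bk(G_1)$ in the stable case and verifying that the $G_0$ from the $\KOp$ lemma is almost strongly internal to $D$---is exactly the glue needed to assemble the statement, and is routine as you say.
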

			
			\begin{corollary}\label{C: nu rel def}
				Let $G$ be a definable group in $\CK$ and $G_1$ a definable subgroup. Then $\nu(G_1)$ is relatively definable in $\nu(G)$.
			\end{corollary}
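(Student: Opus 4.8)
The plan is to deduce the corollary from Theorem~\ref{T:main} together with the per-sort results of Theorem~\ref{T: nu of subgroup}, reducing it to a statement about the product decomposition of $\nu(G)$. Let $\mu\colon G^4\to G$ be the definable map $(a,b,c,d)\mapsto abcd$, write $\hat\nu$ for the type-definable subgroup $\nu_K(G)\times\nu_\Gamma(G)\times\nu_{K/\CO}(G)\times\nu_\bk(G)$ of $G^4$, and recall from Theorem~\ref{T:main} that $\tau:=\mu\restriction\hat\nu$ is an isomorphism onto $\nu(G)$. By Theorem~\ref{T: nu of subgroup}, for each distinguished sort $D$ there is a definable subgroup $V_D\le G_1\le G$ with $\nu_D(G_1)\equiv\nu_D(G)\cap V_D$ (take $V_D=G_1$ when $D$ is unstable and $D\neq\KOp$, and $V_D=G_0$ otherwise). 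In particular $\nu_D(G_1)\le\nu_D(G)$ for every $D$, so the subgroups $\nu_D(G_1)$ pairwise commute by Proposition~\ref{P: nus commute}; hence $\nu(G_1)=\mu(\hat\nu_1)$, where $\hat\nu_1:=\nu_K(G_1)\times\nu_\Gamma(G_1)\times\nu_{K/\CO}(G_1)\times\nu_\bk(G_1)\subseteq\hat\nu$, the map $\tau$ restricts to an isomorphism $\hat\nu_1\to\nu(G_1)$, and therefore $\tau^{-1}(\nu(G_1))=\hat\nu_1=\hat\nu\cap P$ with $P:=V_K\times V_\Gamma\times V_{K/\CO}\times V_\bk$ a definable subgroup of $G^4$. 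Thus $\hat\nu_1$ is relatively definable in $\hat\nu$.

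It remains to transfer this across $\tau$, and this is the step that needs a small argument, since $\tau^{-1}$ is only type-definable and the image of a relatively definable set under a type-definable map need not be relatively definable. The trick is to replace $\hat\nu$ by a definable superset on which multiplication is still injective: since $\mu\restriction\hat\nu$ is injective, compactness yields a definable $\Phi\supseteq\hat\nu$ with $\mu\restriction\Phi$ injective. Putting $P':=\Phi\cap P$, which is definable, and noting $\hat\nu\cap P'=\hat\nu\cap P=\hat\nu_1$, we obtain
\[\nu(G_1)=\mu(\hat\nu_1)=\mu(\hat\nu\cap P')=\mu(\hat\nu)\cap\mu(P')=\nu(G)\cap\mu(P'),\]
where the third equality uses $\hat\nu\cap P'\subseteq\Phi$ and the injectivity of $\mu\restriction\Phi$, and the last uses $\mu(\hat\nu)=\nu(G)$. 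Since $\mu(P')$ is definable (it is the image of a definable set under a definable map), $\nu(G_1)$ is the intersection of $\nu(G)$ with a definable set, which is exactly the assertion.

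In summary, the heavy lifting is done entirely by Theorem~\ref{T:main} and Theorem~\ref{T: nu of subgroup}; the only genuine obstacle is the bookkeeping that passes from the external product $\hat\nu\subseteq G^4$ back inside $G$ through the merely type-definable isomorphism $\tau$, and the device that disposes of it is the injectivity of $\mu$ on a definable neighbourhood of $\hat\nu$, which is available precisely because $\tau$ itself is injective. (One should also note the degenerate cases where some $\nu_D(G)$ is trivial, i.e.\ $G$ is not locally almost strongly internal to $D$; then $\nu_D(G_1)$ is trivial as well and the corresponding factor drops out harmlessly.)
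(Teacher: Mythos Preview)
Your proof is correct and follows essentially the same approach as the paper's. Both arguments use Theorem~\ref{T:main} and Theorem~\ref{T: nu of subgroup} in the same way, and both pivot on the compactness step that produces a definable superset of $\hat\nu$ on which multiplication is injective; your $\Phi$ plays the role of the paper's $\prod_i X_i$, your $P'=\Phi\cap P$ corresponds to the paper's $\prod_i Y_i$, and the verification that $\mu(P')\cap\nu(G)=\nu(G_1)$ via injectivity of $\mu\restriction\Phi$ is exactly the paper's two-direction check.
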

			\begin{proof}
				Let $D_1,D_2,D_3,D_4$ be the different distinguished sorts. By Theorem \ref{T:main}, the definable map $f: (x_1,x_2,x_3,x_4)\mapsto x_1x_2x_3x_4$ is injective on $\prod_{i=1}^4 \nu_{D_i}(G)$. By compactness, there exist definable sets $X_i\sub G$, $\nu_{D_i}(G)\vdash X_i$, such that  $f$ is injective on $\prod_{i=1}^4X_i$. 
				
				By Theorem \ref{T: nu of subgroup}, each $\nu_{D_i}(G_1)$ is relatively definable in $\nu_{D_i}(G)$, so we can find $Y_i\subseteq X_i$ such that $\nu_{D_i}(G)\cap Y_i\equiv \nu_{D_i}(G_1)$.
				
				We claim that $f\left( \prod_{i=1}^4Y_i\right)\cap \nu(G)\equiv \nu(G_1)$.
				
				Suppose $y_i\in Y_i(\widehat\CK)$ are elements such that $f(y_1,y_2,y_3,y_4)=y_1y_2y_3y_4\in \nu(G)(\widehat \CK)$. Since $f(\prod_{i=1}^4\nu_{D_i}(G))\equiv \nu(G)$ it follows, by injectivity, that $y_i\in \nu_{D_i}(G)(\widehat \CK)$ so $y_i \in \nu_D(G_1)(\widehat \CK)$.
				
				For the other direction, if $y\in \nu(G_1)(\widehat \CK)$ then $y=y_1y_2y_3y_4$ for $y_i\in \nu_{D_i}(G_i)(\widehat \CK)=(Y_i\cap \nu_{D_i}(G))(\widehat \CK)$ so $y\in \left(f\left( \prod_{i=1}^4Y_i\right)\cap \nu(G)\right)(\widehat \CK)$.
			\end{proof}

			Although we showed  the statement below for each $\nu_D(G)$ separately, the following remains open for $\nu(G)$.  
			\begin{question}
				Let $G_1\le G$ be definable groups in $\CK$. Does $\nu(G_1)$ have finite index in $\nu(G)\cap G_1$, and  for $D\neq \KOp$, is $\nu(G_1)=\nu(G)\cap G_1$? 
			\end{question}
			
			When $\dpr(G_1)=\dpr(G)$ we prove a stronger statement:

			\begin{proposition}\label{P: nu is local}
				Let $G$ be a group definable in $\CK$, $G_1\le G$ a definable subgroup. Assume that $\dpr(G)=\dpr(G_1)$. Then for any distinguished sort $D$, $\adrk(G)=\adrk(G_1)$ and in particular $\nu_D(G_1)\equiv \nu_D(G)$ and $H_G=H_{G_1}$. As a result, $\nu(G)\equiv \nu(G_1)$.
			\end{proposition}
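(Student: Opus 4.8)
The plan is to show first that the entire statement reduces to the single claim that \(\adrk_D(G)=\adrk_D(G_1)\) for every distinguished sort \(D\). Indeed, granting this claim, \(\nu_D(G_1)\equiv\nu_D(G)\) is then immediate from Proposition \ref{P:general nu of finite index subgroup}, and \(H_G=H_{G_1}\) from Lemma \ref{L: same H if same adrk}. Finally, since \(\nu(G)\) (resp.\ \(\nu(G_1)\)) is by definition the subgroup of \(G(\widehat\CK)\) generated by the four subgroups \(\nu_D(G)\) (resp.\ \(\nu_D(G_1)\)), and these coincide sort by sort, the two generated subgroups coincide as subgroups of \(G(\widehat\CK)\); matching the two type-definitions through the direct product description of Theorem \ref{T:main} gives \(\nu(G)\equiv\nu(G_1)\). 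Thus only the rank equality is at stake, and since every subset of \(G_1\) almost strongly internal to \(D\) is also such a subset of \(G\), the inequality \(\adrk_D(G_1)\le\adrk_D(G)\) is automatic; the content is the reverse inequality \(\adrk_D(G)\le\adrk_D(G_1)\).

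To prove \(\adrk_D(G)\le\adrk_D(G_1)\), fix \(D\) and assume \(r:=\adrk_D(G)>0\) (otherwise there is nothing to do). Pick an almost \(D\)-critical \(Y\subseteq G\), so \(\dpr(Y)=r\), witnessed by a finite-to-one definable \(f:Y\to D^n\). I would then consider the definable set
\[
Z:=\{(g,y)\in G\times G_1 : g^{-1}y\in Y\}.
\]
The map \((g,y)\mapsto(g^{-1}y,y)\) is a definable bijection from \(Z\) onto \(Y\times G_1\), with inverse \((z,y)\mapsto(yz^{-1},y)\); hence \(\dpr(Z)=\dpr(Y\times G_1)\ge\dpr(Y)+\dpr(G_1)=r+\dpr(G_1)\), where the inequality is the super-additivity of dp-rank for a product one of whose factors is a definable group — the same ingredient underlying Lemma \ref{L: sub-additivity for groups} and its proof. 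On the other hand, projecting \(Z\) to its first coordinate \(\pi:Z\to G\) has fibre \(\pi^{-1}(g)=\{g\}\times(gY\cap G_1)\) over \(g\), so sub-additivity of dp-rank gives
\[
r+\dpr(G_1)\ \le\ \dpr(Z)\ \le\ \dpr(G)+\max_{g\in G}\dpr(gY\cap G_1).
\]
Since \(\dpr(G)=\dpr(G_1)\) by hypothesis, there is \(g_0\in G\) with \(\dpr(g_0Y\cap G_1)\ge r\); as \(g_0Y\cap G_1\subseteq g_0Y\) we also have \(\dpr(g_0Y\cap G_1)\le\dpr(Y)=r\), so \(\dpr(g_0Y\cap G_1)=r\). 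Now \(g_0Y\cap G_1\) is a subset of \(G_1\), it is almost strongly internal to \(D\) (the map \(x\mapsto f(g_0^{-1}x)\) is finite-to-one on it), and it has dp-rank \(r\); therefore \(\adrk_D(G_1)\ge r=\adrk_D(G)\), as required.

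The step I expect to be the real obstacle is the dp-rank estimate \(\dpr(Y\times G_1)\ge\dpr(Y)+\dpr(G_1)\): dp-rank is genuinely only \emph{sub}-additive for products of arbitrary definable sets, and the super-additive inequality used here holds precisely because \(G_1\) is a definable group. This is exactly the kind of additivity already invoked (for group--group products) in the proof of Lemma \ref{L: sub-additivity for groups}, so in the write-up I would either cite it there or, alternatively, route the argument through the equivalent statement that \(\dpr(G_1)=\dpr(G)\) forces \([G:G_1]<\infty\) and then apply the finite-index case of Proposition \ref{P:general nu of finite index subgroup} directly; both formulations rest on the same additivity input for definable groups.
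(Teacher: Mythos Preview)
Your main argument is correct and is a close cousin of the paper's proof. The paper applies Lemma \ref{L: sub-additivity for groups} directly to the pair $\nu_D(G)$ and $G_1$: from $\dpr(\nu_D(G))+\dpr(G_1)\le \dpr(G_1\cdot\nu_D(G))+\dpr(\nu_D(G)\cap G_1)$ and $\dpr(G_1)=\dpr(G)=\dpr(G_1\cdot\nu_D(G))$ one gets $\dpr(\nu_D(G)\cap G_1)=\adrk(G)$; since this intersection is almost strongly internal to $D$ and sits in $G_1$, $\adrk(G_1)=\adrk(G)$ follows. Your version unpacks the same rank-count with a single almost $D$-critical $Y$ in place of the type-definable $\nu_D(G)$, locating a translate $g_0Y$ meeting $G_1$ in full rank. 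Both reductions to Proposition \ref{P:general nu of finite index subgroup} and Lemma \ref{L: same H if same adrk} are identical.

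Two corrections. First, your worry about super-additivity is unfounded: dp-rank is fully \emph{additive} on Cartesian products of arbitrary definable sets, and the paper uses this freely (e.g., the line ``$\dpr(H_1\times H_2)=\dpr(H_1)+\dpr(H_2)$'' in the proof of Lemma \ref{L: sub-additivity for groups}); no group structure on either factor is needed for $\dpr(Y\times G_1)=\dpr(Y)+\dpr(G_1)$. Second, your proposed alternative route---deducing $[G:G_1]<\infty$ from $\dpr(G_1)=\dpr(G)$---is simply false in these structures: any open ball $B$ around $0$ in $(K/\CO,+)$ is a definable subgroup of dp-rank $1=\dpr(K/\CO)$ but of infinite index. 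So that alternative should be dropped; the main argument already suffices.
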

			
			\begin{proof}   
				
				By Lemma \ref{L: sub-additivity for groups} we get  
				\[
				\dpr(\nu_D(G))+\dpr(G_1)\leq \dpr(G_1\cdot \nu_D(G))+\dpr(\nu_D(G)\cap G_1).
				\] 
				Because $\dpr(G_1)=\dpr(G_1\cdot \nu_D(G))=\dpr(G)$ we get that \[\dpr(\nu_D(G))\leq \dpr(\nu_D(G)\cap G_1)\] so equality must hold and this quantity is equal to $\adrk(G)$.
				
				Since $\nu_D(G)\cap G_1\vdash G_1$ is a almost strongly internal to $D$, of dp-rank $\adrk(G)$, it follows that $\adrk(G_1)=\adrk(G)$ and the rest follows by Proposition \ref{P:general nu of finite index subgroup} and Lemma \ref{L: same H if same adrk}.\qedhere
				
			\end{proof}

			\section{Infinitesimals of products}

			Continuing our investigation of the functorial properties of the mapping sending a definable group $G$ to its groups of infinitesimals, we prove that the mapping $G\mapsto \nu_D(G)$ respects direct products. Our first goal is to prove additivity of the almost $D$-ranks. Namely. that for every definable $X_1,X_2$,  $\adrk(X_1\times X_2)=\adrk(X_1)+\adrk(X_2).$ This result was claimed in \cite[Remark 4.19(2)]{HaHaPeGps} for any vicinic sort, but the proof had a gap. Here we provide a proof for $D$, a distinguished sort in our settings.

			For $Y\sub X_1\times X_2$, and $t\in X_1$, as usual, we write $Y_t=\{y\in X_2:(t,y)\in Y\}$. We make use of the fact that $Y_t$ is in definable bijection with the set $\{t\}\times Y_t\sub Y$.
			\begin{lemma}\label{L:finite cover}
				Let $\CM$ be a first order structure and $D$ a definable set of finite algebraic dp-rank. %
				Assume that $X\sub T\times D^n$ is $A$-definable,  such that $\dpr(X_t)\leq d$ for all $t\in T$. Then there exists an $A$-definable partition $X=\coprod_{i=1}^\ell X_i $
				and   for each $i$ there are $m_i\in \mathbb N$ and a coordinate projection $\pi_i:D^n\to D^d$,
				such that for each $t\in T$, $\pi\rest X_{i,t}$ is at most $m_i$-to-one. 
			\end{lemma}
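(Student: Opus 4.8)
The plan is to argue by induction on $d$. If $d=0$ then every fibre $X_t$ is finite, so by a straightforward compactness argument (after passing to a saturated elementary extension of $\CM$ if needed) the fibres are uniformly bounded in size, and a single piece together with the constant map $D^n\to D^0$ does the job; and if $n\le d$ one takes a single piece with an injective coordinate map $D^n\to D^d$ (repetitions allowed). So assume $d\ge 1$ and $n>d$. Let $\Pi$ be the finite set of coordinate projections $D^n\to D^d$. For $\pi\in\Pi$ and $m\in\Nn$ the set
\[
X[\pi,m]:=\{(t,a)\in X:\ |\pi^{-1}(\pi(a))\cap X_t|\le m\}
\]
is $A$-definable, and the heart of the matter is the claim that \emph{every $(t,a)\in X$ with $\dpr(a/tA)=d$ lies in $\bigcup_{\pi\in\Pi,\,m\in\Nn}X[\pi,m]$.}

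To prove the claim, note that $\dpr(a/tA)=\dim_{\acl}(a/tA)=d$ since $D$ has algebraic dp-rank, so there is $S\subseteq\{1,\dots,n\}$ with $|S|=d$ and $a\in\acl(\pi_S(a)tA)$, where $\pi_S$ projects onto the $S$-coordinates; additivity of $\dim_{\acl}$ in $D$ (together with $\dim_{\acl}(a/\pi_S(a)tA)=0$) yields $\dim_{\acl}(\pi_S(a)/tA)=d$. Now for \emph{any} $b\in\pi_S^{-1}(\pi_S(a))\cap X_t$ we have $\pi_S(b)=\pi_S(a)$ and $\dim_{\acl}(b/tA)\le\dpr(X_t)\le d$, so additivity forces $\dim_{\acl}(b/\pi_S(a)tA)=0$, i.e.\ $b\in\acl(\pi_S(a)tA)$. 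Thus every element of the $(\pi_S(a)tA)$-definable set $\pi_S^{-1}(\pi_S(a))\cap X_t$ is algebraic over those parameters, so in a saturated model the set is finite, of some size $m$; hence $(t,a)\in X[\pi_S,m]$, proving the claim.

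Granting this, $X\setminus\bigcup_{\pi,m}X[\pi,m]$ is contained in $\{(t,a)\in X:\dpr(a/tA)\le d-1\}$. Writing $\{(t,a):\dpr(a/tA)\ge d\}$ as a decreasing intersection $\bigcap_k\theta_k$ of $A$-definable sets — here I use the upward type-definability of dp-rank for $D$, available because $D$ has algebraic dp-rank (cf.\ the discussion preceding Proposition~\ref{P: topological pair 2} and \cite{Simdp}) — one gets a cover of the definable set $X$ by the countable family $\{X[\pi,m]\}\cup\{\neg\theta_k\cap X\}$ of $A$-definable sets; compactness and the nesting of these sets produce finitely many $\pi_1,\dots,\pi_N\in\Pi$, $m_1,\dots,m_N\in\Nn$, and a single $k_0$ with
\[
X=\bigcup_{i=1}^N X[\pi_i,m_i]\ \cup\ (\neg\theta_{k_0}\cap X).
\]
Now disjointify: each piece contained in some $X[\pi_i,m_i]$ already satisfies the conclusion, since $\pi_i$ restricted to each of its fibres is $\le m_i$-to-one; and every fibre of $\neg\theta_{k_0}\cap X$ has dp-rank $\le d-1$, so the inductive hypothesis decomposes $\neg\theta_{k_0}\cap X$ further, after which one pads the resulting projections $D^n\to D^{d-1}$ up to $D^n\to D^d$. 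Concatenating the two families of pieces finishes the induction (the whole configuration being $A$-definable, the statement descends from the saturated extension back to $\CM$).

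The main obstacle is the displayed claim, and inside it the appeal to additivity of $\dim_{\acl}$ in the distinguished sort $D$: for $D\in\{K,\Gamma,\bk\}$ this is immediate from the Exchange Property, whereas for $D=K/\CO$, which fails Exchange, one must invoke the dimension theory for $K/\CO$ developed in \cite{HaHaPeVF,HaHaPeGps}. The only other external input is the type-definability of dp-rank in families for the distinguished sorts.
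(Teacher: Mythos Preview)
Your argument has a genuine gap at the additivity step. From $\dim_{\acl}(b/tA)\le d$ and $\dim_{\acl}(\pi_S(b)/tA)=d$ you conclude $\dim_{\acl}(b/\pi_S(b)tA)=0$; this requires the inequality $\dim_{\acl}(b/tA)\ge\dim_{\acl}(b/\pi_S(b)tA)+\dim_{\acl}(\pi_S(b)/tA)$, which is super-additivity and amounts to Exchange for $\acl$ in $D$. The lemma, however, is stated for an arbitrary structure $\CM$ and any definable $D$ of algebraic dp-rank, with no Exchange hypothesis; the paper itself notes that $K/\CO$ fails Exchange. Your parenthetical fallback (``invoke the dimension theory for $K/\CO$'') neither addresses the general statement nor points to an actual additivity result for $K/\CO$ in the cited references. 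Without this step you cannot conclude that the \emph{entire} fibre $\pi_S^{-1}(\pi_S(a))\cap X_t$ is finite, so there is no reason your specific sets $X[\pi,m]$ cover all points of maximal fibrewise rank, and the induction does not get off the ground.

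The paper's proof sidesteps this by enlarging the class of covering sets. Instead of your $X[\pi,m]$ (which demand that the full fibre of $\pi$ through $a$ in $X_t$ be bounded), it allows \emph{any} $A$-formula $\phi(t,x)$ such that some coordinate projection is uniformly finite-to-one on each $\phi(b,D^n)$. Given any $(t_0,a)\in X$, algebraic dp-rank yields $a\in\acl(At_0\,\pi_S(a))$ for some $S$ of size $d$; a single formula $\psi$ witnessing this algebraicity, conjoined with the clause $\exists^{=m}y\,\psi(t,y,\pi_S(x))$, already lies in this larger class and contains $(t_0,a)$ --- without ever asserting anything about the full fibre $\pi_S^{-1}(\pi_S(a))\cap X_{t_0}$. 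A single compactness argument then gives the finite cover. This uses only the definition of algebraic dp-rank: no induction on $d$, no type-definability of rank, and no additivity.
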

			\begin{proof}
				This is a standard compactness argument.  Let $\CC$ be the collection of all formulas $\varphi(t,x)$ over $A$ with  $(t,x)\in T\times D^n$, such that there is $m\in \mathbb N$ and a coordinate projection $\pi:D^n\to D^d$, so that  for every $b\in T$ the restriction of $\pi$ to $\varphi(b,D^n)$ is $m$-to-1.  We consider the partial type 
				\[\Sigma(t,x):=  \{(t,x)\in X\}\cup \{\neg \varphi(t,x): \varphi\in \CC \},\] and claim  it is inconsistent. 
				
				Indeed, assume towards a contradiction that $(t_0,a)$ realizes  $\Sigma$. As $a\in X_{t_0}$, $\dpr(a/At_0)\leq d$, and since dp-rank is algebraic, there is $a_1\subseteq a$ of length $d$
				such that $a\subseteq  \acl(At_0a_1)$. Let $\pi_1: D^n\to D^d$ be the projection onto the $a_1$-coordinates of $a$ and let $a_0$ be the image of $a$ under the projection $\pi_0$ onto the remaining coordinates. Let $\psi(t_0,x_0,x_1)\in \tp(a_0,a_1/At_0)$ be such that $\models (\exists^{= m}x_0) \psi(t_0,x_0, a_1)$ for some $m\in \Nn$.  
				Let $$\phi(t,x):= \psi(t,\pi_0(x),\pi_1(x))\, \wedge\,  \exists^{= m} y \, \psi(t,\pi_1(x),y).$$ 
				Thus, $\phi \in \CC$ and $\models \phi(t_0,a)$, contradiction.

				So $\Sigma$ is inconsistent, and  by compactness
				we obtain finitely many definable sets $Z_1,\ldots, Z_{\ell}\sub X$, whose union covers $X$, and for each $i$, a coordinate projection $\pi_i:D^n\to D^d$ whose restriction to each $Z_{i,t}$ is  finite-to-one.
				Finally, to obtain a partition of $X$ we replace $Z_2$ by $Z_2\setminus Z_1$, $Z_3$ by $Z_3\setminus (Z_1\cup Z_2)$ etc. 
			\end{proof}

			\begin{lemma}\label{L:equivalence relation dp-rank}
				Let $\CM$ and $D$ be as in the previous lemma, and $X\sub T\times D^n$. Assume that $\dpr(\bigcup_{t\in T}X_t)=k$ and for each $t\in T$, $\dpr(X_t)\leq d$.
				
				Then, there exists a definable subset $Z\sub X$, with $\dpr(\bigcup_t Z_t)=k$, and a definable $S\sub \bigcup_{t} Z_t$ with $\dpr(S)\geq k-d$, such that    
				for every $t\in T$, $|S\cap Z_t|$ is finite.
				
			\end{lemma}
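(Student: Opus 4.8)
The plan is to reduce immediately to the situation provided by Lemma \ref{L:finite cover} and then extract $S$ as a single fibre of the resulting projection over a well-chosen point. First I apply Lemma \ref{L:finite cover} to $X \subseteq T \times D^n$ with the fibrewise bound $\dpr(X_t) \le d$: this yields a definable partition $X = \coprod_{i=1}^\ell X_i$ together with, for each $i$, a constant $m_i \in \Nn$ and a coordinate projection $\pi_i : D^n \to D^d$ such that $\pi_i \restriction X_{i,t}$ is at most $m_i$-to-one for every $t \in T$. Since the partition restricts fibrewise, $\bigcup_t X_t = \bigcup_{i=1}^\ell \bigl(\bigcup_t X_{i,t}\bigr)$, and as the dp-rank of a finite union is the maximum of the dp-ranks, there is an index $i_0$ with $\dpr\bigl(\bigcup_t X_{i_0,t}\bigr) = k$. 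I set $Z := X_{i_0}$, $\pi := \pi_{i_0}$, $m := m_{i_0}$ and $V := \bigcup_t Z_t \subseteq D^n$; then $\dpr(V) = k$ and $\pi \restriction Z_t$ is at most $m$-to-one for every $t$.

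Next I consider the projection $\pi \restriction V : V \to W := \pi(V) \subseteq D^d$. The key point is that $\dpr(W) \le d$: indeed $W \subseteq D^d$, so $\dim_{\acl}(W) \le d$, and since $D$ has algebraic dp-rank this gives $\dpr(W) = \dim_{\acl}(W) \le d$. By subadditivity of dp-rank applied to this projection,
\[
k = \dpr(V) \le \dpr(W) + \sup_{w \in W}\dpr\bigl(\pi^{-1}(w) \cap V\bigr) \le d + \sup_{w \in W}\dpr\bigl(\pi^{-1}(w) \cap V\bigr).
\]
All the ranks $\dpr(\pi^{-1}(w)\cap V)$ are natural numbers bounded by $k$, so this supremum is attained at some $w_0 \in W$, and $\dpr\bigl(\pi^{-1}(w_0)\cap V\bigr) \ge k-d$. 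I then take $S := \pi^{-1}(w_0) \cap V$, which is definable over the parameters defining $Z$ and $\pi$ together with $w_0$. By construction $S \subseteq V = \bigcup_t Z_t$ and $\dpr(S) \ge k-d$. Finally, for each $t \in T$ we have $Z_t \subseteq V$, hence $S \cap Z_t = \{x \in Z_t : \pi(x) = w_0\}$, which has size at most $m$ because $\pi \restriction Z_t$ is at most $m$-to-one; in particular $|S \cap Z_t|$ is finite, as required.

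I do not expect a serious obstacle: once Lemma \ref{L:finite cover} is available the argument is essentially routine. The one point that genuinely needs the hypothesis on $D$ is the inequality $\dpr(W) \le d$ — it uses that $D$ has algebraic dp-rank (dp-rank $=\dim_{\acl}$); without it, e.g.\ for $D$ of finite dp-rank $>1$, one would only obtain $\dpr(S) \ge k - \dpr(D^d)$, which is too weak. It is also worth checking, though harmless, that passing to a single block $Z = X_{i_0}$ of the partition costs nothing: all we need from $Z$ is $\dpr\bigl(\bigcup_t Z_t\bigr) = k$ together with the uniform finite-to-one projection on its fibres, both of which we have arranged.
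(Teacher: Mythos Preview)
Your proof is correct and follows essentially the same route as the paper: apply Lemma~\ref{L:finite cover}, pass to a single block $Z=X_{i_0}$ of the partition with $\dpr(\bigcup_t Z_t)=k$, and take $S$ to be a fibre of the coordinate projection $\pi_{i_0}$ inside $\bigcup_t Z_t$. The only cosmetic difference is that the paper selects $i_0$ and the fibre by first choosing a generic $a\in\bigcup_t X_t$ and setting $w_0=\pi_{i_0}(a)$, whereas you pick $i_0$ by the max-rank-of-a-finite-union argument and then invoke sub-additivity of dp-rank for the projection $V\to W$ to locate $w_0$; both arguments use the same key inequality $\dpr(a_0/A)\le d$ for $a_0\in D^d$.
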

			\begin{proof}
				Assume everything is definable over some $A$. 
				By Lemma \ref{L:finite cover}, there is a definable partition $X:=\coprod_{i=1}^\ell Z_i$, and for each $i=1,\ldots, \ell$, a  coordinate projection $\pi_i: D^n \to D^d$ such that for every $t\in T$, the fibers of $\pi_i\rest Z_{i.t}$  have size at most $m_i$. 
				
				Let $a\in \bigcup_t X_t$ be $A$-generic, and fix $1\leq i_0\leq \ell$ and $t_0\in T$, 
				such that $a\in Z_{i_0,t_0}$. Let $Y_1=\bigcup_t Z_{i_0,t}$. Since $a\in Y_1$, $\dpr(Y_1)=k.$

				Let $a_0=\pi_{i_0}(a)\in D^d$
				and let $S=\pi_{i_0}^{-1}(a_0)\cap Y_1$. As $\dpr(a/A)=k$,  sub-additivity implies that $\dpr(a/Aa_0)\geq k-d$, so $\dpr(S)\geq k-d$. By our choice of the $Z_i$, for each $t\in T$, $|S\cap Z_{i_0,t}|\leq  m_{i_0}$.   The sets $Z:=Z_{i_0}$ (so $\bigcup_t Z_t=Y_1$) and $S$ satisfy the requirements.
				
			\end{proof}
			
			\begin{remark} In the above lemma consider the special case when the $X_t$ are pairwise disjoint. In this situation, the set $T$ can be viewed as the quotient of $\bigcup_t X_t$ by an equivalence relation whose classes are the $X_t$. The resulting set $S$ is "almost a section'' in the sense that it chooses, on a subset of full rank $k$,  a finite set of representatives of each class it intersects. 
			\end{remark}
			
			\begin{proposition}\label{P:subadditivity of drk and adrk}
				Let $X_1$ and $X_2$ be definable sets in $\CK$, and $Y\subseteq X_1\times X_2$ be definable. Then,
				\begin{enumerate}
					\item (on $\drk(Y)$) 
					
					(i) For all distinguished sorts $D$, \[\drk(Y)\leq \adrk(\pi_1(Y))+\max\{\drk(Y_a): a\in X_1\}.\]
					
					(ii) If $D\neq \KOp$ then \[\drk(Y)\leq \drk(\pi_1(Y))+\max\{\drk(Y_a): a\in X_1\}.\]

					\item (on $\adrk(Y)$) For all distinguished $D$, \[\adrk(Y)\leq \adrk(\pi_1(Y))+\max\{\adrk(Y_a): a\in X_1\}.\]
				\end{enumerate}
			\end{proposition}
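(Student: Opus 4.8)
The plan is to deduce all three inequalities from a single construction, applied with the appropriate notion of internality. Each inequality will follow once we exhibit, inside $\pi_1(Y)$, one subset $P$ that is (almost) strongly internal to $D$ with $\dpr(P)\ge\drk(Y)-d$ (resp. $\adrk(Y)-d$), where $d$ is the relevant maximum of the fibre ranks; the stated inequality is then just a rearrangement of the definitions of $\drk$ and $\adrk$, and the degenerate cases ($\drk(Y)=0$, or $d\ge\drk(Y)$, or $Y$ not locally (almost) strongly internal to $D$) are immediate. So fix $W\subseteq Y$ of maximal dp-rank $k$ that is strongly internal to $D$ in case (1), witnessed by an injective $g\colon W\to D^n$, and almost strongly internal to $D$ in case (2), witnessed by a finite-to-one $g\colon W\to D^n$. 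Each slice $W_a=\{b:(a,b)\in W\}$ is then (almost) strongly internal to $D$ via the restriction of $g$, so $\dpr(W_a)\le\drk(Y_a)\le d$ in case (1) and $\dpr(W_a)\le\adrk(Y_a)\le d$ in case (2). Intuitively the inequality must hold because subadditivity of dp-rank forces a generic $(a,b)\in W$ to satisfy $\dpr(a)\ge k-d$; the work is to realise that rank inside a $D$-internal set, which is exactly what Lemma~\ref{L:equivalence relation dp-rank} is for.

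Concretely I would form the correspondence $R:=\{(\pi_1(w),g(w)):w\in W\}\subseteq\pi_1(W)\times D^n$. Its projection to $D^n$ is finite-to-one (injective if $g$ is), so $\dpr(R)=\dpr(g(W))=\dpr(W)=k$, while its fibre over $t\in\pi_1(W)$ is $g(W_t)$, of dp-rank $\le d$. Thus $\bigcup_t R_t=g(W)\subseteq D^n$ has dp-rank $k$ and all fibres of $R$ have dp-rank $\le d$, so Lemma~\ref{L:equivalence relation dp-rank} applies to $R$ and produces a definable $Z\subseteq R$ with $\dpr(\bigcup_t Z_t)=k$ together with a definable $S\subseteq\bigcup_t Z_t\subseteq D^n$ with $\dpr(S)\ge k-d$ meeting each $Z_t$ in a finite set. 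Set $C:=Z\cap(\pi_1(W)\times S)$ and $P:=\pi_1(C)\subseteq\pi_1(W)\subseteq\pi_1(Y)$. The projection $C\to P$ has finite fibres (namely $S\cap Z_t$), and the projection $C\to S$ is onto $S$ with finite fibres (bounded by the fibre size of $g$, as $R\to D^n$ is finite-to-one); hence $\dpr(P)=\dpr(C)=\dpr(S)\ge k-d$, and $C$ realises $P$ as a finite-to-finite correspondent of the $D^n$-subset $S$, single-valued on the $D^n$ side when $g$ is injective.

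It remains --- and this is the crux --- to upgrade ``$P$ is a finite-to-finite correspondent of a $D^n$-subset'' to ``$P$ is (almost) strongly internal to $D$''. Via $p\mapsto\{v:(p,v)\in C\}$ the set $P$ embeds finite-to-one into the sort of finite subsets of $D^n$ of bounded size, so it suffices to know that this sort is (almost) strongly internal to $D$. For $D=K$ this follows from definable Skolem functions in the valued-field sort (choose the elements of a finite set one at a time to get an injection into a power of $K$), which in fact makes $P$ strongly internal; for $D=\Gamma$ the linear order does the same (list the elements in increasing order); for $D=\bk$ in the $V$-minimal case one uses elimination of imaginaries in the algebraically closed field $\bk$ --- equivalently, $\drk$ and $\adrk$ agree on all definable sets for this stable sort. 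For $D\ne\KOp$ the resulting $P$ can therefore be taken strongly internal, which gives (1)(ii). For $D=\KOp$, however, finite subgroups of $K/\CO$ block coding finite subsets into $D$; here I would instead invoke the structural picture of generic subsets of $(K/\CO)^n$ --- after shrinking, $S$ may be taken to be a piece of a coset of a definable subgroup projecting injectively onto a ball (Lemma~\ref{L:onto open}, Remark~\ref{R: onto open for K/O padic}, Fact~\ref{F: concrete description of nu for D-balanced}(2)) --- and recover only an almost strongly internal $P$. This is precisely why only the $\adrk$-valued statements (1)(i) and (2) are asserted for $\KOp$, and it is the source of the asymmetry between (1)(i) and (1)(ii).

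Assembling: with $P\subseteq\pi_1(Y)$ as above, (1)(i) reads $\drk(Y)=k\le\dpr(P)+d\le\adrk(\pi_1(Y))+d$; (1)(ii), when $D\ne\KOp$, reads $\drk(Y)\le\drk(\pi_1(Y))+d$ since $P$ is strongly internal; and (2) follows by running the same argument with $W$ almost strongly internal, $k=\adrk(Y)$ and $d=\max_a\adrk(Y_a)$, yielding $\adrk(Y)\le\adrk(\pi_1(Y))+d$. The main obstacle throughout is the single-sort analysis of the last step --- the passage from the ``almost section'' $S\subseteq D^n$ back to a subset of $X_1$ retaining internality to $D$ --- which is straightforward in spirit but genuinely sort-dependent, and delicate for $\KOp$.
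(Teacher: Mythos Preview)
Your construction up to and including the set $C$ and the observation that $\dpr(P)=\dpr(S)\ge k-d$ is correct and is exactly what the paper does (with $R$ playing the role of the paper's $W$, and $P$ its $T_1$). The divergence, and the gap, is entirely in the crux step: turning the finite-to-finite correspondence $C$ between $S\subseteq D^n$ and $P\subseteq\pi_1(Y)$ into (almost) strong internality of $P$.

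Your plan is to code finite subsets of $D^n$ inside some $D^m$. This does not cover all the sorts you need. First, $V$-minimal fields (e.g.\ ACVF$_{0,0}$) do \emph{not} have definable Skolem functions in the valued-field sort, so your argument for $D=K$ fails there; it can be rescued by elementary symmetric functions, but that is not what you wrote. More seriously, you never address $D=K/\CO$ in the $V$-minimal or $T$-convex settings at all --- these are SW-uniformities, needed for (1)(ii), and for $V$-minimal $K/\CO$ there is no linear order, no field structure, and no obvious elimination of finite imaginaries, so your coding scheme has nothing to appeal to. Finally, your handling of $\KOp$ (``invoke the structural picture \ldots\ after shrinking $S$ \ldots'') is not an argument: shrinking $S$ to a coset projecting onto a ball says nothing about the internality of its \emph{image} $P$ in the foreign sort $X_1$.

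The paper does not attempt to code finite sets. Instead it observes that in Case~1 the correspondence $C$ is the graph of a finite-to-one surjection $S\to P$ (and in Case~2, $C$ itself is almost strongly internal to $D$ via the projection to $S$, and $C\to P$ is finite-to-one), and then invokes pre-existing ``transfer'' lemmas from \cite{HaHaPeGps}: Lemma~2.9 for SW-uniformities, Lemma~4.3 for $\bk$ and $\Gamma$, and Lemma~3.9(2) for $\KOp$. These lemmas assert precisely that a finite-to-one definable image of a set (almost) strongly internal to $D$ contains a subset of the same dp-rank that is strongly (resp.\ almost strongly) internal to $D$, and they are proved by sort-specific arguments --- for SW-uniformities, for instance, by local injectivity of finite-to-one maps at generic points, not by coding. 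That is the missing ingredient; once you cite those lemmas, your argument becomes the paper's.
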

			\begin{proof}
				Let $Y\subseteq X_1\times X_2$ be a definable subset. Note that it is sufficient to prove the above inequalities for 
				$Y_1\sub Y$ which is (almost) strongly internal to $D$, whose dp-rank realizes the (almost) $D$-rank of $Y$. 
				Thus we may assume that $Y$ is (almost) strongly internal to $D$. We treat the two cases uniformly, so we fix $f:Y\to D^n$  an $r$-to-$1$ definable function and when $Y$ is strongly internal to $D$, will consider the case $r=1$.
				
				Let $T=\pi_1(Y)$, $k:=\dpr(Y)$ and $d=\max\{\dpr(Y_t):t\in T\}$.

				We shall apply Lemma \ref{L:equivalence relation dp-rank}, to 
				the set 
				$$W=\{(t,f(t,x))\in T\times D^n:(t,x)\in Y\}.$$
				We have $f(Y)=\bigcup_t W_t$, and since $f$ is finite-to-one, $\dpr(f(Y))=k$.
				
				Since $f$ is at most $r$-to-one, each $y\in f(Y)$ belongs to at most $r$-many $W_t$ (as $t$ varies).

				Let $Z\sub W$ and $S\sub \bigcup_t W_t=f(Y)$ be as provided by Lemma \ref{L:equivalence relation dp-rank}. Namely, 
				$\dpr(\bigcup_{t}Z_t)=k$, $S\sub \bigcup_t Z_t$,    
				$\dpr(S)\geq k-d$ and  $S$ intersects each $Z_t$ in a finite set.   Let $C=Z \cap (T\times S)$
				
				The projection of $C$ on the $T$-coordinate  is a finite-to-one map since $S$ intersects each $W_t$ in a finite set.
				Since each $x\in S$ belongs to at most $r$-many $Z_t$, 
				the projection of $C$ on the $S$-coordinate is a surjection, with fibres at most size $r$.

				\textbf{\underline{Case 1:}} Assume that $r=1$. So $Y$ is strongly internal to $D$ and $C$ is the graph of  a finite-to-one map from $S\sub D^n$ onto some $T_1\sub T$, with $\dpr(T_1)=\dpr(S)=k-d$. By \cite[Lemma 2.9(1)]{HaHaPeGps}, if  $D$ is an SW-uniformity and by \cite[Lemma 4.3]{HaHaPeGps} if $D$ is $\bk$ or $\Gamma$, there exists a definable subset $T_2\subseteq T_1$ with $\dpr(T_2)=\dpr(T_1)$ which is strongly internal to $D$. Thus, for $D\neq \KOp$, $\drk(T)\geq \dpr(T_2)\geq k-d$, and since $Y_a$ is strongly internal to $D$ then $d=\dpr(Y_a)=\drk(Y_a)$. Combining everything we get, for $D\neq \KOp$,
				\[\drk(Y)=k= (k-d)+d\leq \drk(T)+ d\leq \drk(\pi_1(Y))+\max\{\drk(Y_a):a\in \pi_1(Y)\}.\]

				When  $D=\KOp$ we can find, by \cite[Lemma 3.9(2)]{HaHaPeGps}, a subset $T'\sub T$, $\dpr(T')\geq k-d$, with $T'$ \textit{almost strongly internal} to $D$.  Thus $\adrk(T)\geq \dpr(T')= k-d$. Putting together with the above, we get for all $D$,  
				\[\drk(Y)\leq \adrk(T)+d=\adrk(\pi_1(Y))+d.\]
				
				This proves (1)i, ii.
				
				\textbf{\underline{Case 2:}} Assume that $r> 1$. The projection of $C$ onto the second coordinate witnesses that $C$ is almost strongly internal to $D$. The projection onto the $T$-coordinate is a finite-to-one map onto $T_1\sub T$, thus by \cite[Lemma 2.9(2), Lemma 3.9(2), Lemma 4.3]{HaHaPeGps}, there exists a definable subset $T_2\subseteq T_1$ with $\dpr(T_2)=\dpr(T_1)\geq k-d$ which is almost strongly internal to $D$. Thus $\adrk(T)\geq \adrk(T_1)\geq \dpr(T_2)$ and so $\adrk(T)\geq k-d$. Thus, for all $D$,
				\[\adrk(Y)= k-d+d\leq \adrk(T)+d\leq \adrk(\pi_1(Y))+\max\{\adrk(Y_a):a\in \pi_1(Y)\},\]
				as required. This proves (2).
			\end{proof}
			
			The following is an immediate consequence of Proposition \ref{P:subadditivity of drk and adrk}, taking $Y=X_1\times X_2$.
			\begin{corollary}\label{C:adrk and drk of products}
				Let $X_1$ and $X_2$ be two infinite definable sets in $\CK$. Then, 
				
				\begin{enumerate}
					\item For all distinguished sorts D,
					
					(i) if  $\drk(X_1)=\adrk(X_1)$  then $\drk(X_1\times X_2)=\drk(X_1)+\drk(X_2).$

					(ii)  If $D\neq \KOp$ then $\drk(X_1\times X_2)=\drk(X_1)+\drk(X_2).$
					
					\item For all distinguished sorts $D$, $\adrk(X_1\times X_2)=\adrk(X_1)+\adrk(X_2).$

				\end{enumerate}
			\end{corollary}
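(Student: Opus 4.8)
The plan is to combine the subadditivity bounds of Proposition \ref{P:subadditivity of drk and adrk}, applied with $Y=X_1\times X_2$, with the easy reverse (superadditivity) inequality. For $Y=X_1\times X_2$ we have $\pi_1(Y)=X_1$ and $Y_a=X_2$ for every $a\in X_1$, so the relevant clauses of that proposition read
\[
\drk(X_1\times X_2)\le \adrk(X_1)+\drk(X_2),\qquad \adrk(X_1\times X_2)\le \adrk(X_1)+\adrk(X_2),
\]
and, when $D\neq\KOp$, also $\drk(X_1\times X_2)\le \drk(X_1)+\drk(X_2)$.

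For the reverse inequalities I would argue uniformly. Write $\ast$ for either $\drk$ or $\adrk$. Choose $Z_i\subseteq X_i$ with $\dpr(Z_i)=\ast(X_i)$ together with a definable $f_i\colon Z_i\to D^{n_i}$ witnessing (almost) strong internality to $D$ --- injective in the $\drk$ case, finite-to-one in the $\adrk$ case --- taking $Z_i$ to be a single point in the degenerate case $\ast(X_i)=0$. Then $f_1\times f_2\colon Z_1\times Z_2\to D^{n_1+n_2}$ again witnesses (almost) strong internality of $Z_1\times Z_2$ to $D$, since each of its fibres is the product of a fibre of $f_1$ with a fibre of $f_2$, hence finite (and a singleton in the injective case). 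By additivity of dp-rank for products, $\dpr(Z_1\times Z_2)=\dpr(Z_1)+\dpr(Z_2)=\ast(X_1)+\ast(X_2)$, and $Z_1\times Z_2\subseteq X_1\times X_2$, so $\ast(X_1\times X_2)\ge\ast(X_1)+\ast(X_2)$.

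Combining the two directions then finishes each clause: the bound for $\adrk$ gives $\adrk(X_1\times X_2)=\adrk(X_1)+\adrk(X_2)$ for all $D$, proving (2); for $D\neq\KOp$ the $\drk$ bound gives $\drk(X_1\times X_2)=\drk(X_1)+\drk(X_2)$, proving (1)(ii); and under the hypothesis $\drk(X_1)=\adrk(X_1)$, Proposition \ref{P:subadditivity of drk and adrk}(1)(i) yields $\drk(X_1\times X_2)\le\adrk(X_1)+\drk(X_2)=\drk(X_1)+\drk(X_2)$, which with the reverse inequality gives equality, proving (1)(i).

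Since both directions are essentially immediate, there is no serious obstacle here; the only points needing (minimal) care are the stability of (almost) strong internality under forming products of the witnessing maps, the harmless handling of the rank-zero degenerate cases, and matching up the clauses of Proposition \ref{P:subadditivity of drk and adrk} with those of the corollary.
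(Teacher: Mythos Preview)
Your proof is correct and follows the same approach as the paper, which simply states that the corollary is ``an immediate consequence of Proposition \ref{P:subadditivity of drk and adrk}, taking $Y=X_1\times X_2$.'' You have spelled out explicitly the reverse (superadditivity) inequality via the product of witnessing maps, which the paper leaves implicit; otherwise the arguments coincide.
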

			
			The following is a direct corollary of Corollary \ref{C:adrk and drk of products}
			\begin{corollary}\label{C:product of balanced}
				Let $G_1$ and $G_2$ be definable groups in $\CK$, locally almost strongly internal to $D$.
				\begin{enumerate}    
					\item $\adrk(G_1 \times G_2)=\adrk(G_1)+\adrk(G_2)$ and if either $G_1$ or $G_2$ are $D$-balanced then 
					$\drk(G_1\times G_2)=\drk(G_1)+\drk(G_2).$
					\item If both $G_1$ and $G_2$ are $D$-balanced then so is $G_1\times G_2$.
				\end{enumerate}
			\end{corollary}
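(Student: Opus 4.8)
The plan is to deduce everything directly from Corollary \ref{C:adrk and drk of products}, so that the statement becomes a repackaging of that corollary in group-theoretic language. For part (1), the almost-rank equality $\adrk(G_1\times G_2)=\adrk(G_1)+\adrk(G_2)$ is literally Corollary \ref{C:adrk and drk of products}(2) applied with $X_1=G_1$ and $X_2=G_2$, so no work is needed there. For the $\drk$ statement I would first treat the case in which the $D$-balanced factor is $G_1$: then $\drk(G_1)=\adrk(G_1)$, so Corollary \ref{C:adrk and drk of products}(1)(i) with $X_1=G_1$, $X_2=G_2$ gives $\drk(G_1\times G_2)=\drk(G_1)+\drk(G_2)$ immediately.

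To handle the symmetric case, in which instead $G_2$ is the $D$-balanced factor, I would invoke the definable group isomorphism $G_1\times G_2\cong G_2\times G_1$ given by swapping the coordinates. Since both $\drk$ and $\adrk$ are invariant under definable bijections, the case already treated, applied to $G_2\times G_1$, yields $\drk(G_1\times G_2)=\drk(G_2\times G_1)=\drk(G_2)+\drk(G_1)$, finishing part (1).

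For part (2), assuming both $G_1$ and $G_2$ are $D$-balanced, I would simply chain the two halves of part (1): using that $G_1$ is $D$-balanced gives $\drk(G_1\times G_2)=\drk(G_1)+\drk(G_2)$, and since $\drk(G_i)=\adrk(G_i)$ this equals $\adrk(G_1)+\adrk(G_2)$, which by the first half of part (1) equals $\adrk(G_1\times G_2)$. Hence $\drk(G_1\times G_2)=\adrk(G_1\times G_2)$, i.e.\ $G_1\times G_2$ is $D$-balanced by definition.

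I do not expect a genuine obstacle here; the content all lies in Corollary \ref{C:adrk and drk of products}. The only point that deserves to be spelled out is the invariance of $\drk$ and $\adrk$ under the coordinate-swap isomorphism, which is what reduces the ``either $G_1$ or $G_2$'' in part (1) to a single case, and (for completeness) the trivial observation that groups locally almost strongly internal to $D$ are infinite, so that Corollary \ref{C:adrk and drk of products} applies.
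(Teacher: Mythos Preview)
Your proposal is correct and matches the paper's approach: the paper states this result as a direct corollary of Corollary~\ref{C:adrk and drk of products} with no further argument, and your write-up simply fills in the straightforward details (including the coordinate-swap and the observation that the hypothesis forces the $G_i$ to be infinite).
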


			\begin{proposition}\label{P: H of products}
				Let $\CK$ be a $p$-adically closed field and $D=K/\CO$. Let $G_1$ and $G_2$ be definable groups in $\CK$, both locally almost strongly internal to $D$. Then $H_{G_1\times G_2}=H_{G_1}\times H_{G_2}$.
			\end{proposition}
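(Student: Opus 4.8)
The plan is to establish the two inclusions $H_{G_1\times G_2}\subseteq H_{G_1}\times H_{G_2}$ and $H_{G_1}\times H_{G_2}\subseteq H_{G_1\times G_2}$ separately, each being a short consequence of results already in place. Throughout I will use (Notation \ref{N: H_G}, Proposition \ref{Canonical H}) that $H_G$ is characterised as the \emph{unique minimal} finite normal subgroup of $G$ with $G/H_G$ being $D$-balanced, and that by Proposition \ref{P:lasi=lsi} all these subgroups are trivial unless $D=\KOp$; hence the only substantive case is $D=\KOp$ with $\CK$ $p$-adically closed, though the argument below is uniform. We may also assume $\adrk(G_1\times G_2)>0$, since otherwise $\adrk(G_1)=\adrk(G_2)=0$ by Corollary \ref{C:adrk and drk of products}(2) and all three groups are trivial.

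For the inclusion $H_{G_1\times G_2}\subseteq H_{G_1}\times H_{G_2}$: the subgroup $H_{G_1}\times H_{G_2}$ is finite and normal in $G_1\times G_2$, and the quotient $(G_1\times G_2)/(H_{G_1}\times H_{G_2})$ is definably isomorphic to $(G_1/H_{G_1})\times(G_2/H_{G_2})$. Both factors are $D$-balanced by the very definition of $H_{G_1}$ and $H_{G_2}$, so by Corollary \ref{C:product of balanced}(2) their product, and hence $(G_1\times G_2)/(H_{G_1}\times H_{G_2})$, is $D$-balanced. By the minimality in the definition of $H_{G_1\times G_2}$ we conclude $H_{G_1\times G_2}\subseteq H_{G_1}\times H_{G_2}$.

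For the reverse inclusion I would look at the definable subgroup $G_1\times\{e\}\le G_1\times G_2$. By Lemma \ref{L: same H if same adrk}, $H_{G_1\times\{e\}}\le H_{G_1\times G_2}$. Since $g\mapsto(g,e)$ is a definable isomorphism of $G_1$ onto $G_1\times\{e\}$, and $D$-balancedness is preserved under definable isomorphism (being phrased purely via $\drk$ and $\adrk$, which are), the intrinsically defined group $H_{G_1\times\{e\}}$ equals $H_{G_1}\times\{e\}$. By symmetry $\{e\}\times H_{G_2}\le H_{G_1\times G_2}$. As $H_{G_1\times G_2}$ is a subgroup it is closed under multiplication, so $H_{G_1}\times H_{G_2}=(H_{G_1}\times\{e\})\cdot(\{e\}\times H_{G_2})\subseteq H_{G_1\times G_2}$. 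Combining the two inclusions gives $H_{G_1\times G_2}=H_{G_1}\times H_{G_2}$.

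I do not expect a genuine obstacle here; the proof is essentially bookkeeping around the characterisation of $H_G$. The two points that require care are: (a) that a direct product of two $D$-balanced groups is $D$-balanced, which is Corollary \ref{C:product of balanced}(2) and ultimately rests on additivity of almost $D$-rank (Corollary \ref{C:adrk and drk of products}); and (b) that $H_G$ really is the minimal such finite normal subgroup, so that both ``$(G_1\times G_2)/N$ being $D$-balanced forces $H_{G_1\times G_2}\subseteq N$'' and the monotonicity statement of Lemma \ref{L: same H if same adrk} may legitimately be invoked. If one preferred to avoid Lemma \ref{L: same H if same adrk}, the reverse inclusion could also be read off directly from Fact \ref{F: H contained if same adrk} in the case $\adrk(G_2)=0$ and handled by an induction/projection argument otherwise, but using the lemma is cleanest.
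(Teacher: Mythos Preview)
Your proof is correct. The first inclusion $H_{G_1\times G_2}\subseteq H_{G_1}\times H_{G_2}$ coincides with the paper's argument. For the reverse inclusion, however, you take a genuinely different and shorter route: you invoke Lemma~\ref{L: same H if same adrk} applied to the subgroup $G_1\times\{e\}\le G_1\times G_2$ (and symmetrically), whereas the paper argues directly by choosing a $D$-critical set $Y\subseteq (G_1\times G_2)/H_{G_1\times G_2}$, pulling it back to $\widehat Y\subseteq G_1\times G_2$, using sub-additivity (Proposition~\ref{P:subadditivity of drk and adrk}) to locate a fiber $\widehat Y_a\subseteq G_2$ of full almost $D$-rank, and then checking that its image in $G_2/(H_{G_1\times G_2}\cap G_2)$ is strongly internal to $D$, so that this quotient is $D$-balanced and hence $H_{G_2}\subseteq H_{G_1\times G_2}\cap G_2$.

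Your approach is more conceptual: it recognises that the paper's fiber computation is in fact a special case of the monotonicity of $H_G$ under definable subgroups already established in Lemma~\ref{L: same H if same adrk} (which appears earlier, in the section on infinitesimals of subgroups, so there is no circularity). The paper's approach, on the other hand, is self-contained at the level of almost $D$-critical sets and makes the role of the additivity of $\adrk$ in products more visible. Either argument is fine; yours is the cleaner one.
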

			\begin{proof}
				For $i=1,2$ let $d_i=\adrk(G_i)$ and let $H_i=H_{G_i}$. Then by Corollary \ref{C:adrk and drk of products}, $\adrk(G_1\times G_2)=d_1+d_2$. Let $G=G_1\times G_2$ and identify each $G_i$ with a subgroup of $G$. Since $G/(H_1\times H_2)$ is $D$-balanced (by Corollary \ref{C:product of balanced}), Proposition \ref{Canonical H} implies that $H_G\leq H_1\times H_2$. It is sufficient to show that $H_i \sub H_G\cap G_i$, for $i=1,2$; we treat the case $i=2$.
				
				Let $Y\subseteq G/H_G$ be a $D$-critical subset, so its preimage $\widehat Y\subseteq G=G_1\times G_2$, is almost strongly internal to $D$ and $\dpr(\widehat Y)=d_1+d_2$. Together with Proposition \ref{P:subadditivity of drk and adrk} (2), we conclude that 
				\[d_2\leq \max\{\adrk(\widehat Y_a):a\in G_1\}.\]
				Let $a\in G_1$ be an element with $\adrk(\widehat Y_a)\geq d_2$. Since $\widehat Y_a$ is almost strongly internal to $D$, $\dpr(\widehat Y_a)=\adrk(\widehat Y_a)\geq d_2$. But $\widehat Y_a$ is a subset of $G_2$ hence $\dpr(\widehat Y_a)=d_2=\adrk(G_2)$. 
				
				Consider the group $G_2/(H_G\cap G_2)$. By \cite[Lemma 2.14]{HaHaPeSemisimple}, $\adrk(G_2/(H_G\cap G_2))=\adrk(G_2)=d_2$. The image of $\widehat Y_a$ in $G/(H_G\cap G_2)$ is in definable bijection with its image inside of $G/H_G$ but the image in the latter lies inside $Y$, which is strongly internal to $D$.  Hence, the image of $\widehat Y_a$ inside $G/(H_G\cap G_2)$ is strongly internal to $D$. It follows that $d_2=\drk(G_2/(H_G\cap G_2)=\adrk(G_2/(H_G\cap G_2))$, so $G_2/(H_G\cap G_2)$ is $D$-balanced. By Proposition \ref{Canonical H}, $H_2\subseteq H_G\cap G_2$
			\end{proof}
			
			\begin{proposition}\label{P:nu in products}
				Let $G_1$ and $G_2$ be two definable groups in $\CK$ locally almost strongly internal to a distinguished sort $D$. Then $\nu_D(G_1\times G_2)\equiv \nu_D(G_1)\times \nu_D(G_2)$. Consequently, also $\nu(G_1\times G_2)\equiv \nu(G_1)\times \nu(G_2)$. 
			\end{proposition}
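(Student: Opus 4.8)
The plan is to first establish $\nu_D(G_1\times G_2)\equiv\nu_D(G_1)\times\nu_D(G_2)$ for a single distinguished sort $D$, and then deduce the statement for $\nu$ from Theorem \ref{T:main}, which identifies $\nu(H)$ with the internal direct product of the four $\nu_D(H)$: since the subgroup of a direct product $A\times B$ generated by subgroups of the form $A_j\times B_j$ equals $\langle A_j\rangle\times\langle B_j\rangle$, the four factors $\nu_D(G_1)\times\nu_D(G_2)$ reassemble as $\nu(G_1)\times\nu(G_2)$, which is type-definable by Theorem \ref{T:main}.

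For the single-sort statement write $G=G_1\times G_2$ and view each $G_i$ as the definable subgroup $G_i\times\{e\}$ or $\{e\}\times G_i$ of $G$. First I would dispatch the degenerate cases: if, say, $\adrk(G_2)=0$ then $\nu_D(G_2)=\{e\}$, while Corollary \ref{C:adrk and drk of products} gives $\adrk(G)=\adrk(G_1)=\adrk(G_1\times\{e\})$, so Proposition \ref{P:general nu of finite index subgroup} yields $\nu_D(G)\equiv\nu_D(G_1\times\{e\})$, which equals $\nu_D(G_1)\times\{e\}$ by the isomorphism clause of Proposition \ref{P:nu under finite to one}. So I may assume $\adrk(G_1),\adrk(G_2)>0$, whence $G_1,G_2$ and (again by Corollary \ref{C:adrk and drk of products}) $G$ are all locally almost strongly internal to $D$. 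Next I would reduce to the case where all three groups are $D$-balanced: for $D\neq\KOp$ this holds automatically (Proposition \ref{P:lasi=lsi}), and for $D=\KOp$ Proposition \ref{P: H of products} gives $H_G=H_{G_1}\times H_{G_2}$, so $G/H_G\cong(G_1/H_{G_1})\times(G_2/H_{G_2})$ is a product of two $D$-balanced groups and hence $D$-balanced (Corollary \ref{C:product of balanced}(2)); as the quotient map $G\to G/H_G$ is $\pi_1\times\pi_2$ and $\nu_D$ of a non-balanced group is by definition the preimage of $\nu_D$ of its balanced quotient, the balanced case for $G_1/H_{G_1}$ and $G_2/H_{G_2}$ pulls back to the general statement.

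It then remains to treat $D$-balanced $G_1,G_2,G$ with $\drk(G)>0$ (the case $\drk(G)=0$ being trivial), where I would prove the two inclusions. For $\nu_D(G)\vdash\nu_D(G_1)\times\nu_D(G_2)$: by Fact \ref{F: properties of nu} each $\nu_D(G_i)$ is concentrated on a $D$-critical set $V_i\subseteq G_i$, so $\mu:=\nu_D(G_1)\times\nu_D(G_2)$ is a type-definable subgroup of $G$ concentrated on $V_1\times V_2$, which is strongly internal to $D$ of dp-rank $\drk(G_1)+\drk(G_2)=\drk(G)=\dpr(\nu_D(G))$ (Corollary \ref{C:product of balanced}(1) and Fact \ref{F: properties of nu}(4)); the minimality of $\nu_D(G)$, Lemma \ref{L: nu is smallest}, then gives $\nu_D(G)\vdash\mu$. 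For the reverse, Theorem \ref{T: nu of subgroup} applied to the subgroups $G_i\times\{e\}\le G$ shows $\nu_D(G_i\times\{e\})\subseteq\nu_D(G)$, and $\nu_D(G_i\times\{e\})=\nu_D(G_i)\times\{e\}$ by the isomorphism clause of Proposition \ref{P:nu under finite to one}; since $\nu_D(G)(\widehat\CK)$ is a group containing $\nu_D(G_1)\times\{e\}$ and $\{e\}\times\nu_D(G_2)$, it contains their product $(\nu_D(G_1)\times\nu_D(G_2))(\widehat\CK)$, i.e.\ $\mu\vdash\nu_D(G)$. Hence $\nu_D(G)\equiv\mu$.

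I expect the main obstacle to be the $D=\KOp$ bookkeeping: making sure the canonical finite subgroups multiply correctly (Proposition \ref{P: H of products}), so that the reduction to $D$-balanced quotients is legitimate, and checking that the reverse inclusion still goes through even though Theorem \ref{T: nu of subgroup} for $\KOp$ and for the stable sort gives only that $\nu_D(G_i)$ has finite index in $\nu_D(G)\cap G_i$ rather than equality --- this is harmless here since all that is used is the inclusion $\nu_D(G_i)\subseteq\nu_D(G)$. Everything else is routine, resting on additivity of the (almost) $D$-rank (Corollary \ref{C:adrk and drk of products}), the characterization of $\nu_D$ via $D$-critical sets (Fact \ref{F: properties of nu}, Lemma \ref{L: nu is smallest}), and the established behaviour of $\nu_D$ under isomorphisms and finite-index inclusions.
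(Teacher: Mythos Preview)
Your proposal is correct, but it takes a substantially different route from the paper's proof.

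The paper argues directly from the characterization of $\nu_D$ in Proposition \ref{P:characterizatio of nu_D in general}(1c) as the filter generated by $\{XX^{-1}: X\subseteq G \text{ almost strongly internal to }D,\ \dpr(X)=\adrk(G)\}$. For $\nu_D(G)\vdash\nu_D(G_1)\times\nu_D(G_2)$: given $X_i\subseteq G_i$ of the right kind, $X_1\times X_2\subseteq G$ is of the right kind (Corollary \ref{C:adrk and drk of products}), and $(X_1\times X_2)(X_1\times X_2)^{-1}=X_1X_1^{-1}\times X_2X_2^{-1}$. For the converse: given $X\subseteq G$ of the right kind, Proposition \ref{P:subadditivity of drk and adrk} forces some fiber $\{a\}\times X_2$ (and symmetrically $X_1\times\{b\}$) to have $\dpr(X_i)=\adrk(G_i)$, whence $X_1X_1^{-1}\times X_2X_2^{-1}\subseteq XX^{-1}$. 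No reduction to the balanced case and no appeal to Theorem \ref{T: nu of subgroup} or Lemma \ref{L: nu is smallest} is needed.

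Your approach instead assembles the result from the structural machinery already built: the reduction to balanced groups via Proposition \ref{P: H of products}, then minimality of $\nu_D$ (Lemma \ref{L: nu is smallest}) for one inclusion and the subgroup theorem (Theorem \ref{T: nu of subgroup}) for the other. This is sound, and illustrates nicely how the theory fits together, but it is a longer dependency chain and invokes heavier results (Theorem \ref{T: nu of subgroup} in particular) where the paper gets by with a two-line computation. The paper's argument is also uniform across all $D$ without the $\KOp$ case split you anticipated. Your handling of the degenerate case $\adrk(G_i)=0$ is unnecessary given the hypothesis that both $G_i$ are locally almost strongly internal to $D$, though it is relevant (and correctly sketched) for the ``Consequently'' clause about $\nu$, where some $D$'s may fail this hypothesis.
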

			\begin{proof} Let $G=G_1\times G_2$.
				We use the characterization of $\nu_D(G)$ (and similarly each $\nu_D(G_i)$), from  Proposition \ref{P:characterizatio of nu_D in general}, as the collection of all $XX^{-1}$ for $X\sub G$ almost strongly internal to $D$ with $\dpr(X)=\adrk(G)$.
				
				Assume, for $i=1,2$, that $X_i\sub G_i$ is almost strongly internal to $D$ and $\dpr(X_i)=\adrk(G_i)$. For  $X=X_1\times X_2$, $X$ is almost strongly internal to $D$ and, by Corollary \ref{C:adrk and drk of products}, $\dpr(X)=\adrk(G)$. We have 
				$XX^{-1}=(X_1X_1^{-1})\times (X_2X_2^{-1})$  and therefore $\nu_D(G)\vdash \nu_{D}(G_1)\times \nu_D(G_2)$.
				
				Conversely, assume that $X\sub G_1\times G_2$ is almost strongly internal to $G$ with $\dpr(X)=\adrk(G)$. By Proposition \ref{P:subadditivity of drk and adrk}, $X$ contains a fiber $\{a\}\times X_2$, for $X_2\sub G_2$ almost strongly internal to $D$, with $\dpr(X_2)=\adrk(G_2)$, and by symmetry, it also contains a set $X_1\times \{b\}$ with $X_1\sub G_1$ almost strongly internal to $D$, $\dpr(X_1)=\adrk(G_1).$ It follows that $ (X_1X_1^{-1})\times (X_2X_2^{-1})\sub XX^{-1}$, and hence $\nu_D(G_1)\times \nu_D(G_2)\vdash \nu_D(G)$. By Theorem \ref{T:main} this implies $\nu(G_1)\times \nu(G_2)\vdash \nu(G)$. 
				
			\end{proof}

			\bibliographystyle{plain}
			\bibliography{harvard}

		\end{document}